\newtheorem{theorem}{Theorem}[section]
\newtheorem{proposition}[theorem]{Proposition}
\newtheorem{lemma}[theorem]{Lemma}
\theoremstyle{definition}
\newtheorem{definition}[theorem]{Definition}
\theoremstyle{remark}
\newtheorem{remark}[theorem]{Remark}
\newcommand{\llbracket}{[\![}
\newcommand{\rrbracket}{]\!]}
\DeclareMathOperator{\sech}{sech}
\DeclareMathOperator{\cn}{cn}
\DeclareMathOperator{\dn}{dn}
\DeclareMathOperator{\sn}{sn}
\DeclareMathOperator{\sgn}{sign}
\begin{document}

\title{Elastic curves and phase transitions}
\author{Tatsuya Miura}
\address{Graduate School of Mathematical Sciences, The University of Tokyo, 3-8-1 Komaba, Meguro, Tokyo, 153-8914 Japan}
\email{miura@ms.u-tokyo.ac.jp}
\address{Max Planck Institute for Mathematics in the Sciences, Inselstra{\ss}e 22, 04103 Leipzig, Germany}
\email{tmiura@mis.mpg.de}
\keywords{Elastica; Bending Energy; Boundary value problem; Uniqueness; Phase transition; Singular perturbation.}
\subjclass[2010]{49Q10, and 53A04}

\begin{abstract}
	This paper is devoted to classical variational problems for planar elastic curves of clamped endpoints, so-called Euler's elastica problem.
	We investigate a straightening limit that means enlarging the distance of the endpoints, and obtain several new results concerning properties of least energy solutions.
	In particular we reach a first uniqueness result that assumes no symmetry.
	As a key ingredient we develop a foundational singular perturbation theory for the modified total squared curvature energy.
	It turns out that our energy has almost the same variational structure as a phase transition energy of Modica-Mortola type at the level of a first order singular limit.
\end{abstract}

\maketitle


\section{Introduction}\label{sectintroduction}

In this paper we consider the most classical variational model of inextensible flexible rods studied from the 18th century, so-called {\em Euler's elastica problem}: Among smooth planar curves $\gamma$ of fixed length subject to the clamped boundary condition, we minimize the total squared curvature, also known as {\it bending energy},
\begin{eqnarray}\label{eqntotalsquaredcurvature}
\mathcal{B}[\gamma]=\int_{\gamma}\kappa^2ds,
\end{eqnarray}
where $s$ denotes the arc length parameter and $\kappa$ denotes the curvature.
The clamped boundary condition means that the endpoints are prescribed up to the first order derivatives (Figure \ref{figcurvebc}).

\begin{figure}[tb]
	\centering
	\includegraphics[width=60mm]{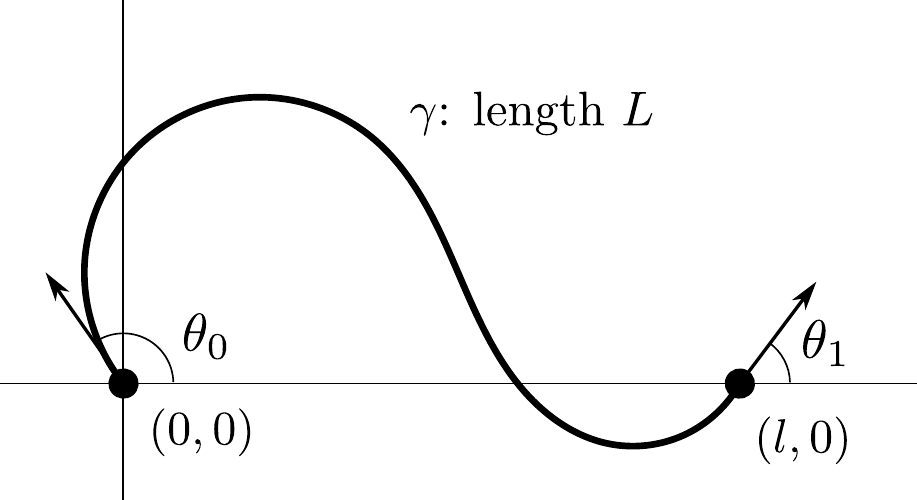}
	\caption{Clamped curve.}
	\label{figcurvebc}
\end{figure}

Euler's elastica problem is of one-dimension, and in particular the existence and regularity of solutions are by now standard.
However this problem is of higher-order, nonconvex, strongly nonlinear, and possessing a nonlocal constraint, so that not only quantitative but also qualitative properties of solutions sensitively depend on boundary data.
For these reasons, even today no rigorous method is available for obtaining uniqueness or the exact shapes of solutions just from given boundary data, except for special cases, e.g., closed curves.

The aim of this paper is to derive uniqueness as well as some properties of solutions for certain boundary data.
Throughout this paper we focus on global minimizers (least energy solutions) in a {\em straightening} regime, which roughly means that the distance of the endpoints is large.
We summarize our main results without precise definitions; one may refer Figure~\ref{figcurvebc} at this stage (see Section 2 for details).

\begin{theorem}\label{thmmain0}
	Fix any length $L>0$ and boundary angles $\theta_0,\theta_1\in(-\pi,\pi)$ with $\theta_0\theta_1<0$.
	Then there exists $\bar{l}\in(0,L)$ such that if the distance of the endpoints $l$ satisfies $l\in(\bar{l},L)$, Euler's elastica problem admits a unique global minimizer $\gamma_l$, and its tangential angle is strictly monotone from $\theta_0$ to $\theta_1$ (in particular, $\gamma_l$ is convex).
	Moreover, as $l\to L$, the rescaled curve $\hat{\gamma}_l$ defined as
	$$\hat{\gamma}_l(\hat{s}):=\frac{1}{\varepsilon_l}\gamma_l(\varepsilon_l\hat{s}), \quad \varepsilon_l:=\frac{L-l}{4\sqrt{2}\left(\sin^2\frac{\theta_0}{4}+\sin^2\frac{\theta_1}{4}\right)},$$
	converges locally smoothly to the borderline elastica with initial angle $\theta_0$.
	By symmetry, a similar convergence holds at the other endpoint.
\end{theorem}

The {\em borderline elastica} is defined in Definition \ref{defborderline0} (see also Figure \ref{figborderline}).
The condition $\theta_0\theta_1<0$ is essential for the convexity.
All the results, even uniqueness, are totally new: A particular feature is that we deal with non-closed curves of no symmetry, and impose no assumptions except for boundary data.
It should be noted that the uniqueness of global minimizers is not valid for general boundary data, and thus naturally requires some suitable assumptions.

Toward Theorem \ref{thmmain0} we mainly consider an auxiliary problem that is minimization of the modified total squared curvature,
\begin{eqnarray}\label{eqnmodifiedtotalsquaredcurvature}
\mathcal{E}_\varepsilon[\gamma]=\varepsilon^2\int_{\gamma}\kappa^2ds+\int_{\gamma}ds,
\end{eqnarray}
under the same clamped boundary condition but no length constraint.
For clarification we call the first one {\it inextensible problem} and the second {\it extensible problem}.
For the extensible problem we first carry out an asymptotic analysis as $\varepsilon\to0$, and apply it in order to obtain more rigid properties, as convexity, of minimizers for small $\varepsilon$: Up to this step we simultaneously deal with convex and nonconvex cases.
Then, independently developing a simple method that ensures some uniqueness among convex curves, we prove the uniqueness of global minimizers in a convexly straightening regime, i.e., $\theta_0\theta_1<0$ and $\varepsilon$ small.
We finally reach Theorem \ref{thmmain0} by reducing the inextensible problem in the limit $l\to L$ to the extensible problem in $\varepsilon\to0$, with the help of our uniqueness result.

A key step of our argument is the asymptotic analysis part, in which we discover a theoretical connection between two classical theories; elastic curves and phase transitions (see Section \ref{subsectphasetransition}).
Although the present paper focuses on the most classical planar framework, this insight is of independent interest and expected to be also applicable to e.g.\ free boundary problems and higher-codimensional problems, which will be addressed in our future works.

This paper is organized as follows.
In the rest of this section we present more precise backgrounds and our motivation, results, and methods.
All the main results of this paper are collected in Section \ref{sectmainresults}.
The results are sequentially proved in Sections \ref{sectenergyexpansion}, \ref{sectconvergence}, \ref{sectqualitativeproperty}, and \ref{sectrelation}.

\subsection{Euler's elastica: the origin}

Euler's elastica problem (inextensible problem) is first studied by Euler in 1744 \cite{Euler1744} and since then extensively studied in various fields.
In his celebrated study, Euler derives ODEs for solution curves (i.e., critical points) and moreover classifies the types of solution curves qualitatively.
The solution curves are nowadays called {\it Euler's elasticae} or simply {\em elasticae}.
See e.g.\ \cite{Fr91,Le08,Lo44,Ma10,Sa08,SaLe10,Tr83} for more details of the history.

We recall some basic facts on solution curves, using modern terms.
By the classical Lagrange multiplier method, for any critical point $\gamma$ in the inextensible problem, there is a multiplier $\lambda\in\mathbb{R}$ such that $\gamma$ is also a critical point of the energy
$$\int_{\gamma}\kappa^2ds+\lambda\int_\gamma ds$$
under the same boundary condition.
Calculating the first variation, we find that the signed curvature $\kappa$ of $\gamma$ satisfies
\begin{eqnarray}\label{eqnelasticalambda}
2\partial_s^2\kappa+\kappa^3-\lambda\kappa=0,
\end{eqnarray}
which we call {\it elastica equation} in this paper.
The elastica equation is uniquely solved for any given $\lambda$ and initial values $\kappa(0)$, $\kappa'(0)$ \cite{Li96}.
All solutions are expressed in terms of the Jacobi elliptic functions: Figure~\ref{figelasticae_sol} exhibits a classification of basic patterns, and the cases (i)--(iii) and (iii)--(v) respectively correspond to the $\dn$- and $\cn$-function.
In particular, the case (iii) is called {\it critical} or {\it borderline} elastica, and the only case having no periodicity: The convergence limit in Theorem \ref{thmmain0} is nothing but a part of this curve.
See e.g.\ \cite{AuPo10,Br92,Lo44,MlHa17,Si08} for more details.

\begin{figure}[tb]
	\centering
	\includegraphics[width=110mm]{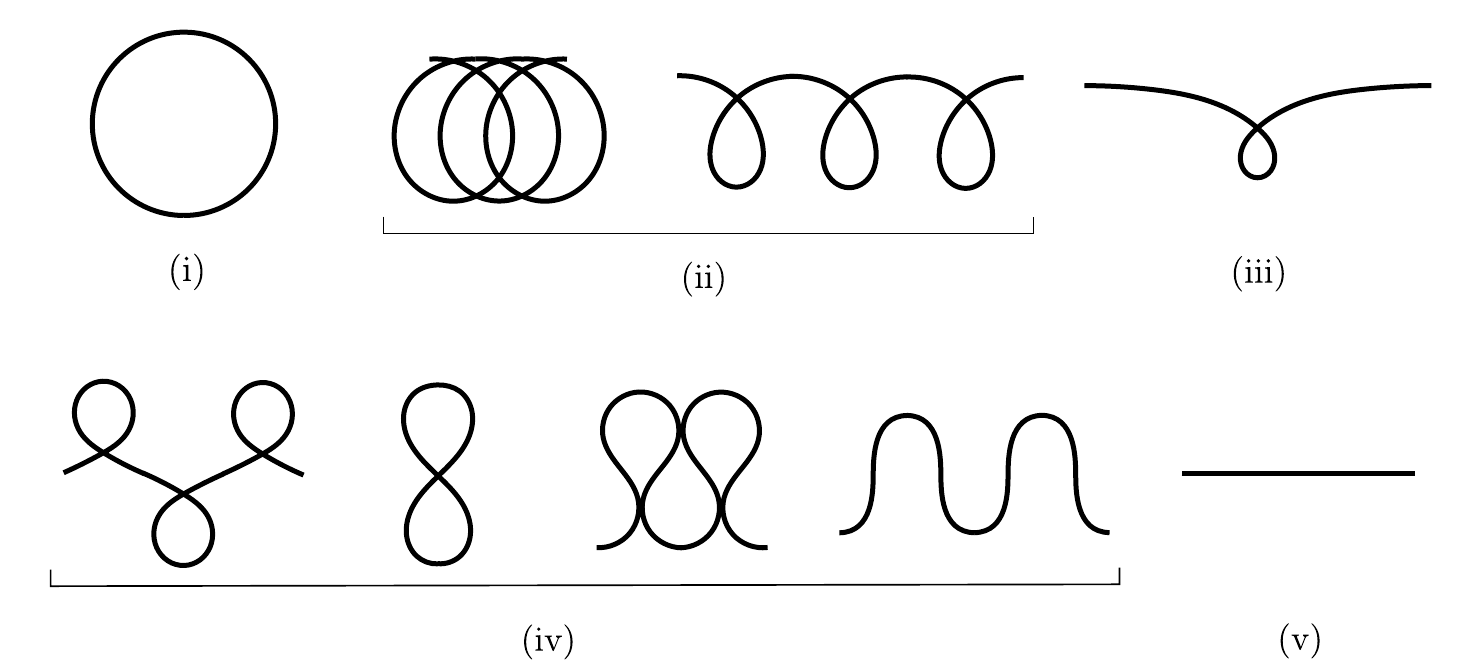}
	\caption{Basic patterns of elasticae.}
	\label{figelasticae_sol}
\end{figure}

\subsection{Shape of clamped elastica: remaining problems}\label{subsectshapeofelastica}

Notwithstanding that the elastica problem is already solved at the level of ``equation'' as above, it is still difficult to study our ``boundary value problem''.

One reason is that our clamped boundary condition does not fix any of the parameters $\lambda$, $\kappa(0)$, and $\kappa'(0)$; in particular there are infinitely many local minimizers (stable critical points) with different shapes as e.g.\ in Figure~\ref{figloops} (see Appendix \ref{appendixexistence}).
Although there are useful comprehensive formulae describing the relations between our boundary condition and solution curves \cite{Li98,Li98b}, the formulae are given as involved simultaneous transcendental equations (including elliptic functions and elliptic integrals) and not necessarily direct evidence for a clear understanding of the shapes of solution curves.

In addition, even if we know all candidates of minimizers (as local minimizers) in some sense, there remain intricate calculations of their energies for seeking global minimizers.
As mentioned earlier, the uniqueness of global minimizers is not expected in general due to several reasons, which are still unclear (cf.\ \cite{SaSa14}).
To the best of the author's knowledge, there is no comprehensive uniqueness result in the literature, although in some cases of special symmetry the uniqueness is easily proved or disproved as explained in Appendix \ref{appendixuniqueness}.

The only case that all critical points and their local and global optimality are fully understood would be closed curves ($l=0$ and $\theta_0=\theta_1$):
In this case it is shown in \cite{AvKaSo13,LaSi85,Sa12} that any critical point is an $n$-fold circle or an $n$-fold figure-of-eight, any local minimizer is an $n$-fold circle or the $1$-fold figure-of-eight, and a global minimizer is the $1$-fold circle.
Non-closed curves are substantially less tractable since scaling arguments do not reduce multipliers.
Incidentally, we mention that the case of buckling as in Figure \ref{figbuckling} is also well studied since this case is covered by small deformation theory so that the bifurcation theory can be applied (see e.g.\ \cite{An83,An95,Ma84}).
In this paper we are concerned with more nonlinear phenomena.


\begin{figure}[tb]
	\centering
	\includegraphics[width=80mm]{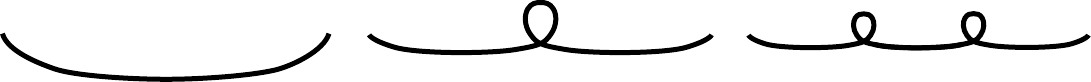}
	\caption{Loops.}
	\label{figloops}
\end{figure}

Upon exploring the shapes of elastic curves, one of the most interesting questions is to ask the number of infection points (i.e., sign-changing points of the curvature) since it is concerned with not only the shape but also other properties or mathematical techniques directly.
As a pioneering work with respect to inflection points, in 1906, Born proved that any solution curve without inflection point is stable \cite{Born1906}.
Recently, Sachkov et al.\ \cite{Sa08,Sa08b,SaSa14} revisit the elastica problem in view of optimal control and obtains several new results about local and global optimality.
In particular, it is shown that any stable solution has at most two inflection points.
The upper bound two is optimal even for global minimizers, since a well-known buckling example as in Figure~\ref{figbuckling} may be a global minimizer in a certain case.
However, there are few results to exactly determine the number of inflection points from given boundary data.

\begin{figure}[tb]
	\centering
	\includegraphics[width=50mm]{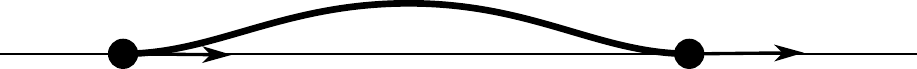}
	\caption{Buckling.}
	\label{figbuckling}
\end{figure}

In the rest of this subsection, to clarify the above difficulties, we observe an example case of straightening by seeing Figure~\ref{figelasticaegray}, which is just formal and heuristic.
Each row in Figure~\ref{figelasticaegray} describes ``continuous'' deformation from a closed elastica, where the continuity especially means that the ``winding number'' is preserved.
Since the two boundary angles are given to be same in this figure, the dotted curves have the same energies as the corresponding bold curves, respectively.
The gray region in Figure~\ref{figelasticaegray} indicates expected global minimizers.
The point (a) indicates a change of winding number: Such a point would exist since a closed global minimizer is known to be a circle, while a straightened global minimizer would be an ``S-shaped'' curve (in view of our result below).
The point (b) indicates a symmetry breaking, and accordingly a change of the number of global minimizers.
The number of inflection points would also change at (a) and (b).
These nontrivial transitions are expected at least formally and would be obstruction in our analysis.
To the author's knowledge, there is no general result to determine typical points as (a) or (b) rigorously (cf.\ \cite{ArSa09,GaLeMe80,SaSa14}).

\begin{figure}[tb]
	\centering
	\includegraphics[width=110mm]{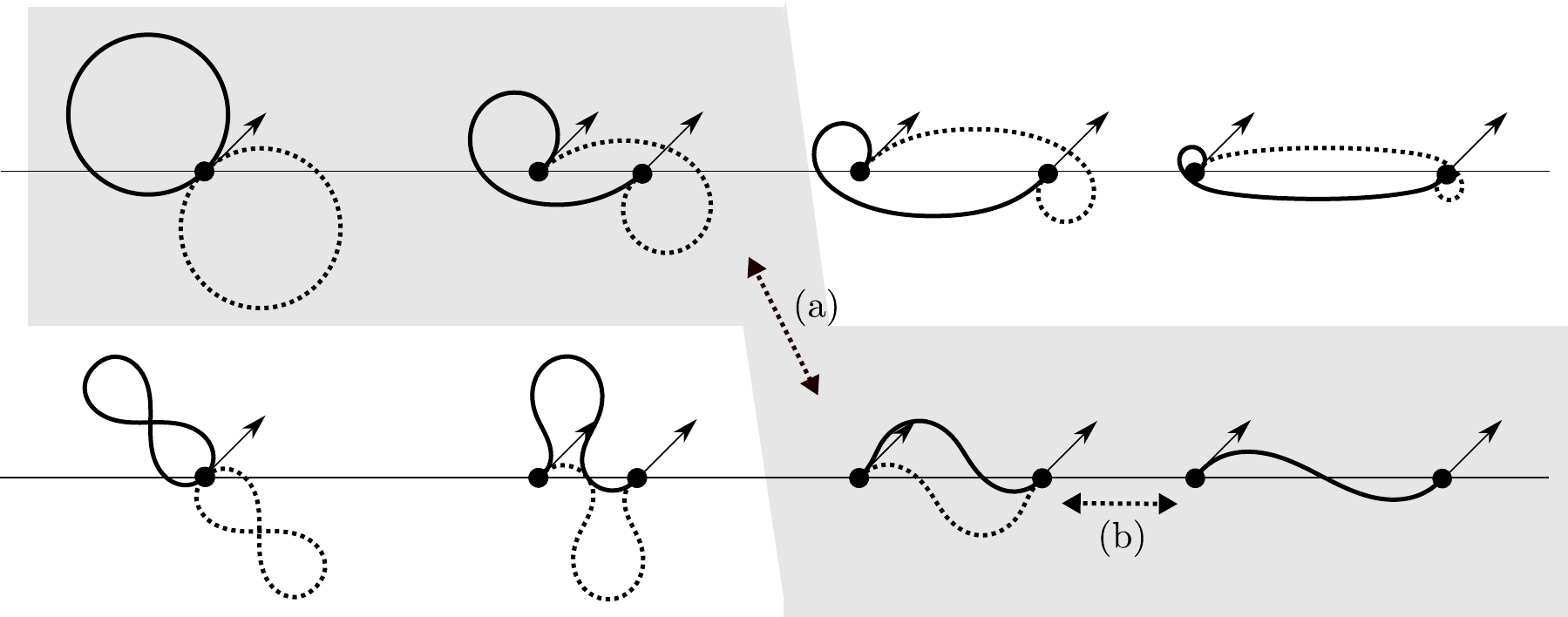}
	\caption{Formal observation for a straightening process.}
	\label{figelasticaegray}
\end{figure}

\subsection{Straightened elastica: main results}

The purpose of this paper is to obtain the above-mentioned properties of global minimizers only under assumptions on the boundary condition.
As observed above, finding an appropriate assumption is as important as obtaining assertions.
This paper focuses on a straightening regime, i.e., the case that the distance of the endpoints is close to the length of a curve (excluding buckling).
In this case we are able to detect global minimizers since the main (singular) effect of energy is localized near the endpoints and hence we can significantly reduce the involved competition of energy between candidates of a global minimizer (critical points).

However, even if we focus on the straightening regime, the inextensible problem is not easy to tackle directly.
The main reason is the length constraint, which is nonlocal and causes a multiplier.
To circumvent this difficulty, in this paper, we first consider the singular limit $\varepsilon\to0$ for the extensible problem.
Considering this limit is physically natural.
In fact, the constant $\varepsilon^2$ is interpreted as bending rigidity divided by tension, and we expect that straightened elastic curves have very high tension.
The extensible problem is relatively tractable in the sense that the multiplier $\varepsilon^2$ is a priori fixed.

Our main result (Theorem \ref{thmmain1}) states that, in the extensible problem, any sequence of global minimizers is straightened as $\varepsilon\to0$ as in Figure~\ref{figcurvethm} for an arbitrary given boundary condition.
More precisely, for small $\varepsilon$, any minimizer bends at the scale of $\varepsilon$ near the endpoints and is almost straight elsewhere, i.e., the tangent vectors are almost rightward.
In addition, a rescaled convergence limit at an endpoints is a part of the borderline elastica.
In this proof we use a theoretical analogy to the phase transition theory, as explained in the next subsection.
Our result also implies other more qualitative properties (Theorem \ref{thmqualitativeelastica}).
For instance, as a direct corollary, we find that any minimizer has no self-intersection for any small $\varepsilon$.
In addition, combining our result with expressions of the curvatures by elliptic functions, we determine the number of inflection points for generic boundary angles (except some critical cases).
The number is zero or one, depending only on the signs of boundary angles as in Figure~\ref{figcurvecor}.
Furthermore, in the case of no inflection point, we prove the uniqueness of global minimizers (Theorem \ref{thmuniqueness}).
Our proof uses a change of variables based on the Gauss map, which has been already used by Born \cite{Born1906} for studying stability of convex curves.
This change of variables yields a ``convexification'' under some a priori convexity, which is ensured by Theorem \ref{thmuniqueness}, so that our problem is translated into minimization of a convex energy defined on a convex set.
We thus discover that Born's convexification is also a powerful tool for uniqueness, which is a global problem in contrast to stability.

\begin{figure}[tb]
	\centering
	\includegraphics[width=60mm]{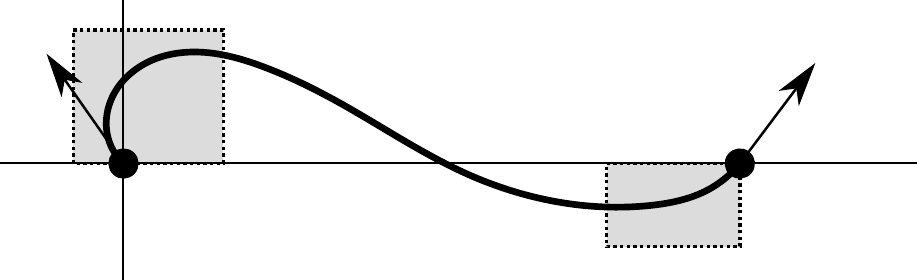}
	\caption{Straightened elastica.}
	\label{figcurvethm}
\end{figure}

\begin{figure}[tb]
	\centering
	\includegraphics[width=100mm]{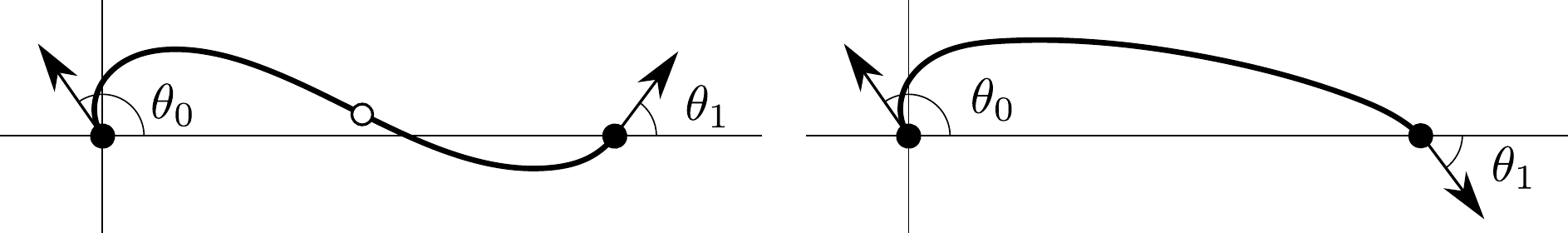}
	\caption{Straightened elasticae with and without inflection point.}
	\label{figcurvecor}
\end{figure}

We finally translate the above results on the extensible problem as $\varepsilon\to0$ back to the inextensible problem as $l\to L$.
Generally speaking, the Lagrange multiplier method indicates some kind of relation between the extensible and inextensible problems at the level of critical points.
In this paper, we investigate the precise relation at the level of global minimizers.
At this time the translation is partial (Theorem \ref{thmmain2}) due to the lack of uniqueness theory, but fully proved in a convex case (Theorem \ref{thmmain2'}) thanks to our uniqueness result.
This is the reason why Theorem \ref{thmmain1} covers only convex curves.

\subsection{Phase transition: a new perspective}\label{subsectphasetransition}

As mentioned, our asymptotic analysis is based on a new perspective that indicates a theoretical connection between the (extensible) elastic problem and the phase transition theory.

We briefly recall the studies on phase transition energy.
The minimizing problem of a potential energy perturbed by a gradient term, as
$$E_\epsilon[u]=\epsilon^2\int_\Omega|\nabla u|^2+ \int_\Omega W(u),$$
has been widely studied, in particular, in view of the van der Waals-Cahn-Hilliard theory of phase transitions \cite{CaHi58,VdW79}.
Here $\Omega\subset\mathbb{R}^n$ is a certain open set.
The potential function $W$ is often taken as the double-well potential $W(u)=(1-u^2)^2$, and the volume constraint $\int_\Omega u=M$ is often imposed.

In the phase transition problem, for small $\epsilon$, the values of a minimizer should be almost separated into the phases $1$ and $-1$ to minimize the potential energy.
Moreover, if a minimizer needs to have a transition between the two phases due to the volume constraint, then the area of ``interface'' is expected to be minimized due to the effect of perturbation.
These expectations are proved by Carr-Gurtin-Slemrod \cite{CaGuSl84} in a one-dimensional case, and by Modica \cite{Mo87} and Sternberg \cite{St88} in higher-dimensional cases.
The higher-dimensional results \cite{Mo87,St88} are described in terms of $\Gamma$-convergence, which is introduced by De Giorgi in 1970's (see e.g.\ \cite{Br14,DM93}).
The $\Gamma$-convergence result particularly implies the first order expansion of the minimum value of $E_\epsilon$ as $\epsilon\to0$.
Moreover, it also implies that, up to a subsequence, any sequence of minimizers $u_\epsilon$ converges in $L^1$ to a characteristic function $u_0\in BV(\Omega;\{-1,1\})$ of which total variation is minimized among functions $u\in BV(\Omega;\{-1,1\})$ with $\int_\Omega u=M$.
Some stronger convergence results are also known, even for local minimizers \cite{CaCo95} or critical points \cite{HuTo00} with certain boundedness; roughly speaking, a locally uniform convergence holds except interfaces.
Furthermore, at least formally, one expects that the transition part of a minimizer is close to a rescaled ``transition layer'' solution.
In fact, in the particular case that $\Omega=(-1,1)$ and $M=0$, it is easy to prove that the rescaled minimizer $\hat{u}_\epsilon(x)=u_\epsilon(\epsilon x)$ is nothing but a transition layer, i.e., a solution to $|u'|^2=W(u)$, as in Figure~\ref{figtransitionlayer}.
Finally, it should be mentioned that a basic strategy for the above higher-dimensional results \cite{Mo87,St88} has been provided in the earlier paper by Modica and Mortola \cite{MoMo77}.
The paper deals with an unconstrained problem for the periodic potential $W(u)=\sin^2(\pi u)$; this potential is more directly relative to our problem.

\begin{figure}[tb]
	\centering
	\includegraphics[width=110mm]{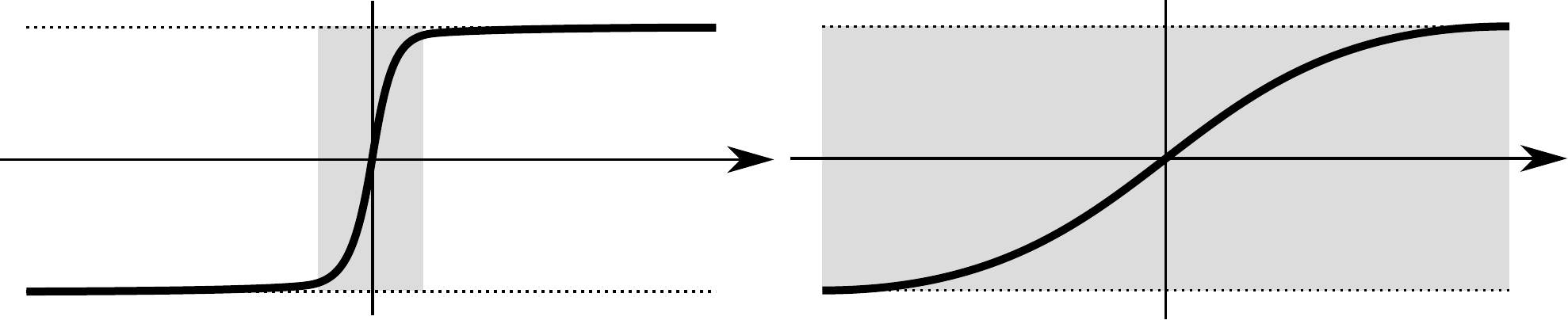}
	\caption{A minimizer $u_\varepsilon$ and a transition layer.}
	\label{figtransitionlayer}
\end{figure}

We shall go back to our elastic curve problem.
For a curve $\gamma$ as in Figure~\ref{figcurvebc}, we denote its length by $L$ and represent the modified total squared curvature in terms of its tangential angle function $\vartheta:[0,L]\to\mathbb{R}$ (i.e., $\partial_s\gamma=(\cos\vartheta,\sin\vartheta)$) as
\begin{eqnarray}
\mathcal{E}_\varepsilon[\gamma] &=& \varepsilon^2\int_0^L|\partial_s\vartheta|^2ds + \int_0^Lds \nonumber\\
&=& \varepsilon^2\int_0^L|\partial_s\vartheta|^2ds + \int_0^L(1-\cos\vartheta)ds + l, \nonumber
\end{eqnarray}
where $l$ is the fixed distance of the endpoints.
The last equality follows since $\int_0^L\cos\vartheta ds$ is nothing but the difference of the $x$-coordinates at the endpoints.
The above expression indicates that $\mathcal{E}_\varepsilon$ can be regarded as a one-dimensional phase transition energy with the periodic potential $W(\theta)=1-\cos\theta$ ($=2\sin^2(\theta/2)$).
All the stable phases $\theta\in2\pi\mathbb{Z}$ correspond to the rightward tangent vector.

By using this observation, we obtain the first order expansion of the energy minimum as $\varepsilon\to0$ (Lemma \ref{lemasymptitoticmodified}), which is essentially similar to the phase transition problem.
Of course, there are some differences between our elastic problem and the original phase transition problem; the integration interval $[0,L]$ is not fixed, and there are a greater number of constraints due to the clamped boundary condition, namely,
$$\vartheta(0)=\theta_0, \quad \vartheta(L)=\theta_1\ (\textrm{mod }2\pi), \quad \int_0^L\cos\vartheta(s)ds=l, \quad \int_0^L\sin\vartheta(s)ds=0,$$
than the above volume constraint.
However, our result reveals that the differences do not affect the expansion up to the first order (but would do from the next order).
The expansion is proved by standard steps in the calculus of variations, in particular, in the phase transition theory; we first obtain a lower bound of the energy, and then construct a suitable sequence of test curves that ensures the optimality of the lower bound.
In our construction of test curves, we use the fact that the lengths of curves are unconstrained.

Then we show that the first order expansion is enough sharp to reveal the precise convergence of minimizers as $\varepsilon\to0$.
In particular, near the endpoints, the rescaled tangential angles smoothly converge to a part of transition layer (Figure~\ref{figtransitionlayer3}), i.e., a solution to the transition layer equation
\begin{eqnarray}\label{eqntransitionlayer}
|\partial_s\vartheta|^2=1-\cos\vartheta.
\end{eqnarray}
The curve corresponding to the transition layer has one loop, and is nothing but the borderline elastica (Figure~\ref{figelasticae_sol} (iii)).
Thus we give a new interpretation of this typical elastica in view of the phase transition theory.
In our proof, the rescaled convergence is first justified in a weak sense, and then the regularity of convergence is improved by using the fact that any rescaled minimizer satisfies the rescaled elastica equation, that is, (\ref{eqnelasticalambda}) with $\lambda=1$.
Although the scaling law is enough just for ensuring convergence of rescaled solutions, the prefactor of the first order expansion plays a key role in the proof that the convergent limit must satisfy not only the rescaled elastica equation but also the transition layer equation (\ref{eqntransitionlayer}).
Our proof is based on the one-dimensionality in the sense that we use a one-dimensional partition of the domain of curves.
This type of convergence result seems to be new even for the phase transition model.

We note that our study is also essentially related to the concept of $\Gamma$-convergence although this paper includes no explicit statement.
One may obtain a more general $\Gamma$-convergence result such that the function space of the limits of the tangential angles contains general $2\pi\mathbb{Z}$-valued $BV$-functions, but we do not state it in this paper to avoid digressing from our main subject.

\begin{figure}[tb]
	\centering
	\includegraphics[width=100mm]{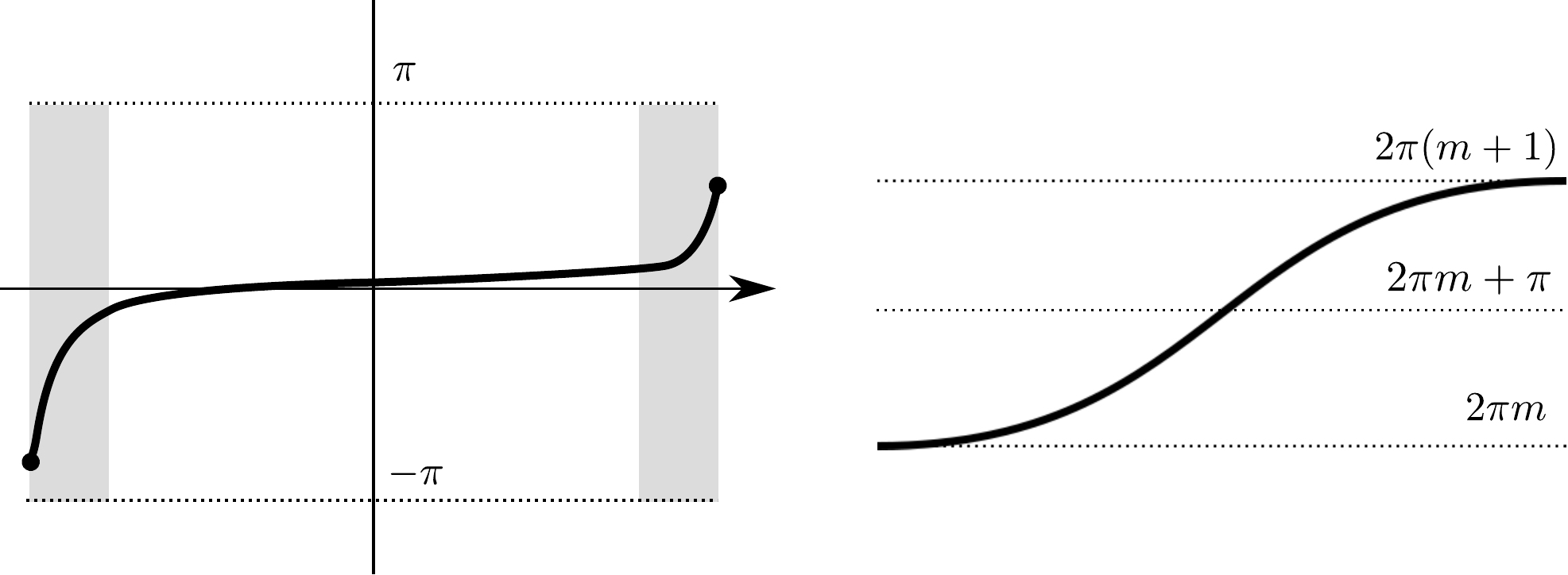}
	\caption{Tangential angle of a minimizer and a transition layer.}
	\label{figtransitionlayer3}
\end{figure}

\subsection{Related problems and remarks}

We finally mention some related problems and several remarks with comparative reviews.

The minimization of total squared curvature is studied not only in the plane but also in other manifolds or higher-dimensional spaces (e.g.\ \cite{Ko92,LaSi84,LaSi84b,LaSi85,Si08}).
In particular, there remain many open problems on elastic knots (see e.g.\ \cite{GeReVdM17,GoMaScVdM}).
The above papers basically focus on closed curves only.

Boundary value problems are rather well-studied for ``free'' elasticae, i.e., critical points of the total squared curvature without length constraint.
This problem is substantially simpler than our problem since the nontrivial configuration is essentially unique: Any nontrivial solution curve is represented by the so-called rectangular elastica up to similarity transformation.
In this problem several results including uniqueness are obtained for Dirichlet and Navier boundary value problems in e.g.\ \cite{DeGr07,DeGr09,Li93,LiJe07,Ma15}.

A free elastica is also referred as a one-dimensional Willmore surface.
Willmore surfaces are critical points of the integral of squared mean curvature in any dimension, see e.g.\ \cite{Be16,BeRi14,KeMoRi14,KuSc12,MaNe14} for recent developments, and also references therein.
Boundary value problems are also studied for Willmore surfaces (e.g.\ \cite{Da12,Ni93,Sc10}).
In particular, Willmore surfaces of revolution are studied more precisely (e.g.\ \cite{BeDaFr10,DaDeGu08,DaDeWh13,DaFrGrSc11,EiKo17,Ma17}) and a boundary layer analysis is also carried out in \cite{Gr10} under some assumptions as symmetry.
This case is more related to our problems since the corresponding equation in terms of the hyperbolic curvature is reduced to our elastica equation.

There are various other points of view even for planar curves.
For closed curves, Gage's classical result of isoperimetric inequality type \cite{Ga83} is recently generalized in \cite{BuHe15} and \cite{FeKaNi16} independently.
For open curves, the stability of post-buckling elasticae is even now a central issue (see e.g.\ \cite{GaLeMe80,JiBa15,Ma14,Sa08b,SaLe10} and references therein).
More recently, basic properties of elastic networks are investigated \cite{DaNoPl17,DaPl17}: Along the way it is confirmed in \cite{DaNoPl17} that a global minimizer in a ``drop'' setting ($l=0$ and $\theta_0$, $\theta_1$ not fixed) is the half figure-eight and thus unique up to rotation.
Free boundary problems of obstacle type are also studied in various settings; confined closed curves \cite{DoMuRo11,DaMaNo15}, graph curves above obstacles \cite{DaDe17}, and adhesion problems \cite{Ke16,Mi16,Mi16b}.
In particular, the author studied a singular limit for an adhesion problem in the paper \cite{Mi16}, from which some ideas in the present paper come.

The terminology ``phase transition'' may be confusing since phase-field methods are often used even for elastic problems (see e.g. \cite{DoLeWo17,DoMuRo11,RoSc06}).
The naive idea of phase-field methods is to approximate an objective $n$-dimensional surface by an ``interface'' of a smoothed characteristic function defined in $(n+1)$-dimension, so essentially different from our idea.

It is worth noting that our philosophy is similar to Ni and Takagi's celebrated study for a singularly perturbed elliptic equation \cite{NiTa91} (see also \cite{NiPaTa92,NiTa93}).
A common point is that they also obtain some control of least energy solutions in a singular limit, localizing the effect of energy into the boundary.

Last but not least, we do not claim that this paper is the first to point out that the borderline elastica appears near the endpoints in the straightening limit.
In fact, this has been formally indicated in Audoly and Pomeau's book in physics \cite[Section 4.4.1]{AuPo10} from a viewpoint of boundary layer analysis.
However, our result would be the first to provide a mathematical proof of the rescaled convergence, and moreover to determine the precise rate of magnification in the rescaling.

\section{Preliminaries and main results}\label{sectmainresults}

In this section we precisely describe our main results and indicate the positions of the proofs.
Throughout this paper we assume that the endpoints lie in the $x$-axis as in Figure \ref{figcurvebc}.
This loses no generality since our energies are invariant with respect to isometry.

\subsection{Extensible problem}

Let $I=(0,1)$ be the open unit interval and $\bar{I}=[0,1]$ be its closure.
For a smooth regular curve $\gamma:\bar{I}\to\mathbb{R}^2$ we denote the length by $\mathcal{L}[\gamma]$, and the total squared curvature by $\mathcal{B}[\gamma]$ as (\ref{eqntotalsquaredcurvature}).
Then, for $\varepsilon>0$, the modified total squared curvature (\ref{eqnmodifiedtotalsquaredcurvature}) is represented as $$\mathcal{E}_\varepsilon[\gamma]:=\varepsilon^2\mathcal{B}[\gamma]+\mathcal{L}[\gamma].$$
Hereafter, we use both the original parameter $t\in\bar{I}$ and the arc length parameterization $s\in[0,\mathcal{L}[\gamma]]$ as the situation demands.
For a regular curve $\gamma\in C^\infty(\bar{I};\mathbb{R}^2)$, we often denote its arc length reparameterization by $\tilde{\gamma}:[0,\mathcal{L}[\gamma]]\to\mathbb{R}^2$.

Let $l>0$ and $\theta_0,\theta_1\in[-\pi,\pi]$.
We say that a curve $\gamma\in C^\infty(\bar{I};\mathbb{R}^2)$ is {\it admissible} if $\gamma$ is regular and of constant speed, i.e., $|\dot{\gamma}|\equiv\mathcal{L}[\gamma]>0$, and moreover satisfies the clamped boundary condition:
\begin{equation}\label{eqnboundarycondition}
\begin{array}{ll}
\gamma(0)=(0,0),& \dot{\gamma}(0)=\mathcal{L}[\gamma](\cos\theta_0,\sin\theta_0), \\
\gamma(1)=(l,0),& \dot{\gamma}(1)=\mathcal{L}[\gamma](\cos\theta_1,\sin\theta_1).
\end{array}
\end{equation}
We denote the set of all admissible curves by $\mathcal{A}_{\theta_0,\theta_1,l}\subset C^\infty(\bar{I};\mathbb{R}^2)$.

For $\varepsilon>0$, we consider the following minimizing problem
\begin{eqnarray}\label{minprob1}
\min_{\gamma\in\mathcal{A}_{\theta_0,\theta_1,l}}\mathcal{E}_\varepsilon[\gamma].
\end{eqnarray}
Our object is a global minimizer, i.e., a curve $\gamma_\varepsilon$ such that $\mathcal{E}_\varepsilon[\gamma_\varepsilon]=\min_{\gamma\in \mathcal{A}_{\theta_0,\theta_1,l}}\mathcal{E}_\varepsilon[\gamma]$.
The existence of minimizers follows by a direct method in the calculus of variations and a bootstrap argument (Appendix \ref{appendixexistence}).

Our first theorem is concerned with a rescaled convergence as $\varepsilon\to0$ to a part of the borderline elastica near each endpoint as in Figure \ref{figcurvethm}.
To state the main theorem, we define borderline elasticae with initial angles as in Figure~\ref{figborderline}.

For a smooth regular curve $\gamma$ defined on an interval $\bar{J}=[0,T]$ (or $\bar{J}=[0,\infty)$) we denote by $\vartheta_\gamma$ a continuous representation of the {\em tangential angle}.
More precisely, $\vartheta_\gamma$ is a smooth function on $\bar{J}$ such that the vectors $\dot{\gamma}(t)$ and $\left(\cos\vartheta_\gamma(t),\sin\vartheta_\gamma(t)\right)$ are in a same direction for any $t\in\bar{J}$.
Such a function is unique up to the addition of constants in $2\pi\mathbb{Z}$.
Then we define borderline elasticae with initial angles as follows.

\begin{definition}[Borderline elastica with initial angle]\label{defborderline0}
	For $\theta\in[-\pi,\pi]$, we say that a smooth curve $\gamma_{B}^\theta:[0,\infty)\to\mathbb{R}^2$ parameterized by the arc length $s$ is the {\it borderline elastica with initial angle $\theta$} if
	$$\gamma_B^\theta(0)=(0,0),\quad \vartheta_{\gamma_B^\theta}(0)=\theta,\quad \lim_{s\to\infty}\vartheta_{\gamma_B^\theta}(s)=0,$$
	and moreover $|\partial_s\vartheta_{\gamma_B^\theta}|^2=1-\cos\vartheta_{\gamma_B^\theta}$ holds in $(0,\infty)$.
	\if0
	its (signed) curvature $\kappa_B^\theta:=\partial_s\vartheta_{\gamma_B^\theta}$ satisfies
	$$\kappa_B^\theta(s)=(-\sgn{\theta})\sqrt{2}\sech\left(\frac{s+s_\theta}{\sqrt{2}}\right).$$
	Here $s_\theta\geq0$ is a constant, which is uniquely determined by $\theta$.
	\fi
	Such a curve is uniquely determined for any given $\theta\in[-\pi,\pi]$.
	(See also Definition \ref{defborderline}.)
\end{definition}

\begin{figure}[tb]
	\centering
	\includegraphics[width=100mm]{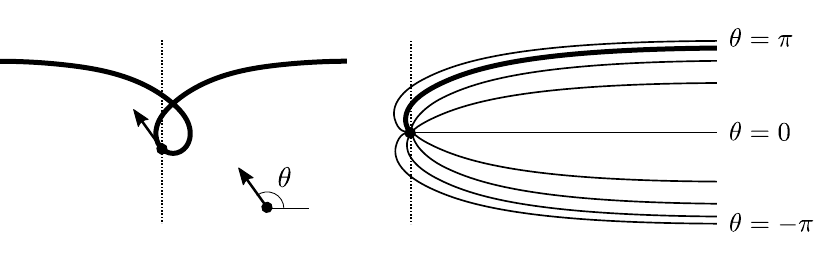}
	\caption{Borderline elastica with initial angle.}
	\label{figborderline}
\end{figure}

We are now in a position to state our first main result, which describes convergence of minimizers as $\varepsilon\to0$.

\begin{theorem}[Straightening result for extensible problem]\label{thmmain1}
	Fix any convergent sequences $l_\varepsilon\to l$ in $(0,\infty)$ and $\theta_0^\varepsilon\to\theta_0$, $\theta_1^\varepsilon\to\theta_1$ in $[-\pi,\pi]$.
	Let $\gamma_\varepsilon$ be a minimizer of $\mathcal{E}_\varepsilon$ in $\mathcal{A}_{\theta_0^\varepsilon,\theta_1^\varepsilon,l_\varepsilon}$ for $\varepsilon>0$.
	Let $\tilde{\gamma}_\varepsilon$ be the arc length parameterization of $\gamma_\varepsilon$.
	Then the following statements hold.
	\begin{enumerate}
		\item[(1)] Let $\hat{\gamma}_\varepsilon(\hat{s}):=\varepsilon^{-1}\tilde{\gamma}_\varepsilon(\varepsilon\hat{s})$.
		If $|\theta_0|<\pi$, then $\hat{\gamma}_\varepsilon$ converges to $\gamma_B^{\theta_0}$ in $C^\infty_{loc}$ as $\varepsilon\to0$.
		If $|\theta_0|=\pi$, then for any subsequence of $\{\hat{\gamma}_{\varepsilon}\}_\varepsilon$ there is a subsequence $\{\hat{\gamma}_{\varepsilon'}\}_{\varepsilon'}$ such that $\hat{\gamma}_{\varepsilon'}$ converges to $\gamma_B^{\pi}$ or $\gamma_B^{-\pi}$ in $C^\infty_{loc}$ as $\varepsilon'\to0$.
		\item[(2)] Denote the length of $\gamma_\varepsilon$ by $L_\varepsilon$.
		Let $K_{c\varepsilon}=[c\varepsilon,L_\varepsilon-c\varepsilon]$ for $c>0$.
		Then
		$$\limsup_{\varepsilon\to0}\max_{s\in K_{c\varepsilon}}|\partial_s\tilde{\gamma}_\varepsilon(s)-(1,0)|\leq4 e^{-\frac{c}{\sqrt{2}}}.$$
	\end{enumerate}
\end{theorem}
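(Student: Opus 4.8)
The plan is to exploit the phase-transition reformulation recorded in the introduction. Writing a minimizer $\gamma_\varepsilon$ in terms of its tangential angle $\vartheta_\varepsilon$ on $[0,L_\varepsilon]$, one has
$$\mathcal{E}_\varepsilon[\gamma_\varepsilon]-l=\varepsilon^2\int_0^{L_\varepsilon}|\partial_s\vartheta_\varepsilon|^2\,ds+\int_0^{L_\varepsilon}W(\vartheta_\varepsilon)\,ds,\qquad W(\theta)=1-\cos\theta,$$
with the further identity $\int_0^{L_\varepsilon}W(\vartheta_\varepsilon)\,ds=L_\varepsilon-l$. The quantitative input is the first-order expansion (Lemma \ref{lemasymptitoticmodified}), which I would use in the form $\varepsilon^{-1}(\mathcal{E}_\varepsilon[\gamma_\varepsilon]-l)\to\sigma(\theta_0)+\sigma(\theta_1)$, where $\sigma(\theta)=2\int_0^{|\theta|}\sqrt{W(u)}\,du$ is the cost of a single transition layer connecting a boundary angle to the rightward phase $0\in2\pi\mathbb{Z}$. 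In particular the total excess energy is $O(\varepsilon)$, so $L_\varepsilon\to l$ and $K_{c\varepsilon}$ eventually fills almost all of $[0,l]$.

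For part (1), I would rescale at the left endpoint by setting $\hat\vartheta_\varepsilon(\hat s):=\vartheta_\varepsilon(\varepsilon\hat s)$ on $[0,L_\varepsilon/\varepsilon]$, so that the rescaled excess energy on any window $[0,\hat S]$ equals $\int_0^{\hat S}\big(|\partial_{\hat s}\hat\vartheta_\varepsilon|^2+W(\hat\vartheta_\varepsilon)\big)\,d\hat s=\varepsilon^{-1}\int_0^{\varepsilon\hat S}(\varepsilon^2|\partial_s\vartheta_\varepsilon|^2+W)\,ds\le\varepsilon^{-1}(\mathcal{E}_\varepsilon[\gamma_\varepsilon]-l)=O(1)$. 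This gives $H^1_{loc}$ bounds, hence a subsequential weak-$H^1_{loc}$ and locally uniform limit $\hat\vartheta_0$ with $\hat\vartheta_0(0)=\theta_0$. The crucial rigidity step is to read off the limiting profile from sharpness of the expansion: the Modica--Mortola inequality $|\partial_{\hat s}\hat\vartheta|^2+W(\hat\vartheta)\ge2\sqrt{W(\hat\vartheta)}\,|\partial_{\hat s}\hat\vartheta|$ together with the fact that the whole budget is exactly $\sigma(\theta_0)+\sigma(\theta_1)$ forces, in the limit, pointwise equipartition $|\partial_{\hat s}\hat\vartheta_0|^2=W(\hat\vartheta_0)=1-\cos\hat\vartheta_0$ (the transition-layer equation of Definition \ref{defborderline0}), monotonicity of $\hat\vartheta_0$, and convergence $\hat\vartheta_0(\hat s)\to0$ as $\hat s\to\infty$; any extra oscillation or any transition to a farther multiple of $2\pi$ would consume an additional positive multiple of $\varepsilon$ and violate the budget. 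This identifies $\hat\vartheta_0$ as the tangential angle of $\gamma_B^{\theta_0}$. To upgrade to $C^\infty_{loc}$, I would use that the rescaled curvature $\hat\kappa_\varepsilon=\varepsilon\kappa_\varepsilon(\varepsilon\,\cdot)$ satisfies the autonomous, $\varepsilon$-independent elastica equation $2\partial_{\hat s}^2\hat\kappa_\varepsilon+\hat\kappa_\varepsilon^3-\hat\kappa_\varepsilon=0$ (the rescaling of $2\partial_s^2\kappa+\kappa^3=\varepsilon^{-2}\kappa$, the extensible equation with $\lambda=\varepsilon^{-2}$); with the $L^2_{loc}$ bound on $\hat\kappa_\varepsilon$ already in hand, a bootstrap through this ODE yields smooth local convergence. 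When $|\theta_0|<\pi$ the layer, and hence the limit, is unique; when $|\theta_0|=\pi$ the two directions $\gamma_B^{\pm\pi}$ are mirror images with the same cost, and the minimizer may select either, which is exactly why only subsequential convergence is claimed. The right endpoint is treated symmetrically.

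For part (2), I would work with $g_\varepsilon(s):=|\partial_s\tilde\gamma_\varepsilon(s)-(1,0)|=2|\sin(\vartheta_\varepsilon(s)/2)|$, noting $g_\varepsilon^2=2W(\vartheta_\varepsilon)$. The Euler--Lagrange equation for the two displacement constraints is the pendulum equation $2\varepsilon^2\partial_s^2\vartheta_\varepsilon=-a_\varepsilon\sin\vartheta_\varepsilon+b_\varepsilon\cos\vartheta_\varepsilon$, which admits the first integral $\varepsilon^2|\partial_s\vartheta_\varepsilon|^2=a_\varepsilon\cos\vartheta_\varepsilon+b_\varepsilon\sin\vartheta_\varepsilon+C_\varepsilon$; matching this against the layer relation $\varepsilon^2|\partial_s\vartheta_\varepsilon|^2\to1-\cos\vartheta_\varepsilon$ from part (1) forces $(a_\varepsilon,b_\varepsilon,C_\varepsilon)\to(-1,0,1)$, consistent with the transversality condition for the free length. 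Consequently $\varepsilon^2|\partial_s\vartheta_\varepsilon|^2\to W(\vartheta_\varepsilon)$, so with $p=\sin(\vartheta_\varepsilon/2)$ one gets $\varepsilon\,|\partial_s p|=\tfrac1{\sqrt2}\sqrt{1-p^2}\,|p|\,(1+o(1))$, which integrates from the endpoint, using $1+\sqrt{1-p^2}\le2$, to $p(s)\le2e^{-s/(\sqrt2\varepsilon)}(1+o(1))$. Since the first integral shows that any interior critical point of $\vartheta_\varepsilon$ lies near $2\pi\mathbb{Z}$ (where $g_\varepsilon$ is negligible), the maximum of $g_\varepsilon$ over $K_{c\varepsilon}$ is asymptotically attained at the points $s=c\varepsilon$ and $s=L_\varepsilon-c\varepsilon$ nearest the layers; evaluating the decay estimate there at $\hat s=c$ gives $g_\varepsilon\le2\cdot2e^{-c/\sqrt2}(1+o(1))$, whence $\limsup_{\varepsilon\to0}\max_{K_{c\varepsilon}}g_\varepsilon\le4e^{-c/\sqrt2}$.

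The hard part is the rigidity step in part (1): turning the mere \emph{value} of the first-order expansion into the \emph{exact} limiting profile. One must rule out globally that a near-minimizer spends part of its budget on spurious interior oscillations or on transitions to non-nearest phases, and must show that the single surviving layer at each end is monotone and descends precisely to the rightward phase; this is what both the equipartition identity and the identification of the interior phase rest on, and it also underlies the monotonicity used in part (2). Once this is secured, the $C^\infty_{loc}$ upgrade via the autonomous rescaled equation and the exponential estimate via the first integral are comparatively routine.
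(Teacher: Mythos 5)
Your part (1) follows essentially the paper's route: the first-order expansion gives an $H^1$ bound on the rescaled angle, the Modica--Mortola inequality together with exactness of the budget (split at an interior point where the angle is near $2\pi\mathbb{Z}$, as in Lemma \ref{lemmeanvalue}) forces the limit to satisfy the layer equation and to be monotone, and the rescaled elastica equation upgrades the convergence. One gloss there: the upgrade is not a plain ``bootstrap,'' since an $L^2_{loc}$ bound does not control the cubic term $\hat\kappa_\varepsilon^3$; one must first bound the amplitude, which the paper does via the explicit $\cn$/$\dn$ representation (an amplitude blow-up shrinks the period and forces the $L^2$ norm on a fixed window to blow up). That is repairable.

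Part (2) is where the proposal genuinely fails. You reduce the interior maximum of $g_\varepsilon=2|\sin(\vartheta_\varepsilon/2)|$ over $K_{c\varepsilon}$ to its endpoint values by arguing that interior critical points of $\vartheta_\varepsilon$ lie near $2\pi\mathbb{Z}$. But the maximum of $g_\varepsilon$ is attained either at the boundary of $K_{c\varepsilon}$ or at a critical point of $g_\varepsilon$, and away from its zeros $g_\varepsilon'=\pm\cos(\vartheta_\varepsilon/2)\,\vartheta_\varepsilon'$ vanishes not only where $\vartheta_\varepsilon'=0$ but also where $\vartheta_\varepsilon\equiv\pi\pmod{2\pi}$, i.e.\ on a loop, where $g_\varepsilon=2$. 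Nothing in your argument excludes a loop in the middle of the curve: loopy curves are genuine critical points, indeed local minimizers (Appendix \ref{appendixexistence}, Figure \ref{figloops}); they satisfy the same elastica equation, the same first integral $\varepsilon^2|\vartheta_\varepsilon'|^2=1-\lambda_\varepsilon\cos\vartheta_\varepsilon-\mu_\varepsilon\sin\vartheta_\varepsilon$, and exhibit the same layer behaviour near the endpoints, so neither the multiplier asymptotics $(\lambda_\varepsilon,\mu_\varepsilon)\to(1,0)$ nor part (1) --- which only controls windows of length $O(\varepsilon)$ at the two ends --- can rule them out. Excluding them is exactly the content of almost-straightness, and it requires a \emph{global} use of minimality: the paper cuts $[0,L_{\varepsilon}]$ at four points (at $c\varepsilon$ and $L_\varepsilon-c\varepsilon$, at a hypothetical interior point of large angle, and at a point where the angle is near $2\pi\mathbb{Z}$), applies the weighted-variation lower bound of Lemmas \ref{lemlowerbound} and \ref{lemvartheta} to each of the five pieces, and contradicts the first-order expansion of Lemma \ref{lemasymptitoticmodified}. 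A secondary flaw in the same step: from $(\lambda_\varepsilon,\mu_\varepsilon)\to(1,0)$ you only get $\varepsilon^2|\vartheta_\varepsilon'|^2=(1-\cos\vartheta_\varepsilon)+o(1)$ with an \emph{additive} $o(1)$, not the multiplicative $(1+o(1))$ you use; the additive error dominates $1-\cos\vartheta_\varepsilon$ precisely in the small-angle regime where your exponential-decay integration takes place (indeed, in the non-inflectional case $\vartheta_\varepsilon'$ never vanishes, so the decay saturates), so that computation does not go through as written.
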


Theorem \ref{thmmain1} is proved in Section \ref{sectconvergence}.
To prove this theorem, we first prove a key step in Section \ref{sectenergyexpansion}, namely, the first order expansion of the energy minimum.
By using the expansion, in Section \ref{sectconvergence}, we first prove the rescaled convergence (1) in a weak sense, and then complete the proof of the almost straightness (2).
Finally, we improve the regularity of the rescaled convergence by using explicit expressions of the curvatures by elliptic functions.

\begin{remark}
	We give some remarks on the main theorem to clarify the meaning.
	\begin{itemize}
		\item To be more precise, the above $C^\infty_{loc}$-convergence means that for any $c>0$ the restricted rescaled curve $\hat{\gamma}_\varepsilon|_{[0,c]}$ converges to $\gamma_B^{\theta_0}|_{[0,c]}$ in $C^\infty([0,c];\mathbb{R}^2)$ as $\varepsilon\to0$.
		The rescaled curve $\hat{\gamma}_\varepsilon(\hat{s})$ is defined for $\hat{s}\in[0,L_\varepsilon/\varepsilon]$, and hence at least in $[0,l_\varepsilon/\varepsilon]$.
		Since $l_\varepsilon/\varepsilon\to\infty$, for any fixed $c>0$ there is $\varepsilon_c>0$ such that for any $\varepsilon\in(0,\varepsilon_c)$ the curve $\hat{\gamma}_\varepsilon$ is defined at least in $[0,c]$.
		Thus, the convergence of $\hat{\gamma}_\varepsilon|_{[0,c]}$ is well-defined for any $c>0$.
		\item The rescaled convergence is stated only at the origin.
		However, by symmetry, we immediately find that a similar rescaled convergence is valid for the other endpoint $(l_\varepsilon,0)$ in the following sense.
		Let $\tilde{\gamma}^*_\varepsilon$ be the backward reparameterization of half-rotated $\tilde{\gamma}_\varepsilon$ about the point $(l_\varepsilon/2,0)\in\mathbb{R}^2$.
		Let $\hat{\gamma}^*_\varepsilon(\hat{s}):=\varepsilon^{-1}\tilde{\gamma}^*_\varepsilon(\varepsilon\hat{s})$.
		Then $\hat{\gamma}^*_\varepsilon$ converges to the borderline elastica with initial angle $\theta_1$ in the same sense as (1) in Theorem \ref{thmmain1}.
		\item Theorem \ref{thmmain1} controls the whole shape of $\tilde{\gamma}_\varepsilon$ as $\varepsilon\to0$.
		Indeed, for any fixed $c>0$, a minimizer $\tilde{\gamma}_\varepsilon$ is controlled in $[0,c\varepsilon]$ by (1) and in $[c\varepsilon,L_\varepsilon-c\varepsilon]$ by (2) for any small $\varepsilon>0$.
		Moreover, by symmetry, $\tilde{\gamma}_\varepsilon$ is also controlled in $[L_\varepsilon-c\varepsilon,L_\varepsilon]$.
		\item In the case that $|\theta_0|=\pi$, the rescaled convergent limits are not unique.
		This is natural because, for example, if we additionally assume that $|\theta_0^\varepsilon|\equiv\pi$ and $|\theta^\varepsilon_1|\equiv\pi$ (or $\theta^\varepsilon_1\equiv0$), then there are two different minimizers $\gamma_\varepsilon=(x_\varepsilon,y_\varepsilon)$ and $\gamma'_\varepsilon=(x_\varepsilon,-y_\varepsilon)$.
		If $|\theta_1|\in(0,\pi)$, then there remains a possibility to obtain the uniqueness, but we then need a higher order expansion of the energy than our first order expansion.
		\item In the assumption the boundary condition is perturbed as $l_\varepsilon\to l$, $\theta_0^\varepsilon\to\theta_0$, and $\theta_1^\varepsilon\to\theta_1$.
		However, the effects do not appear in the conclusion.
		This means that our result is ``stable'' for the perturbation.
		This stability would be useful for free boundary problems as in \cite{Mi16,Mi16b}; our forthcoming paper would essentially use this stability in the study of such a free boundary problem.
		\item Theorem \ref{thmmain1} is valid only for global minimizers since there are local minimizers with loops (as in Figure~\ref{figloops}) as shown in Appendix \ref{appendixexistence}.
	\end{itemize}
\end{remark}

By using Theorem \ref{thmmain1}, we also obtain more qualitative properties of global minimizers for small $\varepsilon$.
From our viewpoint, the case of boundary angle $0$ or $\pi$ is critical in a sense.
In this paper, we often assume the following {\em generic angle condition}:
\begin{eqnarray}\label{eqngenericanglecondition}
|\theta_0|,|\theta_1|\in(0,\pi).
\end{eqnarray}

Now we give a statement on qualitative properties.
We define an {\em inflection point} of a solution curve as a point (except the endpoints) where the sign of the curvature changes.
This is well-defined since the curvature of any non-straight solution curve is represented by a nonzero elliptic function (see Proposition \ref{propelasticasolution}).
For convenience sake we define that the straight line has no inflection point.

\begin{theorem}[Qualitative properties]\label{thmqualitativeelastica}
	Fix any convergent sequences $l_\varepsilon\to l$ in $(0,\infty)$ and $\theta_0^\varepsilon\to\theta_0$, $\theta_1^\varepsilon\to\theta_1$ in $[-\pi,\pi]$.
	Then there is $\bar{\varepsilon}>0$ such that for any $\varepsilon\in(0,\bar{\varepsilon})$ any minimizer $\gamma_\varepsilon$ of $\mathcal{E}_\varepsilon$ in $\mathcal{A}_{\theta_0^\varepsilon,\theta_1^\varepsilon,l_\varepsilon}$ has no self-intersection and at most one inflection point.
	In addition, if we suppose (\ref{eqngenericanglecondition}), then the following statements hold.
	\begin{enumerate}
		\item[(1)] If $\theta_0\theta_1<0$, then there is $\bar{\varepsilon}>0$ such that for any $\varepsilon\in(0,\bar{\varepsilon})$ any minimizer $\gamma_\varepsilon$ has no inflection point, and moreover the total variation of $\vartheta_{\gamma_\varepsilon}$ is $|\theta_0^\varepsilon|+|\theta_1^\varepsilon|$.
		\item[(2)] If $\theta_0\theta_1>0$, then there is $\bar{\varepsilon}>0$ such that for any $\varepsilon\in(0,\bar{\varepsilon})$ any minimizer $\gamma_\varepsilon$ has exactly one inflection point.
		Moreover, the total variation of $\vartheta_{\gamma_\varepsilon}$ converges to $|\theta_0|+|\theta_1|$ as $\varepsilon\to0$.
	\end{enumerate}
\end{theorem}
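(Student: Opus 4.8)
The plan is to read off the profile of a minimizer from the first integral of its Euler--Lagrange equation, with the constants of that equation controlled by Theorem \ref{thmmain1}. Since each minimizer $\gamma_\varepsilon$ is a critical point of $\mathcal{E}_\varepsilon$ in $\mathcal{A}_{\theta_0^\varepsilon,\theta_1^\varepsilon,l_\varepsilon}$, the standard Lagrange multiplier computation (in the tangential-angle formulation, equivalent to the elastica equation of Proposition \ref{propelasticasolution}) shows that $\vartheta_\varepsilon:=\vartheta_{\tilde{\gamma}_\varepsilon}$ solves the pendulum-type equation $2\varepsilon^2\vartheta_\varepsilon''=\mu_1^\varepsilon\sin\vartheta_\varepsilon-\mu_2^\varepsilon\cos\vartheta_\varepsilon$, where $\mu_1^\varepsilon,\mu_2^\varepsilon$ are the multipliers for the two scalar constraints $\int_0^{L_\varepsilon}\cos\vartheta_\varepsilon\,ds=l_\varepsilon$ and $\int_0^{L_\varepsilon}\sin\vartheta_\varepsilon\,ds=0$ coming from the clamped endpoints. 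This gives the first integral $\varepsilon^2|\vartheta_\varepsilon'|^2+V_\varepsilon(\vartheta_\varepsilon)=H_\varepsilon$ with $V_\varepsilon(\vartheta):=\mu_1^\varepsilon\cos\vartheta+\mu_2^\varepsilon\sin\vartheta$, which in the rescaled variable reads $|\partial_{\hat s}\hat{\vartheta}_\varepsilon|^2=H_\varepsilon-\mu_1^\varepsilon\cos\hat{\vartheta}_\varepsilon-\mu_2^\varepsilon\sin\hat{\vartheta}_\varepsilon$. Comparing this, along the $C^\infty_{loc}$-limit supplied by Theorem \ref{thmmain1}(1), with the transition-layer identity $|\partial_{\hat s}\vartheta_{\gamma_B^{\theta_0}}|^2=1-\cos\vartheta_{\gamma_B^{\theta_0}}$, and using that $\vartheta_{\gamma_B^{\theta_0}}$ sweeps a nondegenerate interval of angles whenever $\theta_0\neq0$, the linear independence of $1,\cos,\sin$ forces $\mu_1^\varepsilon\to1$, $\mu_2^\varepsilon\to0$, $H_\varepsilon\to1$. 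Consequently, for small $\varepsilon$ the potential $V_\varepsilon$ has a single nondegenerate maximum (a saddle of the planar flow) near $\vartheta=0$ and a single minimum (a center) near $\vartheta=\pi$.

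Inflection points are precisely the interior zeros of $\kappa_\varepsilon=\vartheta_\varepsilon'$ at which $\vartheta_\varepsilon$ reverses, i.e.\ the turning points of this trajectory, so I would count them. Near each endpoint, Theorem \ref{thmmain1}(1) gives $C^\infty_{loc}$-convergence of the rescaled curvature to that of $\gamma_B^{\theta_0}$ (resp.\ $\gamma_B^{\theta_1}$ after the reflection in the remark following Theorem \ref{thmmain1}), which never vanishes; hence on $[0,c\varepsilon]$ and $[L_\varepsilon-c\varepsilon,L_\varepsilon]$ the curvature $\kappa_\varepsilon$ has a fixed sign ($-\sgn\theta_0$, resp.\ $\sgn\theta_1$) and there is no turning point. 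On the middle part $K_{c\varepsilon}$, Theorem \ref{thmmain1}(2) keeps the tangent within $4e^{-c/\sqrt{2}}$ of $(1,0)$; taking $c$ large confines $\vartheta_\varepsilon$ to a small neighbourhood of $0$ modulo $2\pi$, and continuity from the left layer (where $\vartheta_\varepsilon$ emerges near the branch $0$) selects the branch, so $\vartheta_\varepsilon\in(-\pi/2,\pi/2)$ on $K_{c\varepsilon}$. Since $V_\varepsilon$ has no center in $(-\pi/2,\pi/2)$, no librating loop fits inside the confinement interval, so the trajectory can only approach the saddle near $0$ from one side and turn back at most once. This yields at most one inflection point. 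The absence of self-intersection I would get by scale separation: on $K_{c\varepsilon}$ the identity $\dot{x}=\cos\vartheta_\varepsilon>0$ makes the middle a graph over the $x$-axis, while the two end loops are $C^1$-close (after rescaling) to the embedded arcs $\gamma_B^{\theta_0}|_{[0,c]}$ and $\gamma_B^{\theta_1}|_{[0,c]}$ and are localized at scale $\varepsilon$ near the distinct points $(0,0)$ and $(l_\varepsilon,0)$; a routine patching then excludes crossings for small $\varepsilon$.

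Under the generic condition (\ref{eqngenericanglecondition}), the sign of $\theta_0\theta_1$ decides the dichotomy. If $\theta_0\theta_1<0$, the trajectory must run from $\vartheta_\varepsilon(0)=\theta_0^\varepsilon$ to $\vartheta_\varepsilon(L_\varepsilon)$ across the saddle near $0$; a crossing in finite length is possible only if $H_\varepsilon>\max V_\varepsilon$, in which case $\vartheta_\varepsilon$ is strictly monotone. The confinement of the middle part forbids passing $\vartheta=\pm\pi$, so no extra winding occurs, there is no inflection point, and the total variation equals exactly $|\theta_0^\varepsilon|+|\theta_1^\varepsilon|$. If $\theta_0\theta_1>0$, a monotone (over-the-top) trajectory would either stay bounded away from $0$ throughout the middle or cross $\vartheta=\pm\pi$, both contradicting Theorem \ref{thmmain1}(2); hence $H_\varepsilon<\max V_\varepsilon$, and the trajectory turns back exactly once at a turning angle $\vartheta_\varepsilon^\ast\to0$. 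This gives exactly one inflection point, and the total variation $(|\theta_0^\varepsilon|-|\vartheta_\varepsilon^\ast|)+(|\theta_1^\varepsilon|-|\vartheta_\varepsilon^\ast|)\to|\theta_0|+|\theta_1|$.

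The hard part will be the rigorous identification of the multiplier limits together with the exclusion of extra turning points and windings; this is exactly where the sharp first-order expansion behind Theorem \ref{thmmain1} is indispensable, since it both locates the saddle of $V_\varepsilon$ near $\vartheta=0$ and, via the confinement estimate (2), prevents the trajectory from ever reaching the leftward direction $\vartheta=\pm\pi$ where a full librating swing — and hence extra inflection points and extra total variation $2\pi$ — could appear. A secondary technical point is the degenerate boundary values $\theta_0,\theta_1\in\{0,\pi\}$ appearing in the first (non-generic) assertion: when one angle vanishes the multiplier limits must be read off from the other endpoint, while if $\theta_0=\theta_1=0$ the minimizer is the straight segment and the claim is trivial; the values $\pm\pi$ cause no trouble since $\kappa_{\gamma_B^{\pm\pi}}(0)=\mp\sqrt{2}\neq0$ still yields monotonicity near the layer.
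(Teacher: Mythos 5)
Your route is genuinely different from the paper's, and for the generic part it is viable. You identify the constraint multipliers in the pendulum form of the Euler--Lagrange equation and read the inflection count off the phase portrait; the paper never touches the multipliers. Instead it works with the intrinsic equation $\varepsilon^2(2\partial_s^2\kappa+\kappa^3)=\kappa$ (Proposition \ref{propelasticaequation}), which contains no unknown constants --- in your language, the free length already pins $H_\varepsilon\equiv 1$ exactly, a fact you miss when you treat $H_\varepsilon$ as a quantity to be identified only in the limit --- and then uses the explicit $\cn$-representation (Proposition \ref{propelasticasolution}): two inflection points would place a quarter-period of a wavelike elastica inside the almost-straight middle region, whose angle variation equals that of the boundary layer and hence cannot be small (Proposition \ref{propinfleciton0}). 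Your phase-plane mechanism (consecutive turning points force a sweep through the potential well near $\vartheta=\pi$) is the same mechanism in different clothes, and it works for $|\theta_0|+|\theta_1|>0$ provided you supply the missing technical step of \emph{boundedness} of $(\mu_1^\varepsilon,\mu_2^\varepsilon,H_\varepsilon)$ before passing to the limit (evaluating the first integral at three parameters where the limit layer takes distinct angles, and using that three distinct points on the unit circle are affinely independent, does this). The dichotomy (1)--(2), the total-variation statements, and the self-intersection argument then come out essentially as in the paper (Propositions \ref{propinfleciton} and \ref{propselfintersection}).

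The genuine gap is the case $\theta_0=\theta_1=0$ of the unconditional claim ``at most one inflection point.'' You dismiss it by asserting that the minimizer is then the straight segment, but the theorem allows $\theta_0^\varepsilon,\theta_1^\varepsilon\neq 0$ with $\theta_i^\varepsilon\to 0$; in that situation no minimizer is straight, both rescaled limits are the trivial layer $\gamma_B^0$ whose curvature vanishes identically, so your endpoint sign argument and your multiplier identification both collapse (the limit layer sweeps no nondegenerate angle interval at either end), and Theorem \ref{thmmain1}(2) by itself is compatible with many small-amplitude oscillations of $\vartheta_\varepsilon$ around $0$. This is exactly where the paper needs a separate idea: for small $\varepsilon$ minimizers are graphs, and a graph elastica with two inflection points admits an energy-decreasing cut-and-paste perturbation (Lemma \ref{lemlocalperturbation}, Proposition \ref{propinfleciton1}). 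Your framework could also be repaired without that lemma --- since $H_\varepsilon\equiv 1$, any two turning points of the pendulum trajectory are separated by an angle sweep exceeding $\pi$, which by the weighted-variation bound of Lemma \ref{lemlowerbound} costs a fixed positive amount of $\mathcal{F}_\varepsilon$, contradicting $\mathcal{F}_\varepsilon[\gamma_\varepsilon]\to 0$ from Lemma \ref{lemasymptitoticmodified} when $\theta_0=\theta_1=0$ --- but no such argument appears in your proposal, so as written the non-generic case is unproved.
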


Theorem \ref{thmqualitativeelastica} is proved in Section \ref{sectqualitativeproperty}.
This theorem roughly states that for any small $\varepsilon$ any minimizer is a straightened C-shaped or S-shaped curve as in Figure \ref{figcurvecor}.
In particular, our results also imply that for any angles such that $|\theta_0|,|\theta_1|<\pi/2$ any minimizer is represented by the graph of a function for small $\varepsilon$.

\begin{remark}\label{reminflectioncritical}
	It is more delicate to deal with the critical cases.
	In the additional part of Theorem \ref{thmqualitativeelastica}, the case of $|\theta_0|=\pi$ or $|\theta_1|=\pi$ is excluded since the sign of the curvature at the corresponding endpoint is not determined only by our convergence result.
	The case of $\theta_0\theta_1=0$ is also excluded since in this case the number may depend on how the boundary parameters converge.
	However, even in the case that $|\theta_0|>0$ and $\theta_1=0$, if we additionally assume that $\theta_1^\varepsilon\geq0$, then we can prove that any minimizer has one inflection point when $\varepsilon$ is small.
	This fact is proved in Remark \ref{reminflectioncritical2}.
	An important point is that the assumption $\theta_1^\varepsilon\geq0$ particularly includes the constant angle case that $\theta_1^\varepsilon\equiv\theta_1=0$.
\end{remark}

We finally state that, if $\theta_0\theta_1<0$ holds in the generic angle condition, then the energy $\mathcal{E}_\varepsilon$ admits a unique global minimizer for any small $\varepsilon$.
This theorem is also proved in Section \ref{sectqualitativeproperty}.

\begin{theorem}[Uniqueness]\label{thmuniqueness}
	Fix any convergent sequences $l_\varepsilon\to l$ in $(0,\infty)$ and $\theta_0^\varepsilon\to\theta_0$, $\theta_1^\varepsilon\to\theta_1$ in $[-\pi,\pi]$ with (\ref{eqngenericanglecondition}) and $\theta_0\theta_1<0$.
	Then there is $\bar{\varepsilon}>0$ such that for any $\varepsilon\in(0,\bar{\varepsilon})$ the energy $\mathcal{E}_\varepsilon$ admits a unique minimizer in $\mathcal{A}_{\theta_0^\varepsilon,\theta_1^\varepsilon,l_\varepsilon}$.
\end{theorem}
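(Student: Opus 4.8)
The plan is to reduce the problem, via the Gauss-map reparametrization, to a strictly convex variational problem with affine constraints, for which uniqueness is immediate. The essential input is that \emph{every} global minimizer lies in one and the same convex regime; this is precisely what Theorems \ref{thmmain1} and \ref{thmqualitativeelastica} supply. So the self-contained content of this proof is the convexification, and the role of the earlier results is to guarantee that the competitors we compare all admit the same change of variables.

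First I would fix $\bar\varepsilon$ so small that the conclusions of Theorem \ref{thmqualitativeelastica}(1) hold, and let $\gamma_\varepsilon$ be any minimizer for $\varepsilon\in(0,\bar\varepsilon)$, with arc-length parametrization $\tilde\gamma_\varepsilon$ on $[0,L_\varepsilon]$ and continuous tangent angle $\vartheta_\varepsilon:=\vartheta_{\tilde\gamma_\varepsilon}$. By Theorem \ref{thmqualitativeelastica}(1) the curvature $\kappa_\varepsilon=\partial_s\vartheta_\varepsilon$ has no interior sign change and the total variation of $\vartheta_\varepsilon$ equals $|\theta_0^\varepsilon|+|\theta_1^\varepsilon|$. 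Taking without loss of generality $\theta_0^\varepsilon<0<\theta_1^\varepsilon$, the rescaled convergence to $\gamma_B^{\theta_0}$ near the origin in Theorem \ref{thmmain1} forces $\vartheta_\varepsilon$ to move from $\theta_0^\varepsilon$ toward $0$, so $\kappa_\varepsilon>0$ and $\vartheta_\varepsilon$ is a strictly increasing diffeomorphism of $[0,L_\varepsilon]$ onto $[\theta_0^\varepsilon,\theta_1^\varepsilon]$. I still need $\kappa_\varepsilon$ nonvanishing on the closed interval: an interior zero that is not an inflection point would be a double zero, forcing $\kappa_\varepsilon\equiv0$ by the elastica equation, while at the endpoints $\varepsilon\kappa_\varepsilon(0)$ converges to the curvature of $\gamma_B^{\theta_0}$ at the origin, which is nonzero since $\theta_0\neq0$; the same holds at the other endpoint by symmetry. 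Thus $\rho_\varepsilon:=1/\kappa_\varepsilon$ is a well-defined smooth positive function, and crucially all minimizers share the \emph{same} angle interval and sign of curvature.

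Next I would change variables to $\vartheta\in[\theta_0^\varepsilon,\theta_1^\varepsilon]$ and set $\rho_\varepsilon(\vartheta)=ds/d\vartheta>0$. Using $ds=\rho_\varepsilon\,d\vartheta$ and $\kappa_\varepsilon^2\,ds=\rho_\varepsilon^{-1}\,d\vartheta$, the energy becomes $\mathcal{E}_\varepsilon[\gamma_\varepsilon]=\int_{\theta_0^\varepsilon}^{\theta_1^\varepsilon}(\varepsilon^2\rho_\varepsilon^{-1}+\rho_\varepsilon)\,d\vartheta$, while the clamped condition $\tilde\gamma_\varepsilon(L_\varepsilon)-\tilde\gamma_\varepsilon(0)=(l_\varepsilon,0)$ becomes the two affine constraints $\int\rho\cos\vartheta\,d\vartheta=l_\varepsilon$ and $\int\rho\sin\vartheta\,d\vartheta=0$; the boundary angles are automatically encoded in the fixed domain of $\vartheta$. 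Conversely, any smooth positive $\rho$ on $[\theta_0^\varepsilon,\theta_1^\varepsilon]$ satisfying these two constraints reconstructs, by integrating $(\cos\vartheta,\sin\vartheta)\rho$ from the origin, to a curve in $\mathcal{A}_{\theta_0^\varepsilon,\theta_1^\varepsilon,l_\varepsilon}$ with the identical energy, and this correspondence is injective. Hence minimizing $\mathcal{E}_\varepsilon$ within this regime is equivalent to minimizing $\rho\mapsto\int(\varepsilon^2\rho^{-1}+\rho)\,d\vartheta$ over a convex feasible set. The integrand $g(\rho)=\varepsilon^2\rho^{-1}+\rho$ satisfies $g''(\rho)=2\varepsilon^2\rho^{-3}>0$, so it is strictly convex; if two distinct minimizers existed, their radii $\rho,\rho'$ would differ on a set of positive measure while both attaining the minimal energy, and the feasible midpoint $(\rho+\rho')/2$ would yield, by strict convexity, an admissible curve of strictly smaller energy, a contradiction. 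Uniqueness follows.

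The hard part will be the structural step isolated in the second paragraph, namely forcing every global minimizer into a single convex class with a common angle interval and strictly positive radius of curvature. This is exactly where I rely on the prior results: Theorem \ref{thmqualitativeelastica}(1) for the absence of inflections and the exact total variation, and Theorem \ref{thmmain1} both to pin down the sign of $\kappa_\varepsilon$ — thereby excluding the mirror-image branch, which would otherwise live on a different angle interval and escape the convexity comparison — and to keep the curvature away from zero at the endpoints. Once this uniformity of the reparametrization is secured, the convexification and the uniqueness conclusion are routine; the entire subtlety lies in the uniformity across all minimizers, rather than in the convex analysis itself.
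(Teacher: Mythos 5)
Your proposal is correct and follows essentially the same route as the paper: Theorem \ref{thmqualitativeelastica}(1) places every minimizer in the class of curves with strictly monotone tangential angle, and the radius-of-curvature (Gauss-map) convexification — which is exactly the paper's Proposition \ref{propuniqueness} — then yields uniqueness from strict convexity of $\rho\mapsto\int(\varepsilon^2\rho^{-1}+\rho)\,d\phi$ over a convex constraint set. Your additional care in using Theorem \ref{thmmain1} to pin down the sign of the curvature (hence the direction of monotonicity, excluding the mirror branch) and to keep the curvature nonvanishing up to the endpoints fills in details that the paper leaves implicit when it asserts that minimizers belong to $\tilde{\mathcal{A}}_{\theta_0^\varepsilon,\theta_1^\varepsilon,l_\varepsilon}$.
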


\subsection{Inextensible problem}

By using the above results, we also obtain a straightening result for the inextensible problem.
In this part we do not consider the perturbation of angles for simplicity, and concentrate our attention on changing the distance of the endpoints.

Let $0<l<L$ and $\theta_0,\theta_1\in[-\pi,\pi]$.
Let $\mathcal{A}_{\theta_0,\theta_1,l}^{L}\subset \mathcal{A}_{\theta_0,\theta_1,l}$ be the set of admissible curves $\gamma\in\mathcal{A}_{\theta_0,\theta_1,l}$ of fixed length $\mathcal{L}[\gamma]=L$.
Recall that the inextensible problem is formulated as
\begin{eqnarray}\label{minprob2}
\min_{\gamma\in\mathcal{A}_{\theta_0,\theta_1,l}^{L}}\mathcal{B}[\gamma].
\end{eqnarray}

\if0
\begin{remark}
	This problem is more physical and realistic than the problem for the modified total squared curvature in the sense that all the quantities are easier to measure and control.
	The modified total squared curvature includes the positive constant $\varepsilon^2$ corresponding to the ratio of bending rigidity and tension, which are not easier to know than the length of a rod.
\end{remark}
\fi

We are concerned with the shapes of straightened elastic rods, i.e., the asymptotic shape of minimizers as the distance of the endpoints is enlarged as $l\uparrow L$ while the length $L$ and the angles $\theta_0$, $\theta_1$ are fixed.
This paper proves that in the limit $l\uparrow L$ we can rephrase (\ref{minprob2}) in terms of (\ref{minprob1}) at least in a subsequential sense.

\begin{theorem}[Straightening result for inextensible problem: general case]\label{thmmain2}
	Let $L>0$ and $\theta_0,\theta_1\in[-\pi,\pi]$ with $|\theta_0|+|\theta_1|>0$.
	Then there are sequences $l_n\uparrow L$ and $\varepsilon_n\downarrow0$ as $n\to\infty$ such that for any minimizer $\gamma_n$ of $\mathcal{B}$ in $\mathcal{A}_{\theta_0,\theta_1,l_n}^{L}$ the dilated curve $\frac{L}{l_n}\gamma_n$ is a minimizer of $\mathcal{E}_{\varepsilon_n}$ in $\mathcal{A}_{\theta_0,\theta_1,L}$, and moreover
	$$\lim_{n\to\infty}\frac{L-l_n}{\varepsilon_n}=4\sqrt{2}\left(\sin^2\frac{\theta_0}{4}+\sin^2\frac{\theta_1}{4}\right).$$
\end{theorem}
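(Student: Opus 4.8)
The plan is to exploit the scale invariance of the bending energy so as to convert the inextensible problem at distance $l_n$ and length $L$ into an \emph{extensible} minimization at distance $L$, and then to read off both the minimizer correspondence and the asymptotic rate from Theorem~\ref{thmmain1}. First I would set up the scaling dictionary. For $\ell>L$ put $b(\ell):=\min\{\mathcal{B}[\gamma]:\gamma\in\mathcal{A}^{\ell}_{\theta_0,\theta_1,L}\}$. A dilation by a factor $\mu$ satisfies $\mathcal{B}[\mu\gamma]=\mu^{-1}\mathcal{B}[\gamma]$ and $\mathcal{L}[\mu\gamma]=\mu\mathcal{L}[\gamma]$ and preserves all tangential angles, so $\gamma\mapsto\frac{L}{l}\gamma$ is a bijection of $\mathcal{A}^{L}_{\theta_0,\theta_1,l}$ onto $\mathcal{A}^{L^2/l}_{\theta_0,\theta_1,L}$ scaling $\mathcal{B}$ by $l/L$; in particular $b(L^2/l)=\frac{l}{L}\min_{\mathcal{A}^{L}_{\theta_0,\theta_1,l}}\mathcal{B}$. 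Minimizing $\mathcal{E}_\varepsilon$ over $\mathcal{A}_{\theta_0,\theta_1,L}$ then decouples as $\min_{\ell>L}\bigl(\varepsilon^2 b(\ell)+\ell\bigr)$, and any minimizer of $\mathcal{E}_\varepsilon$ must minimize $\mathcal{B}$ among curves of its own length, since otherwise one could lower $\mathcal{E}_\varepsilon$ without changing the length.

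Next I would construct the sequences. Choose any $\varepsilon_n\downarrow0$, let $\gamma_n^*$ be a minimizer of $\mathcal{E}_{\varepsilon_n}$ in $\mathcal{A}_{\theta_0,\theta_1,L}$ with length $\ell_n^*:=\mathcal{L}[\gamma_n^*]$, and set $l_n:=L^2/\ell_n^*$. Because $|\theta_0|+|\theta_1|>0$, no admissible curve is straight, so $\ell_n^*>L$ and $l_n<L$; moreover $\min\mathcal{E}_{\varepsilon_n}\to L$ by the first order expansion (Lemma~\ref{lemasymptitoticmodified}), while $L\le\ell_n^*\le\mathcal{E}_{\varepsilon_n}[\gamma_n^*]=\min\mathcal{E}_{\varepsilon_n}$, whence $\ell_n^*\to L$ and $l_n\to L$ with $l_n<L$; passing to a subsequence I may assume $l_n\uparrow L$. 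For this $l_n$ the dictionary shows that \emph{any} minimizer $\gamma_n$ of $\mathcal{B}$ in $\mathcal{A}^{L}_{\theta_0,\theta_1,l_n}$ dilates to $\frac{L}{l_n}\gamma_n\in\mathcal{A}^{\ell_n^*}_{\theta_0,\theta_1,L}$ with $\mathcal{B}[\frac{L}{l_n}\gamma_n]=b(\ell_n^*)$, so that $\mathcal{E}_{\varepsilon_n}[\frac{L}{l_n}\gamma_n]=\varepsilon_n^2 b(\ell_n^*)+\ell_n^*=\mathcal{E}_{\varepsilon_n}[\gamma_n^*]=\min\mathcal{E}_{\varepsilon_n}$; hence the dilated curve is a minimizer of $\mathcal{E}_{\varepsilon_n}$, as required.

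It then remains to compute the rate. Since $L-l_n=L(\ell_n^*-L)/\ell_n^*$ with $L/\ell_n^*\to1$, it suffices to evaluate $\lim_n(\ell_n^*-L)/\varepsilon_n=\lim_n(L_{\varepsilon_n}-L)/\varepsilon_n$. Writing $\partial_s\tilde\gamma_\varepsilon=(\cos\vartheta_{\tilde\gamma_\varepsilon},\sin\vartheta_{\tilde\gamma_\varepsilon})$ and using that the $x$-displacement equals $L$, the length excess is precisely the potential energy,
$$L_\varepsilon-L=\int_0^{L_\varepsilon}\bigl(1-\cos\vartheta_{\tilde\gamma_\varepsilon}\bigr)\,ds.$$
Near the origin I would rescale by $\hat s=s/\varepsilon$: by Theorem~\ref{thmmain1}(1) the rescaled tangential angle converges locally uniformly to $\vartheta_{\gamma_B^{\theta_0}}$, so $\frac{1}{\varepsilon}\int_0^{c\varepsilon}(1-\cos\vartheta_{\tilde\gamma_\varepsilon})\,ds\to\int_0^{c}(1-\cos\vartheta_{\gamma_B^{\theta_0}})\,d\hat s$, and the other endpoint contributes the analogue with $\theta_1$ (the sign ambiguity when $|\theta_i|=\pi$ is harmless, as $\sin^2$ is even). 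Using the defining relation $|\partial_s\vartheta_{\gamma_B^\theta}|^2=1-\cos\vartheta_{\gamma_B^\theta}$ and the substitution $u=\vartheta$, a direct integration gives
$$\int_0^\infty\bigl(1-\cos\vartheta_{\gamma_B^\theta}\bigr)\,ds=\int_0^{|\theta|}\sqrt2\,\sin\frac{u}{2}\,du=4\sqrt2\,\sin^2\frac{\theta}{4},$$
which yields the stated constant once the middle region is shown to be negligible.

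The hard part will be controlling the middle region $K_{c\varepsilon}=[c\varepsilon,L_\varepsilon-c\varepsilon]$ after dividing by $\varepsilon$. The crude bound from Theorem~\ref{thmmain1}(2), namely $1-\cos\vartheta_{\tilde\gamma_\varepsilon}\le 8e^{-\sqrt2\,c}$ on $K_{c\varepsilon}$, only gives $\frac{1}{\varepsilon}\int_{K_{c\varepsilon}}(1-\cos\vartheta_{\tilde\gamma_\varepsilon})\,ds\le \frac{8L_\varepsilon}{\varepsilon}e^{-\sqrt2\,c}$, which diverges because $L_\varepsilon/\varepsilon\to\infty$; the uniform $L^\infty$ estimate is therefore not enough. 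What is genuinely needed is the pointwise decay $1-\cos\vartheta_{\tilde\gamma_\varepsilon}(s)\le Ce^{-\sqrt2\,s/\varepsilon}$ on the left half (and symmetrically on the right), that is, a version of Theorem~\ref{thmmain1}(2) that is uniform in $c$; heuristically this is the estimate applied with $c=s/\varepsilon$. I expect to obtain it either by extracting such uniformity from the proof of Theorem~\ref{thmmain1}(2), or by a comparison argument for the rescaled elastica equation forcing exponential relaxation to the stable phase away from the layers. With this pointwise bound, $\frac{1}{\varepsilon}\int_{c\varepsilon}^{L_\varepsilon/2}(1-\cos\vartheta_{\tilde\gamma_\varepsilon})\,ds\le C\int_c^\infty e^{-\sqrt2\,\hat s}\,d\hat s\to0$ as $c\to\infty$, which legitimizes interchanging $\varepsilon\to0$ and $c\to\infty$ and completes the computation of the limit as $4\sqrt2\bigl(\sin^2\frac{\theta_0}{4}+\sin^2\frac{\theta_1}{4}\bigr)$.
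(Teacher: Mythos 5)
Your first two paragraphs are correct and essentially coincide with the paper's own argument: your observation that a minimizer of $\mathcal{E}_\varepsilon$ must minimize $\mathcal{B}$ among curves of its own length is the paper's Proposition \ref{propeulermodified}, and your scaling dictionary is its Lemma \ref{lemdilation}; the choice $l_n:=L^2/\ell_n^*$ and the verification that $\frac{L}{l_n}\gamma_n$ attains $\min\mathcal{E}_{\varepsilon_n}$ go through exactly as you wrote. The genuine gap is in the rate computation, and it sits exactly where you flagged it. The estimate you need --- a pointwise decay $1-\cos\vartheta_{\tilde\gamma_\varepsilon}(s)\le Ce^{-\sqrt2\,s/\varepsilon}$ valid across the whole middle region with a constant independent of $\varepsilon$ --- does not follow from Theorem \ref{thmmain1}(2): that statement is a $\limsup$ for each \emph{fixed} $c$, so it controls $K_{c\varepsilon}$ only for $\varepsilon$ below a threshold depending on $c$, and no diagonal argument converts this into a bound on scales $c\sim L_\varepsilon/\varepsilon$. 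Its proof in the paper is by contradiction for fixed $c$ and yields no such uniformity, so "I expect to obtain it" is precisely the missing proof; establishing it would require a separate quantitative boundary-layer or comparison analysis of the elastica equation that your proposal does not carry out.

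The paper closes this step without any pointwise information, by an equipartition-of-energy argument (Proposition \ref{propminlength}) that you could substitute verbatim for your third and fourth paragraphs. Your identity $L_\varepsilon-L=\int_0^{L_\varepsilon}(1-\cos\vartheta_{\tilde\gamma_\varepsilon})\,ds$ says that $(L_\varepsilon-L)/\varepsilon$ is exactly the potential part of $\mathcal{F}_\varepsilon[\gamma_\varepsilon]$. Set $X_\varepsilon:=\sqrt{\varepsilon\mathcal{B}[\gamma_\varepsilon]}$ and $Y_\varepsilon:=\sqrt{(L_\varepsilon-L)/\varepsilon}$. Lemma \ref{lemasymptitoticmodified} gives $X_\varepsilon^2+Y_\varepsilon^2=\mathcal{F}_\varepsilon[\gamma_\varepsilon]\to 8\sqrt2\left(\sin^2\frac{\theta_0}{4}+\sin^2\frac{\theta_1}{4}\right)$, while the Cauchy--Schwarz inequality gives $2X_\varepsilon Y_\varepsilon\ge\int_0^{L_\varepsilon}|\partial_s(V\circ\vartheta_{\tilde\gamma_\varepsilon})|\,ds$; splitting this integral at a point $s_\varepsilon$ with $\llbracket\vartheta_{\tilde\gamma_\varepsilon}(s_\varepsilon)\rrbracket\to0$ (Lemma \ref{lemmeanvalue}) and applying Lemma \ref{lemvartheta} to each piece shows it is bounded below by $8\sqrt2\left(\sin^2\frac{\theta_0}{4}+\sin^2\frac{\theta_1}{4}\right)-o(1)$. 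Hence $(X_\varepsilon-Y_\varepsilon)^2\to0$, the kinetic and potential parts equidistribute, and $Y_\varepsilon^2\to 4\sqrt2\left(\sin^2\frac{\theta_0}{4}+\sin^2\frac{\theta_1}{4}\right)$, which is exactly the limit you wanted --- with no need to localize the potential energy near the endpoints or to interchange the limits $\varepsilon\to0$ and $c\to\infty$ at all.
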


We remark that the distance of the endpoints of $\frac{L}{l_n}\gamma_n$ is fixed as $L$.
The dilation is just for the normalization to fix the endpoints of curves.
It is not effective since the magnification rate $L/l_n$ converges to $1$.

Theorem \ref{thmmain2} is proved in Section \ref{sectrelation}.
This theorem implies that similar straightening results to Theorem \ref{thmmain1} and Theorem \ref{thmqualitativeelastica} are also valid for the classical inextensible problem, at least in a subsequential straightening process.
In particular, minimizers bend at the scale $\varepsilon_n$ in a straightening process $l_n\uparrow L$.
The last equality in Theorem \ref{thmmain2} means that the leading order term of $\varepsilon_n$ is completely determined by $L-l_n$ and the angles $\theta_0$ and $\theta_1$.

\begin{remark}
	The case $\theta_0=\theta_1=0$ is quite different from others, both physically and mathematically.
	This case corresponds to buckling (Figure~\ref{figbuckling}) but not straightening.
	In addition, if $\theta_0=\theta_1=0$, then the extensible problem admits only the trivial segment minimizer, but such a segment is not admissible in the inextensible problem (except $l=L$).
	Hence, the problem (\ref{minprob2}) can not be read as (\ref{minprob1}).
\end{remark}

Theorem \ref{thmmain2} requires to take a subsequence.
It is expected to be a technical assumption, but at this time we have no proof of a full convergence for the general case.
As mentioned, the difficulty is crucially due to the lack of general theory for the uniqueness of minimizers in the extensible problem.
In fact, if a given boundary condition guarantees the uniqueness as $\varepsilon\to0$, then Theorem \ref{thmmain2} is valid in a full convergence sense:
In particular, thanks to Theorem \ref{thmuniqueness}, we are able to reach a full result providing the additional assumption that $\theta_0\theta_1<0$.

\begin{theorem}[Straightening result for inextensible problem: convex case]\label{thmmain2'}
	Let $L>0$ and $\theta_0,\theta_1\in[-\pi,\pi]$ with (\ref{eqngenericanglecondition}) and $\theta_0\theta_1<0$.
	Then there are $\bar{l}\in(0,L)$ and a strictly decreasing function $\tilde{\varepsilon}:(\bar{l},L)\to(0,\infty)$ such that for any $l\in(\bar{l},L)$, the energy $\mathcal{B}$ admits a unique minimizer in $\mathcal{A}_{\theta_0,\theta_1,l}^{L}$, the dilated curve $\frac{L}{l}\gamma_l$ is a minimizer of $\mathcal{E}_{\tilde{\varepsilon}(l)}$ in $\mathcal{A}_{\theta_0,\theta_1,L}$, and moreover
	$$\lim_{l\uparrow L}\frac{L-l}{\tilde{\varepsilon}(l)}=4\sqrt{2}\left(\sin^2\frac{\theta_0}{4}+\sin^2\frac{\theta_1}{4}\right).$$
\end{theorem}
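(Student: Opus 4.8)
The plan is to reduce the inextensible problem to the extensible one through a fixed-length slicing together with the dilation scaling, and then to use the uniqueness result (Theorem~\ref{thmuniqueness}) to turn the subsequential correspondence of Theorem~\ref{thmmain2} into a genuine function of $l$. First I would record two elementary facts. (i) If $g_\varepsilon$ minimizes $\mathcal{E}_\varepsilon$ in $\mathcal{A}_{\theta_0,\theta_1,L}$ and has length $\mathcal{L}[g_\varepsilon]=:L^*(\varepsilon)$, then among all admissible curves of that fixed length one has $\mathcal{E}_\varepsilon=\varepsilon^2\mathcal{B}+L^*(\varepsilon)$, so $g_\varepsilon$ minimizes $\mathcal{B}$ over $\mathcal{A}_{\theta_0,\theta_1,L}^{L^*(\varepsilon)}$; every extensible minimizer is thus an inextensible minimizer for its own length. (ii) Dilation by $\mu>0$ maps $\mathcal{A}_{\theta_0,\theta_1,d}^{\ell}$ bijectively onto $\mathcal{A}_{\theta_0,\theta_1,\mu d}^{\mu\ell}$ and sends $\mathcal{B}$ to $\mu^{-1}\mathcal{B}$, hence preserves inextensible minimality. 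Combining (i) and (ii): if $L^*(\varepsilon)=L^2/l$, then the dilation $\frac{l}{L}g_\varepsilon$ is an inextensible minimizer of length $L$ and endpoint distance $l$. So the whole theorem hinges on inverting $\varepsilon\mapsto L^*(\varepsilon)$ near $\varepsilon=0$.

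By Theorem~\ref{thmuniqueness} there is $\bar\varepsilon>0$ so that for $\varepsilon\in(0,\bar\varepsilon)$ the extensible minimizer $g_\varepsilon$ is unique; in particular $L^*(\varepsilon)$ is a well-defined single-valued function. To analyze it I would pass to the variable $u=\varepsilon^2$ and consider $m(u):=\min_{\mathcal{A}_{\theta_0,\theta_1,L}}(u\mathcal{B}+\mathcal{L})$. As an infimum of functions affine and non-decreasing in $u$, $m$ is concave and non-decreasing, and uniqueness of the minimizer makes $m$ differentiable with $m'(u)=\mathcal{B}[g_{\sqrt{u}}]$ (envelope theorem); a concave function differentiable on an open interval is $C^1$, so $L^*=\mathcal{L}[g_\varepsilon]=m(u)-u\,m'(u)$ is continuous, with $\frac{d}{du}L^*=-u\,m''(u)\ge0$. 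Hence $L^*$ is non-decreasing. To upgrade to strict monotonicity, which is the crux, I would rule out flat pieces: if $L^*$ were constant on $[\varepsilon_1,\varepsilon_2]$ then $m$ would be affine there and a single curve $g$ would minimize $\mathcal{E}_\varepsilon$ for all $\varepsilon\in[\varepsilon_1,\varepsilon_2]$. But a minimizer of $\mathcal{E}_\varepsilon$ is a critical point and hence satisfies the elastica equation $2\partial_s^2\kappa+\kappa^3-\varepsilon^{-2}\kappa=0$, whose multiplier $\varepsilon^{-2}$ is forced by $\varepsilon$; since $\theta_0\theta_1<0$ forces $\kappa\not\equiv0$ (indeed $\kappa$ has a single sign by Theorem~\ref{thmqualitativeelastica}(1)), the relation $\varepsilon^{-2}=(2\partial_s^2\kappa+\kappa^3)/\kappa$ shows $g$ can satisfy the equation for at most one value of $\varepsilon$, a contradiction. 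Therefore $L^*$ is strictly increasing, and $L^*\colon(0,\bar\varepsilon)\to(L,L_0)$ is a continuous strictly increasing bijection onto an interval, where $L_0:=\lim_{\varepsilon\uparrow\bar\varepsilon}L^*(\varepsilon)$ and $L^*(\varepsilon)\downarrow L$ as $\varepsilon\downarrow0$ (the latter from the first-order expansion, Lemma~\ref{lemasymptitoticmodified}).

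With $L^*$ invertible the conclusion follows. Set $\bar l:=L^2/L_0\in(0,L)$ and, for $l\in(\bar l,L)$, define $\tilde\varepsilon(l):=(L^*)^{-1}(L^2/l)$; since $l\mapsto L^2/l$ is strictly decreasing and $(L^*)^{-1}$ strictly increasing, $\tilde\varepsilon$ is strictly decreasing with $\tilde\varepsilon(l)\to0$ as $l\uparrow L$. For uniqueness of the inextensible minimizer at distance $l$, let $\hat\gamma$ be any such minimizer; dilating by $L/l$ gives an inextensible minimizer of length $L^2/l$ at distance $L$, so $\mathcal{B}[\frac{L}{l}\hat\gamma]=\mathcal{B}[g_{\tilde\varepsilon(l)}]$ and, both having length $L^*(\tilde\varepsilon(l))=L^2/l$, one gets $\mathcal{E}_{\tilde\varepsilon(l)}[\frac{L}{l}\hat\gamma]=\mathcal{E}_{\tilde\varepsilon(l)}[g_{\tilde\varepsilon(l)}]=\min\mathcal{E}_{\tilde\varepsilon(l)}$; uniqueness of the extensible minimizer then forces $\frac{L}{l}\hat\gamma=g_{\tilde\varepsilon(l)}$, so $\hat\gamma=\frac{l}{L}g_{\tilde\varepsilon(l)}$ is unique and $\frac{L}{l}\hat\gamma=g_{\tilde\varepsilon(l)}$ is the extensible minimizer, as claimed. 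Finally, from $L^*(\tilde\varepsilon(l))-L=L^2/l-L=L(L-l)/l$ and the expansion $\frac{L^*(\varepsilon)-L}{\varepsilon}\to 4\sqrt{2}(\sin^2\frac{\theta_0}{4}+\sin^2\frac{\theta_1}{4})=:C$ as $\varepsilon\to0$ (a full, non-subsequential limit here by uniqueness and the full convergence of Theorem~\ref{thmmain1}), I obtain $\frac{L}{l}\cdot\frac{L-l}{\tilde\varepsilon(l)}\to C$, and since $L/l\to1$ this yields the stated limit $\frac{L-l}{\tilde\varepsilon(l)}\to C$.

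The main obstacle is precisely the strict monotonicity of $L^*$: the first-order expansion only pins down the leading behaviour of $L^*(\varepsilon)-L$ near $0$ and gives no monotonicity on its own, and without it $\tilde\varepsilon$ would be at best set-valued, which is exactly why Theorem~\ref{thmmain2} is only subsequential. I expect the concavity-plus-multiplier argument above to resolve it cleanly; the one delicate point is the justification that uniqueness of the extensible minimizer yields differentiability of $m$ and hence a single-valued, continuous $L^*$. A fallback, should the envelope argument need care at points where $m$ is only one-sided differentiable, is to derive continuity of $L^*$ directly from uniqueness together with the $C^\infty_{loc}$-compactness of minimizers underlying Theorem~\ref{thmmain1}.
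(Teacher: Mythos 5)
Your proposal is correct in its overall architecture, and at the crucial ``inversion'' step it takes a genuinely different route from the paper. The paper never proves that the minimizer's length is strictly increasing in $\varepsilon$: it shows that the length function is nondecreasing (Proposition \ref{proplength1}) and, being single-valued by Theorem \ref{thmuniqueness}, continuous (Proposition \ref{proplength2}), and then defines $\tilde{\varepsilon}'$ as a \emph{right inverse} of $L_*$, which is automatically strictly increasing even across flat pieces of $L_*$; the price is that $\tilde{\varepsilon}$ may fail to be continuous, as the paper notes in its closing remark. You instead prove strict monotonicity of $\varepsilon\mapsto L^*(\varepsilon)$ outright: a flat piece would force a single curve $g$ to minimize $\mathcal{E}_\varepsilon$ for two distinct values of $\varepsilon$ (indeed, if $L^*(\varepsilon_1)=L^*(\varepsilon_2)$ then comparing the two unique minimizers gives $\mathcal{B}[g_{\varepsilon_1}]=\mathcal{B}[g_{\varepsilon_2}]$, hence $g_{\varepsilon_2}$ minimizes $\mathcal{E}_{\varepsilon_1}$, hence $g_{\varepsilon_1}=g_{\varepsilon_2}$ by uniqueness), so $g$ would satisfy (\ref{eqnelastica}) with two different multipliers, forcing $\kappa\equiv0$, which is impossible when $|\theta_0|\in(0,\pi)$. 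This multiplier-rigidity argument is sound and buys something the paper explicitly does not claim: continuity of $\tilde{\varepsilon}$. The remaining steps (extensible minimizers minimize $\mathcal{B}$ in their own length class; the dilation Lemma \ref{lemdilation}; the equal-length, equal-$\mathcal{B}$ comparison that upgrades any inextensible minimizer to the unique extensible one) reproduce in substance the paper's Proposition \ref{propeulermodified}/Theorem \ref{thmmain3} pipeline.

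Two justifications need repair. The minor one: the envelope-theorem claim that uniqueness makes $m$ differentiable requires a compactness argument (and the formula $\frac{d}{du}L^*=-u\,m''(u)$ presupposes a second derivative that need not exist); your stated fallback is exactly the paper's Proposition \ref{proplength2}, and monotonicity of $L^*$ is elementary as in Proposition \ref{proplength1}, so the envelope machinery can simply be dropped. The substantive one: you attribute the full limit $(L^*(\varepsilon)-L)/\varepsilon\to 4\sqrt{2}\left(\sin^2\frac{\theta_0}{4}+\sin^2\frac{\theta_1}{4}\right)$ to ``uniqueness and the full convergence of Theorem \ref{thmmain1}.'' That derivation does not work: Theorem \ref{thmmain1} controls the middle of the curve only through the bound $|\llbracket\vartheta\rrbracket|\lesssim e^{-c/\sqrt{2}}$ on $[c\varepsilon,L_\varepsilon-c\varepsilon]$, an interval of length of order one, so the resulting bound on the excess length $\int(1-\cos\vartheta)\,ds$ of the middle part is $O(e^{-\sqrt{2}c})$ uniformly in $\varepsilon$ --- small in $c$ but not $o(\varepsilon)$, which is what the limit requires. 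The correct source is the equipartition (Cauchy--Schwarz) argument of Proposition \ref{propminlength}, which yields the expansion for \emph{any} sequence of minimizer lengths, with no uniqueness needed. With that substitution your proof is complete.
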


The relation between the uniqueness and the full convergence is discussed in Section \ref{sectrelation} more precisely.
Notice that Theorem \ref{thmmain2'} and our results for the extensible problem immediately imply Theorem \ref{thmmain0}.

\section{Asymptotic expansion of the energies of minimizers}\label{sectenergyexpansion}

In this section, we prove a key step for our rescaled convergence: an asymptotic expansion of the energies of minimizers as $\varepsilon\to0$.
Throughout this section, we fix convergent sequences $l_\varepsilon\to l$ in $(0,\infty)$ and $\theta_0^\varepsilon\to\theta_0$, $\theta_1^\varepsilon\to\theta_1$ in $[-\pi,\pi]$.

\begin{lemma}\label{lemasymptitoticmodified}
	Let $\gamma_\varepsilon\in\mathcal{A}_{\theta_0^\varepsilon,\theta_1^\varepsilon,l_\varepsilon}$ be a minimizer of $\mathcal{E}_\varepsilon$ in $\mathcal{A}_{\theta_0^\varepsilon,\theta_1^\varepsilon,l_\varepsilon}$ for $\varepsilon>0$.
	Then
	$$\mathcal{E}_\varepsilon[\gamma_\varepsilon]-l_\varepsilon-8\sqrt{2}\left(\sin^2\frac{\theta_0^\varepsilon}{4}+\sin^2\frac{\theta_1^\varepsilon}{4}\right)\varepsilon=o(\varepsilon) \quad \textrm{as}\quad \varepsilon\to0.$$
\end{lemma}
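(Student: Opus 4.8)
The plan is to regard $\mathcal{E}_\varepsilon[\gamma]-l_\varepsilon=\varepsilon^2\int_0^L|\partial_s\vartheta|^2\,ds+\int_0^L W(\vartheta)\,ds$, with $W(\vartheta)=1-\cos\vartheta$, as a one-dimensional phase-transition energy and to establish matching upper and lower bounds for its minimum. Set $\Phi(\theta):=\int_0^\theta\sqrt{W(\tau)}\,d\tau$; an elementary integration gives $\Phi(\theta)=4\sqrt2\,\sgn(\theta)\sin^2(\theta/4)$ on $[-2\pi,2\pi]$, so that $|\Phi(\theta_i^\varepsilon)|=4\sqrt2\sin^2(\theta_i^\varepsilon/4)$ and the asserted constant equals $2\bigl(|\Phi(\theta_0^\varepsilon)|+|\Phi(\theta_1^\varepsilon)|\bigr)$. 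Both bounds will be driven by the pointwise Young inequality $\varepsilon^2|\partial_s\vartheta|^2+W(\vartheta)\ge2\varepsilon\sqrt{W(\vartheta)}\,|\partial_s\vartheta|$, whose equality case is exactly the transition-layer ODE $\varepsilon^2|\partial_s\vartheta|^2=W(\vartheta)$ that defines the rescaled borderline elastica.

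First I would prove the upper bound by exhibiting an admissible competitor and invoking minimality of $\gamma_\varepsilon$. I take the tangential angle to be a rescaled borderline-elastica transition layer running from $\theta_0^\varepsilon$ down to a small bulk angle $\alpha$ on a short interval near $s=0$, constant equal to $\alpha$ on a long central interval, and a reflected layer from $\alpha$ up to $\theta_1^\varepsilon$ near the right end. On each layer equality holds in Young's inequality, so its energy is $2\varepsilon|\Phi(\theta_i^\varepsilon)-\Phi(\alpha)|+o(\varepsilon)$, the truncation error being $o(\varepsilon)$ by the exponential decay of the layer, while the bulk contributes $\int W(\alpha)=O(\varepsilon^2)$ once $\alpha=O(\varepsilon)$. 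The delicate point --- and the step where the \emph{unconstrained length} is indispensable --- is to make the competitor exactly admissible, i.e.\ to hit $\gamma(L)=(l_\varepsilon,0)$ with the prescribed terminal angle. I would achieve this by a two-parameter adjustment: the bulk length tunes the $x$-displacement $\int_0^L\cos\vartheta\,ds=L-\int_0^L W(\vartheta)\,ds$, while the small tilt $\alpha=O(\varepsilon)$ tunes the $y$-displacement $\int_0^L\sin\vartheta\,ds$, the two constraints being solved simultaneously by a continuity/fixed-point argument without disturbing the leading-order layer energy. Since $\Phi(\alpha)=O(\varepsilon^2)$, the competitor has energy $l_\varepsilon+2\varepsilon\bigl(|\Phi(\theta_0^\varepsilon)|+|\Phi(\theta_1^\varepsilon)|\bigr)+o(\varepsilon)$; in particular this yields the a priori bound $\mathcal{E}_\varepsilon[\gamma_\varepsilon]-l_\varepsilon\le C_0\varepsilon$.

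For the lower bound, integrating Young's inequality and using that $\Phi$ is nondecreasing gives $\mathcal{E}_\varepsilon[\gamma_\varepsilon]-l_\varepsilon\ge2\varepsilon\int_0^{L}\sqrt{W(\vartheta_\varepsilon)}\,|\partial_s\vartheta_\varepsilon|\,ds=2\varepsilon\,\mathrm{TV}(\Phi\circ\vartheta_\varepsilon)$. The total variation could fall below $|\Phi(\theta_0^\varepsilon)|+|\Phi(\theta_1^\varepsilon)|$ only if $\vartheta_\varepsilon$ ran directly between the two boundary angles without returning to a well $2\pi\mathbb{Z}$, and I rule this out using the a priori bound: $\int_0^L W(\vartheta_\varepsilon)\,ds\le C_0\varepsilon$ forces $|\{s:\mathrm{dist}(\vartheta_\varepsilon,2\pi\mathbb{Z})>\eta\}|\le C\varepsilon/\eta^2$, so with $\eta=\varepsilon^{1/3}$ (legitimate because $L_\varepsilon\ge l_\varepsilon\to l>0$) there is an interior point $\bar s$ with $\vartheta_\varepsilon(\bar s)$ within $\eta$ of some well $2\pi m$. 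Splitting the total variation at $\bar s$, using that $\Phi$ is $\sqrt2$-Lipschitz, and using $|\theta_0^\varepsilon|,|\theta_1^\varepsilon|\le\pi$ --- so that $0$ is the nearest multiple of $4\sqrt2$ to each $\Phi(\theta_i^\varepsilon)$ --- yields $\mathrm{TV}(\Phi\circ\vartheta_\varepsilon)\ge|\Phi(\theta_0^\varepsilon)|+|\Phi(\theta_1^\varepsilon)|-C\eta$. This matches the upper bound to order $o(\varepsilon)$; note that any additional oscillation of $\vartheta_\varepsilon$ only increases the total variation and is therefore harmless.

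The main obstacle I anticipate is the recovery-sequence construction: producing a \emph{genuinely} admissible curve that is optimal to order $\varepsilon$ requires the free length together with the simultaneous correction of both displacement constraints, and one must verify that these corrections perturb the layer energy only at order $o(\varepsilon)$. By contrast the lower bound is conceptually subtle only in recognizing that the potential term forces the bulk near a single well; once the a priori bound is in hand it reduces to the elementary total-variation estimate above.
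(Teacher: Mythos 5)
Your proposal is correct in substance and follows the same two--sided strategy as the paper: the phase--transition rewriting $\mathcal{E}_\varepsilon[\gamma]-l_\varepsilon=\varepsilon^2\int|\partial_s\vartheta|^2\,ds+\int(1-\cos\vartheta)\,ds$, a lower bound via the Young/Modica--Mortola inequality combined with an interior point where $\vartheta$ sits near a well of $2\pi\mathbb{Z}$, and an upper bound via a recovery curve with borderline--elastica layers at the endpoints (your $\Phi$ is one half of the paper's weighted variation $V$, so your constant $2(|\Phi(\theta_0^\varepsilon)|+|\Phi(\theta_1^\varepsilon)|)$ matches). The differences are in implementation, and they are worth noting. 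For the lower bound, the paper locates the interior well point by first proving uniform convergence of minimizers to the segment (Proposition \ref{propsegmentconvergence}, via Arzel\`a--Ascoli) and then a displacement argument (Lemma \ref{lemmeanvalue}); your Chebyshev--type estimate $|\{s:\mathrm{dist}(\vartheta_\varepsilon,2\pi\mathbb{Z})>\eta\}|\le C\varepsilon/\eta^{2}$, drawn from the a priori bound $\int(1-\cos\vartheta_\varepsilon)\le C_0\varepsilon$, is more elementary and more quantitative, and your subsequent splitting at $\bar s$ using the $\sqrt2$--Lipschitz bound on $\Phi$ and the ``nearest multiple of $4\sqrt2$'' observation is precisely the content of the paper's Lemma \ref{lemlowerbound} and Lemma \ref{lemvartheta}. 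For the upper bound, the paper enforces admissibility by inserting circular arcs of radius $\varepsilon$ and a near--horizontal straight segment (Lemma \ref{lemcurveconstruction2}, which requires the $y$--offsets of the layers to be $o(\varepsilon^{1/2})$; they are in fact $O(\varepsilon)$), whereas you tilt the bulk by $\alpha=O(\varepsilon)$ and solve the two displacement constraints by a continuity/IVT argument; both give $o(\varepsilon)$ corrections.

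One concrete wrinkle in your recovery construction: an exact solution of the layer ODE $\varepsilon^2|\vartheta'|^2=1-\cos\vartheta$ emanating from $\theta_0^\varepsilon$ is monotone toward $0$ and never crosses $0$, so it cannot terminate at a bulk angle $\alpha$ of sign opposite to $\theta_0^\varepsilon$. Your intermediate--value argument needs the competitor to be defined for \emph{both} signs of $\alpha$ (so that the $y$--displacement changes sign), hence as literally stated the construction is unavailable for one of the two signs. The repair is routine: follow the exact layer only until its angle has size $O(\varepsilon)$ (arc length $O(\varepsilon\log(1/\varepsilon))$ by the exponential decay) and then interpolate the angle linearly to $\alpha$ over a length $O(\varepsilon)$; this adds $O(\varepsilon^3)$ to both the bending and potential terms, hence $o(\varepsilon)$, and restores continuity in $\alpha$. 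With that amendment, your proof is complete and matches the sharp constant.
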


In the rest of this section we prove the above lemma.
Note that it suffices to prove that, for any sequence of minimizers,
\begin{eqnarray}\label{eqnlimsupinequality}
\limsup_{\varepsilon\to0}\frac{\mathcal{E}_\varepsilon[\gamma_\varepsilon]-l_\varepsilon}{\varepsilon}\leq 8\sqrt{2}\left(\sin^2\frac{\theta_0}{4}+\sin^2\frac{\theta_1}{4}\right)
\end{eqnarray}
and
\begin{eqnarray}\label{eqnliminfinequality}
\liminf_{\varepsilon\to0}\frac{\mathcal{E}_\varepsilon[\gamma_\varepsilon]-l_\varepsilon}{\varepsilon}\geq 8\sqrt{2}\left(\sin^2\frac{\theta_0}{4}+\sin^2\frac{\theta_1}{4}\right).
\end{eqnarray}

We define an energy functional $\mathcal{F}_\varepsilon$ for any smooth regular curve $\gamma$ by
\begin{equation}\label{eqnenergyanglerepresentation}
\mathcal{F}_\varepsilon[\gamma]= \int_0^{\mathcal{L}[\gamma]}\left(\varepsilon|\partial_s\vartheta_{\tilde{\gamma}}|^2+\frac{1}{\varepsilon}(1-\cos\vartheta_{\tilde{\gamma}})\right) ds,
\end{equation}
where $\theta_{\tilde{\gamma}}$ is the tangential angle of the arc length parameterization $\tilde{\gamma}$ of $\gamma$.
Note that $\mathcal{F}$ is well-defined since this energy is invariant by the addition of constants of $2\pi\mathbb{Z}$ to $\vartheta_{\tilde{\gamma}}$.
Moreover, we notice that for any $\gamma\in\mathcal{A}_{\theta_0^\varepsilon,\theta_1^\varepsilon,l_\varepsilon}$ the relation
$$\mathcal{F}_\varepsilon[\gamma]=\frac{\mathcal{E}_\varepsilon[\gamma]-l_\varepsilon}{\varepsilon}$$
holds since
$$\int_0^{\mathcal{L}[\gamma]}|\partial_s\vartheta_{\tilde{\gamma}}|^2ds=\int_{\gamma}\kappa^2ds,\quad \int_0^{\mathcal{L}[\gamma]}ds=\int_{\gamma}ds, \quad \int_0^{\mathcal{L}[\gamma]}\cos\vartheta_{\tilde{\gamma}} ds=l_\varepsilon.$$
As mentioned in Introduction, the representation $\mathcal{F}_\varepsilon$ is essentially used in this paper.

The following lemma is obvious by definition but frequently used.

\begin{lemma}\label{lemenergydecomposition}
	Let $N$ be a positive integer and $t_0<\dots<t_N$ be real numbers.
	Let $\bar{J}=[t_0,t_N]$ and $\bar{J}_i=[t_{i},t_{i+1}]$ for $i=0,\dots,N-1$.
	For any $\varepsilon>0$ and any smooth constant speed curve $\gamma:\bar{J}\to\mathbb{R}^2$,
	$$\mathcal{F}_\varepsilon[\gamma]=\sum_{i=0}^{N-1}\mathcal{F}_\varepsilon[\gamma|_{\bar{J}_i}]$$
	and each term of the right-hand sum is nonnegative.
	In particular, $\mathcal{F}_\varepsilon[\gamma]\geq \mathcal{F}_\varepsilon[\gamma|_{\bar{J}_i}]$ holds for each $i$.
\end{lemma}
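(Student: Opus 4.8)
The plan is to reduce everything to two elementary facts: the pointwise nonnegativity of the integrand defining $\mathcal{F}_\varepsilon$, and the additivity of the arc-length integral over a partition, once the reparameterization bookkeeping is in place.

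First I would record nonnegativity. For any smooth regular curve $\eta$ the integrand in (\ref{eqnenergyanglerepresentation}) satisfies
$$\varepsilon|\partial_s\vartheta_{\tilde{\eta}}|^2+\frac{1}{\varepsilon}(1-\cos\vartheta_{\tilde{\eta}})\geq0,$$
since the first term is $\varepsilon>0$ times a square and the second is nonnegative because $\cos\vartheta_{\tilde{\eta}}\leq1$. Integrating gives $\mathcal{F}_\varepsilon[\eta]\geq0$, which in particular shows that each summand $\mathcal{F}_\varepsilon[\gamma|_{\bar{J}_i}]$ on the right-hand side is nonnegative.

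Next I would establish the additivity, where the constant-speed hypothesis is exactly what makes the arc-length reparameterization compatible with the partition. Writing $v:=\mathcal{L}[\gamma]/(t_N-t_0)$ for the constant speed, the arc-length variable is $s=v(t-t_0)$, so the subinterval $\bar{J}_i=[t_i,t_{i+1}]$ corresponds to the arc-length interval $[v(t_i-t_0),v(t_{i+1}-t_0)]$, and these intervals partition $[0,\mathcal{L}[\gamma]]$. Because $\gamma$ is constant speed, the arc-length reparameterization of the restriction $\gamma|_{\bar{J}_i}$ coincides, up to a translation of the arc-length variable, with the restriction of $\tilde{\gamma}$ to the corresponding interval; in particular its tangential angle is the restriction of $\vartheta_{\tilde{\gamma}}$. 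The $2\pi\mathbb{Z}$ ambiguity in the choice of continuous tangential angle is harmless, since the integrand depends on $\vartheta$ only through $\partial_s\vartheta$ and through $1-\cos\vartheta$, both invariant under adding a constant in $2\pi\mathbb{Z}$ (as already noted for the well-definedness of $\mathcal{F}_\varepsilon$). Splitting the integral over $[0,\mathcal{L}[\gamma]]$ as the sum over these subintervals then yields
$$\mathcal{F}_\varepsilon[\gamma]=\sum_{i=0}^{N-1}\mathcal{F}_\varepsilon[\gamma|_{\bar{J}_i}].$$

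Finally, the displayed inequality is immediate: by additivity together with the nonnegativity of every summand, discarding all terms except the $i$-th gives $\mathcal{F}_\varepsilon[\gamma]\geq\mathcal{F}_\varepsilon[\gamma|_{\bar{J}_i}]$. There is no genuine obstacle here — the statement is, as the text says, essentially definitional — and the only point requiring a line of care is the compatibility of the arc-length reparameterization with the partition, which is precisely where the constant-speed assumption enters.
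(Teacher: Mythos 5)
Your proof is correct and matches the paper's treatment: the paper offers no argument at all, stating only that the lemma ``is obvious by definition,'' and your writeup is precisely the definitional verification being alluded to (nonnegativity of the integrand, additivity of the integral under the partition after the arc-length and $2\pi\mathbb{Z}$ bookkeeping). The only quibble is that the compatibility of arc-length reparameterization with restriction holds for any regular curve, not just constant-speed ones, so that hypothesis is a convenience of the setting rather than the crux you make it out to be; this does not affect the validity of your argument.
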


\subsection{Weighted total variation}

The following weighted variation function is also frequently used in this paper.

\begin{definition}[Weighted variation of tangential angle]
	Define a strictly increasing function $V\in C^1(\mathbb{R})$ by
	$$V(\theta):=\int_0^\theta 2\sqrt{1-\cos\phi}d\phi.$$
\end{definition}

\begin{remark}[Calculation of weighted variation]\label{remvariation}
	By the half-angle formula, for any $\theta\in[-\pi,\pi]$ we calculate
	$$V(\theta)=\textrm{sign}(\theta) \cdot 8\sqrt{2}\sin^2\frac{\theta}{4}.$$
	By the periodicity, for any $m\in\mathbb{Z}$ and $\theta\in[(2m-1)\pi,(2m+1)\pi)$ we have
	$$V(\theta)=\textrm{sign}(\llbracket\theta\rrbracket) \cdot 8\sqrt{2}\sin^2\frac{\llbracket\theta\rrbracket}{4} + 8\sqrt{2}m,$$
	where $\llbracket\theta\rrbracket$ denotes a unique angle in $[-\pi,\pi)$ so that $\theta-\llbracket\theta\rrbracket\in 2\pi\mathbb{Z}$.
	Hereafter, we frequently use the notation $\llbracket\cdot\rrbracket$ in this sense.
\end{remark}

The weighted variation is essential for our arguments since the following lower estimate holds.

\begin{lemma}\label{lemlowerbound}
	For any $\varepsilon>0$ and smooth regular curve $\gamma$ parameterized by the arc length $s$, we have
	\begin{eqnarray}\nonumber
	\mathcal{F}_\varepsilon[\gamma] \geq \int_0^{\mathcal{L}[\gamma]}|\partial_
	s(V\circ\vartheta_\gamma)|ds \geq \left|V\left(\vartheta_\gamma(\mathcal{L}[\gamma])\right)-V\left(\vartheta_\gamma(0)\right)\right|.
	\end{eqnarray}
\end{lemma}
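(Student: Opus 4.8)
The plan is to prove the two inequalities separately: the first via a pointwise Young-type inequality (the classical equipartition estimate from phase transition theory), and the second via the fundamental theorem of calculus together with the triangle inequality. Throughout, the smoothness of $\gamma$ makes every manipulation legitimate, so I expect no genuine obstacle; the argument is essentially the standard Modica--Mortola lower bound adapted to the periodic potential $1-\cos\theta$.

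For the first inequality, I would start from the elementary pointwise bound
$$\varepsilon a^2 + \tfrac{1}{\varepsilon}b \geq 2\sqrt{a^2 b} = 2|a|\sqrt{b}, \qquad a\in\mathbb{R},\ b\geq 0,$$
which follows by completing the square, $\bigl(\sqrt{\varepsilon}\,|a|-\sqrt{b/\varepsilon}\bigr)^2\geq 0$. Applying this with $a=\partial_s\vartheta_\gamma$ and $b=1-\cos\vartheta_\gamma\geq 0$, the integrand of $\mathcal{F}_\varepsilon$ is bounded below pointwise by $2\,|\partial_s\vartheta_\gamma|\sqrt{1-\cos\vartheta_\gamma}$.

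The key step that turns this bound into the weighted total variation is the chain rule. Since $V'(\theta)=2\sqrt{1-\cos\theta}\geq 0$ by the definition of $V$, we have $\partial_s(V\circ\vartheta_\gamma)=2\sqrt{1-\cos\vartheta_\gamma}\,\partial_s\vartheta_\gamma$, and because the factor $2\sqrt{1-\cos\vartheta_\gamma}$ is nonnegative this gives $|\partial_s(V\circ\vartheta_\gamma)|=2\sqrt{1-\cos\vartheta_\gamma}\,|\partial_s\vartheta_\gamma|$, which is precisely the pointwise lower bound obtained above. Integrating over $[0,\mathcal{L}[\gamma]]$ then yields the first asserted inequality. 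The only point needing minor care is this nonnegativity of $V'$, which is exactly what validates the absolute-value identity in the chain-rule step.

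For the second inequality, I would simply invoke $\int|f'|\geq\bigl|\int f'\bigr|$ with $f=V\circ\vartheta_\gamma$ and apply the fundamental theorem of calculus, so that
$$\int_0^{\mathcal{L}[\gamma]}|\partial_s(V\circ\vartheta_\gamma)|\,ds \geq \left|\int_0^{\mathcal{L}[\gamma]}\partial_s(V\circ\vartheta_\gamma)\,ds\right| = \left|V\bigl(\vartheta_\gamma(\mathcal{L}[\gamma])\bigr)-V\bigl(\vartheta_\gamma(0)\bigr)\right|.$$
Chaining the two estimates completes the proof.
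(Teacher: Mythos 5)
Your proposal is correct and follows exactly the paper's own argument: the first inequality via the pointwise bound $\varepsilon X^2+\varepsilon^{-1}Y^2\geq 2|X||Y|$ applied with $X=\partial_s\vartheta_\gamma$, $Y=\sqrt{1-\cos\vartheta_\gamma}$, combined with the chain rule $|\partial_s(V\circ\vartheta_\gamma)|=2\sqrt{1-\cos\vartheta_\gamma}\,|\partial_s\vartheta_\gamma|$, and the second via the integral triangle inequality. You have merely written out in detail what the paper compresses into two lines, so there is nothing to add.
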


\begin{proof}
	The first inequality follows by the definition of $\mathcal{F}$ and the inequality $\varepsilon X^2+\varepsilon^{-1}Y^2\geq2|X||Y|$.
	The last inequality follows by the triangle inequality.
\end{proof}

To compute the above lower bound, the following lemma is useful.

\begin{lemma}\label{lemvartheta}
	Let $\theta,\theta'\in\mathbb{R}$.
	Then the following inequality holds:
	$$|V(\theta)-V(\theta')| \geq 8\sqrt{2}\left|\sin^2\frac{\llbracket\theta\rrbracket}{4}-\sin^2\frac{\llbracket\theta'\rrbracket}{4}\right|.$$
	The equality is attained if and only if $\theta,\theta'\in[m\pi,(m+1)\pi]$ for some $m\in\mathbb{Z}$.
\end{lemma}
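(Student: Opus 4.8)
The plan is to reduce the inequality to the elementary fact that $\int|f|\ge\bigl|\int f\bigr|$, by exhibiting both sides as integrals of one and the same function, once with and once without an absolute value. To this end I first introduce the $2\pi$-periodic Lipschitz function
\[
P(\theta):=8\sqrt{2}\sin^2\frac{\llbracket\theta\rrbracket}{4},
\]
so that, by the definition of $\llbracket\cdot\rrbracket$, the right-hand side of the asserted inequality is exactly $|P(\theta)-P(\theta')|$; note $P(0)=0$.

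The heart of the argument is the pointwise identity $V'(\theta)=|P'(\theta)|$ for a.e.\ $\theta$. Indeed, from the definition of $V$ one has $V'(\theta)=2\sqrt{1-\cos\theta}=2\sqrt{2}\,|\sin\frac{\theta}{2}|$, while differentiating $P$ on one period $(-\pi,\pi)$ gives $P'(\theta)=2\sqrt{2}\sin\frac{\theta}{2}$; since $|\sin\frac{\theta}{2}|$ is $2\pi$-periodic, this yields $|P'|=V'$ at every point where $P$ is differentiable (the only exceptions being the corners of $P$ at odd multiples of $\pi$, a null set). Both $V$ and $P$ are absolutely continuous with $V(0)=P(0)=0$, so for $\theta'\le\theta$ (say) one has $V(\theta)-V(\theta')=\int_{\theta'}^{\theta}|P'|$ and $P(\theta)-P(\theta')=\int_{\theta'}^{\theta}P'$. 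Since $V$ is increasing, the triangle inequality for integrals then gives
\[
|V(\theta)-V(\theta')|=\int_{\theta'}^{\theta}|P'|\ \ge\ \left|\int_{\theta'}^{\theta}P'\right|=|P(\theta)-P(\theta')|,
\]
which is precisely the claimed bound.

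For the equality characterization I would analyze the sign of $P'$. Equality in the triangle inequality holds exactly when $P'$ does not change sign (a.e.) on $(\theta',\theta)$. Now $P'(\theta)=2\sqrt{2}\sin\frac{\llbracket\theta\rrbracket}{2}$ vanishes only at integer multiples of $\pi$ and has the constant sign $(-1)^m$ on each open interval $(m\pi,(m+1)\pi)$. Hence $P'$ keeps a constant sign on $(\theta',\theta)$ if and only if this open interval contains no multiple of $\pi$, i.e.\ if and only if $\theta,\theta'\in[m\pi,(m+1)\pi]$ for a common integer $m$. Conversely, if some $m\pi$ lies strictly inside $(\theta',\theta)$, then $P'$ is strictly positive on one side of $m\pi$ and strictly negative on the other within $(\theta',\theta)$, each on a set of positive measure, forcing strict inequality; this gives the ``only if'' direction.

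The computations involved (the half-angle simplification $\sqrt{1-\cos\theta}=\sqrt{2}\,|\sin\frac{\theta}{2}|$ and the derivative of $P$) are routine, so the only genuinely delicate point is the equality analysis: I must justify that the sign of $P'$ is governed exactly by the partition of $\mathbb{R}$ into the half-period intervals $[m\pi,(m+1)\pi]$ and that crossing an interior point $m\pi$ produces opposite signs on sets of positive measure. Some care is also needed because $P$ is only Lipschitz (with corners at odd multiples of $\pi$), so throughout I work with the almost-everywhere derivative and the fundamental theorem of calculus for absolutely continuous functions rather than with classical derivatives.
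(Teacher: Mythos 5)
Your proof is correct, but it follows a genuinely different route from the paper's. The paper argues globally: it uses the $2\pi$-periodicity and the reflection symmetry of $\theta\mapsto|\llbracket\theta\rrbracket|$ about multiples of $\pi$ to replace $\theta'$ by a point $\theta^*$ lying in the same half-period interval $[m\pi,(m+1)\pi]$ as $\theta$, with $|\llbracket\theta^*\rrbracket|=|\llbracket\theta'\rrbracket|$ and $|\theta-\theta^*|\leq|\theta-\theta'|$; the monotonicity of $V$ then gives $|V(\theta)-V(\theta')|\geq|V(\theta)-V(\theta^*)|$, and the latter is computed exactly from the explicit piecewise formula for $V$ in Remark \ref{remvariation}, with the equality case read off from this reduction. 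You instead argue infinitesimally: you identify the right-hand side as $|P(\theta)-P(\theta')|$ for the periodic Lipschitz function $P$, verify the pointwise identity $V'=|P'|$ a.e., and reduce everything to $\int|P'|\geq\bigl|\int P'\bigr|$, with equality exactly when $P'$ has a.e.\ constant sign, i.e.\ when no multiple of $\pi$ lies strictly between $\theta'$ and $\theta$ --- which is equivalent to the paper's condition $\theta,\theta'\in[m\pi,(m+1)\pi]$. Your route buys a very transparent equality analysis and avoids the one mildly delicate step in the paper's argument (that the level point of $|\llbracket\cdot\rrbracket|$ nearest to $\theta$ can be taken in the same half-period interval), at the cost of invoking the fundamental theorem of calculus for Lipschitz functions and a.e.\ derivatives; the paper's proof stays entirely elementary and recycles the computation already recorded in Remark \ref{remvariation}. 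One cosmetic remark: the phrase ``$P'$ vanishes only at integer multiples of $\pi$'' should be worded more carefully, since at odd multiples of $\pi$ the derivative $P'$ does not exist (these are the corners you mention); this is harmless, as only the sign of $P'$ on the open intervals $(m\pi,(m+1)\pi)$ enters your argument.
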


\begin{proof}
	Fix $\theta,\theta'\in\mathbb{R}$.
	Then there exists $\theta^*\in\mathbb{R}$ with $\theta^*\leq\theta$ so that $|\llbracket\theta^*\rrbracket|=|\llbracket\theta'\rrbracket|$ and $\theta^*,\theta\in[m\pi,(m+1)\pi]$ for some $m\in\mathbb{Z}$.
	By periodicity, we have $|\theta-\theta'|\geq|\theta-\theta^*|$, and hence
	$$|V(\theta)-V(\theta')|\geq|V(\theta)-V(\theta^*)|.$$
	By Remark \ref{remvariation}, the right-hand term is calculated as
	$$|V(\theta)-V(\theta^*)| = 8\sqrt{2}\left|\sin^2\frac{\llbracket\theta\rrbracket}{4}-\sin^2\frac{\llbracket\theta^*\rrbracket}{4}\right|.$$
	Since $\sin^2(\llbracket\theta^*\rrbracket/4)=\sin^2(\llbracket\theta'\rrbracket/4)$, the desired inequality holds.
	In view of the first inequality, the equality is attained if and only if $\theta,\theta'\in[m\pi,(m+1)\pi]$ for some $m\in\mathbb{Z}$ from the beginning.
	The proof is complete.
\end{proof}

\subsection{Lower bound for the modified squared curvature}

In this subsection we prove the liminf inequality (\ref{eqnliminfinequality}), that is, the following proposition.

\begin{proposition}\label{propliminfinequality}
	Let $\gamma_\varepsilon\in\mathcal{A}_{\theta_0^\varepsilon,\theta_1^\varepsilon,l_\varepsilon}$ be a minimizer of $\mathcal{E}_\varepsilon$ in $\mathcal{A}_{\theta_0^\varepsilon,\theta_1^\varepsilon,l_\varepsilon}$ for $\varepsilon>0$.
	Then
	$$\liminf_{\varepsilon\to0}\mathcal{F}_\varepsilon[\gamma_\varepsilon]\geq 8\sqrt{2}\left(\sin^2\frac{\theta_0}{4}+\sin^2\frac{\theta_1}{4}\right).$$
\end{proposition}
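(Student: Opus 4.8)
The plan is to bound the energy $\mathcal{F}_\varepsilon[\gamma_\varepsilon]$ from below by carving the curve into two pieces, one near each endpoint, and applying the lower estimates of Lemma~\ref{lemlowerbound} and Lemma~\ref{lemvartheta} to each piece. The subtlety — and the reason the liminf inequality is nontrivial rather than a one-line consequence of Lemma~\ref{lemlowerbound} — is that Lemma~\ref{lemlowerbound} controls only the \emph{difference} $|V(\vartheta_\gamma(\mathcal{L})) - V(\vartheta_\gamma(0))|$, whereas we want the \emph{sum} of the contributions from the two angles $\theta_0$ and $\theta_1$. If we apply the lemma naively to the whole curve, we get $|V(\theta_1^\varepsilon) - V(\theta_0^\varepsilon)|$, which can be far smaller than $|V(\theta_0)| + |V(\theta_1)|$ (for instance when $\theta_0 = \theta_1$ they nearly cancel). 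So the real content is to show that a minimizer must, in the limit, relax to the stable phase value $\vartheta \in 2\pi\mathbb{Z}$ somewhere in the interior, so that the two boundary layers contribute \emph{additively}.

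First I would split the arc-length interval $[0,L_\varepsilon]$ at a well-chosen interior point $s_\varepsilon$ into $\bar{J}_0 = [0, s_\varepsilon]$ and $\bar{J}_1 = [s_\varepsilon, L_\varepsilon]$, and use the additivity of Lemma~\ref{lemenergydecomposition} to write $\mathcal{F}_\varepsilon[\gamma_\varepsilon] = \mathcal{F}_\varepsilon[\gamma_\varepsilon|_{\bar{J}_0}] + \mathcal{F}_\varepsilon[\gamma_\varepsilon|_{\bar{J}_1}]$. On each piece Lemma~\ref{lemlowerbound} gives a bound by $|V(\vartheta(s_\varepsilon)) - V(\theta_0^\varepsilon)|$ and $|V(\theta_1^\varepsilon) - V(\vartheta(s_\varepsilon))|$ respectively, where $\vartheta = \vartheta_{\tilde\gamma_\varepsilon}$. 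Summing, the total is at least $|V(\vartheta(s_\varepsilon)) - V(\theta_0^\varepsilon)| + |V(\theta_1^\varepsilon) - V(\vartheta(s_\varepsilon))|$. The goal is then to choose $s_\varepsilon$ so that, along a subsequence realizing the liminf, the interior angle $\vartheta(s_\varepsilon)$ converges to a value in $2\pi\mathbb{Z}$; then Lemma~\ref{lemvartheta} (applied to each term, with $\llbracket 2\pi k\rrbracket = 0$) lets each piece be bounded below by $8\sqrt{2}\sin^2(\theta_0/4)$ and $8\sqrt{2}\sin^2(\theta_1/4)$ in the limit, using the continuity of $\theta \mapsto \sin^2(\theta/4)$ and the convergence $\theta_i^\varepsilon \to \theta_i$.

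The main obstacle, as I see it, is justifying the existence of a splitting point $s_\varepsilon$ at which $\vartheta(s_\varepsilon)$ is asymptotically a multiple of $2\pi$. The natural mechanism is a contradiction argument powered by the already-established limsup bound. Suppose that along some subsequence the tangential angle stayed uniformly bounded away from every stable phase $2\pi\mathbb{Z}$ on the entire middle region; then the potential term $\frac{1}{\varepsilon}\int(1-\cos\vartheta)\,ds$ would pick up a contribution proportional to the length of that region, and since the middle region has length of order $l_\varepsilon = O(1)$ while the prefactor is $\varepsilon^{-1}$, this would force $\mathcal{F}_\varepsilon[\gamma_\varepsilon] \to \infty$, contradicting the upper bound (\ref{eqnlimsupinequality}). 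Hence for small $\varepsilon$ there must be interior points where $\vartheta$ is close to $2\pi\mathbb{Z}$; I would take $s_\varepsilon$ to be such a point. Making this precise requires a little care, because $\vartheta(s_\varepsilon)$ might a priori approach different multiples of $2\pi$ along different subsequences — but Remark~\ref{remvariation} shows $V$ is translation-equivariant with period giving $V(\theta + 2\pi) = V(\theta) + 8\sqrt{2}$, so the quantity $\sin^2(\llbracket\vartheta(s_\varepsilon)\rrbracket/4)$ that actually enters the bound is insensitive to which multiple is selected, which resolves this concern and lets the additive lower bound $8\sqrt{2}(\sin^2(\theta_0/4) + \sin^2(\theta_1/4))$ emerge in the $\varepsilon \to 0$ limit.
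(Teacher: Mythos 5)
Your proposal is correct, and its skeleton is exactly the paper's: split the curve at an interior point $s_\varepsilon$, use the additivity of Lemma~\ref{lemenergydecomposition}, bound each piece by Lemma~\ref{lemlowerbound} and Lemma~\ref{lemvartheta}, and let $\llbracket\vartheta(s_\varepsilon)\rrbracket\to0$ make the two boundary contributions add. Where you genuinely differ is the mechanism producing the splitting point. The paper first proves that minimizers converge uniformly to the segment $(lt,0)$ with $L_\varepsilon\to l$ (Proposition~\ref{propsegmentconvergence}, via a crude competitor and Arzel\`{a}--Ascoli), and then applies Lemma~\ref{lemmeanvalue}: if $|\llbracket\vartheta_{\gamma_\varepsilon}\rrbracket|\geq\delta$ on a subinterval, the $x$-displacement there is at most $\cos\delta$ times the length, contradicting the convergence to the segment. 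You instead contradict the energy bound directly: if $\mathrm{dist}(\vartheta,2\pi\mathbb{Z})\geq\delta$ on the whole curve, then $\mathcal{F}_\varepsilon[\gamma_\varepsilon]\geq\varepsilon^{-1}(1-\cos\delta)L_\varepsilon\geq\varepsilon^{-1}(1-\cos\delta)l_\varepsilon\to\infty$, which is the standard Modica--Mortola mechanism; note the relevant region is the whole domain, of length $L_\varepsilon\geq l_\varepsilon$, so no separate ``middle region'' needs to be isolated. Your appeal to (\ref{eqnlimsupinequality}) is legitimate even though the paper proves it after the liminf inequality: Proposition~\ref{proplimsupinequality} is a pure test-curve construction with no dependence on this proposition, and in fact you only need $\mathcal{F}_\varepsilon[\gamma_\varepsilon]\leq C$, which already follows from the circular-arcs-plus-segment competitor mentioned in the proof of Proposition~\ref{propsegmentconvergence}. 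What each approach buys: yours is shorter and self-contained for this statement, and needs no compactness argument; the paper's heavier investment (uniform segment convergence and the localized Lemma~\ref{lemmeanvalue}, which yields such points in \emph{every} subinterval) is machinery that is reused later, e.g.\ in Propositions~\ref{proprescaledconvergence}, \ref{propstraight} and \ref{propminlength}. Finally, your handling of the subsequence issue is right: only the quantity $\sin^2(\llbracket\vartheta(s_\varepsilon)\rrbracket/4)$ enters via Lemma~\ref{lemvartheta}, and it tends to zero regardless of which multiple of $2\pi$ the angle approaches.
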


We first confirm basic convergences on a sequence of minimizers.

\begin{proposition}\label{propsegmentconvergence}
	Let $\gamma_\varepsilon\in\mathcal{A}_{\theta_0^\varepsilon,\theta_1^\varepsilon,l_\varepsilon}$ be a minimizer of $\mathcal{E}_\varepsilon$ in $\mathcal{A}_{\theta_0^\varepsilon,\theta_1^\varepsilon,l_\varepsilon}$ for $\varepsilon>0$.
	Then the length $L_\varepsilon$ of $\gamma_\varepsilon$ converges to $l$, and the curve $\gamma_\varepsilon$ uniformly converges to the segment $\bar{\gamma}(t)=(lt,0)$, $t\in\bar{I}$, as $\varepsilon\to0$.
\end{proposition}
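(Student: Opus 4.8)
The plan is to bound the minimal energy from above by a short explicit competitor, extract from this bound that the length converges and that the potential $\int(1-\cos\vartheta_{\tilde{\gamma}_\varepsilon})$ vanishes, and then convert the smallness of this potential into uniform convergence of the curve to the segment.

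\textbf{Upper bound by a competitor.} First I would construct, for each $\varepsilon$, an admissible competitor $\eta_\varepsilon\in\mathcal{A}_{\theta_0^\varepsilon,\theta_1^\varepsilon,l_\varepsilon}$ with $\mathcal{E}_\varepsilon[\eta_\varepsilon]=l_\varepsilon+O(\varepsilon)$ as $\varepsilon\to0$. The natural choice is a curve that is straight and horizontal on most of its length but turns its tangent from $\theta_0^\varepsilon$ to $0$ within a region of size comparable to $\varepsilon$ near the initial endpoint, and from $0$ to $\theta_1^\varepsilon$ within a similar region near the terminal endpoint (using smooth profiles so that $\eta_\varepsilon$ is genuinely $C^\infty$ and of constant speed). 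Each turning region of scale $\varepsilon$ contributes $O(\varepsilon)$ to the length and $O(\varepsilon)$ to $\varepsilon^2\mathcal{B}$, so $\mathcal{E}_\varepsilon[\eta_\varepsilon]=l_\varepsilon+O(\varepsilon)$. By minimality of $\gamma_\varepsilon$,
$$\mathcal{E}_\varepsilon[\gamma_\varepsilon]\leq\mathcal{E}_\varepsilon[\eta_\varepsilon]=l_\varepsilon+O(\varepsilon).$$

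\textbf{Length convergence.} Since $\gamma_\varepsilon$ joins $(0,0)$ to $(l_\varepsilon,0)$, its length dominates the distance of the endpoints, $L_\varepsilon\geq l_\varepsilon$, so that $l_\varepsilon\leq L_\varepsilon\leq\mathcal{E}_\varepsilon[\gamma_\varepsilon]\leq l_\varepsilon+O(\varepsilon)$. As $l_\varepsilon\to l$, this forces $L_\varepsilon\to l$. Moreover, writing $\vartheta_\varepsilon:=\vartheta_{\tilde{\gamma}_\varepsilon}$ and using $\int_0^{L_\varepsilon}\cos\vartheta_\varepsilon\,ds=l_\varepsilon$ (the horizontal span of the clamped endpoints), I obtain
$$\int_0^{L_\varepsilon}(1-\cos\vartheta_\varepsilon)\,ds=L_\varepsilon-l_\varepsilon\leq\mathcal{E}_\varepsilon[\gamma_\varepsilon]-l_\varepsilon=O(\varepsilon)\to0.$$

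\textbf{Uniform convergence.} For the vertical component I would use the pointwise inequality $|\sin\vartheta|\leq\sqrt{2}\sqrt{1-\cos\vartheta}$ (from the half-angle identities) together with Cauchy--Schwarz:
$$\int_0^{L_\varepsilon}|\sin\vartheta_\varepsilon|\,ds\leq\sqrt{2}\,L_\varepsilon^{1/2}\left(\int_0^{L_\varepsilon}(1-\cos\vartheta_\varepsilon)\,ds\right)^{1/2}\to0,$$
since $L_\varepsilon\to l$. Because $\partial_s\tilde{\gamma}_\varepsilon=(\cos\vartheta_\varepsilon,\sin\vartheta_\varepsilon)$ and $\tilde{\gamma}_\varepsilon(0)=(0,0)$, the second coordinate of $\tilde{\gamma}_\varepsilon(\sigma)$ is $\int_0^\sigma\sin\vartheta_\varepsilon$, whose supremum over $\sigma$ is bounded by the integral above and hence tends to $0$. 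For the horizontal component I would write $\cos\vartheta_\varepsilon=1-(1-\cos\vartheta_\varepsilon)$, so that the first coordinate of $\tilde{\gamma}_\varepsilon(\sigma)$ equals $\sigma-\int_0^\sigma(1-\cos\vartheta_\varepsilon)$, with correction uniformly $O(\varepsilon)$. Passing to the constant-speed parameter $t\in\bar{I}$ through $\gamma_\varepsilon(t)=\tilde{\gamma}_\varepsilon(L_\varepsilon t)$ and using $L_\varepsilon\to l$, the first coordinate converges uniformly to $lt$ and the second to $0$; that is, $\gamma_\varepsilon\to\bar{\gamma}$ uniformly.

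\textbf{Main obstacle.} The only genuinely delicate point is the construction of $\eta_\varepsilon$: it must be smooth, of constant speed, and meet the clamped condition \emph{exactly} at $(0,0)$ and $(l_\varepsilon,0)$ with tangents $\theta_0^\varepsilon,\theta_1^\varepsilon$, while keeping $\mathcal{E}_\varepsilon[\eta_\varepsilon]=l_\varepsilon+o(1)$. Generically the two turning regions produce a nonzero net vertical displacement of order $\varepsilon$, so a purely horizontal middle segment will not close up at $(l_\varepsilon,0)$. I would absorb this by allowing a tilt of order $\varepsilon$ in the straight middle part (adjusting its length to fix the horizontal endpoint); this perturbs the length and the bending only at order $o(\varepsilon)$, so the energy estimate survives. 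Everything downstream is then routine.
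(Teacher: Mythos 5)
Your proof is correct, and while the first two steps (the nearly straight competitor with $\varepsilon$-scale turning regions, and the sandwich $l_\varepsilon\leq L_\varepsilon\leq\mathcal{E}_\varepsilon[\gamma_\varepsilon]\leq l_\varepsilon+O(\varepsilon)$) coincide with the paper's argument, your treatment of the uniform convergence is genuinely different. The paper proceeds softly: bounded speed gives equicontinuity, the fixed endpoint gives uniform boundedness, Arzel\`{a}--Ascoli yields subsequential uniform limits, and the limit is identified as the segment because it joins $(0,0)$ to $(l,0)$ while inheriting Lipschitz constant $l$ from $L_\varepsilon\to l$; full convergence then follows from the subsequence-of-subsequence trick. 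You instead exploit the identity $\int_0^{L_\varepsilon}(1-\cos\vartheta_\varepsilon)\,ds=L_\varepsilon-l_\varepsilon=O(\varepsilon)$ and the pointwise bound $|\sin\vartheta|\leq\sqrt{2}\sqrt{1-\cos\vartheta}$ with Cauchy--Schwarz to control both coordinates directly. What each buys: your route avoids compactness and any subsequence extraction, and it is quantitative --- it gives an explicit rate, $O(\varepsilon)$ for the horizontal deviation and $O(\sqrt{\varepsilon})$ for the vertical one, which the paper's argument does not provide; it also anticipates the phase-transition mechanism (smallness of the potential term) that the paper only deploys later through the functional $\mathcal{F}_\varepsilon$ and Lemma \ref{lemlowerbound}. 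The paper's route is shorter and needs less bookkeeping, since it never invokes the identity $\int\cos\vartheta_\varepsilon\,ds=l_\varepsilon$ at this stage. One small remark: your worry about closing up the competitor exactly at $(l_\varepsilon,0)$ is legitimate and your fix (an $O(\varepsilon)$ tilt of the middle segment) is the standard one; the paper glosses over the same point here and only addresses it carefully later, in the proof of Lemma \ref{lemcurveconstruction2}.
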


\begin{proof}
	Let $L_\varepsilon=\mathcal{L}[\gamma_\varepsilon]$ be the length (speed) of $\gamma_\varepsilon$.
	It is easy to confirm that $\mathcal{E}_\varepsilon[\gamma_\varepsilon]\to l$ as $\varepsilon\to0$ since we can easily construct a sequence of curves $\gamma'_\varepsilon\in\mathcal{A}_{\theta_0^\varepsilon,\theta_1^\varepsilon,l_\varepsilon}$ such that $\mathcal{E}_\varepsilon[\gamma'_\varepsilon]\to l$ by using circular arcs of radius $\varepsilon$ and a segment.
	Since $l_\varepsilon\leq L_\varepsilon\leq \mathcal{E}_\varepsilon[\gamma_\varepsilon]$ and $l_\varepsilon\to l$, the length (speed) $L_\varepsilon$ also converges to $l$.
	In addition, since the speeds $L_\varepsilon$ are bounded as $\varepsilon\to0$, the curves $\gamma_\varepsilon$ are equicontinuous as $\varepsilon\to0$.
	Moreover, since the endpoint $\gamma_\varepsilon(0)=(0,0)$ is fixed and the lengths are bounded, we also find that the curves $\gamma_\varepsilon$ are uniformly bounded as $\varepsilon\to0$.
	Thus, by the Arzel\`{a}-Ascoli theorem, up to a subsequence of any subsequence, $\gamma_\varepsilon$ uniformly converges to a continuous curve joining $(0,0)$ to $(l,0)$.
	Since $L_\varepsilon\to l$ and $\gamma_\varepsilon$ is of constant speed, the convergent limit must be the segment $\bar{\gamma}$.
	Hence, $\gamma_\varepsilon$ fully converges to the segment $\bar{\gamma}$.
	The proof is complete.
\end{proof}

For such a convergent sequence, the following elementary lemma holds.

\begin{lemma}\label{lemmeanvalue}
	Let $l>0$.
	Suppose that a sequence of smooth constant speed curves $\gamma_\varepsilon$ uniformly converges to the segment $\bar{\gamma}(t)=(lt,0)$, and moreover the length $L_\varepsilon$ of $\gamma_{\varepsilon}$ converges to $l$ as $\varepsilon\to0$.
	Then for any open subinterval $J\subset I$ there is a sequence of times $\{t_\varepsilon\}_\varepsilon\subset J$ such that $\llbracket \vartheta_{\gamma_\varepsilon}(t_\varepsilon) \rrbracket\to0$ as $\varepsilon\to0$.
\end{lemma}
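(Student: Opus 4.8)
The plan is to extract from the two hypotheses---uniform convergence $\gamma_\varepsilon\to\bar\gamma$ and convergence of lengths $L_\varepsilon\to l$---the fact that the average horizontal component of the unit tangent on any fixed subinterval tends to its maximal value $1$. Since $1-\cos\vartheta_{\gamma_\varepsilon}$ is nonnegative and vanishes exactly when $\llbracket\vartheta_{\gamma_\varepsilon}\rrbracket=0$, this will force the weight $1-\cos\vartheta_{\gamma_\varepsilon}$ to be small in $L^1$, and a pointwise minimum over the interval will then single out the desired time $t_\varepsilon$.

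First I would fix a closed subinterval $[a,b]\subset J$ with $a<b$, so that every point produced later automatically lies in the open set $J$. Because $\gamma_\varepsilon$ has constant speed $L_\varepsilon$, its velocity is $\dot\gamma_\varepsilon=L_\varepsilon(\cos\vartheta_{\gamma_\varepsilon},\sin\vartheta_{\gamma_\varepsilon})$, and integrating the first component over $[a,b]$ yields
$$L_\varepsilon\int_a^b\cos\vartheta_{\gamma_\varepsilon}\,dt = \langle \gamma_\varepsilon(b)-\gamma_\varepsilon(a),(1,0)\rangle.$$
By uniform convergence the right-hand side tends to $\langle\bar\gamma(b)-\bar\gamma(a),(1,0)\rangle=l(b-a)$, and since $L_\varepsilon\to l>0$ I obtain $\int_a^b\cos\vartheta_{\gamma_\varepsilon}\,dt\to (b-a)$.

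Next, using $\cos\theta\le1$ and the identity $1-\cos\theta=2\sin^2(\theta/2)\ge0$, I rewrite this as
$$\int_a^b\bigl(1-\cos\vartheta_{\gamma_\varepsilon}\bigr)\,dt = (b-a)-\int_a^b\cos\vartheta_{\gamma_\varepsilon}\,dt \longrightarrow 0,$$
so the nonnegative continuous integrand tends to $0$ in $L^1([a,b])$. Choosing $t_\varepsilon\in[a,b]$ to realize the minimum of $t\mapsto 1-\cos\vartheta_{\gamma_\varepsilon}(t)$ over the compact interval, this minimum is bounded above by the mean value, whence $1-\cos\vartheta_{\gamma_\varepsilon}(t_\varepsilon)\to0$. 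Finally, since $1-\cos\vartheta_{\gamma_\varepsilon}(t_\varepsilon)=2\sin^2(\llbracket\vartheta_{\gamma_\varepsilon}(t_\varepsilon)\rrbracket/2)$ and the map $\sigma\mapsto\sin(\sigma/2)$ vanishes only at $\sigma=0$ on the range $[-\pi,\pi)$ of $\llbracket\cdot\rrbracket$ (indeed $\sin^2(\sigma/2)\ge\sigma^2/\pi^2$ there by Jordan's inequality), the convergence forces $\llbracket\vartheta_{\gamma_\varepsilon}(t_\varepsilon)\rrbracket\to0$, which is the claim.

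The substantive input is the quantitative use of $L_\varepsilon\to l$ together with the uniform convergence to derive the horizontal-displacement identity; everything downstream is elementary nonnegativity and continuity. The only point requiring mild care is ensuring the selected time lies in the \emph{open} interval $J$ rather than merely its closure, which is why I work on a slightly smaller closed subinterval $[a,b]\subset J$ from the outset.
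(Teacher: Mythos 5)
Your proof is correct and rests on exactly the same mechanism as the paper's: computing the horizontal displacement over a fixed subinterval as $L_\varepsilon\int\cos\vartheta_{\gamma_\varepsilon}\,dt$ and comparing it, via the length convergence $L_\varepsilon\to l$ and the uniform convergence to the segment, with its limit $l\,|J|$. The only difference is organizational---the paper argues by contradiction (angles bounded away from $0$ force $\cos\vartheta_{\gamma_\varepsilon}\le\cos\delta$ and hence too small a displacement), while you argue directly by making $1-\cos\vartheta_{\gamma_\varepsilon}$ small in $L^1$ and selecting a minimizing time---which is not a substantive distinction.
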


\begin{proof}
	We prove by contradiction; suppose that there is an open interval $J\subset I$ such that $\inf_J|\llbracket \vartheta_{\gamma_\varepsilon}\rrbracket|$ does not converge to $0$ as $\varepsilon\to0$, i.e., there are $\delta>0$ and a sequence $\varepsilon_j\to0$ such that $\inf_J|\llbracket \vartheta_{\gamma_{\varepsilon_j}}\rrbracket|\geq\delta$ for any $j$.
	By this assumption, the $x$-component of $\gamma_{\varepsilon_j}$ satisfies
	$$\limsup_{j\to\infty}(x_{\varepsilon_j}(t_1)-x
	_{\varepsilon_j}(t_0))= \limsup_{j\to\infty}L_{\varepsilon_j}\int_{J}\cos\vartheta_{\gamma_{\varepsilon_j}}dt \leq l(t_1-t_0)(\cos\delta)< l(t_1-t_0),$$
	where the convergence $L_{\varepsilon_j}\to l$ is used.
	On the other hand, since $\gamma_{\varepsilon_j}$ converges to the segment $\bar{\gamma}(t)=(lt,0)$, we immediately have
	$$\lim_{j\to\infty}(x_{\varepsilon_j}(t_1)-x
	_{\varepsilon_j}(t_0))= l(t_1-t_0).$$
	This is a contradiction.
\end{proof}

\begin{remark}
	The above lemma is elementary but should be slightly noted, since there is an example of a sequence of curves such that the sequence uniformly converges to a segment but the tangent vectors are uniformly away from the rightward vector anywhere.
	Such an example is constructed as in Figure~\ref{fignonrightward}, namely, as ``sawtooth'' curves of which edges are modified by loops, so that the number of the tooths diverges and the loops rapidly degenerate to points in the limit.
	Hence, the length convergence is an essential assumption.
\end{remark}

\begin{figure}[tb]
	\centering
	\includegraphics[width=60mm]{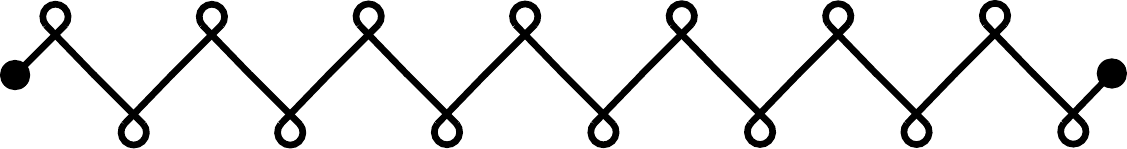}
	\caption{An example of a curve of which tangent vector is not rightward.}
	\label{fignonrightward}
\end{figure}

We are now in a position to prove Proposition \ref{propliminfinequality}.

\begin{proof}[Proof of Proposition \ref{propliminfinequality}]
	By Proposition \ref{propsegmentconvergence} and Lemma \ref{lemmeanvalue}, for $\varepsilon>0$ there is $t_{\varepsilon}\in I$ such that $\llbracket\vartheta_{\gamma_\varepsilon}(t_{\varepsilon})\rrbracket\to0$ as $\varepsilon\to0$.
	Then, by Lemma \ref{lemlowerbound} and Lemma \ref{lemvartheta}, we find that
	\begin{eqnarray}
	\mathcal{F}_{\varepsilon}[\gamma_{\varepsilon}] &=& \mathcal{F}_{\varepsilon}[\gamma_{\varepsilon}|_{[0,t_{\varepsilon}]}]+\mathcal{F}_{\varepsilon}[\gamma_{\varepsilon}|_{[t_{\varepsilon},1]}] \nonumber\\
	&\geq& 8\sqrt{2}\left|\sin^2\frac{\llbracket\vartheta_{\gamma_{\varepsilon}}(0)\rrbracket}{4}-\sin^2\frac{\llbracket\vartheta_{\gamma_{\varepsilon}}(t_{\varepsilon})\rrbracket}{4}\right|\nonumber\\
	&&+8\sqrt{2}\left|\sin^2\frac{\llbracket\vartheta_{\gamma_{\varepsilon}}(t_{\varepsilon})\rrbracket}{4}-\sin^2\frac{\llbracket\vartheta_{\gamma_{\varepsilon}}(1)\rrbracket}{4}\right|. \nonumber
	\end{eqnarray}
	Since $$\sin^2\frac{\llbracket\vartheta_{\gamma_{\varepsilon}}(0)\rrbracket}{4}=\sin^2\frac{\theta^{\varepsilon}_0}{4},\quad \sin^2\frac{\llbracket\vartheta_{\gamma_{\varepsilon}}(1)\rrbracket}{4}=\sin^2\frac{\theta^{\varepsilon}_1}{4},$$
	and the convergences $\theta^{\varepsilon}_0\to\theta_0$, $\theta^{\varepsilon}_1\to\theta_1$, $\llbracket\vartheta_{\gamma_\varepsilon}(t_{\varepsilon})\rrbracket\to0$ hold as $\varepsilon\to0$, we obtain
	$$\liminf_{\varepsilon\to0}\mathcal{F}_{\varepsilon}[\gamma_{\varepsilon}]\geq8\sqrt{2}\left(\sin^2\frac{\theta_0}{4}+\sin^2\frac{\theta_1}{4}\right).$$
	The proof is complete.
\end{proof}

\subsection{Construction of curves with energy convergence}\label{subsectupperbound}

In this subsection we prove that the limsup inequality (\ref{eqnlimsupinequality}) holds for any sequence of minimizers.
To this end, it suffices to construct a suitable sequence of test curves so that the energies converge to the right-hand term of (\ref{eqnlimsupinequality}).

\begin{proposition}\label{proplimsupinequality}
	There is a sequence of curves $\gamma'_\varepsilon\in\mathcal{A}_{\theta_0^\varepsilon,\theta_1^\varepsilon,l_\varepsilon}$ such that
	\begin{eqnarray}\label{eqnenergylimit}
	\lim_{\varepsilon\to0}\mathcal{F}_\varepsilon[\gamma'_\varepsilon]=8\sqrt{2}\left(\sin^2\frac{\theta_0}{4}+\sin^2\frac{\theta_1}{4}\right).
	\end{eqnarray}
\end{proposition}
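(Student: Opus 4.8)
The plan is to build a \emph{recovery sequence} in the spirit of the Modica--Mortola construction, adapted to the geometric closing constraint. Observe first that the target value $8\sqrt{2}(\sin^2\frac{\theta_0}{4}+\sin^2\frac{\theta_1}{4})$ equals $|V(\theta_0)|+|V(\theta_1)|$ by Remark \ref{remvariation}, and that the lower bound in Lemma \ref{lemlowerbound} is saturated precisely when the Young inequality $\varepsilon X^2+\varepsilon^{-1}Y^2\ge 2|X||Y|$ is an equality, i.e.\ when the arc-length tangential angle obeys the transition-layer equation $|\partial_s\vartheta|^2=\varepsilon^{-2}(1-\cos\vartheta)$. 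This is exactly the defining equation of the $\varepsilon$-rescaled borderline elastica. Accordingly, I would place an $\varepsilon$-rescaled borderline elastica near each endpoint and connect them through the middle by an almost straight arc whose energy cost I then show is negligible.

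Concretely, I would construct the curve through its arc-length tangential angle $\vartheta_\varepsilon$ on $[0,L_\varepsilon]$, since then $\gamma'_\varepsilon(s)=\int_0^s(\cos\vartheta_\varepsilon,\sin\vartheta_\varepsilon)\,d\sigma$ automatically starts at $(0,0)$ and has the prescribed tangents whenever $\vartheta_\varepsilon(0)=\theta_0^\varepsilon$ and $\vartheta_\varepsilon(L_\varepsilon)=\theta_1^\varepsilon \pmod{2\pi}$. On an initial interval $[0,\varepsilon R_\varepsilon]$ set $\vartheta_\varepsilon(s)=\vartheta_{\gamma_B^{\theta_0^\varepsilon}}(s/\varepsilon)$, and on a terminal interval of length $\varepsilon R_\varepsilon$ use the time-reversed profile of $\gamma_B^{\theta_1^\varepsilon}$ sending the angle from $0$ up to $\theta_1^\varepsilon$; here $R_\varepsilon\to\infty$ with $\varepsilon R_\varepsilon\to0$ (e.g.\ $R_\varepsilon=|\log\varepsilon|$). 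Since these profiles solve the transition-layer equation and the corresponding angle is monotone, both inequalities in Lemma \ref{lemlowerbound} are equalities, so the $\mathcal{F}_\varepsilon$-energy of each end piece equals $|V(\vartheta_{\gamma_B^{\theta_i^\varepsilon}}(R_\varepsilon))-V(\theta_i^\varepsilon)|$, which tends to $|V(\theta_i)|$ as the borderline angle decays to $0$. The residual angle $\vartheta_{\gamma_B^{\theta_i^\varepsilon}}(R_\varepsilon)$ is exponentially small, so a smooth cutoff bringing it to $0$ at the junctions adds only exponentially small energy and keeps $\gamma'_\varepsilon$ in $C^\infty$.

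The heart of the matter, and where I expect the main difficulty, is the closing condition $\gamma'_\varepsilon(L_\varepsilon)=(l_\varepsilon,0)$, i.e.\ the two scalar equations $\int_0^{L_\varepsilon}\cos\vartheta_\varepsilon=l_\varepsilon$ and $\int_0^{L_\varepsilon}\sin\vartheta_\varepsilon=0$. Because the length is unconstrained, I would keep two free parameters in the middle part: its length $\lambda$ (on which $\vartheta_\varepsilon\approx0$) and the amplitude $b$ of a small fixed-shape angular bump $b\,\phi(\cdot)$ supported in the interior. Increasing $\lambda$ raises $\int\cos\vartheta_\varepsilon$ at rate $\approx 1$, while $b$ controls $\int\sin\vartheta_\varepsilon$ at rate $\approx\int\phi\neq0$; the off-diagonal couplings are $O(b)=O(\varepsilon)$, so the Jacobian of $(\lambda,b)\mapsto(\int\cos\vartheta_\varepsilon-l_\varepsilon,\int\sin\vartheta_\varepsilon)$ is invertible. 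Since the end pieces contribute only $O(\varepsilon R_\varepsilon)\to0$ to each integral, the implicit function theorem should yield $\lambda_\varepsilon\to l$ and $b_\varepsilon=O(\varepsilon)$ solving both equations exactly. Making this two-parameter solvability rigorous, and checking it stays compatible with admissibility, is the delicate point.

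Finally I must verify that the middle part costs $o(1)$ in $\mathcal{F}_\varepsilon$. There $|\vartheta_\varepsilon|=O(\varepsilon)$ and $|\partial_s\vartheta_\varepsilon|=O(\varepsilon)$ (amplitude $O(\varepsilon)$ over an $O(1)$ interval), whence $1-\cos\vartheta_\varepsilon=O(\varepsilon^2)$; therefore $\int_{\mathrm{mid}}\varepsilon|\partial_s\vartheta_\varepsilon|^2\,ds=O(\varepsilon^3)$ and $\int_{\mathrm{mid}}\varepsilon^{-1}(1-\cos\vartheta_\varepsilon)\,ds=O(\varepsilon)$, both $o(1)$. Adding the end-piece contributions, which tend to $|V(\theta_0)|+|V(\theta_1)|$, together with the exponentially small cutoff corrections, gives (\ref{eqnenergylimit}). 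I emphasize that the unconstrained length is used essentially throughout: it is what allows the middle arc to be long enough to realize the $x$-span $l_\varepsilon$ while remaining almost straight, and it frees the parameter $\lambda$ used to close the curve.
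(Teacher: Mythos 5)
Your construction is correct and shares the paper's skeleton: rescaled borderline elasticae at each endpoint carrying the energy $|V(\theta_0)|+|V(\theta_1)|=8\sqrt{2}\bigl(\sin^2\tfrac{\theta_0}{4}+\sin^2\tfrac{\theta_1}{4}\bigr)$ via the saturated Young inequality, an almost straight middle of negligible energy, and essential use of the unconstrained length. Where you genuinely diverge from the paper is the closing step. The paper (Lemmas \ref{lemcurveconstruction} and \ref{lemcurveconstruction2}) closes the curve by explicit geometric surgery: circular arcs of radius $\varepsilon$ with vanishing central angles rotate the tangents, and a straight segment then joins the two modified endpoints exactly; the only quantitative input is that the end pieces terminate at heights $O(\varepsilon)=o(\varepsilon^{1/2})$, so the tilted segment's potential term is $\varepsilon^{-1}O(|\Delta y|^2)=o(1)$, and the boundary conditions hold by construction rather than by solving any equation. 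You instead keep a flat middle with two free parameters (its length $\lambda$ and a bump amplitude $b$) and solve the two closure equations by a quantitative implicit function theorem, using that the Jacobian is essentially triangular with diagonal entries $1$ and $\int\phi\neq0$, uniformly in $\varepsilon$, and that the discrepancies to be corrected are $O(\varepsilon R_\varepsilon)$ and $O(\varepsilon)$. Both routes work, and the energy bookkeeping in your version ($O(\varepsilon^3)+O(\varepsilon)$ for the middle, polynomially small cutoff cost $O(\varepsilon^{\sqrt{2}})$ with $R_\varepsilon=|\log\varepsilon|$) is sound. What each buys: the paper's surgery is entirely explicit and needs no fixed-point argument, at the price of producing only a $C^1$, piecewise $C^2$ competitor that must then be mollified; your route yields a globally smooth admissible curve directly and quantifies $b_\varepsilon=O(\varepsilon)$, $\lambda_\varepsilon\to l$, but the uniform-invertibility/existence argument you yourself flag as delicate does need to be written out (it is standard, since the closure map is affine in $(\lambda,b)$ up to $O(b^2)$ errors, but it is the one step in your sketch that is asserted rather than proved).
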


This immediately implies (\ref{eqnlimsupinequality}) for any sequence of minimizers $\{\gamma_\varepsilon\}_\varepsilon$ since $\mathcal{F}_\varepsilon[\gamma_\varepsilon]$ is bounded above by $F_\varepsilon[\gamma'_\varepsilon]$ for a curve $\gamma'_\varepsilon$ in Proposition \ref{proplimsupinequality}.
For the proof, we construct suitable curves which are ``optimally bending'' as $\varepsilon\to0$ near the endpoints.
Some ideas are similar to the author's previous paper \cite{Mi16}.

In view of phase transitions, near the endpoints, the rescaled tangential angles are expected to be close to transition layers for the phase transition energy $\mathcal{F}_\varepsilon$.
Hence, we consider the following ODEs:
\begin{eqnarray}\label{eqnborderline}
\partial_s\varphi_+(s)=\sqrt{1-\cos\varphi_+(s)}, \quad \partial_s\varphi_-(s)=-\sqrt{1-\cos\varphi_-(s)}.
\end{eqnarray}

For any initial values $\varphi_\pm(0)\in\mathbb{R}$, these equations are solved uniquely and globally in $s\in\mathbb{R}$.
When $\varphi_\pm(0)\in2\pi\mathbb{Z}$, the solutions are constant functions.
In the case that $\varphi_\pm(0)=\pm\pi$, the solutions are represented as
\begin{eqnarray}\label{solborderline}
\bar{\varphi}_\pm(s):=\pm 4\arctan\left(e^{\frac{s}{\sqrt{2}}}\right).
\end{eqnarray}
The function $\bar{\varphi}_+$ is strictly increasing with $\lim_{s\to\pm\infty}\bar{\varphi}_+(s)=\pi\pm\pi$ and its graph possesses point symmetry at $(0,\bar{\varphi}_+(0))=(0,\pi)$.
Any other solution to (\ref{eqnborderline}) is of the form $\bar{\varphi}_{\pm}(s+s_0)+2\pi m$, where $s_0\in\mathbb{R}$ and $m\in\mathbb{Z}$.

An important property of the above solutions is that for any $s_0<s_1$, by (\ref{eqnborderline}), the following energy identity holds:
\begin{eqnarray}\label{eqnenergy}
\int_{s_0}^{s_1}\left(|\partial_s\bar{\varphi}_\pm|^2 + (1-\cos\bar{\varphi}_\pm) \right)ds &=& \pm\int_{s_0}^{s_1} 2\partial_s\bar{\varphi}_\pm\sqrt{1-\cos\bar{\varphi}_\pm}ds \\
&=& \pm\int_{s_0}^{s_1} \partial_s(V\circ\bar{\varphi}_\pm) ds \nonumber\\
&=& \pm(V\circ\bar{\varphi}_\pm(s_1)-V\circ\bar{\varphi}_\pm(s_0)), \nonumber\\
&=& |V(\bar{\varphi}_\pm(s_1))-V(\bar{\varphi}_\pm(s_0))|,\nonumber
\end{eqnarray}
where $V$ is the weighted variation function.
The last equality follows since $V$ is increasing and $\bar{\varphi}_+$ (resp.\ $\bar{\varphi}_-$) is increasing (resp.\ decreasing).

A non-straight unit speed curve of which tangential angle satisfies (\ref{eqnborderline}) is nothing but the borderline elastica;
in fact, concerning (\ref{solborderline}) for example, we compute the curvature as
$$\bar{\kappa}_\pm(s)=\partial_s\bar{\varphi}_\pm(s)=\pm\sqrt{2}{\rm \ sech}\frac{s}{\sqrt{2}}.$$
(See e.g.\ \cite{Si08} to confirm that the above expression corresponds to the borderline elastica.)
By (\ref{eqnborderline}) and (\ref{solborderline}), the borderline elasticae $\bar{\gamma}_\pm=(\bar{x}_\pm,\bar{y}_\pm)$ such that $\bar{\gamma}_\pm(0)=(0,0)$ and $\partial_s\bar{\gamma}_\pm(0)=(-1,0)$ are explicitly parameterized as
\begin{eqnarray}
\bar{x}_\pm(s) &=& \int_0^s\cos\bar{\varphi}_\pm=s-\int_0^s(1-\cos\bar{\varphi}_\pm) = s\mp\int_0^s\partial_s\bar{\varphi}_\pm\sqrt{1-\cos\bar{\varphi}_\pm} \nonumber\\
&=& s-\sqrt{2}\int_0^s\partial_s\bar{\varphi}_\pm\sin\frac{\bar{\varphi}_\pm}{2} = s + 2\sqrt{2}\cos\frac{\bar{\varphi}_\pm(s)}{2}=s - 2\sqrt{2}\tanh\frac{s}{\sqrt{2}}, \nonumber\\
\bar{y}_\pm(s) &=& \int_0^s\sin\bar{\varphi}_\pm=\mp\int_0^{|s|}\sqrt{1-\cos^2\bar{\varphi}_\pm} = -\int_0^{|s|}\partial_s\bar{\varphi}_\pm\sqrt{1+\cos\bar{\varphi}_\pm} \nonumber\\
&=& \sqrt{2}\int_0^{|s|}\partial_s\bar{\varphi}_\pm\cos\frac{\bar{\varphi}_\pm}{2} = 2\sqrt{2}\left(\sin\frac{\bar{\varphi}_\pm(s)}{2} \mp 1 \right)= \pm 2\sqrt{2}\left({\rm sech}\frac{s}{\sqrt{2}}-1\right). \nonumber
\end{eqnarray}

Using the borderline elasticae, we can construct a sequence of curves satisfying (\ref{eqnenergylimit}).
For the sake of convenience, we prepare a precise definition of borderline elasticae, which is equivalent to Definition \ref{defborderline0}.

\begin{definition}[Borderline elastica with initial angle]\label{defborderline}
	Let $\theta\in[-\pi,\pi]$.
	A function $\vartheta_B^\theta:[0,\infty)\to\mathbb{R}$ is called {\it borderline angle function with initial angle $\theta$} if $\vartheta_B^\theta$ is a solution to either of the equations (\ref{eqnborderline}) such that $\vartheta_B^\theta(0)=\theta$ and $\vartheta_B^\theta(s)\to0$ as $s\to\infty$.
	Such a function is uniquely determined for any $\theta\in[-\pi,\pi]$.

	Similarly, a smooth curve $\gamma_B^\theta:[0,\infty)\to\mathbb{R}^2$ parameterized by the arc length is called {\it borderline elastica with initial angle $\theta$} if $\gamma_B^\theta(0)=(0,0)$ and its tangential angle $\vartheta_{\gamma_B^\theta}$ is the borderline angle function with initial angle $\theta$ in the above sense.
\end{definition}

Now we construct a bending part near the origin.

\begin{lemma}\label{lemcurveconstruction}
	Let $\alpha\in(0,1)$ and $\theta_\varepsilon\to\theta$ be a convergent sequence in $[-\pi,\pi]$.
	Then there is a sequence of smooth regular curves $\gamma_\varepsilon=(x_\varepsilon,y_\varepsilon)$ parameterized by the arc lengths $s\in[0,\varepsilon^\alpha]$ such that the following conditions hold:
	\begin{enumerate}
		\item[(1)] $\gamma_\varepsilon(0)=(0,0)$, $-2\sqrt{2}\varepsilon\leq x_\varepsilon(s)\leq\varepsilon^{\alpha}$ and $|y_\varepsilon(s)|\leq 2\sqrt{2}\varepsilon$ for $s\in[0,\varepsilon^\alpha]$.
		\item[(2)] $\vartheta_{\gamma_\varepsilon}(0)=\theta_{\varepsilon}$ and $\lim_{\varepsilon\to0}\vartheta_{\gamma_\varepsilon}(\varepsilon^{\alpha})=0$.
		\item[(3)] $\lim_{\varepsilon\to0}\mathcal{F}_\varepsilon[\gamma_\varepsilon]=8\sqrt{2}\sin^2({\theta}/{4}).$
	\end{enumerate}
\end{lemma}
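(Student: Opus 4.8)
The plan is to take for $\gamma_\varepsilon$ a rescaled copy of the borderline elastica with the prescribed initial angle, exploiting that under the dilation $s=\varepsilon\hat s$ the energy $\mathcal{F}_\varepsilon$ in (\ref{eqnenergyanglerepresentation}) becomes the scale-free transition energy whose optimal profiles solve (\ref{eqnborderline}). Concretely, letting $\gamma_B^{\theta_\varepsilon}:[0,\infty)\to\mathbb{R}^2$ be the borderline elastica with initial angle $\theta_\varepsilon$ from Definition \ref{defborderline}, I would set
\[
\gamma_\varepsilon(s):=\varepsilon\,\gamma_B^{\theta_\varepsilon}(s/\varepsilon),\qquad s\in[0,\varepsilon^\alpha].
\]
Since the dilation preserves unit speed, $\gamma_\varepsilon$ is again arc length parameterized, its tangential angle is $\vartheta_{\gamma_\varepsilon}(s)=\vartheta_B^{\theta_\varepsilon}(s/\varepsilon)$, and $\gamma_\varepsilon(0)=(0,0)$. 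Note that $\gamma_B^{\theta_\varepsilon}$ need only be evaluated on $[0,\varepsilon^{\alpha-1}]$, which is harmless because $\varepsilon^{\alpha-1}\to\infty$ for $\alpha\in(0,1)$.

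For (2), the initial value $\vartheta_{\gamma_\varepsilon}(0)=\vartheta_B^{\theta_\varepsilon}(0)=\theta_\varepsilon$ is built in, while $\vartheta_{\gamma_\varepsilon}(\varepsilon^\alpha)=\vartheta_B^{\theta_\varepsilon}(\varepsilon^{\alpha-1})$; since the borderline angle functions decay to $0$ exponentially with a rate uniform in the initial angle (as can be read off from (\ref{solborderline})) and $\varepsilon^{\alpha-1}\to\infty$, this endpoint angle tends to $0$. For (3), the key is scale invariance: changing variables $s=\varepsilon\hat s$ in (\ref{eqnenergyanglerepresentation}) gives
\[
\mathcal{F}_\varepsilon[\gamma_\varepsilon]=\int_0^{\varepsilon^{\alpha-1}}\left(|\partial_{\hat s}\vartheta_B^{\theta_\varepsilon}|^2+(1-\cos\vartheta_B^{\theta_\varepsilon})\right)d\hat s,
\]
and since $\vartheta_B^{\theta_\varepsilon}$ solves one of the equations (\ref{eqnborderline}), the energy identity (\ref{eqnenergy}) evaluates this exactly as $|V(\vartheta_B^{\theta_\varepsilon}(\varepsilon^{\alpha-1}))-V(\theta_\varepsilon)|$. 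Passing to the limit with $\vartheta_B^{\theta_\varepsilon}(\varepsilon^{\alpha-1})\to0$, $\theta_\varepsilon\to\theta$, continuity of $V$, and the identity $|V(\theta)|=8\sqrt{2}\sin^2(\theta/4)$ from Remark \ref{remvariation} yields the claimed limit.

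Condition (1) reduces to $C^0$-bounds on the borderline elastica itself, since $x_\varepsilon(s)=\varepsilon X(s/\varepsilon)$ and $y_\varepsilon(s)=\varepsilon Y(s/\varepsilon)$ with $(X,Y)=\gamma_B^{\theta_\varepsilon}$. The upper bound $x_\varepsilon(s)\leq s\leq\varepsilon^\alpha$ is immediate from $X(\hat s)\leq\hat s$. For the remaining bounds I would integrate $\cos\vartheta_B^\theta$ and $\sin\vartheta_B^\theta$ explicitly using the relation $|\partial_s\vartheta_B^\theta|=\sqrt{2}\,|\sin(\vartheta_B^\theta/2)|$ coming from (\ref{eqnborderline}); this gives closed forms such as
\[
X(\hat s)=\hat s-2\sqrt{2}\left(\cos\tfrac{\vartheta_B^\theta(\hat s)}{2}-\cos\tfrac{\theta}{2}\right),\qquad
Y(\hat s)=\pm 2\sqrt{2}\left(\sin\tfrac{\theta}{2}-\sin\tfrac{\vartheta_B^\theta(\hat s)}{2}\right),
\]
from which $X\geq-2\sqrt{2}$ and $|Y|\leq2\sqrt{2}$ follow from the monotonicity of $\vartheta_B^\theta$ and $|\theta|\leq\pi$.

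I expect the only genuine subtlety to be uniformity in the sequence $\theta_\varepsilon\to\theta$: one must ensure that $\vartheta_B^{\theta_\varepsilon}(\varepsilon^{\alpha-1})\to0$ and that all coordinate bounds hold uniformly as the initial angle varies, in particular in the degenerate regimes $|\theta|=\pi$ (where $\gamma_B^{\theta_\varepsilon}$ approaches the full loop (\ref{solborderline})) and $\theta=0$ (where $\gamma_B^{\theta_\varepsilon}$ degenerates to a segment). Both are handled by the explicit exponential decay and the continuous dependence of the profiles on the initial angle, so this is a technical rather than conceptual obstacle; the conceptual content lies entirely in the scale invariance together with the energy identity (\ref{eqnenergy}).
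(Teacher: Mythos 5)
Your proposal is correct and is essentially identical to the paper's proof: the paper also sets $\gamma_\varepsilon(s)=\varepsilon\gamma_B^{\theta_\varepsilon}(s/\varepsilon)$ on $[0,\varepsilon^\alpha]$, gets (2) from $\varepsilon^{\alpha-1}\to\infty$, and gets (3) from the change of variables $s'=s/\varepsilon$ together with the energy identity (\ref{eqnenergy}) and Lemma \ref{lemvartheta}/Remark \ref{remvariation}. The only difference is cosmetic: the paper dismisses (1) as following from the ``aforementioned properties'' of the borderline elastica, whereas you write out the explicit antiderivatives for $X$ and $Y$, which is a correct (and slightly more complete) rendering of the same computation already displayed in Section \ref{subsectupperbound}.
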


\begin{proof}
	We prove this lemma by using a part of the rescaled borderline elastica:
	we define the curve $\gamma_\varepsilon$ so that $\mathcal{L}[\gamma_\varepsilon]=\varepsilon^\alpha$ and $\gamma_\varepsilon(s)=\varepsilon\gamma_B^{\theta_\varepsilon}(s/\varepsilon)$ for $s\in(0,\varepsilon^\alpha)$, where $\gamma_B^{\theta_\varepsilon}$ is the borderline elastica with initial angle ${\theta_\varepsilon}$ in Definition \ref{defborderline}.
	Note that $\vartheta_{\gamma_\varepsilon}(s)=\vartheta_B^{\theta_\varepsilon}(s/\varepsilon)$.
	By the aforementioned properties of the borderline elastica, it is straightforward to confirm the conditions (1) and (2).
	It should be noted that $\varepsilon^{\alpha-1}\to\infty$ as $\varepsilon\to0$ by $\alpha<1$, and hence $\vartheta_{\gamma_{\varepsilon}}(\varepsilon^\alpha)=\vartheta_B^{\theta_\varepsilon}(\varepsilon^{\alpha-1})$ converges to zero as $\varepsilon\to0$.
	The last condition (3) follows by the energy identity (\ref{eqnenergy}):
	\begin{eqnarray}
	\mathcal{F}_\varepsilon[\gamma_\varepsilon] &=& \int_0^{\varepsilon^\alpha} \left(\varepsilon|\partial_s\vartheta_{\gamma_\varepsilon}|^2 + \frac{1}{\varepsilon}(1-\cos\vartheta_{\gamma_\varepsilon}) \right) ds \nonumber\\
	&=& \int_0^{\varepsilon^{\alpha-1}} \left(|\partial_{s'}\vartheta_B^{\theta_\varepsilon}|^2 + (1-\cos{\vartheta_B^{\theta_\varepsilon}})\right) ds' \quad(s'=s/\varepsilon) \nonumber\\
	&=& |V(\vartheta_B^{\theta_\varepsilon}(0))-V(\vartheta_B^{\theta_\varepsilon}(\varepsilon^{\alpha-1}))|\xrightarrow{\varepsilon\to0}|V({\theta}) -V(0)|=8\sqrt{2}\sin^2\frac{{\theta}}{4}, \nonumber
	\end{eqnarray}
	where Lemma \ref{lemvartheta} is used for the last identity.
\end{proof}

We next construct a suitable sequence of curves connecting the parts near the endpoints.

\begin{lemma}\label{lemcurveconstruction2}
	Let $A_\varepsilon=(a_\varepsilon^x,a_\varepsilon^y),B_\varepsilon=(b_\varepsilon^x,b_\varepsilon^y)\in\mathbb{R}^2$ be points such that $A_\varepsilon\to(0,0)$ and $B_\varepsilon\to(l,0)$ as $\varepsilon\to0$ for some $l>0$.
	Let $\theta_\varepsilon^A,\theta_\varepsilon^B\in[-\pi,\pi]$ be angles converging to zero as $\varepsilon\to0$.
	Suppose that $|a_\varepsilon^y|+|b_\varepsilon^y|=o(\varepsilon^{1/2})$ as $\varepsilon\to0$.
	Then there is a sequence of smooth curves $\gamma_\varepsilon$ of length $L_\varepsilon$ parameterized by the arc lengths $s\in[0,L_\varepsilon]$ such that the boundary condition
	$$\gamma_\varepsilon(0)=A_\varepsilon,\ \gamma(L_\varepsilon)=B_\varepsilon,\ \partial_s\gamma_\varepsilon(0)=(\cos\theta_\varepsilon^A,\sin\theta_\varepsilon^A),\ \partial_s\gamma_\varepsilon(L_\varepsilon)=(\cos\theta_\varepsilon^B,\sin\theta_\varepsilon^B)$$
	hold, the length $L_\varepsilon$ converges to $l$, and moreover
	$$\lim_{\varepsilon\to0}\mathcal{F}_\varepsilon[\gamma_\varepsilon]=0.$$
\end{lemma}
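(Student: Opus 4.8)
The plan is to realize $\gamma_\varepsilon$ as a graph over the segment joining $A_\varepsilon$ to $B_\varepsilon$. Since both boundary angles tend to $0$ and $B_\varepsilon-A_\varepsilon\to(l,0)$, for small $\varepsilon$ the connecting curve should be nearly horizontal, so I would seek $\gamma_\varepsilon(x)=(x,f_\varepsilon(x))$ with $x\in[a_\varepsilon^x,b_\varepsilon^x]$ and $f_\varepsilon$ smooth, reparametrized by arc length at the end. Writing the curve this way makes the position and tangent boundary data automatic: I only need $f_\varepsilon(a_\varepsilon^x)=a_\varepsilon^y$, $f_\varepsilon(b_\varepsilon^x)=b_\varepsilon^y$, $f_\varepsilon'(a_\varepsilon^x)=\tan\theta_\varepsilon^A$, and $f_\varepsilon'(b_\varepsilon^x)=\tan\theta_\varepsilon^B$, the branch $\arctan(\tan\theta_\varepsilon^\bullet)=\theta_\varepsilon^\bullet$ being correct once $|\theta_\varepsilon^\bullet|<\pi/2$, which holds for small $\varepsilon$. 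With $\vartheta=\arctan f_\varepsilon'$ and $\kappa=f_\varepsilon''/(1+(f_\varepsilon')^2)^{3/2}$, the energy reads $\mathcal{F}_\varepsilon[\gamma_\varepsilon]=\int_{a_\varepsilon^x}^{b_\varepsilon^x}(\varepsilon\kappa^2+\varepsilon^{-1}(1-\cos\vartheta))\sqrt{1+(f_\varepsilon')^2}\,dx$, and since $f_\varepsilon'$ will be small this is comparable to $\int(\varepsilon(f_\varepsilon'')^2+(2\varepsilon)^{-1}(f_\varepsilon')^2)\,dx$.

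The key structural point is that the $\varepsilon^{-1}$-weight forces the slope to be $o(\varepsilon^{1/2})$ wherever it is spread over an $O(1)$ length; large slopes may only live in thin boundary layers. Accordingly I would split $[a_\varepsilon^x,b_\varepsilon^x]$ into two layers of width $\varepsilon$ at the ends and a middle, and prescribe $f_\varepsilon'$ to drop smoothly from $\tan\theta_\varepsilon^A$ to a constant middle slope $m_\varepsilon$ over the left layer, to stay equal to $m_\varepsilon$ in the middle, and to rise from $m_\varepsilon$ to $\tan\theta_\varepsilon^B$ over the right layer, using fixed smooth transition profiles. By the scaling $f_\varepsilon''=O(\theta_\varepsilon^\bullet/\varepsilon)$ on a set of measure $\varepsilon$, each layer costs $O((\theta_\varepsilon^A)^2+(\theta_\varepsilon^B)^2)\to0$ in both the bending and the potential part; in the middle $f_\varepsilon''=0$, so only the potential term survives there, of size $\approx m_\varepsilon^2 l/(2\varepsilon)$.

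It remains to fix $m_\varepsilon$ so that the total rise matches $b_\varepsilon^y-a_\varepsilon^y$. Integrating $f_\varepsilon'$, the two layers contribute amounts $I_\varepsilon^A,I_\varepsilon^B$ of size $O(\varepsilon\,\theta_\varepsilon^\bullet)=o(\varepsilon)$, so the constraint $b_\varepsilon^y-a_\varepsilon^y=I_\varepsilon^A+I_\varepsilon^B+m_\varepsilon(b_\varepsilon^x-a_\varepsilon^x-2\varepsilon)$ is linear and solved by $m_\varepsilon=(b_\varepsilon^y-a_\varepsilon^y-I_\varepsilon^A-I_\varepsilon^B)/(b_\varepsilon^x-a_\varepsilon^x-2\varepsilon)$, whose numerator is $o(\varepsilon^{1/2})$ and denominator tends to $l$. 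Here the hypothesis $|a_\varepsilon^y|+|b_\varepsilon^y|=o(\varepsilon^{1/2})$ enters decisively: it yields $m_\varepsilon=o(\varepsilon^{1/2})$, hence $m_\varepsilon^2 l/(2\varepsilon)=o(1)$ and the middle potential cost vanishes. Finally $L_\varepsilon=\int\sqrt{1+(f_\varepsilon')^2}\,dx\to l$ because $f_\varepsilon'\to0$ uniformly and $b_\varepsilon^x-a_\varepsilon^x\to l$.

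The main obstacle, and the only place the quantitative hypothesis is needed, is reconciling the exact vertical displacement with the smallness of the angle: one must route all the prescribed rise through the $O(1)$-length middle without ever letting $\varepsilon^{-1}(f_\varepsilon')^2$ fail to integrate to $o(1)$, and this is possible precisely because $(b_\varepsilon^y-a_\varepsilon^y)^2=o(\varepsilon)$. Everything else is routine: verifying the $C^\infty$ matching of the fixed layer profiles, and controlling the exact energy by its small-slope approximation via the elementary bounds $1-\cos\vartheta\le\vartheta^2/2$ and $|\vartheta|=|\arctan f_\varepsilon'|\le|f_\varepsilon'|$ together with $\sqrt{1+(f_\varepsilon')^2}\to1$ uniformly.
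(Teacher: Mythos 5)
Your construction is correct, but it takes a genuinely different route from the paper. The paper reduces the boundary data in two stages using circular arcs of radius $\varepsilon$ (first to kill the angles, then to align the data with a connecting segment), computes the energy of each arc explicitly ($\varepsilon\kappa^2$-term of size $O(\phi_\varepsilon)$, potential term of size $O(\phi_\varepsilon)$), and bounds the segment's potential term by the excess of its length over its horizontal displacement, which is $\varepsilon^{-1}o(|\Delta y|^2)=o(1)$ by the hypothesis; since the pieces are only $C^1$ and piecewise $C^2$, it then invokes a standard mollification. You instead build the curve as a graph $(x,f_\varepsilon(x))$, with smooth slope-transition layers of width $\varepsilon$ at the ends and a straight middle of slope $m_\varepsilon$ fixed by the vertical-displacement constraint; the hypothesis enters as $m_\varepsilon=o(\varepsilon^{1/2})$, hence middle potential cost $\varepsilon^{-1}m_\varepsilon^2 l=o(1)$, which is the same mechanism as the paper's segment estimate in different clothing. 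What your approach buys: if the transition profiles have flat ends the curve is genuinely $C^\infty$ from the start, so no mollification step is needed, and the endpoint matching collapses to one linear equation; what the paper's approach buys: trivial energy bookkeeping (constant curvature $1/\varepsilon$ on the arcs) and no need for graph-curvature or small-slope expansions.

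One small wrinkle you should fix in a final write-up: since your layers interpolate between $\tan\theta_\varepsilon^\bullet$ and $m_\varepsilon$, the layer contributions $I_\varepsilon^A,I_\varepsilon^B$ to the rise depend (affinely) on $m_\varepsilon$, so your displayed formula for $m_\varepsilon$ is implicit. This is harmless: the matching constraint is affine in $m_\varepsilon$ with coefficient $b_\varepsilon^x-a_\varepsilon^x-2c\varepsilon\to l>0$, so it has a unique solution and the estimate $m_\varepsilon=o(\varepsilon^{1/2})$, hence the vanishing of the energy, goes through unchanged.
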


\begin{proof}
	We first note that it suffices to construct a sequence of curves of class $C^1$ and piecewise $C^2$ by a standard mollifying argument.
	We construct $\gamma_\varepsilon$ as in Figure~\ref{figupperbound}; namely, we use circular arcs of radius $\varepsilon$ near the endpoints, and connect them by a segment.

	By using circular arcs of radius $\varepsilon$ and central angles $\phi_\varepsilon^A$, $\phi_\varepsilon^B$ such that $\phi_\varepsilon^A,\phi_\varepsilon^B\to0$ near the endpoints (and noting Lemma \ref{lemenergydecomposition}), we can modify the boundary conditions as $A'_\varepsilon,B'_\varepsilon,\theta_\varepsilon^{A'},\theta_\varepsilon^{B'}$ such that $A'_\varepsilon,B'_\varepsilon$ satisfy the same assumptions as $A_\varepsilon,B_\varepsilon$, and $\theta_\varepsilon^{A'}=\theta_\varepsilon^{B'}=0$ for any small $\varepsilon>0$.
	Note that the energy $\mathcal{F}_\varepsilon$ of the circular arc parts $\gamma_\varepsilon^c$ tends to be zero as $\varepsilon\to0$ since
	$$\varepsilon\int_{\gamma_\varepsilon^c}\kappa^2ds=\varepsilon\cdot\frac{1}{\varepsilon^2}\cdot\varepsilon(\phi_\varepsilon^A+\phi_\varepsilon^B)\to0,\quad \frac{1}{\varepsilon}\int_{\gamma_\varepsilon^c}ds=\frac{1}{\varepsilon}\cdot\varepsilon(\phi_\varepsilon^A+\phi_\varepsilon^B)\to0,$$
	$$\frac{1}{\varepsilon}\left|\int_{\gamma_\varepsilon^c}\cos\vartheta_{\gamma_\varepsilon^c}ds\right|\leq \frac{1}{\varepsilon}\int_{\gamma_\varepsilon^c}ds \to 0.$$

	Then, by using again circular arcs of radius $\varepsilon$ such that the central angles converge to zero, we may assume that the boundary condition $A''_\varepsilon,B''_\varepsilon,\theta_\varepsilon^{A''},\theta_\varepsilon^{B''}$ allow a segment that is compatible with the condition.

	The energy $\mathcal{F}_\varepsilon$ of the segment $\gamma_\varepsilon^s$ joining $A''_\varepsilon=({a_\varepsilon^x}'',{a_\varepsilon^y}'')$ to $B''_\varepsilon=({b_\varepsilon^x}'',{b_\varepsilon^y}'')$ also satisfies $\mathcal{F}_\varepsilon[\gamma_\varepsilon^s]\to0$.
	In fact, the curvature of the segment is zero, and
	\begin{eqnarray}
	\frac{1}{\varepsilon}\int_{\gamma_\varepsilon^s}(1-\cos\vartheta_{\gamma_\varepsilon^s})ds &=& \frac{1}{\varepsilon}\left(\sqrt{|{a_\varepsilon^x}''-{b_\varepsilon^x}''|^2+|{a_\varepsilon^y}''-{b_\varepsilon^y}''|^2}-|{a_\varepsilon^x}''-{b_\varepsilon^x}''|\right)\nonumber\\
	&=& \varepsilon^{-1} o(|{a_\varepsilon^y}''-{b_\varepsilon^y}''|^2)=o(1)\to0 \nonumber
	\end{eqnarray}
	since $|{a_\varepsilon^x}''-{b_\varepsilon^x}''|\to l>0$ and $|{a_\varepsilon^y}''|+|{b_\varepsilon^y}''|=o(\varepsilon^{1/2})$.
	The proof is now complete.
\end{proof}

\begin{figure}[tb]
	\centering
	\includegraphics[width=80mm]{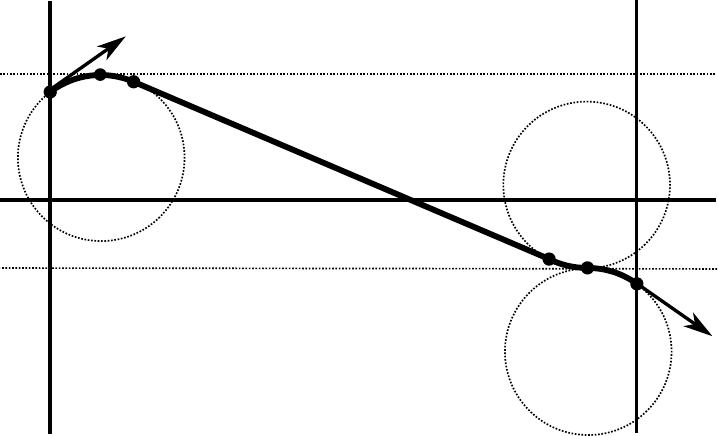}
	\caption{Construction of a curve for Lemma \ref{lemcurveconstruction2}.}
	\label{figupperbound}
\end{figure}

By using the above lemmas, we complete the proof of Proposition \ref{proplimsupinequality}.

\begin{proof}[Proof of Proposition \ref{proplimsupinequality}]
	As mentioned in the proof of Lemma \ref{lemcurveconstruction2}, it suffices to construct a sequence of curves of class $C^1$ and piecewise $C^2$ by a standard mollifying argument.
	We construct a sequence $\{\gamma'_\varepsilon\}_\varepsilon$ as in Figure~\ref{figupperbound2}.

	Fix any $\alpha\in(0,1)$.
	Let $\varepsilon$ be small as $\varepsilon^\alpha<l_\varepsilon$.
	To construct $\gamma'_\varepsilon$, we use the curves in Lemma \ref{lemcurveconstruction} near the endpoints and connect them suitably by Lemma \ref{lemcurveconstruction2}.
	Namely, denoting the curves of Lemma \ref{lemcurveconstruction} with $\theta=\theta^\varepsilon_i$ ($i=0,1$) by $\gamma^i_\varepsilon$, we define $\gamma_\varepsilon'$ so that
	\begin{eqnarray}\nonumber
	\gamma_\varepsilon'(s)=
	\begin{cases}
	\gamma_\varepsilon^0(s), & s\in[0,\varepsilon^\alpha],\\
	\gamma_\varepsilon''(s-\varepsilon^\alpha), & s\in[\varepsilon^\alpha,L'_\varepsilon-\varepsilon^\alpha],\\
	(l_\varepsilon,0)-\gamma_\varepsilon^1(L'_\varepsilon-s), & s\in[L'_\varepsilon-\varepsilon^\alpha,L'_\varepsilon],
	\end{cases}
	\end{eqnarray}
	where the connecting part $\gamma_\varepsilon''$ is taken as in Lemma \ref{lemcurveconstruction2} of which boundary condition is suitably set so that $\gamma'_\varepsilon$ is of class $C^1$ (the length $L'_\varepsilon$ is a posteriori defined).
	Note that the points and tangential angles at $s=\varepsilon^\alpha$ and $s=L'_\varepsilon-\varepsilon^\alpha$ satisfy the assumptions in Lemma \ref{lemcurveconstruction2} by Lemma \ref{lemcurveconstruction}.
	Then, since Lemma \ref{lemenergydecomposition} implies that
	$$\mathcal{F}_{\varepsilon}[\gamma_\varepsilon']=\mathcal{F}_{\varepsilon}[\gamma_\varepsilon'|_{[0,\varepsilon^\alpha]}]+\mathcal{F}_{\varepsilon}[\gamma_\varepsilon'|_{[\varepsilon^\alpha,L'_\varepsilon-\varepsilon^\alpha]}]+\mathcal{F}_{\varepsilon}[\gamma_\varepsilon'|_{[L'_\varepsilon-\varepsilon^\alpha,L'_\varepsilon]}],$$
	Lemma \ref{lemcurveconstruction} and Lemma \ref{lemcurveconstruction2} imply that the constructed curve $\gamma'_\varepsilon$ satisfies the energy convergence (\ref{eqnenergylimit}).
	In particular, we note that
	$$\mathcal{F}[\gamma'_\varepsilon|_{[L'_\varepsilon-\varepsilon^\alpha,L'_\varepsilon]}]=\mathcal{F}_\varepsilon[\gamma_\varepsilon^1|_{[0,\varepsilon^\alpha]}]$$
	since the combination of the backward reparameterization and the half-rotation for a curve maintains the value of $\mathcal{F}_\varepsilon$ (the translation also maintains $\mathcal{F}_\varepsilon$ obviously).
	The proof is now complete.
\end{proof}

\begin{figure}[tb]
	\centering
	\includegraphics[width=60mm]{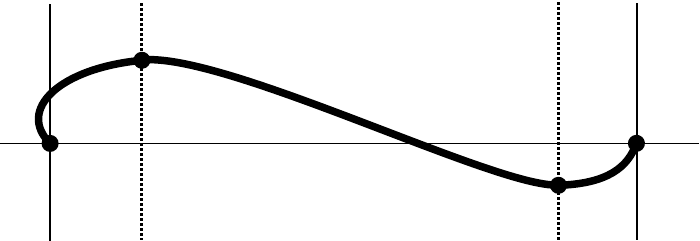}
	\caption{Construction of a curve for Proposition \ref{proplimsupinequality}.}
	\label{figupperbound2}
\end{figure}

\section{Convergence of minimizers}\label{sectconvergence}

In this section, we prove Theorem \ref{thmmain1} by using results in the previous section.
The rescaled convergence part is first proved in a weak sense, more precisely, the $H^2$-weak sense of curves.
The almost straightness part is then fully proved.
For these parts we mainly use properties of the energy.
After that, we improve the regularity of our rescaled convergence; in this regularity part we strongly use properties of the elastica equation.

\subsection{Rescaled weak convergence to borderline elasticae near the endpoints}

We first prove (1) of Theorem \ref{thmmain1} in a weak sense.
The following fact is an essential step of our proof.

\begin{lemma}\label{lemvartheta2}
	Let $c>0$ and $\vartheta\in H^1(0,c)$.
	Suppose that $\vartheta(0)\in[-\pi,\pi]$ and
	$$8\sqrt{2}\left(\sin^2\frac{\vartheta(0)}{4}-\sin^2\frac{\llbracket\vartheta(c)\rrbracket}{4}\right) \geq \int_0^{c} \left(|\vartheta'|^2+(1-\cos\vartheta)\right),$$
	where $\llbracket\cdot\rrbracket$ is defined in Remark \ref{remvariation}.
	Then, in the above inequality, the equality is attained.
	Moreover, if $|\vartheta(0)|<\pi$, the function $\vartheta$ is the borderline angle function with initial angle $\vartheta(0)$ (in the sense of Definition \ref{defborderline}).
	In the case that $|\vartheta(0)|=\pi$, up to the addition of a constant in $\{0,\pm2\pi\}$, the function $\vartheta$ is either the borderline angle function with initial angle $\pi$ or $-\pi$.
\end{lemma}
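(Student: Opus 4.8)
The plan is to feed the extra term $\int_0^c(|\vartheta'|^2+(1-\cos\vartheta))$ into exactly the chain of estimates behind Lemma \ref{lemlowerbound} and Lemma \ref{lemvartheta}, and then observe that the hypothesis cannot hold unless every inequality in that chain is an equality. Concretely, since $V'(\phi)=2\sqrt{1-\cos\phi}$, the pointwise bound $X^2+Y^2\ge 2|X||Y|$ with $X=\vartheta'$ and $Y=\sqrt{1-\cos\vartheta}$ gives
$$\int_0^c\!\big(|\vartheta'|^2+(1-\cos\vartheta)\big)\ \ge\ \int_0^c|\partial_s(V\circ\vartheta)|\ \ge\ |V(\vartheta(c))-V(\vartheta(0))|\ \ge\ 8\sqrt2\Big|\sin^2\tfrac{\vartheta(0)}{4}-\sin^2\tfrac{\llbracket\vartheta(c)\rrbracket}{4}\Big|,$$
where the middle step is the triangle inequality and the last is Lemma \ref{lemvartheta} (using $\vartheta(0)\in[-\pi,\pi]$, so $\sin^2(\llbracket\vartheta(0)\rrbracket/4)=\sin^2(\vartheta(0)/4)$). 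Writing $D:=8\sqrt2(\sin^2\tfrac{\vartheta(0)}4-\sin^2\tfrac{\llbracket\vartheta(c)\rrbracket}4)$ for the left-hand side of the hypothesis, the displayed right-hand side is precisely $|D|$, so the hypothesis reads $D\ge |D|$. This forces $D\ge0$ and, crucially, equality throughout the chain; in particular the asserted equality is attained.

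Next I would read off the three equality conditions. Equality in the AM--GM step yields $|\vartheta'|^2=1-\cos\vartheta$ a.e., i.e.\ $\vartheta$ solves one of the transition-layer equations (\ref{eqnborderline}) in the a.e.\ sense. Equality in the triangle inequality forces $g:=V\circ\vartheta$ to be monotone; since $V$ is a strictly increasing homeomorphism of $\mathbb R$, $\vartheta=V^{-1}\circ g$ is itself monotone, which fixes a single sign of $\vartheta'$ and thereby selects one of the two equations in (\ref{eqnborderline}). Finally, equality in Lemma \ref{lemvartheta} forces $\vartheta(0),\vartheta(c)\in[m\pi,(m+1)\pi]$ for a common $m\in\mathbb Z$. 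Because $\sqrt{1-\cos\vartheta}=\sqrt2\,|\sin(\vartheta/2)|$ is a continuous (indeed Lipschitz) function of $\vartheta$ and $\vartheta$ is continuous, the a.e.\ identity $\vartheta'=\pm\sqrt{1-\cos\vartheta}$ upgrades to a classical $C^1$ relation, so $\vartheta$ is a genuine solution of (\ref{eqnborderline}) with $\vartheta(0)\in[-\pi,\pi]$.

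It then remains to identify $\vartheta$ with a borderline angle function. When $|\vartheta(0)|<\pi$ one has $m=0$ for $\vartheta(0)\ge0$ and $m=-1$ for $\vartheta(0)\le0$; the sign condition $D\ge0$ together with the monotonicity of $\sin^2(\cdot/4)$ on the relevant interval forces $\vartheta$ to move monotonically toward $0$, so by the uniqueness of solutions to (\ref{eqnborderline}) recalled in the text $\vartheta$ agrees on $[0,c]$ with the unique solution of given initial value that decays to $0$, namely $\vartheta=\vartheta_B^{\vartheta(0)}$ in the sense of Definition \ref{defborderline}. When $|\vartheta(0)|=\pi$, say $\vartheta(0)=\pi$, the common interval is either $[0,\pi]$ ($m=0$) or $[\pi,2\pi]$ ($m=1$): in the first case monotonicity makes $\vartheta$ decrease from $\pi$ and $\vartheta=\vartheta_B^{\pi}$; in the second $\vartheta$ increases from $\pi$, so $\vartheta-2\pi$ decreases to $0$ starting from $-\pi$ and equals $\vartheta_B^{-\pi}$. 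This is exactly the stated $\{0,\pm2\pi\}$ ambiguity, and $\vartheta(0)=-\pi$ is symmetric.

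The routine part consists of the equality-case analyses of the three elementary inequalities. The one step deserving care is the final identification, where the apparent non-uniqueness of (\ref{eqnborderline}) at the zeros of $\sqrt{1-\cos\vartheta}$ must be controlled; I expect this to be the main (though mild) obstacle. It is dissolved by noting that $\sqrt{1-\cos\vartheta}=\sqrt2\,|\sin(\vartheta/2)|$ is Lipschitz, so the ODE has unique trajectories through every point: if $\vartheta$ met a point of $2\pi\mathbb Z$ at finite $s$ it would have to coincide with the constant solution there and hence everywhere, contradicting $\vartheta(0)\notin2\pi\mathbb Z$. Thus $\vartheta$ stays strictly monotone and matches the borderline profile, while the genuine degeneracy at $|\vartheta(0)|=\pi$ (where both the increasing and decreasing branches are admissible) is precisely what produces the two possible limits $\vartheta_B^{\pm\pi}$.
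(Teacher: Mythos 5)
Your proof is correct and follows essentially the same route as the paper: the same three-step chain (Young's inequality to get $\int|(V\circ\vartheta)'|$, the triangle inequality, then Lemma \ref{lemvartheta}), the same reading-off of the equality cases to get $|\vartheta'|^2=1-\cos\vartheta$ a.e.\ plus monotonicity, and the same endpoint comparison $|\vartheta(c)|\leq|\vartheta(0)|$ (your $D\geq|D|$ observation) to rule out the outgoing branch and identify $\vartheta$ with the borderline angle function. Your explicit Lipschitz/uniqueness argument for the ODE $\vartheta'=\pm\sqrt{2}\,|\sin(\vartheta/2)|$ is a welcome justification of a point the paper leaves implicit, and the only blemish is a harmless wording slip in the $\vartheta(0)=\pi$, $m=1$ case, where $\vartheta-2\pi$ increases from $-\pi$ toward $0$ rather than decreases.
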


\begin{proof}
	By the inequality $X^2+Y^2\geq 2|X||Y|$,
	\begin{eqnarray}
	\int_0^{c} \left(|\vartheta'|^2+(1-\cos\vartheta)\right) \geq \int_0^{c} 2|\vartheta'|\sqrt{1-\cos\vartheta} = \int_0^{c} |(V\circ\vartheta)'|. \label{eqnlem1}
	\end{eqnarray}
	By the triangle inequality,
	\begin{eqnarray}\label{eqnlem2}
	\int_0^{c} |(V\circ\vartheta)'| \geq |V(\vartheta(0))-V(\vartheta(c))|.
	\end{eqnarray}
	Moreover, by Lemma \ref{lemvartheta},
	\begin{eqnarray}
	|V(\vartheta(0))-V(\vartheta(c))| &\geq& 8\sqrt{2}\left|\sin^2\frac{\llbracket\vartheta(0)\rrbracket}{4}-\sin^2\frac{\llbracket\vartheta(c)\rrbracket}{4}\right| \label{eqnlem3}\\
	&\geq& 8\sqrt{2}\left(\sin^2\frac{\vartheta(0)}{4}-\sin^2\frac{\llbracket\vartheta(c)\rrbracket}{4}\right). \label{eqnlem4}
	\end{eqnarray}
	The last inequality follows by the definition of absolute value and the assumption that $\vartheta(0)\in[-\pi,\pi]$, i.e., $|\llbracket\vartheta(0)\rrbracket|=|\vartheta(0)|$.

	Then, by the assumption, it turns out that in all the above inequalities (\ref{eqnlem1})--(\ref{eqnlem4}) the equalities are attained.
	The equality in (\ref{eqnlem1}) implies $|\vartheta'|^2=1-\cos\vartheta$ for a.e.\ in $[0,c]$.
	The equality in (\ref{eqnlem2}) implies that $(V\circ\vartheta)'$ does not change the sign, i.e., $\vartheta$ is monotone.
	Thus, $\vartheta$ satisfies either of the equations (\ref{eqnborderline}) in the classical sense.

	By the above fact, the proof is complete when $\vartheta(0)=0$ since the solution of (\ref{eqnborderline}) is unique in this case.
	Moreover, if $|\vartheta(0)|=\pi$, we also obtain the assertion by noting the symmetry of the solutions.
	In the case that $0<|\vartheta(0)|<\pi$, there are still two possibilities on $\vartheta$ since there are two solutions to (\ref{eqnborderline}).
	One solution is the desired borderline angle function; in this case the function $|\vartheta|$ is strictly decreasing.
	The other one corresponds to the case that $|\vartheta|$ is strictly increasing.
	However, since $\vartheta(0)\in(-\pi,\pi)$, Lemma \ref{lemvartheta} and the equality in (\ref{eqnlem3}) imply that $\vartheta(c)\in[-\pi,\pi]$.
	In addition, by the equality in (\ref{eqnlem4}) and the fact that $\llbracket\vartheta(0)\rrbracket=\vartheta(0)\in(-\pi,\pi)$, we find that $|\llbracket\vartheta(c)\rrbracket|\leq|\vartheta(0)|$.
	In particular, $|\llbracket\vartheta(c)\rrbracket|<\pi$, and hence $\llbracket\vartheta(c)\rrbracket=\vartheta(c)$.
	Consequently, $|\vartheta(c)|\leq|\vartheta(0)|$.
	Thus the function $|\vartheta|$ is decreasing, and hence $\vartheta$ is nothing but the borderline angle function with initial angle $\vartheta(0)$.
	The proof is now complete.
\end{proof}

We are now in a position to prove the (weak) rescaled convergence.
We prove it in terms of the tangential angle.

\begin{proposition}\label{proprescaledconvergence}
	Let $\{\gamma_\varepsilon\}_\varepsilon$ be a sequence as in Theorem \ref{thmmain1}.
	Let $\tilde{\gamma}_\varepsilon$ be the arc length parameterization of $\gamma_\varepsilon$.
	Let $\vartheta_{\tilde{\gamma}_\varepsilon}$ be the unique tangential angle such that $\vartheta_{\tilde{\gamma}_\varepsilon}(0)=\theta^\varepsilon_0$.
	Fix any $c>0$.
	Define the rescaled tangential angle $\hat{\vartheta}_\varepsilon\in C^\infty([0,c])$ as $\hat{\vartheta}_\varepsilon(\hat{s}):=\vartheta_{\tilde{\gamma}_\varepsilon}(\varepsilon\hat{s})$ for any small $\varepsilon$ (so that $\varepsilon c<l_\varepsilon$).
	Then, for any subsequence of $\{\hat{\vartheta}_\varepsilon\}_\varepsilon$ there is a subsequence $\{\hat{\vartheta}_{\varepsilon'}\}_{\varepsilon'}$ such that $\hat{\vartheta}_{\varepsilon'}$ converges to some $\vartheta_*\in H^1(0,c)$ weakly in $H^1(0,c)$.

	Moreover, if $|\theta_0|<\pi$, the function $\vartheta_*$ is the (unique) borderline angle function with initial angle $\theta_0$ (in the sense of Definition \ref{defborderline}), and hence the convergence is valid in the full convergence sense.
	If $|\theta_0|=\pi$, up to the addition of a constant in $\{0,\pm2\pi\}$, the function $\vartheta_*$ is either the borderline angle function with initial angle $\pi$ or $-\pi$.
\end{proposition}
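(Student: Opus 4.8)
The plan is to combine weak $H^1$-compactness with the sharp first-order expansion of Lemma~\ref{lemasymptitoticmodified} so as to verify the hypothesis of Lemma~\ref{lemvartheta2} for the weak limit. First I would record the rescaling identity: writing $s=\varepsilon\hat s$, the relations $\partial_s\vartheta_{\tilde\gamma_\varepsilon}=\varepsilon^{-1}\partial_{\hat s}\hat\vartheta_\varepsilon$ and $ds=\varepsilon\,d\hat s$ turn the density of $\mathcal F_\varepsilon$ into the scale-free integrand $|\partial_{\hat s}\hat\vartheta_\varepsilon|^2+(1-\cos\hat\vartheta_\varepsilon)$, so that
\[
\mathcal F_\varepsilon[\gamma_\varepsilon|_{[0,\varepsilon c]}]=\int_0^c\left(|\partial_{\hat s}\hat\vartheta_\varepsilon|^2+(1-\cos\hat\vartheta_\varepsilon)\right)d\hat s.
\]
Since Lemma~\ref{lemenergydecomposition} gives $\mathcal F_\varepsilon[\gamma_\varepsilon|_{[0,\varepsilon c]}]\le\mathcal F_\varepsilon[\gamma_\varepsilon]$, and the total energy converges by Lemma~\ref{lemasymptitoticmodified} together with the identity $\mathcal F_\varepsilon=(\mathcal E_\varepsilon-l_\varepsilon)/\varepsilon$, the derivatives $\partial_{\hat s}\hat\vartheta_\varepsilon$ are bounded in $L^2(0,c)$; combined with $\hat\vartheta_\varepsilon(0)=\theta_0^\varepsilon$ this bounds $\hat\vartheta_\varepsilon$ in $H^1(0,c)$. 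Hence every subsequence has a further subsequence $\hat\vartheta_{\varepsilon'}\rightharpoonup\vartheta_*$ weakly in $H^1(0,c)$, and by the compact embedding $H^1(0,c)\hookrightarrow C^0([0,c])$ the convergence is uniform; in particular $\vartheta_*(0)=\theta_0$ and $\hat\vartheta_{\varepsilon'}(c)\to\vartheta_*(c)$.

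To identify $\vartheta_*$ I would verify the inequality required by Lemma~\ref{lemvartheta2}, namely
\[
8\sqrt2\left(\sin^2\tfrac{\theta_0}{4}-\sin^2\tfrac{\llbracket\vartheta_*(c)\rrbracket}{4}\right)\ge\int_0^c\left(|\vartheta_*'|^2+(1-\cos\vartheta_*)\right).
\]
The right-hand side is controlled from above by lower semicontinuity of the Dirichlet term under weak $H^1$-convergence and by uniform convergence of the potential term, so it suffices to bound $\limsup_{\varepsilon'}\mathcal F_{\varepsilon'}[\gamma_{\varepsilon'}|_{[0,\varepsilon'c]}]$ by the left-hand side. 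Writing $\mathcal F_{\varepsilon'}[\gamma_{\varepsilon'}|_{[0,\varepsilon'c]}]=\mathcal F_{\varepsilon'}[\gamma_{\varepsilon'}]-\mathcal F_{\varepsilon'}[\gamma_{\varepsilon'}|_{[\varepsilon'c,L_{\varepsilon'}]}]$ and using that the total energy converges to $8\sqrt2(\sin^2\tfrac{\theta_0}{4}+\sin^2\tfrac{\theta_1}{4})$, the whole task reduces to the complementary lower bound
\[
\liminf_{\varepsilon'}\mathcal F_{\varepsilon'}[\gamma_{\varepsilon'}|_{[\varepsilon'c,L_{\varepsilon'}]}]\ge 8\sqrt2\left(\sin^2\tfrac{\llbracket\vartheta_*(c)\rrbracket}{4}+\sin^2\tfrac{\theta_1}{4}\right).
\]

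For this lower bound I would invoke Proposition~\ref{propsegmentconvergence} (length convergence and uniform convergence to the segment) in order to apply Lemma~\ref{lemmeanvalue} on a fixed interior subinterval, producing arc-length points $s_{\varepsilon'}$ with $\varepsilon'c<s_{\varepsilon'}<L_{\varepsilon'}$, bounded away from both endpoints, at which $\llbracket\vartheta_{\tilde\gamma_{\varepsilon'}}(s_{\varepsilon'})\rrbracket\to0$. Splitting $[\varepsilon'c,L_{\varepsilon'}]$ at $s_{\varepsilon'}$ and applying Lemma~\ref{lemlowerbound} with Lemma~\ref{lemvartheta} on each piece gives
\[
\mathcal F_{\varepsilon'}[\gamma_{\varepsilon'}|_{[\varepsilon'c,s_{\varepsilon'}]}]\ge 8\sqrt2\left|\sin^2\tfrac{\llbracket\hat\vartheta_{\varepsilon'}(c)\rrbracket}{4}-\sin^2\tfrac{\llbracket\vartheta_{\tilde\gamma_{\varepsilon'}}(s_{\varepsilon'})\rrbracket}{4}\right|
\]
together with the analogous estimate on $[s_{\varepsilon'},L_{\varepsilon'}]$, whose right endpoint value satisfies $\sin^2\tfrac{\llbracket\vartheta_{\tilde\gamma_{\varepsilon'}}(L_{\varepsilon'})\rrbracket}{4}=\sin^2\tfrac{\theta_1^{\varepsilon'}}{4}$. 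Here I would use that $\theta\mapsto\sin^2(\llbracket\theta\rrbracket/4)$ is continuous, since $\sin^2$ is even and hence does not jump at odd multiples of $\pi$; passing to the limit with $\llbracket\vartheta_{\tilde\gamma_{\varepsilon'}}(s_{\varepsilon'})\rrbracket\to0$, $\hat\vartheta_{\varepsilon'}(c)\to\vartheta_*(c)$, and $\theta_1^{\varepsilon'}\to\theta_1$ then yields exactly the claimed bound. Substituting back establishes the hypothesis of Lemma~\ref{lemvartheta2}, and that lemma identifies $\vartheta_*$ as the borderline angle function with initial angle $\theta_0$ (with the stated $\pm2\pi$ ambiguity when $|\theta_0|=\pi$). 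When $|\theta_0|<\pi$ this limit is unique, so since the weak $H^1$-topology is metrizable on bounded sets the whole sequence converges, giving full convergence.

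The main obstacle is the middle step: showing the energy left over on $[\varepsilon'c,L_{\varepsilon'}]$ is at least the sum of the cost $8\sqrt2\sin^2(\llbracket\vartheta_*(c)\rrbracket/4)$ of returning the angle to the stable phase and the cost $8\sqrt2\sin^2(\theta_1/4)$ of the transition at the far endpoint, with nothing to spare. It is precisely this \emph{sharp} matching --- no energy is wasted anywhere --- that forces equality throughout and lets Lemma~\ref{lemvartheta2} pin down $\vartheta_*$, and it is the sharpness of Lemma~\ref{lemasymptitoticmodified} that makes it work. A minor technical point to handle with care is arranging the interior mean-value points $s_{\varepsilon'}$ to separate the rescaled window $[0,\varepsilon'c]$ from the far endpoint for all small $\varepsilon'$, which holds because $\varepsilon'c\to0$ while $s_{\varepsilon'}$ stays bounded below by a positive constant.
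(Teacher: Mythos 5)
Your proposal is correct and follows essentially the same route as the paper's proof: the same $H^1$-bound from the energy expansion, the same decomposition of $\mathcal{F}_\varepsilon$ at $s=c\varepsilon$, the same use of Proposition~\ref{propsegmentconvergence} and Lemma~\ref{lemmeanvalue} to produce interior points where $\llbracket\vartheta\rrbracket\to0$, the same two-piece lower bound via Lemmas~\ref{lemlowerbound} and~\ref{lemvartheta}, and the same identification of $\vartheta_*$ through Lemma~\ref{lemvartheta2}. Your added remarks (continuity of $\theta\mapsto\sin^2(\llbracket\theta\rrbracket/4)$ across odd multiples of $\pi$, and the separation of the mean-value points from the rescaled window) are accurate and only make explicit what the paper leaves implicit.
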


\begin{proof}
	We decompose the curve $\tilde{\gamma}_\varepsilon(s)$ into the part $s\in[0,c\varepsilon]$ and $s\in[c\varepsilon,L_\varepsilon]$ (where $L_\varepsilon=\mathcal{L}[\gamma_\varepsilon]$).
	By Lemma \ref{lemenergydecomposition}, the energy $\mathcal{F}_\varepsilon[\gamma_\varepsilon]$ is also decomposed as
	\begin{eqnarray}\label{eqnlowerbound3}
	\mathcal{F}_\varepsilon[\gamma_\varepsilon] = \mathcal{F}_\varepsilon[\tilde{\gamma}_\varepsilon|_{[0,c\varepsilon]}]+\mathcal{F}_\varepsilon[\tilde{\gamma}_\varepsilon|_{[c\varepsilon,L_\varepsilon]}].
	\end{eqnarray}
	By Lemma \ref{lemasymptitoticmodified}, the energy convergence (\ref{eqnenergylimit}) holds.
	Moreover, since $\hat{\vartheta}_\varepsilon(0)=\theta^\varepsilon_0\to\theta_0$ and
	\begin{eqnarray}
	\mathcal{F}_\varepsilon[\gamma_\varepsilon] \geq \mathcal{F}_\varepsilon[\tilde{\gamma}_\varepsilon|_{[0,c\varepsilon]}] = \int_0^{c} \left(|\partial_{\hat{s}}\hat{\vartheta}_\varepsilon|^2+(1-\cos\hat{\vartheta}_\varepsilon)\right)d\hat{s} \geq \int_0^{c} |\partial_{\hat{s}}\hat{\vartheta}_\varepsilon|^2d\hat{s}, \nonumber
	\end{eqnarray}
	the sequence $\{\hat{\vartheta}_\varepsilon\}_\varepsilon$ is bounded in $H^1(0,c)$ as $\varepsilon\to0$.
	Thus, for any subsequence there is a subsequence (without relabeling) such that $\hat{\vartheta}_\varepsilon$ weakly converges to some function $\vartheta_*\in H^1(0,c)$ as $\varepsilon\to0$, and hence $\hat{\vartheta}_\varepsilon$ uniformly converges to $\vartheta_*$ in $[0,c]$ by the Sobolev embedding.

	We next prove
	\begin{eqnarray}\label{eqnlowerbound4}
	\liminf_{\varepsilon\to0}\mathcal{F}_\varepsilon[\tilde{\gamma}_\varepsilon|_{[c\varepsilon,L_\varepsilon]}]\geq 8\sqrt{2}\left(\sin^2\frac{\llbracket\vartheta_*(c)\rrbracket}{4}+\sin^2\frac{\theta_1}{4}\right).
	\end{eqnarray}
	Notice that $\vartheta_{\tilde{\gamma}_\varepsilon}(c\varepsilon)$ $(=\hat{\vartheta}_\varepsilon(c))$ converges to $\vartheta_*(c)$ as $\varepsilon\to0$ since $\hat{\vartheta}_\varepsilon$ uniformly converges to $\vartheta_*$ in $[0,c]$.
	Moreover, by Proposition \ref{propsegmentconvergence} and Lemma \ref{lemmeanvalue}, there exists a sequence of $s_\varepsilon\in[c\varepsilon,L_\varepsilon]$ such that $\llbracket\vartheta_{\tilde{\gamma}_\varepsilon}(s_\varepsilon)\rrbracket\to0$ as $\varepsilon\to0$.
	Then, by Lemma \ref{lemenergydecomposition}, Lemma \ref{lemlowerbound}, and Lemma \ref{lemvartheta}, we find that
	\begin{eqnarray}
	&& \mathcal{F}_\varepsilon[\tilde{\gamma}_\varepsilon|_{[c\varepsilon,L_\varepsilon]}] = \mathcal{F}_\varepsilon[\tilde{\gamma}_\varepsilon|_{[c\varepsilon,s_\varepsilon]}]+\mathcal{F}_\varepsilon[\tilde{\gamma}_\varepsilon|_{[s_\varepsilon,L_\varepsilon]}] \nonumber\\
	&\geq& 8\sqrt{2}\left|\sin^2\frac{\llbracket\vartheta_{\tilde{\gamma}_\varepsilon}(c\varepsilon)\rrbracket}{4}-\sin^2\frac{\llbracket\vartheta_{\tilde{\gamma}_\varepsilon}(s_\varepsilon)\rrbracket}{4}\right| + 8\sqrt{2}\left|\sin^2\frac{\llbracket\vartheta_{\tilde{\gamma}_\varepsilon}(s_\varepsilon)\rrbracket}{4}-\sin^2\frac{\llbracket\vartheta_{\tilde{\gamma}_\varepsilon}(L_\varepsilon)\rrbracket}{4}\right|. \nonumber
	\end{eqnarray}
	Since $|\llbracket\vartheta_{\tilde{\gamma}_\varepsilon}(L_\varepsilon)\rrbracket|\to\theta_1$, taking the limit $\varepsilon\to0$, we obtain (\ref{eqnlowerbound4}).

	Combining the energy limit (\ref{eqnenergylimit}) with (\ref{eqnlowerbound3}) and (\ref{eqnlowerbound4}), we have
	\begin{eqnarray}
	8\sqrt{2}\left(\sin^2\frac{\theta_0}{4}-\sin^2\frac{\llbracket\vartheta_*(c)\rrbracket}{4}\right) &\geq& \limsup_{\varepsilon\to0}\mathcal{F}_\varepsilon[\gamma_\varepsilon|_{[0,c\varepsilon]}] \nonumber\\
	&=& \limsup_{\varepsilon\to0}\int_0^{c} \left(|\partial_{\hat{s}}\hat{\vartheta}_\varepsilon|^2+(1-\cos\hat{\vartheta}_\varepsilon)\right)d\hat{s}. \nonumber
	\end{eqnarray}
	Moreover, since $\hat{\vartheta}_\varepsilon$ converges to $\vartheta_*$ weakly in $H^1(0,c)$, we also have
	$$\liminf_{\varepsilon\to0}\int_0^{c} \left(|\partial_{\hat{s}}\hat{\vartheta}_\varepsilon|^2+(1-\cos\hat{\vartheta}_\varepsilon)\right)d\hat{s} \geq \int_0^{c} \left(|\vartheta'_*|^2+(1-\cos\vartheta_*)\right).$$
	Therefore, the function $\vartheta_*$ satisfies the assumption of Lemma \ref{lemvartheta2}, which implies the conclusion.
	The proof is complete.
\end{proof}

Since the endpoint $\gamma_\varepsilon(0)=(0,0)$ is fixed, we find that any sequence of minimizers converges to the borderline elastica in a weak sense.

\subsection{Almost straightness except the endpoints}

In this subsection, we prove (2) of Theorem \ref{thmmain1} by using the above weak convergence.
We improve the regularity of the weak convergence from the next subsection.

Since $|(\cos\theta,\sin\theta)-(1,0)|\leq|\theta|$ for $\theta \in [-\pi,\pi]$, we find that
$$|\partial_s\tilde{\gamma}_\varepsilon(s)-(1,0)|\leq|\llbracket\vartheta_{\tilde{\gamma}_\varepsilon}(s)\rrbracket|.$$
Hence, it suffices to prove the following proposition.

\begin{proposition}\label{propstraight}
	Let $\{\gamma_\varepsilon\}_\varepsilon$ be a sequence in Theorem \ref{thmmain1}.
	Let $\tilde{\gamma}_\varepsilon$ be the arc length parameterization of $\gamma_\varepsilon$.
	Fix any $c>0$.
	Let $K_{c\varepsilon}=[c\varepsilon,L_\varepsilon-c\varepsilon]$ for any small $\varepsilon$ (so that $\varepsilon<l_\varepsilon/c$), where $L_\varepsilon=\mathcal{L}[\gamma_\varepsilon]$.
	Then
	$$\limsup_{\varepsilon\to0}\max_{s\in K_{c\varepsilon}}|\llbracket\vartheta_{\tilde{\gamma}_\varepsilon}(s)\rrbracket|\leq 4 e^{-\frac{c}{\sqrt{2}}}.$$
\end{proposition}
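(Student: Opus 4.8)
The plan is to work one endpoint at a time and reduce the global bound to a local one. By the symmetry noted after Theorem \ref{thmmain1} (reflecting about $(l_\varepsilon/2,0)$ and reversing parametrization preserves $\mathcal{F}_\varepsilon$), it suffices to bound $|\llbracket\vartheta_{\tilde\gamma_\varepsilon}(s)\rrbracket|$ on the left half $[c\varepsilon,L_\varepsilon/2]$; the right half is identical with $\theta_1$ in place of $\theta_0$. Passing to a subsequence realizing $\limsup_\varepsilon\max_{[c\varepsilon,L_\varepsilon/2]}|\llbracket\vartheta_{\tilde\gamma_\varepsilon}\rrbracket|$, I choose $s_\varepsilon$ attaining this maximum and set $M:=\lim_\varepsilon|\llbracket\vartheta_{\tilde\gamma_\varepsilon}(s_\varepsilon)\rrbracket|\in[0,\pi]$; the goal is $M\le 4e^{-c/\sqrt2}$. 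First I record the explicit borderline profile: separating variables in $\partial_s\vartheta=-\sqrt{1-\cos\vartheta}=-\sqrt2\,\sin(\vartheta/2)$ gives $\tan(\vartheta_B^{\theta_0}(s)/4)=\tan(|\theta_0|/4)\,e^{-s/\sqrt2}$, so that $|\vartheta_B^{\theta_0}(s)|=4\arctan\!\big(\tan(|\theta_0|/4)e^{-s/\sqrt2}\big)\le 4e^{-s/\sqrt2}$ using $\arctan x\le x$ and $\tan(|\theta_0|/4)\le\tan(\pi/4)=1$. In particular $|\vartheta_B^{\theta_0}(c)|\le 4e^{-c/\sqrt2}$, and by Proposition \ref{proprescaledconvergence} the value at the left end of $K_{c\varepsilon}$ obeys $|\llbracket\vartheta_{\tilde\gamma_\varepsilon}(c\varepsilon)\rrbracket|\to|\vartheta_B^{\theta_0}(c)|$.

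Next I convert the first-order expansion into an exact energy budget beyond $c\varepsilon$. Writing $m_c:=\sin^2(\vartheta_B^{\theta_0}(c)/4)$ and $m_1:=\sin^2(\theta_1/4)$, the equality case in Lemma \ref{lemvartheta2} identifies the limiting rescaled energy as $\mathcal{F}_\varepsilon[\tilde\gamma_\varepsilon|_{[0,c\varepsilon]}]\to 8\sqrt2\big(\sin^2(\theta_0/4)-m_c\big)$, while \eqref{eqnlowerbound4} in the proof of Proposition \ref{proprescaledconvergence} gives $\liminf_\varepsilon\mathcal{F}_\varepsilon[\tilde\gamma_\varepsilon|_{[c\varepsilon,L_\varepsilon]}]\ge 8\sqrt2(m_c+m_1)$. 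Since these two liminfs sum to the full limit $8\sqrt2(\sin^2(\theta_0/4)+m_1)$ of Lemma \ref{lemasymptitoticmodified}, both are genuine limits, and in particular
$$\mathcal{F}_\varepsilon[\tilde\gamma_\varepsilon|_{[c\varepsilon,L_\varepsilon]}]\longrightarrow 8\sqrt2(m_c+m_1).$$
This is the entire energy available on $[c\varepsilon,L_\varepsilon]$.

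The core of the argument is a proof by contradiction using a three-piece decomposition. Suppose $\sin^2(M/4)>m_c$. By Lemma \ref{lemmeanvalue} (with Proposition \ref{propsegmentconvergence}) applied to a subinterval of $(1/2,1)$, I produce a return point $s_\varepsilon'\in(s_\varepsilon,L_\varepsilon)$ with $\llbracket\vartheta_{\tilde\gamma_\varepsilon}(s_\varepsilon')\rrbracket\to0$, where $s_\varepsilon\le L_\varepsilon/2$ guarantees the ordering $s_\varepsilon<s_\varepsilon'$. Splitting $[c\varepsilon,L_\varepsilon]$ at $s_\varepsilon$ and $s_\varepsilon'$, Lemma \ref{lemenergydecomposition} together with the weighted-variation bounds of Lemma \ref{lemlowerbound} and Lemma \ref{lemvartheta} bounds the three energies below by $8\sqrt2$ times the respective differences of $\sin^2(\llbracket\cdot\rrbracket/4)$; passing to the limit (the first absolute value opens with a definite sign because $\sin^2(M/4)>m_c$) yields
$$8\sqrt2(m_c+m_1)\ \ge\ 8\sqrt2\Big(\big(\sin^2\tfrac{M}{4}-m_c\big)+\sin^2\tfrac{M}{4}+m_1\Big),$$
i.e. $m_c\ge\sin^2(M/4)$, contradicting the assumption. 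Hence $\sin^2(M/4)\le m_c$, so $M\le|\vartheta_B^{\theta_0}(c)|\le 4e^{-c/\sqrt2}$, and the symmetric estimate on the right half completes the proof. The main obstacle is exactly this passage from the single endpoint scale to the whole interior: the rescaled convergence of Proposition \ref{proprescaledconvergence} only pins the angle at $s\sim c\varepsilon$, so the interior decay must be recovered from the sharp energy budget, the mechanism being that an interior bulge of height $M$ must be paid for twice—once ascending and once descending—producing the two $\sin^2(M/4)$ terms that the first-order expansion cannot afford once $M$ exceeds the layer value $|\vartheta_B^{\theta_0}(c)|$.
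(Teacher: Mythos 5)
Your proof is correct, and the engine is the same as the paper's: a contradiction argument in which an interior bulge must be paid for twice in weighted variation (ascent plus descent to a return point supplied by Proposition \ref{propsegmentconvergence} and Lemma \ref{lemmeanvalue}, via Lemmas \ref{lemenergydecomposition}, \ref{lemlowerbound} and \ref{lemvartheta}), a cost the first-order expansion of Lemma \ref{lemasymptitoticmodified} cannot absorb. The execution differs in the bookkeeping, and the comparison is instructive. The paper cuts the \emph{whole} curve into five pieces at $c\varepsilon$, $s_j$, $s_j^*$, $L_\varepsilon-c\varepsilon$, tests the resulting lower bound directly against the total budget $8\sqrt{2}\left(\sin^2\frac{\theta_0}{4}+\sin^2\frac{\theta_1}{4}\right)$, measures the bulge against the common threshold $\theta_c^*=\max\{|\vartheta_B^{\theta_0}(c)|,|\vartheta_B^{\theta_1}(c)|\}$, and must remark that the two interior cut points may change order along the sequence. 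You instead first upgrade the two one-sided estimates inside the proof of Proposition \ref{proprescaledconvergence} (the limsup bound on $[0,c\varepsilon]$, inequality (\ref{eqnlowerbound4}), and the equality case of Lemma \ref{lemvartheta2}) to the exact sub-interval budget $\mathcal{F}_\varepsilon[\tilde{\gamma}_\varepsilon|_{[c\varepsilon,L_\varepsilon]}]\to 8\sqrt{2}(m_c+m_1)$ with $m_c=\sin^2(\vartheta_B^{\theta_0}(c)/4)$, $m_1=\sin^2(\theta_1/4)$, and then need only a three-piece split of $[c\varepsilon,L_\varepsilon]$; your symmetry reduction to the half-interval $[c\varepsilon,L_\varepsilon/2]$ forces the ordering $s_\varepsilon<s_\varepsilon'$ automatically. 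This buys a cleaner case analysis (no ordering issue) and a per-endpoint bound $|\vartheta_B^{\theta_0}(c)|$, resp.\ $|\vartheta_B^{\theta_1}(c)|$, on each half rather than $\theta_c^*$ — marginally sharper, though both yield $4e^{-c/\sqrt{2}}$; the paper's version, conversely, avoids the intermediate exact-budget step and runs off the crude total expansion alone. A last cosmetic difference: you bound the layer values by the closed form $|\vartheta_B^{\theta}(s)|=4\arctan\left(\tan\frac{|\theta|}{4}\,e^{-s/\sqrt{2}}\right)$, while the paper uses monotonicity in the initial angle together with the explicit $\theta=\pi$ solution (\ref{solborderline}); both give the same estimate.
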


\begin{proof}
	By Proposition \ref{proprescaledconvergence} and symmetry, the angles $\llbracket\vartheta_{\tilde{\gamma}_\varepsilon}(c\varepsilon)\rrbracket$ and $\llbracket\vartheta_{\tilde{\gamma}_\varepsilon}(L_\varepsilon-c\varepsilon)\rrbracket$ converge as $\varepsilon\to0$, and moreover
	$$\lim_{\varepsilon\to0}|\llbracket\vartheta_{\tilde{\gamma}_\varepsilon}(c\varepsilon)\rrbracket|=|\llbracket\vartheta_B^{\theta_0}(c)\rrbracket|\leq|\theta_0|,\quad \lim_{\varepsilon\to0}|\llbracket\vartheta_{\tilde{\gamma}_\varepsilon}(L_\varepsilon-c\varepsilon)\rrbracket|=|\llbracket\vartheta_B^{\theta_1}(c)\rrbracket|\leq|\theta_1|,$$
	where $\vartheta_B^{\theta_i}$ is the borderline angle function with initial angle $\theta_i$ for $i=0,1$.
	Notice that
	$$|\llbracket\vartheta_B^{\theta_i}(c)\rrbracket|\leq |\llbracket\vartheta_B^{\pi}(c)\rrbracket|=|\bar{\varphi}_\pm(c)\mp2\pi|=|\bar{\varphi}_\pm(-c)|=4\arctan\left(e^{-\frac{c}{\sqrt{2}}}\right).$$
	by the representation (\ref{solborderline}).
	Since $\arctan X\leq X$ for $X\geq0$, we see that, for $i=0,1$, $$|\llbracket\vartheta_B^{\theta_i}(c)\rrbracket|\leq 4 e^{-\frac{c}{\sqrt{2}}}.$$

	Thus it suffices to prove that
	$$\limsup_{\varepsilon\to0}\max_{s\in K_{c\varepsilon}}|\llbracket\vartheta_{\tilde{\gamma}_\varepsilon}(s)\rrbracket|=\max\{|\llbracket\vartheta_B^{\theta_0}(c)\rrbracket|,|\llbracket\vartheta_B^{\theta_1}(c)\rrbracket|\}=:\theta^*_c.$$
	Note that $\theta^*_c\in(0,\pi)$.
	We prove it by contradiction, so we assume that there would exist $\delta\in(0,\pi-\theta^*_c)$, a sequence $\varepsilon_j\to0$, and $s_j\in \mathring{K}_{c\varepsilon_j}:=(c\varepsilon_j,L_{\varepsilon_j}-c\varepsilon_j)$ such that
	$$\lim_{j\to\infty}|\llbracket\vartheta_{\tilde{\gamma}_{\varepsilon_j}}(s_j)\rrbracket|=\theta^*_c+\delta\in(\theta^*_c,\pi).$$
	By taking a subsequence if necessary, we may assume that $s_j$ converges.
	Then, by Proposition \ref{propsegmentconvergence} and Lemma \ref{lemmeanvalue}, there is a sequence of $s_j^*\in \mathring{K}_{c\varepsilon_j}$ such that $s_j^*\neq s_j$ and $\llbracket\vartheta_{\tilde{\gamma}_{\varepsilon_j}}(s_j^*)\rrbracket\to0$ as $j\to\infty$.
	We then cut the arc length interval $[0,L_{\varepsilon_j}]$ at the points $c\varepsilon_j$, $s_j$, $s_j^*$ and $L_{\varepsilon_j}-c\varepsilon_j$ and decompose the curve $\gamma_{\varepsilon_j}$ into the corresponding five parts.
	(Note that the order of $s_j$ and $s_j^*$ may change as $j\to\infty$.)
	By using Lemma \ref{lemlowerbound} and Lemma \ref{lemvartheta} for each of the parts and applying Lemma \ref{lemenergydecomposition}, we find that
	$$\liminf_{j\to\infty}\mathcal{F}_{\varepsilon_j}[\gamma_{\varepsilon_j}]\geq 8\sqrt{2}\left( \sin^2\frac{\theta_0}{4}+\sin^2\frac{\theta_1}{4}-2\sin^2\frac{\theta^*_c}{4}+2\sin^2\frac{\theta^*_c+\delta}{4} \right).$$
	However, this contradicts the energy convergence (\ref{eqnenergylimit}).
	The proof is complete.
\end{proof}

\subsection{Jacobi elliptic functions and elastica equation}

In the rest of this section we improve the regularity of the weak convergence in Proposition \ref{proprescaledconvergence}.
To this end we use some properties of elliptic functions.
In this subsection we briefly recall some properties of elliptic functions, and expressions of solutions to the elastica equation in terms of elliptic functions.

We first recall that any minimizer satisfies the following elastica equation.

\begin{proposition}[e.g.\ \cite{Br92,Si08}]\label{propelasticaequation}
	Let $\gamma_\varepsilon$ be any minimizer of $\mathcal{E}_\varepsilon$ in $\mathcal{A}$ (with any boundary condition) and $\tilde{\gamma}$ be the arc length parameterization.
	Then its signed curvature $\kappa=\partial_s\vartheta_{\tilde{\gamma}}$ satisfies
	\begin{eqnarray}\label{eqnelastica}
	\varepsilon^2(2\partial^2_s\kappa+\kappa^3)-\kappa=0.
	\end{eqnarray}
\end{proposition}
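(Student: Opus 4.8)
The plan is to obtain (\ref{eqnelastica}) as the interior Euler--Lagrange equation of the functional $\mathcal{E}_\varepsilon=\varepsilon^2\mathcal{B}+\mathcal{L}$, using that in the extensible problem the length is free, so that no Lagrange multiplier for a length constraint intervenes. First I would fix a minimizer $\gamma_\varepsilon$, pass to its arc length parameterization $\tilde\gamma$, and record that by the regularity obtained in Appendix \ref{appendixexistence} the curvature $\kappa=\partial_s\vartheta_{\tilde\gamma}$ is smooth, so a pointwise equation is meaningful. Then I would test minimality against compactly supported normal variations: for $\psi\in C_c^\infty$ supported in the interior of $[0,\mathcal{L}[\gamma_\varepsilon]]$, take a smooth family $\Gamma(\cdot,t)$ with $\Gamma(\cdot,0)=\tilde\gamma$ and $\partial_t\Gamma|_{t=0}=\psi N$, where $N$ is the unit normal. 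Because $\psi$ and all its derivatives vanish near the endpoints, each such variation preserves the clamped data (\ref{eqnboundarycondition}); since $\mathcal{E}_\varepsilon$ is invariant under reparameterization, we may regard $\Gamma(\cdot,t)$ (reparameterized to constant speed) as a curve in $\mathcal{A}$, so minimality gives $\frac{d}{dt}\mathcal{E}_\varepsilon[\Gamma(\cdot,t)]|_{t=0}=0$.

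The computational heart is the first variation of the two geometric integrals. I would invoke the standard normal-variation formulas $\partial_t(ds)=-\kappa\psi\,ds$ and $\partial_t\kappa=\partial_s^2\psi+\kappa^2\psi$ (see \cite{Br92,Si08}). From these,
\[
\frac{d}{dt}\Big|_{t=0}\int\kappa^2\,ds=\int\big(2\kappa(\partial_s^2\psi+\kappa^2\psi)-\kappa^3\psi\big)\,ds=\int(2\partial_s^2\kappa+\kappa^3)\psi\,ds,
\]
where the last equality integrates the $2\kappa\,\partial_s^2\psi$ term by parts twice, the boundary contributions vanishing by the compact support of $\psi$; similarly $\frac{d}{dt}\int ds|_{t=0}=-\int\kappa\psi\,ds$. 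Combining the two gives
\[
0=\frac{d}{dt}\Big|_{t=0}\mathcal{E}_\varepsilon[\Gamma(\cdot,t)]=\int\big(\varepsilon^2(2\partial_s^2\kappa+\kappa^3)-\kappa\big)\psi\,ds.
\]

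Since $\psi$ ranges over all smooth functions with compact support in the interior, the fundamental lemma of the calculus of variations forces the bracketed quantity to vanish identically, which is exactly (\ref{eqnelastica}). Two points deserve a remark, though both are routine. First, it suffices to use normal variations: a tangential variation $\phi T$ merely reparameterizes the curve and leaves the geometric energy $\mathcal{E}_\varepsilon$ unchanged, hence yields no further information; this also explains why the absence of a length constraint removes any multiplier. (An alternative route through the tangential-angle formulation would instead introduce two multipliers encoding the tension vector and require a first integral to eliminate them, so the geometric variation is cleaner.) Second, the only genuine obstacle is the justification of the variation formulas for $\partial_t\kappa$ and $\partial_t(ds)$, which follow by differentiating the Frenet relations $\partial_sT=\kappa N$, $\partial_sN=-\kappa T$ along $\Gamma$; I would simply \emph{cite} these as classical rather than reproduce the computation, after which the derivation is complete.
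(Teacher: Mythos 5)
Your proof is correct, and it is the standard argument: the paper itself offers no proof of Proposition \ref{propelasticaequation}, citing it as classical (\cite{Br92,Si08}), and your compactly supported normal-variation computation with the formulas $\partial_t(ds)=-\kappa\psi\,ds$ and $\partial_t\kappa=\partial_s^2\psi+\kappa^2\psi$, followed by integration by parts and the fundamental lemma, is exactly the derivation those references (and the paper's own sketch of the inextensible case in the introduction) rely on. Your two remarks — that tangential variations carry no information, and that the absence of a length constraint is why no multiplier appears (equivalently, the fixed coefficient $\varepsilon^2$ plays the role of the multiplier $\lambda=\varepsilon^{-2}$ in the paper's equation $2\partial_s^2\kappa+\kappa^3-\lambda\kappa=0$) — are also consistent with how the paper frames the relation between the two problems.
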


It is well-known that any solution of the above equation is solved in terms of the Jacobi elliptic functions.
We briefly recall the definitions and some properties of elliptic functions (see e.g.\ \cite{La89} for details).

Let $F(\xi;k)$ be the incomplete elliptic integral of the first kind of modulus $k\in(0,1)$:
$$F(\xi;k):=\int_0^\xi\frac{dt}{\sqrt{1-t^2}\sqrt{1-k^2t^2}}.$$
Let $K(k)$ be the complete elliptic integral of the first kind, i.e., $K(k):=F(1;k)$.

The function $\sn(x,k)$ is defined so that $x=F(\sn(x,k);k)$ for $|x|\leq K(k)$, and $\sn(x,k)=-\sn(x+2K(k),k)$ for $x\in\mathbb{R}$.
Note that $\sn(\cdot,k)$ is an odd $2K(k)$-antiperiodic function and, in $[-K(k),K(k)]$, strictly increasing from $-1$ to $1$.

The function $\cn(x,k)$ is defined as a unique smooth function such that $\cn(0,k)=1$ and $\cn^2(x,k)+\sn^2(x,k)=1$ for $x\in\mathbb{R}$.
Note that $\cn(\cdot,k)$ is an even $2K(k)$-antiperiodic function and, in $[0,2K(k)]$, strictly decreasing from $1$ to $-1$.

The function $\dn(x,k)$ is defined as a unique smooth function such that $\dn(0,k)=1$ and $\dn^2(x,k)+k^2\sn^2(x,k)=1$.
Note that $\dn(\cdot,k)$ is a positive even $2K(k)$-periodic function and, in $[0,K(k)]$, strictly decreasing from $1$ to $\sqrt{1-k^2}$.

For $k=0$, the functions $\sn$, $\cn$, $\dn$ are interpreted as $\sin$, $\cos$, $1$, respectively.
For $k=1$, they are interpreted as $\tanh$, $\sech$, $\sech$, respectively.

The following derivative formulae hold: for $k\in[0,1]$,
\begin{eqnarray}\label{eqnellipticderivative}
\sn'=\cn\dn,\quad \cn'=-\sn\dn,\quad \dn'=-k^2\sn\cn.
\end{eqnarray}

We finally recall that any solution to the equation (\ref{eqnelastica}) is expressed by an elliptic function.

\begin{proposition}[{e.g.\ \cite{Li96}}]\label{propelasticasolution}
	For any given $\varepsilon>0$ and initial values $\kappa(0)=a_0$ and $\partial_s\kappa(0)=b_0$, the equation (\ref{eqnelastica}) is uniquely solved in $\mathbb{R}$.
	Moreover, the solution is given by either
	\begin{enumerate}
		\item[(1)] $\kappa(s)=A\cn(\alpha s+\beta,k)$, where $k\in[0,1]$ is modulus, $A\cn(\beta,k)=a_0$, $-A\alpha\sn(\beta,k)\dn(\beta,k)=b_0$, $A^2=4k^2\alpha^2$, and $\varepsilon^2(A^2-2\alpha^2)=1$, or
		\item[(2)] $\kappa(s)=A\dn(\alpha s+\beta,k)$, where $k\in[0,1]$ is modulus, $A\dn(\beta,k)=a_0$, $-A\alpha p^2\sn(\beta,k)\cn(\beta,k)=b_0$, $A^2=4\alpha^2$, and $\varepsilon^2(A^2-2\alpha^2k^2)=1$.
	\end{enumerate}
	If $(a_0^2-2\varepsilon^{-2})a_0^2+4b_0^2\geq0$ then the solution is (1), and otherwise (2).
\end{proposition}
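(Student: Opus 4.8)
The plan is to combine abstract ODE theory with the classical reduction of the autonomous equation (\ref{eqnelastica}) to a first integral, and then to verify directly that the two proposed elliptic-function ansätze exhaust all solutions.

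First I would settle existence and uniqueness abstractly. Solved for the highest derivative, (\ref{eqnelastica}) reads $\partial_s^2\kappa=\frac{1}{2\varepsilon^2}\kappa-\frac{1}{2}\kappa^3$, whose right-hand side is a polynomial, hence smooth, in $\kappa$. Applying the Picard--Lindel\"of theorem to the equivalent first-order system in $(\kappa,\partial_s\kappa)$ yields a unique local solution for any prescribed initial pair $(\kappa(0),\partial_s\kappa(0))=(a_0,b_0)$. Next I would extract the first integral: multiplying (\ref{eqnelastica}) by $\partial_s\kappa$ and integrating gives the conserved quantity
$$E:=\varepsilon^2(\partial_s\kappa)^2+\frac{\varepsilon^2}{4}\kappa^4-\frac{1}{2}\kappa^2,$$
whose value is fixed by the data as $E=\varepsilon^2 b_0^2+\frac{\varepsilon^2}{4}a_0^4-\frac{1}{2}a_0^2$; equivalently $4E/\varepsilon^2=(a_0^2-2\varepsilon^{-2})a_0^2+4b_0^2$, so that the sign appearing in the statement is exactly $\sgn E$. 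Since conservation forces $\frac{\varepsilon^2}{4}\kappa^4\leq E+\frac12\kappa^2$, the curvature remains bounded on finite intervals, and the local solution extends globally to $\mathbb{R}$.

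The heart of the argument is verifying the two forms. Setting $\kappa=A\cn(\alpha s+\beta,k)$ and using the derivative rules (\ref{eqnellipticderivative}) together with the identities $\sn^2+\cn^2=1$ and $\dn^2=1-k^2\sn^2$, a direct computation rewrites $(\partial_s\kappa)^2$ as a quartic polynomial in $\kappa$. Matching it coefficient by coefficient against the first-integral relation $(\partial_s\kappa)^2=\frac{E}{\varepsilon^2}+\frac{1}{2\varepsilon^2}\kappa^2-\frac14\kappa^4$ reproduces precisely the constraints $A^2=4k^2\alpha^2$ and $\varepsilon^2(A^2-2\alpha^2)=1$, and forces the constant term to satisfy $E/\varepsilon^2=\alpha^2 A^2(1-k^2)\geq0$. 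The analogous computation for $\kappa=A\dn(\alpha s+\beta,k)$ gives $A^2=4\alpha^2$, $\varepsilon^2(A^2-2\alpha^2k^2)=1$, and $E/\varepsilon^2=\alpha^2 A^2(k^2-1)\leq0$. This confirms that both candidates solve (\ref{eqnelastica}) and that the $\cn$-form corresponds to $E\geq0$ while the $\dn$-form corresponds to $E\leq0$, the two families meeting at $E=0$, $k=1$, where both collapse to $\sech$ (the borderline elastica).

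Finally I would close the classification by the matching step. Given $(a_0,b_0)$, compute $E$, select the $\cn$-form when $E\geq0$ and the $\dn$-form when $E<0$, and solve the parameter constraints for $(k,\alpha,A)$: eliminating $\alpha$ and $A$ shows the modulus must satisfy $E\varepsilon^2=k^2(1-k^2)/(2k^2-1)^2$ in the $\cn$-case (with $k\in(1/\sqrt2,1]$) and $E\varepsilon^2=(k^2-1)/(2-k^2)^2$ in the $\dn$-case (with $k\in[0,1]$), each of which is a strictly monotone bijection onto the relevant range of $E$ (respectively $[0,\infty)$ and $[-\tfrac{1}{4}\varepsilon^{-2}\cdot\varepsilon^2,0]$). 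The phase $\beta$ is then pinned down by matching $a_0$ through $\cn(\beta,k)$ (respectively $\dn(\beta,k)$) and choosing its sign to match $\sgn(b_0)$. Since the abstract uniqueness already guarantees there is only one solution, this reconstructed elliptic solution is \emph{the} solution, which proves the proposition. The main obstacle is exactly this last surjectivity check: one must verify that the transcendental parameter system is solvable for every admissible $(a_0,b_0)$ and treat the degenerate moduli $k=0$ (the circle $\kappa\equiv\pm1/\varepsilon$ and the trivial line) and $k=1$ (the borderline elastica), where the monotone relations attain their endpoints.
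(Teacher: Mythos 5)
Your proposal is correct in substance, but note that the paper does not prove Proposition \ref{propelasticasolution} at all: it is quoted as a classical fact with a pointer to the literature (e.g.\ \cite{Li96}; see also \cite{Br92,Si08}), so your argument is a self-contained alternative rather than a reproduction of the paper's reasoning. Your route is the standard phase-plane one: Picard--Lindel\"of for uniqueness, the first integral $E=\varepsilon^2(\partial_s\kappa)^2+\frac{\varepsilon^2}{4}\kappa^4-\frac12\kappa^2$ for global extension, coefficient matching of $(\partial_s\kappa)^2$ against $\frac{E}{\varepsilon^2}+\frac{1}{2\varepsilon^2}\kappa^2-\frac14\kappa^4$ to recover exactly the constraints in (1) and (2), and the identity $4E/\varepsilon^2=(a_0^2-2\varepsilon^{-2})a_0^2+4b_0^2$, which identifies the stated dichotomy with $\sgn E$; what this buys over the paper's citation is an actual verification, and in particular it explains where the threshold $k>1/\sqrt{2}$ for the $\cn$-branch comes from. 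Your monotonicity claims check out: with $x=k^2$ one computes $\frac{d}{dx}\bigl[x(1-x)/(2x-1)^2\bigr]=-(2x-1)^{-3}<0$ on $(\tfrac12,1]$ and $\frac{d}{dx}\bigl[(x-1)/(2-x)^2\bigr]=x(2-x)^{-3}>0$ on $[0,1]$, so the moduli equations are bijections onto $[0,\infty)$ and $[-\tfrac14,0]$ for $E\varepsilon^2$; to finish surjectivity you should also record the elementary bound $E\geq-\frac{1}{4\varepsilon^2}$ (minimize $\frac{\varepsilon^2}{4}a_0^4-\frac12a_0^2$ over $a_0$), which guarantees every admissible negative energy is actually attained by the $\dn$-family. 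Two small blemishes: the interval you wrote as ``$[-\tfrac{1}{4}\varepsilon^{-2}\cdot\varepsilon^2,0]$'' is the range of $E\varepsilon^2$, i.e.\ $E\in[-\frac{1}{4\varepsilon^2},0]$; and your caveat mislabels the straight line, since $\kappa\equiv0$ (that is, $a_0=b_0=0$, $E=0$) is not the $k=0$ member of either family --- it satisfies the constraint system of neither (1) nor (2), because $A=0$ is incompatible with $\varepsilon^2(A^2-2\alpha^2)=1$. That last point is really a harmless defect of the statement itself, which the paper sidesteps by applying the proposition only to non-straight solution curves; once this trivial case is set aside, your outline is a complete and correct proof.
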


Since $\|\cn\|_\infty=\|\dn\|_\infty=1$, the above solution $\kappa$ satisfies $\|\kappa\|_\infty\leq|A|$.
We call the number $|A|$ {\it virtual maximum} of $\kappa$, since the maximum $|A|$ may not be attained in a finite interval.
In this paper we do not use the relations on the initial values $a_0$ and $b_0$.
We also mention a small remark that, since $\varepsilon^2$ is now positive, in the case of $\cn$ the modulus has a lower bound as $k\in(1/\sqrt{2},1]$.

\subsection{Boundedness of higher derivatives}

For improving the regularity of the weak convergence in Proposition \ref{proprescaledconvergence}, it suffices to prove that any higher order derivative of the rescaled tangential angle is (locally) bounded as $\varepsilon\to0$.
We prove the boundedness by using the expression in terms of elliptic functions.

\begin{proposition}\label{proprescaledconvergence2}
	Let $c>0$ and $\hat{\vartheta}_\varepsilon\in C^\infty([0,c])$ be the rescaled tangential angle function in Proposition \ref{proprescaledconvergence} for $\varepsilon>0$ with $c\varepsilon<l_\varepsilon$.
	Then for any positive integer $k$ the sequence of $\hat{\vartheta}_\varepsilon$ is bounded in $C^k([0,c])$ as $\varepsilon\to0$.
	Accordingly, the $H^1$-weak convergence in Proposition \ref{proprescaledconvergence} is improved to the $C^\infty$-convergence.
\end{proposition}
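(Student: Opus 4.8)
The plan is to use the two facts developed earlier: that any minimizer's curvature solves the elastica equation (Proposition \ref{propelasticaequation}), and that after the rescaling $\hat{s}=s/\varepsilon$ this equation becomes $\varepsilon$-free. Setting $\hat{\kappa}_\varepsilon(\hat{s}):=\partial_{\hat{s}}\hat{\vartheta}_\varepsilon(\hat{s})=\varepsilon\kappa(\varepsilon\hat{s})$ for the rescaled curvature and substituting into (\ref{eqnelastica}), a direct computation shows that $\hat{\kappa}_\varepsilon$ satisfies $2\partial_{\hat{s}}^2\hat{\kappa}_\varepsilon+\hat{\kappa}_\varepsilon^3-\hat{\kappa}_\varepsilon=0$ on $[0,c]$, i.e.\ equation (\ref{eqnelastica}) with $\varepsilon=1$. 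The only quantitative input I carry over from Proposition \ref{proprescaledconvergence} is that $\hat{\vartheta}_\varepsilon$ is bounded in $H^1(0,c)$, so that $\hat{\kappa}_\varepsilon$ is bounded in $L^2(0,c)$ uniformly in $\varepsilon$. The goal is then to promote this $L^2$ bound to uniform bounds on every derivative of $\hat{\vartheta}_\varepsilon$.

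The crucial step, and the one I expect to be the main obstacle, is a uniform $C^0$ bound on $\hat{\kappa}_\varepsilon$. Here I would use the explicit form of Proposition \ref{propelasticasolution} applied with $\varepsilon=1$: each $\hat{\kappa}_\varepsilon$ equals $\hat{A}_\varepsilon\cn(\hat{\alpha}_\varepsilon\hat{s}+\hat{\beta}_\varepsilon,\hat{k}_\varepsilon)$ or $\hat{A}_\varepsilon\dn(\hat{\alpha}_\varepsilon\hat{s}+\hat{\beta}_\varepsilon,\hat{k}_\varepsilon)$, with $\|\hat{\kappa}_\varepsilon\|_\infty\le|\hat{A}_\varepsilon|$, so it suffices to bound the virtual maximum $|\hat{A}_\varepsilon|$. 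In the $\dn$ case the constraints $\hat{A}_\varepsilon^2=4\hat{\alpha}_\varepsilon^2$ and $\hat{A}_\varepsilon^2-2\hat{\alpha}_\varepsilon^2\hat{k}_\varepsilon^2=1$ give $\hat{A}_\varepsilon^2=2/(2-\hat{k}_\varepsilon^2)\in[1,2]$, so nothing is needed. The $\cn$ case is the real difficulty: there $\hat{A}_\varepsilon^2=2\hat{\alpha}_\varepsilon^2+1$, so a blow-up $\hat{A}_\varepsilon\to\infty$ forces $\hat{\alpha}_\varepsilon\to\infty$, i.e.\ faster and faster oscillation. I would rule this out by showing a large virtual maximum forces a large $L^2$ norm: since $\hat{\alpha}_\varepsilon\to\infty$ while the modulus stays in the compact range $\hat{k}_\varepsilon\in(1/\sqrt{2},1]$, the number of periods of $\cn(\hat{\alpha}_\varepsilon\cdot,\hat{k}_\varepsilon)$ in $[0,c]$ grows like $c\hat{\alpha}_\varepsilon/(4K(\hat{k}_\varepsilon))$, and evaluating $\int_0^c\hat{\kappa}_\varepsilon^2$ period by period yields a lower bound proportional to $\hat{A}_\varepsilon^2$ with constant bounded away from zero. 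This contradicts the $L^2$ bound, so $|\hat{A}_\varepsilon|$ is uniformly bounded, and with it $\hat{\alpha}_\varepsilon$ (via the two displayed relations in either case).

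Once $\hat{A}_\varepsilon$ and $\hat{\alpha}_\varepsilon$ are bounded, the derivative bounds are essentially free. Differentiating $\hat{\kappa}_\varepsilon=\hat{A}_\varepsilon\,\mathrm{f}(\hat{\alpha}_\varepsilon\hat{s}+\hat{\beta}_\varepsilon,\hat{k}_\varepsilon)$ with $\mathrm{f}\in\{\cn,\dn\}$ and using the formulae (\ref{eqnellipticderivative}) repeatedly, each $\partial_{\hat{s}}^j\hat{\kappa}_\varepsilon$ equals $\hat{A}_\varepsilon\hat{\alpha}_\varepsilon^j$ times a universal polynomial in $\sn,\cn,\dn,\hat{k}_\varepsilon$, which is bounded by a constant depending only on $j$ since $|\sn|,|\cn|,|\dn|\le1$ and $\hat{k}_\varepsilon\le1$. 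Hence $|\partial_{\hat{s}}^j\hat{\kappa}_\varepsilon|\le C_j|\hat{A}_\varepsilon|\hat{\alpha}_\varepsilon^j$ is bounded uniformly in $\varepsilon$ for every $j$. (Alternatively, one could bootstrap directly on the ODE $2\partial_{\hat{s}}^2\hat{\kappa}_\varepsilon=\hat{\kappa}_\varepsilon-\hat{\kappa}_\varepsilon^3$, using its first integral to control $\partial_{\hat{s}}\hat{\kappa}_\varepsilon$.) Since $\hat{\vartheta}_\varepsilon(0)=\theta_0^\varepsilon$ is bounded and $\partial_{\hat{s}}\hat{\vartheta}_\varepsilon=\hat{\kappa}_\varepsilon$, this gives uniform $C^k([0,c])$ bounds on $\hat{\vartheta}_\varepsilon$ for all $k$.

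Finally I would upgrade the convergence. With uniform $C^{k+1}$ bounds, Arzel\`a--Ascoli extracts from every subsequence a further subsequence converging in $C^k([0,c])$; its limit must coincide with the weak $H^1$ limit $\vartheta_*$ identified in Proposition \ref{proprescaledconvergence}, which is unique when $|\theta_0|<\pi$. Hence $\hat{\vartheta}_\varepsilon\to\vartheta_*$ in $C^k([0,c])$ for every $k$ and every $c$, that is, in $C^\infty_{loc}$; the corresponding statement for the curves $\hat{\gamma}_\varepsilon$ follows because the tangential angle together with the fixed initial point $\hat{\gamma}_\varepsilon(0)=(0,0)$ determines the curve. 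The decisive difficulty throughout is the control of the virtual maximum in the oscillatory $\cn$ regime of the second paragraph; everything after that is routine.
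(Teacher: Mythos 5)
Your proposal is correct and follows essentially the same route as the paper: rescale to the $\varepsilon$-free elastica equation, use the elliptic-function representation of Proposition \ref{propelasticasolution} (with $\varepsilon=1$) to reduce everything to bounding the virtual maximum $|\hat{A}_\varepsilon|$, rule out blow-up in the $\cn$ case by showing it forces $\|\hat{\kappa}_\varepsilon\|_{L^2(0,c)}$ to diverge (the paper integrates over a single period, you count all of them --- both suffice), and finish with the derivative formulae (\ref{eqnellipticderivative}) and Arzel\`{a}--Ascoli. The one point to tighten is that your period-counting needs $K(\hat{k}_\varepsilon)$ bounded and the mean of $\cn^2$ over a period bounded below, which does \emph{not} follow from $\hat{k}_\varepsilon\in(1/\sqrt{2},1]$ alone (as $\hat{k}_\varepsilon\to1$ one has $K(\hat{k}_\varepsilon)\to\infty$ and the mean of $\cn^2$ tends to $0$), but rather from the relation $\hat{k}_\varepsilon^2=\hat{A}_\varepsilon^2/(2(\hat{A}_\varepsilon^2-1))\to1/2$ in the blow-up regime, which is exactly the fact the paper invokes.
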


\begin{proof}
	Let $\kappa_\varepsilon(s)=\partial_s\vartheta_{\tilde{\gamma}_\varepsilon}(s)$ be the signed curvature of the original minimizer $\gamma_\varepsilon$.
	Recall that $\kappa_\varepsilon$ satisfies (\ref{eqnelastica}).
	Hence, the rescaled curvature $\hat{\kappa}_\varepsilon$ defined by
	$$\hat{\kappa}_\varepsilon(\hat{s}):=\partial_{\hat{s}}\hat{\vartheta}_\varepsilon(\hat{s})=\varepsilon\kappa_\varepsilon(\varepsilon\hat{s})$$
	satisfies the normalized elastica equation:
	\begin{eqnarray}\nonumber
	2\partial^2_{\hat{s}}\hat{\kappa}_\varepsilon + \hat{\kappa}_\varepsilon^3 - \hat{\kappa}_\varepsilon=0.
	\end{eqnarray}
	By Proposition \ref{propelasticasolution}, the rescaled curvature $\hat{\kappa}_\varepsilon$ is of the form either (1) or (2) with $\varepsilon=1$.
	Thus, it suffices to prove that the virtual maximum $|\hat{A}_\varepsilon|$ of $\hat{\kappa}_\varepsilon$ and the coefficient $\hat{\alpha}_\varepsilon$ of the variable is bounded as $\varepsilon\to0$; in fact, by the derivative formulae (\ref{eqnellipticderivative}) and the fact that all the elliptic functions and modulus $\hat{k}_\varepsilon$ are bounded above by $1$, any derivative of $\hat{\kappa}_\varepsilon$ is bounded by a polynomial of $|\hat{A}_\varepsilon|$ and $|\hat{\alpha}_\varepsilon|$.
	Moreover, by the relations in Proposition \ref{propelasticasolution} (with $\varepsilon=1$), the boundedness of $|\hat{A}_\varepsilon|$ and of $|\hat{\alpha}_\varepsilon|$ are equivalent.
	Hence, it suffices to prove that $|\hat{A}_\varepsilon|$ is bounded as $\varepsilon\to0$.

	We now prove the boundedness by contradiction; suppose that a subsequence (not relabeled) of the virtual maximum $|\hat{A}_\varepsilon|$ of $\hat{\kappa}_\varepsilon$ diverges to infinity as $\varepsilon\to0$.
	We prove that this assumption contradicts the fact that the sequence of $\hat{\kappa}_\varepsilon$ is bounded in $L^2(0,c)$ (by Proposition \ref{proprescaledconvergence}).
	By the relations of constants in Proposition \ref{propelasticasolution} for $\hat{\kappa}_\varepsilon$, the assumption that $|\hat{A}_\varepsilon|\to\infty$ implies that only the case (1) occurs for any small $\varepsilon$.
	Hence, the following relations hold: $$\hat{\kappa}_\varepsilon(\hat{s})=\hat{A}_\varepsilon\cn(\hat{\alpha}_\varepsilon\hat{s}+\hat{\beta}_\varepsilon,\hat{k}_\varepsilon),\quad \hat{k}_\varepsilon^2=\frac{\hat{A}_\varepsilon^2}{2(\hat{A}_\varepsilon^2-1)},\quad \hat{\alpha}_\varepsilon^2=\frac{\hat{A}_\varepsilon^2-1}{2}.$$
	Then we calculate
	\begin{eqnarray}\nonumber
	\|\hat{\kappa}_\varepsilon\|_{L^2(0,c)}^2=
	\frac{\hat{A}_\varepsilon^2}{|\hat{\alpha}_\varepsilon|}\int_{\hat{\beta}_\varepsilon}^{\hat{\alpha}_\varepsilon c+\hat{\beta}_\varepsilon}|\cn(x,\hat{k}_\varepsilon)|^2dx.
	\end{eqnarray}
	Since $\hat{\alpha}_\varepsilon\to\infty$ and $\hat{k}_\varepsilon\to1/\sqrt{2}$, for any small $\varepsilon$ the interval $[\hat{\beta}_\varepsilon,\hat{\alpha}_\varepsilon c+\hat{\beta}_\varepsilon]$ includes one period $4K(\hat{k}_\varepsilon)$ of $\cn(x,\hat{k}_\varepsilon)$:
	$$\int_{\hat{\beta}_\varepsilon}^{\hat{\alpha}_\varepsilon c+\hat{\beta}_\varepsilon}|\cn(x,\hat{k}_\varepsilon)|^2dx\geq \int_{0}^{4K(\hat{k}_\varepsilon)}|\cn(x,\hat{k}_\varepsilon)|^2dx.$$
	By the dominated convergence theorem and $K(\hat{k}_\varepsilon)\to K(1/\sqrt{2})$, the right-hand term converges to a positive value, namely,
	$$\int_0^{4K(1/\sqrt{2})}|\cn(x,1/\sqrt{2})|^2dx.$$
	Since $\hat{A}_\varepsilon^2/|\hat{\alpha}_\varepsilon|\to\infty$, the $L^2$-norm $\|\hat{\kappa}_\varepsilon\|_{L^2(0,c)}$ diverges to infinity.
	This is a contradiction, and hence the boundedness part is proved.

	The improvement of the regularity of convergence is obvious since, by the boundedness of higher order derivatives, the Arzel\`{a}-Ascoli theorem implies the desired $C^\infty$-convergence.
	The proof is now complete.
\end{proof}

We shall complete the proof of Theorem \ref{thmmain1}.

\begin{proof}[Proof of Theorem \ref{thmmain1}]
	Let $\{\gamma_\varepsilon\}_\varepsilon$ be any sequence of minimizers as in the assumption.
	For the part (1), since the position of $\gamma_{\varepsilon}(0)$ is fixed at the origin, it suffices to prove (1) in terms of the tangential angles.
	This follows by Proposition \ref{proprescaledconvergence} and Proposition \ref{proprescaledconvergence2}.
	The almost straightness part (2) is proved in Proposition \ref{propstraight}, which is also in terms of the tangential angles.
	The proof is now complete.
\end{proof}

\section{Qualitative properties}\label{sectqualitativeproperty}

In this section we prove Theorem \ref{thmqualitativeelastica} and Theorem \ref{thmuniqueness} by using Theorem \ref{thmmain1}.
In this part we also use the expressions of the curvatures in terms of elliptic functions as in Proposition \ref{propelasticasolution}.

\subsection{Self-intersection}

We first confirm that any minimizer has no self-intersection in the limit $\varepsilon\to0$.
This is an almost direct consequence of Theorem \ref{thmmain1}.

\begin{proposition}\label{propselfintersection}
	Let $\{\gamma_\varepsilon\}_\varepsilon$ be any sequence of minimizers as in Theorem \ref{thmmain1}.
	Then there is $\bar{\varepsilon}>0$ such that for any $\varepsilon\in(0,\bar{\varepsilon})$ the curve $\gamma_{\varepsilon}$ has no self-intersection.
\end{proposition}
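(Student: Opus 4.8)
The plan is to argue by contradiction, combining the rescaled convergence to the \emph{embedded} borderline elastica near each endpoint (Theorem \ref{thmmain1}(1)) with strict monotonicity of the $x$-coordinate on the nearly straight middle portion (Theorem \ref{thmmain1}(2)). Suppose the claim fails, so that there is a sequence $\varepsilon_j\downarrow0$ for which each minimizer has a self-intersection; write $\tilde{\gamma}_j:=\tilde{\gamma}_{\varepsilon_j}$ and pick $0\le s_j<s_j'\le L_{\varepsilon_j}$ with $\tilde{\gamma}_j(s_j)=\tilde{\gamma}_j(s_j')$. First I would fix a large constant $c>0$ with $4e^{-c/\sqrt{2}}<\pi/4$; then Theorem \ref{thmmain1}(2) gives $|\llbracket\vartheta_{\tilde{\gamma}_j}\rrbracket|<\pi/3$, hence $\cos\vartheta_{\tilde{\gamma}_j}\ge 1/2=:m_0$, on the middle interval $K_{c\varepsilon_j}=[c\varepsilon_j,L_{\varepsilon_j}-c\varepsilon_j]$ for all large $j$, so $x$ is strictly increasing there. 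Passing to a further subsequence (harmless, and needed when $|\theta_0|=\pi$, where the rescaled limit is only determined up to sign), I may assume the rescaled curves $\hat{\gamma}_j=\varepsilon_j^{-1}\tilde{\gamma}_j(\varepsilon_j\,\cdot\,)$ converge in $C^\infty_{loc}([0,\infty))$ to a fixed $\gamma_B^{\theta_0}$, and symmetrically at the right endpoint. A key preliminary fact is that every $\gamma_B^{\theta}$, $\theta\in[-\pi,\pi]$, is embedded: since $\vartheta_B^{\theta}$ is monotone with values strictly between $0$ and $\theta$, the height satisfies $\partial_s y=\sin\vartheta_B^{\theta}$ with a strict sign, so $\gamma_B^{\theta}$ is injective. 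Consequently a compact sub-arc $\gamma_B^{\theta_0}|_{[0,R]}$ has a positive tube radius, whence any $C^1$-close curve is still injective; this turns injectivity of the limit into injectivity of $\hat{\gamma}_j|_{[0,R]}$ for large $j$.

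Next I would rule out each case for the location of $(s_j,s_j')$. If both lie in $K_{c\varepsilon_j}$, then strict $x$-monotonicity makes $\tilde{\gamma}_j$ injective there, a contradiction. If both rescaled parameters $s_j/\varepsilon_j,s_j'/\varepsilon_j$ stay in a fixed window $[0,R]$ (near the left endpoint), the self-intersection survives under $C^\infty_{loc}$-convergence and contradicts the embeddedness of $\gamma_B^{\theta_0}|_{[0,R]}$ via the tube argument above; the right endpoint is symmetric. If one parameter is near each endpoint, then the left arc $\tilde{\gamma}_j([0,c\varepsilon_j])=\varepsilon_j\hat{\gamma}_j([0,c])$ lies in $B(0,(R_c+1)\varepsilon_j)$ and the right arc in $B((l_{\varepsilon_j},0),(R_c+1)\varepsilon_j)$, where $R_c=\max_{[0,c]}|\gamma_B^{\theta_0}|$; since $l_{\varepsilon_j}\to l>0$, these balls are disjoint for large $j$, so no such intersection exists.

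The genuinely delicate case, and the main obstacle, is the interface: one parameter lies in a boundary layer and the other deep in the straight middle, where a priori the middle could bend back and meet the layer. Here I would use the $x$-estimate quantitatively. The left arc satisfies $x\le(X_c+o(1))\varepsilon_j$ with $X_c:=\max_{[0,c]}x_B^{\theta_0}$, while on the middle $\partial_s x\ge m_0>0$ gives $x(\tilde{\gamma}_j(s))\ge x(\tilde{\gamma}_j(c\varepsilon_j))+m_0(s-c\varepsilon_j)\ge \varepsilon_j(x_B^{\theta_0}(c)-o(1))+m_0(s-c\varepsilon_j)$. For a middle point to share an $x$-value with a left-arc point we therefore need $m_0(s-c\varepsilon_j)\le\varepsilon_j(X_c-x_B^{\theta_0}(c))+o(\varepsilon_j)$, i.e.\ $s-c\varepsilon_j\le C\varepsilon_j$ for the fixed constant $C:=(X_c-x_B^{\theta_0}(c))/m_0+1$. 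Thus any such crossing has its middle parameter rescaling into $[c,c+C]$, so \emph{both} rescaled parameters lie in the fixed window $[0,c+C]$, and the crossing is again excluded by embeddedness of $\gamma_B^{\theta_0}|_{[0,c+C]}$. The symmetric estimate disposes of the right endpoint, completing the contradiction.

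In summary, the crucial mechanism is that choosing $c$ large in the straightness estimate Theorem \ref{thmmain1}(2) forces $x$ to be strictly increasing along the straight part, which \emph{localizes} every candidate self-intersection into a bounded rescaled neighbourhood of one endpoint; there Theorem \ref{thmmain1}(1) identifies the limit as the embedded borderline elastica, and stability of embeddedness under $C^1$-convergence closes the argument. I expect the only step requiring care to be this interface estimate, since all remaining cases reduce to either pure $x$-monotonicity or the tube-radius stability of embeddedness.
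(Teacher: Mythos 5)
Your proof is correct, and it rests on the same two pillars as the paper's own argument: strict monotonicity of $x$ on the middle arc $K_{c\varepsilon}$ (from Theorem \ref{thmmain1}(2) with $c$ large) and embeddedness of the borderline elastica transferred to the boundary layers by the rescaled convergence of Theorem \ref{thmmain1}(1). The organization, however, is genuinely different. The paper chooses $c$ so that, in addition to $4e^{-c/\sqrt{2}}<1$, the $x$-component of $\gamma_B^{\theta_i}(c)$ is \emph{positive}, and then shows that the three arcs $\tilde{\gamma}_\varepsilon|_{(0,c\varepsilon)}$, $\tilde{\gamma}_\varepsilon|_{(c\varepsilon,L_\varepsilon-c\varepsilon)}$, $\tilde{\gamma}_\varepsilon|_{(L_\varepsilon-c\varepsilon,L_\varepsilon)}$ lie in the pairwise disjoint vertical slabs $\{x<\tilde{x}_\varepsilon(c\varepsilon)\}$, $\{\tilde{x}_\varepsilon(c\varepsilon)<x<\tilde{x}_\varepsilon(L_\varepsilon-c\varepsilon)\}$, $\{x>\tilde{x}_\varepsilon(L_\varepsilon-c\varepsilon)\}$; this excludes every cross-arc intersection at once, so only injectivity of each single arc remains and no contradiction argument or case analysis is needed. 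You instead argue by contradiction and treat the layer-versus-middle interaction through the quantitative localization $s-c\varepsilon_j\leq C\varepsilon_j$, which forces any candidate crossing into the fixed rescaled window $[0,c+C]$, where stability of embeddedness applies. Each route buys something: the paper's slab argument is shorter, but its containment claim for the layer (that the whole layer stays strictly to the left of the vertical line through $\tilde{\gamma}_\varepsilon(c\varepsilon)$) silently uses the sign condition $x_B^{\theta_i}(c)>0$ together with a careful transfer of the strict inequality $x_B^{\theta_0}(s)<x_B^{\theta_0}(c)$ for $s<c$ to the approximating curves; your localization needs no sign condition on $x_B^{\theta_0}(c)$, makes explicit the two facts the paper leaves implicit (embeddedness of $\gamma_B^{\theta}$ via monotonicity of its $y$-component, and stability of injectivity of a compact smooth arc under $C^1$-perturbation), and your subsequence step is exactly what is needed to handle the non-uniqueness of rescaled limits when $|\theta_0|=\pi$, a point the paper's ``for any small $\varepsilon$'' phrasing glosses over. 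The price you pay is the longer six-case analysis.
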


\begin{proof}
	Fix sufficiently large $c>0$ so that $4e^{-\frac{c}{\sqrt{2}}}<1$ and the $x$-component of $\gamma_B^{\theta_i}(c)$ is positive for $i=0,1$, where $\gamma_{B}^{\theta_i}$ is the borderline elastica with initial angle $\theta_i$.
	Decompose the domain of the arc length parameterized curve $\tilde{\gamma}_\varepsilon$ into $[0,c\varepsilon]$, $[c\varepsilon,L_\varepsilon-c\varepsilon]$, and $[L_\varepsilon-c\varepsilon,L_\varepsilon]$.
	Then, for any small $\varepsilon$, the curve $\tilde{\gamma}_\varepsilon$ has no self-intersection in each of the parts by Theorem \ref{thmmain1}.
	Moreover, for any small $\varepsilon$, the parts $\tilde{\gamma}_\varepsilon|_{(0,c\varepsilon)}$, $\tilde{\gamma}_\varepsilon|_{(c\varepsilon,L_\varepsilon-c\varepsilon)}$, $\tilde{\gamma}_\varepsilon|_{(L_\varepsilon-c\varepsilon,L_\varepsilon)}$ are respectively included in the sets
	$$\{x< \tilde{x}_\varepsilon(c\varepsilon)\},\quad\{\tilde{x}_\varepsilon(c\varepsilon)< x < \tilde{x}_\varepsilon(L_\varepsilon-c\varepsilon)\},\quad\{\tilde{x}_\varepsilon(L_\varepsilon-c\varepsilon)< x\},$$
	where $\tilde{x}_\varepsilon$ denotes the $x$-component of $\tilde{\gamma}_\varepsilon$.
	This implies that there is no self-intersection in the whole of $\tilde{\gamma}_\varepsilon$ for small $\varepsilon$.
\end{proof}

\subsection{Inflection point}

We next discuss the number of the inflection points, i.e., the sign changes of the curvature.
Recall that the curvatures of all nontrivial (non-straight) solution curves are represented by non-zero elliptic functions, and hence their sign changes are well-defined if $|\theta_0|+|\theta_1|>0$ (and $\varepsilon$ is small).
In particular, all the zeroes of the curvature except the endpoints are nothing but the sign changes.

The key step is to prove that the number of the inflection points are bounded above by one for any small $\varepsilon$.
To this end we first prove the upper bound condition except for the special case that $\theta_0=\theta_1=0$ (Proposition \ref{propinfleciton0}).
Since this result is sufficient to deal with the generic angle condition, we then obtain a result to determine the number of the inflection points (Proposition \ref{propinfleciton}).
After that, we give another approach to obtain the upper bound (Proposition \ref{propinfleciton1}), which is valid for any ``small angle'' case, in particular, even for the ``zero angle'' case excepted in Proposition \ref{propinfleciton0}.

First, we shall obtain the upper bound except for the zero angle case.
The rough strategy is as follows; if $\theta_0\neq0$ and minimizers would have two inflection points, then the curves would contain half-periods of inflectional elasticae away from the origin; however, the tangential angles near the origin have the variation nearly $|\theta_0|$ as $\varepsilon\to0$ and hence, in view of periodicity, the tangential angles would also have a uniformly positive variation in the ``middle''; this contradicts the almost straightness.

\begin{proposition}\label{propinfleciton0}
	Let $\{\gamma_\varepsilon\}_\varepsilon$ be any sequence of minimizers as in Theorem \ref{thmmain1}.
	Suppose that $|\theta_0|+|\theta_1|>0$.
	Then there is $\bar{\varepsilon}>0$ such that for any $\varepsilon\in(0,\bar{\varepsilon})$ the curve $\gamma_\varepsilon$ has at most one inflection point.
\end{proposition}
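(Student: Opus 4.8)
The plan is to combine the explicit elliptic-function form of the curvature (Proposition~\ref{propelasticasolution}) with the two conclusions of Theorem~\ref{thmmain1}. Since $|\theta_0|+|\theta_1|>0$, by the symmetry of the problem (using the reparameterization of the backward, half-rotated curve at the right endpoint described in the remarks after Theorem~\ref{thmmain1}) I may assume $\theta_0\neq0$. Write $\kappa_\varepsilon=\partial_s\vartheta_{\tilde\gamma_\varepsilon}$ for the signed curvature of a minimizer; it solves the elastica equation (\ref{eqnelastica}) on all of $[0,L_\varepsilon]$, so by Proposition~\ref{propelasticasolution} it is globally of the form $A_\varepsilon\cn(\alpha_\varepsilon s+\beta_\varepsilon,k_\varepsilon)$ or $A_\varepsilon\dn(\alpha_\varepsilon s+\beta_\varepsilon,k_\varepsilon)$ with a single set of constants. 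In the $\dn$-case the curvature never vanishes, so there is no inflection point and we are done; hence I assume the $\cn$-case, in which the remark after Proposition~\ref{propelasticasolution} supplies the modulus bound $k_\varepsilon\in(1/\sqrt2,1]$.

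The key computation is the total turning between two consecutive inflection points. The inflection points are exactly the zeros of $\cn(\alpha_\varepsilon s+\beta_\varepsilon,k_\varepsilon)$, and consecutive zeros are a half-period $2K(k_\varepsilon)/|\alpha_\varepsilon|$ apart. Using $A_\varepsilon^2=4k_\varepsilon^2\alpha_\varepsilon^2$ from Proposition~\ref{propelasticasolution} together with the primitive $\int\cn\,dx=k^{-1}\arcsin(k\sn)$, which follows from (\ref{eqnellipticderivative}), I find that between two consecutive inflection points $s_1<s_2$ the angle is monotone with
$$|\vartheta_{\tilde\gamma_\varepsilon}(s_2)-\vartheta_{\tilde\gamma_\varepsilon}(s_1)|=\Big|\int_{s_1}^{s_2}\kappa_\varepsilon\,ds\Big|=\frac{|A_\varepsilon|}{|\alpha_\varepsilon|}\cdot\frac{2}{k_\varepsilon}\arcsin k_\varepsilon=4\arcsin k_\varepsilon>\pi,$$
where the strict inequality uses $k_\varepsilon>1/\sqrt2$. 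Crucially, this quantity is an integral of $|\kappa_\varepsilon|$ and is therefore independent of the phase $\beta_\varepsilon$, so no information about the oscillation axis is needed. Since $\vartheta_{\tilde\gamma_\varepsilon}$ sweeps monotonically through an interval of length exceeding $\pi$, its reduced value cannot remain inside any single arc $[2\pi m-\pi/2,2\pi m+\pi/2]$, and hence $\max_{[s_1,s_2]}|\llbracket\vartheta_{\tilde\gamma_\varepsilon}\rrbracket|>\pi/2$.

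With this turning estimate I localize the inflection points. Fix $c>0$ large enough that $4e^{-c/\sqrt2}<\pi/2$. Near the endpoint with $\theta_0\neq0$, the rescaled convergence of Theorem~\ref{thmmain1}(1) gives $\hat\kappa_\varepsilon\to\partial_{\hat s}\vartheta_B^{\theta_0}=\pm\sqrt{1-\cos\vartheta_B^{\theta_0}}$ in $C^\infty([0,c])$ (see (\ref{eqnborderline})), and this limit is bounded away from $0$ on $[0,c]$ because $\vartheta_B^{\theta_0}>0$ there; hence $\kappa_\varepsilon$ has no zero, i.e.\ no inflection point, in $[0,c\varepsilon]$ for small $\varepsilon$. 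On the complementary interval the reduced angle stays below $\pi/2$: on the middle part $[c\varepsilon,L_\varepsilon-c\varepsilon]$ this is the almost-straightness of Theorem~\ref{thmmain1}(2), while on the right window $[L_\varepsilon-c\varepsilon,L_\varepsilon]$ I use the symmetric rescaled convergence---if $\theta_1\neq0$ the same non-vanishing argument excludes inflection points there, and if $\theta_1=0$ then $\hat\gamma^*_\varepsilon\to\gamma_B^{0}$ forces $|\llbracket\vartheta_{\tilde\gamma_\varepsilon}\rrbracket|\to0$ uniformly on that window. In every case the interval on which inflection points may occur carries $|\llbracket\vartheta_{\tilde\gamma_\varepsilon}\rrbracket|<\pi/2$ for small $\varepsilon$. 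If there were two inflection points, then two consecutive ones $s_1<s_2$ would lie in this interval with $[s_1,s_2]$ contained in it, and the displayed estimate would force $\max_{[s_1,s_2]}|\llbracket\vartheta_{\tilde\gamma_\varepsilon}\rrbracket|>\pi/2$, a contradiction. Hence there is at most one inflection point for small $\varepsilon$.

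I expect the main obstacle to be the bookkeeping at the endpoints, and specifically the borderline case $\theta_1=0$: there the limiting borderline elastica degenerates to the trivial segment, so the ``curvature bounded away from zero'' argument fails and one must instead extract the uniform smallness of the reduced angle directly from the rescaled $C^\infty$-convergence to the line. A secondary point requiring care is the justification that a single $\cn$-branch governs the entire curve and that the turning identity is genuinely phase-independent, so that the modulus bound $k_\varepsilon>1/\sqrt2$ alone yields the decisive strict inequality $4\arcsin k_\varepsilon>\pi$ on which the whole contradiction rests.
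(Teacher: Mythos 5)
Your proof is correct in substance, and it is accepted as a valid alternative; but its engine is genuinely different from the paper's. Both arguments reduce to the $\cn$-case via Proposition~\ref{propelasticasolution}, both use Theorem~\ref{thmmain1}(1) to exclude zeros of the curvature on $[0,c\varepsilon]$ (the limit curvature $-\sqrt{2}\sech((\hat{s}+s_{\theta_0})/\sqrt{2})$ never vanishes), and both derive a contradiction with the almost-straightness (2). The difference is the source of the ``large turning in the middle.'' The paper never computes the turning between inflections and never invokes the modulus bound $k_\varepsilon>1/\sqrt{2}$: it extends the curvature antiperiodically, labels the relevant zeros and extrema $s_{-1}^j<s_0^j<s_1^j<s_2^j$, observes that $[0,c\varepsilon_j]\subset[s_{-1}^j,s_0^j]$ forces $[s_0^j,s_1^j]\subset[c\varepsilon_j,L_{\varepsilon_j}-c\varepsilon_j]$, and then uses the half-period symmetry of $\cn$ to transfer the angle variation accumulated near the left endpoint (which tends to $\theta_0-\vartheta_B^{\theta_0}(c)$, hence is close to $\theta_0>0$ for large $c$) into the middle interval; the contradiction threshold is $\theta_0$ itself against a straightness bound that vanishes as $c\to\infty$. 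Your contradiction instead rests on the phase-independent identity that the turning between consecutive inflections equals $4\arcsin k_\varepsilon>\pi$, valid precisely because the multiplier $\varepsilon^2$ is positive, so your threshold is the universal constant $\pi$ against the reduced-angle bound $\pi/2$. What each buys: your route isolates a clean quantitative fact about elasticae under tension (more than a half-turn between consecutive inflections) and needs no transfer of endpoint information into the middle; the paper's route needs only the left endpoint, so it avoids both the elliptic-integral computation and your right-window case analysis --- in particular the degenerate case $\theta_1=0$, which you nevertheless handle correctly via the rescaled convergence to the segment.

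Two routine repairs are needed. First, Theorem~\ref{thmmain1}(2) bounds $|\partial_s\tilde{\gamma}_\varepsilon-(1,0)|=2|\sin(\llbracket\vartheta_{\tilde{\gamma}_\varepsilon}\rrbracket/2)|$, so your condition $4e^{-c/\sqrt{2}}<\pi/2$ only yields $|\llbracket\vartheta_{\tilde{\gamma}_\varepsilon}\rrbracket|<2\arcsin(\pi/4)$, which exceeds $\pi/2$; either require $4e^{-c/\sqrt{2}}<\sqrt{2}$, or quote the angle version of the straightness estimate (Proposition~\ref{propstraight}), to obtain the bound $\pi/2$ your argument uses. Second, when $|\theta_0|=\pi$ (or $|\theta_1|=\pi$) the rescaled convergence in Theorem~\ref{thmmain1}(1) is only subsequential, with two possible limits; since both limits have nowhere-vanishing curvature on $[0,c]$, your exclusion of zeros on $[0,c\varepsilon]$ for all small $\varepsilon$ still follows by a subsequence-extraction contradiction, but this step should be stated.
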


\begin{proof}
	By symmetry, we may assume that $\theta_0>0$ without loss of generality.
	We prove by contradiction so suppose that there is a sequence $\varepsilon_j\to0$ such that $\gamma_{\varepsilon_j}$ has at least two inflection points.
	Recall that the signed curvature $\kappa_{\varepsilon}$ of $\tilde{\gamma}_{\varepsilon}$ is represented by an elliptic function as in Proposition \ref{propelasticasolution}.
	Since $\kappa_{\varepsilon_j}$ has a zero (and $\kappa_\varepsilon\not\equiv 0$ by $\theta_0\neq0$), it is of the form
	$$\kappa_{\varepsilon_j}(s)=A_j\cn(\alpha_j s+\beta_j,k_j),$$
	where $k_j\in(0,1)$, $A_j\neq0$, and $\alpha_j\neq0$.
	We take the smallest two zeroes $s_0^j,s_2^j\in(0,L_{\varepsilon_j})$ of $\kappa_{\varepsilon_j}$ with $s_0^j<s_2^j$.
	By the $2K$-antiperiodicity of $\cn$, we find that
	$$s_2^j=s_0^j+2K(k_j)/|\alpha_j|.$$
	We now extend the curvature function $\kappa_{\varepsilon_j}(s)$ as a $2K(k_j)$-antiperiodic function defined for any $s\in\mathbb{R}$ by using the elliptic function $\cn$; we use the same notation $\kappa_{\varepsilon_j}$ for the extended curvature.
	Let
	$$s_{\pm1}^j:=s_0^j\pm K(k_j)/|\alpha_j|.$$
	By the periodicity of $\cn$, the curvature $\kappa_{\varepsilon_j}$ takes its maximum or minimum at $s_{\pm1}^j$.

	For simplicity we first consider the case that $\theta_0\neq\pi$, that is, $\theta_0\in(0,\pi)$.
	Take arbitrary large $c>0$.
	Since $\theta_0\in(0,\pi)$, Theorem \ref{thmmain1} implies that the rescaled curvature $\hat{\kappa}_\varepsilon(\hat{s}):=\varepsilon\kappa_{\varepsilon}(\varepsilon\hat{s})$, defined for $\hat{s}\in[0,c]$, smoothly converges to $\partial_s\vartheta_B^{\theta_0}$, where $$\partial_s\vartheta_B^{\theta_0}(\hat{s})=-\sqrt{2}\sech\left(\frac{\hat{s}+s_{\theta_0}}{\sqrt{2}}\right),$$
	and $s_{\theta_0}\geq0$ is a unique constant.
	Thus, for any small $\varepsilon_j$, the curvature $\kappa_{\varepsilon_j}$ is negative and increasing in $[0,c\varepsilon_j]$.
	Hence, for any small $\varepsilon_j$, the interval $[0,c\varepsilon_j]$ is included in $[s_{-1}^j,s_0^j]$.
	In particular, $s_{0}^j>c\varepsilon_j$.
	Moreover, we have $s_{0}^j-s_{-1}^j\geq c\varepsilon_j$, and hence $s_{2}^j-s_{1}^j\geq c\varepsilon_j$.
	Since $s_{2}^j< L_{\varepsilon_j}$, we also find that $s_{1}^j<L_{\varepsilon_j}-c\varepsilon_j$.
	Combining with $s_{0}^j>c\varepsilon_j$, we see that $[s^j_0,s^j_1]\subset[c\varepsilon_j,L_{\varepsilon_j}-c\varepsilon_j]$.
	Noting the periodicity of $\cn$ (and taking $\vartheta_{\tilde{\gamma}_{\varepsilon_j}}$ so that $\vartheta_{\tilde{\gamma}_{\varepsilon_j}}(0)=\theta_0^{\varepsilon_j}$), we have
	\begin{eqnarray}
	2\left(\limsup_{j\to\infty}\max_{s\in [c\varepsilon_j,L_{\varepsilon_j}-c\varepsilon_j]}|\vartheta_{\tilde{\gamma}_{\varepsilon_j}}(s)|\right) &\geq& \limsup_{j\to\infty} (|\vartheta_{\tilde{\gamma}_{\varepsilon_j}}(s_{1}^j)|+|\vartheta_{\tilde{\gamma}_{\varepsilon_j}}(s_{0}^j)|) \nonumber\\
	&\geq&
	\limsup_{j\to\infty} |\vartheta_{\tilde{\gamma}_{\varepsilon_j}}(s_{1}^j)-\vartheta_{\tilde{\gamma}_{\varepsilon_j}}(s_{0}^j)| \nonumber\\
	&=& \limsup_{j\to\infty} |\vartheta_{\tilde{\gamma}_{\varepsilon_j}}(s_{0}^j)-\vartheta_{\tilde{\gamma}_{\varepsilon_j}}(s_{-1}^j)| \nonumber\\
	&\geq& \lim_{j\to\infty}|\vartheta_{\tilde{\gamma}_{\varepsilon_j}}(c\varepsilon_j)-\vartheta_{\tilde{\gamma}_{\varepsilon_j}}(0)| \nonumber\\
	&=& |\vartheta_B^{\theta_0}(c)-\vartheta_B^{\theta_0}(0)|=\theta_0-\vartheta_B^{\theta_0}(c). \nonumber
	\end{eqnarray}
	The last term tends to $\theta_0>0$ as $c\to\infty$.
	This contradicts (2) in Theorem \ref{thmmain1}.

	In the remaining case that $\theta_0=\pi$, the rescaled curvature $\hat{\kappa}_\varepsilon$ converges either $\partial_s\vartheta_B^{\pi}$ or $\partial_s\vartheta_B^{-\pi}$ up to a subsequence of any subsequence; in particular, there is a subsequence of $\{\hat{\kappa}_{\varepsilon_j}\}_j$ converging to either of them.
	Hence, we obtain a contradiction in the same way as above.
	The proof is now complete.
\end{proof}

By using the above upper bound, we determine the exact number of the inflection points providing the generic angle condition.

\begin{proposition}\label{propinfleciton}
	Let $\{\gamma_\varepsilon\}_\varepsilon$ be any sequence of minimizers as in Theorem \ref{thmmain1}.
	Suppose the generic angle condition (\ref{eqngenericanglecondition}).
	If $\theta_0\theta_1>0$ (resp.\ $\theta_0\theta_1<0$), then there is $\bar{\varepsilon}>0$ such that for any $\varepsilon\in(0,\bar{\varepsilon})$ the curve $\gamma_\varepsilon$ has exact one inflection point (resp.\ no inflection point).
\end{proposition}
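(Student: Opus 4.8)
The plan is to pin down the sign of the signed curvature $\kappa_\varepsilon=\partial_s\vartheta_{\tilde\gamma_\varepsilon}$ in a small neighbourhood of each endpoint and then count the interior sign changes by a parity argument, using that $\kappa_\varepsilon$ is an elliptic function (Proposition \ref{propelasticasolution}) and that it has at most one inflection point for small $\varepsilon$ (Proposition \ref{propinfleciton0}). The one fact I would single out at the outset is that, under the generic angle condition (\ref{eqngenericanglecondition}), the borderline angle function is strictly monotone with nonvanishing derivative: as recorded in Definition \ref{defborderline0}, its curvature is $\partial_{\hat s}\vartheta_B^{\theta}(\hat s)=-\sgn(\theta)\sqrt2\,\sech\!\big((\hat s+s_\theta)/\sqrt2\big)$, whose sign is $-\sgn(\theta)\neq0$ everywhere on $[0,\infty)$.

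Fix a constant $c>0$. Near the origin, part (1) of Theorem \ref{thmmain1} gives that the rescaled curvature $\hat\kappa_\varepsilon(\hat s)=\varepsilon\kappa_\varepsilon(\varepsilon\hat s)$ converges in $C^\infty([0,c])$ to $\partial_{\hat s}\vartheta_B^{\theta_0}$, which is bounded away from $0$ with sign $-\sgn(\theta_0)$; hence for all small $\varepsilon$ the function $\kappa_\varepsilon$ has constant sign $-\sgn(\theta_0)$ on $[0,c\varepsilon]$, and in particular no zero there. For the endpoint $s=L_\varepsilon$ I would use the symmetry of the remark following Theorem \ref{thmmain1}. If $\tilde\gamma_\varepsilon^*$ denotes the backward reparameterization of the half-rotation of $\tilde\gamma_\varepsilon$ about $(l_\varepsilon/2,0)$, then its signed curvature is $\kappa_\varepsilon^*(s)=-\kappa_\varepsilon(L_\varepsilon-s)$, since the rotation preserves and the orientation-reversal negates the signed curvature. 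That remark gives that the rescaled curvature of $\tilde\gamma_\varepsilon^*$ converges in $C^\infty([0,c])$ to $\partial_{\hat s}\vartheta_B^{\theta_1}$; unscaling, $-\varepsilon\kappa_\varepsilon(L_\varepsilon-\varepsilon\hat s)\to\partial_{\hat s}\vartheta_B^{\theta_1}(\hat s)$, whose sign is $-\sgn(\theta_1)$. Thus for all small $\varepsilon$ the function $\kappa_\varepsilon$ has constant sign $\sgn(\theta_1)$ on $[L_\varepsilon-c\varepsilon,L_\varepsilon]$, again with no zero there.

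To conclude, I would run the parity count. By Proposition \ref{propelasticasolution} the curvature $\kappa_\varepsilon$ is of $\cn$- or $\dn$-type; the $\dn$-form never vanishes and the $\cn$-form has only simple zeros, so every zero of $\kappa_\varepsilon$ in $(0,L_\varepsilon)$ is a genuine sign change, i.e.\ an inflection point, and conversely. Let $n$ be their number; by Proposition \ref{propinfleciton0}, $n\le1$ once $\varepsilon$ is small. Since $\kappa_\varepsilon$ has no zero on $[0,c\varepsilon]\cup[L_\varepsilon-c\varepsilon,L_\varepsilon]$, all $n$ zeros lie strictly between, so the sign of $\kappa_\varepsilon$ flips exactly $n$ times from $s=c\varepsilon$ to $s=L_\varepsilon-c\varepsilon$, giving $\sgn(\theta_1)=(-1)^n\big(-\sgn(\theta_0)\big)$. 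If $\theta_0\theta_1>0$ the two boundary signs $-\sgn(\theta_0)$ and $\sgn(\theta_1)$ are opposite, forcing $n$ odd and hence $n=1$; if $\theta_0\theta_1<0$ they coincide, forcing $n$ even and hence $n=0$.

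The step I expect to be most delicate is the sign bookkeeping at the far endpoint: one must combine the orientation-reversal (which negates $\kappa$) with the half-rotation (which preserves it) correctly, and verify that the $C^\infty_{loc}$ rescaled convergence of Theorem \ref{thmmain1} really does fix the pointwise sign of $\kappa_\varepsilon$ near each endpoint — which it does, since it controls $\hat\kappa_\varepsilon$ uniformly on compact sets and the limiting curvature is bounded away from zero there by the generic angle condition. The parity argument and the appeal to the $\cn/\dn$ form of the curvature are then routine.
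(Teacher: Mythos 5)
Your proof is correct, and it follows the same strategy as the paper's own proof: fix the sign of $\kappa_\varepsilon$ near each endpoint via the rescaled convergence of Theorem \ref{thmmain1}, then combine a parity count of interior sign changes with the upper bound of Proposition \ref{propinfleciton0}. The comparison is still worth making, because your sign bookkeeping is the correct one, and the paper's proof slips at exactly this point. The paper asserts that $\theta_0\theta_1<0$ gives $\kappa_\varepsilon(0)\kappa_\varepsilon(1)<0$ and that $\theta_0\theta_1>0$ gives $\kappa_\varepsilon(0)\kappa_\varepsilon(1)>0$, and from this it concludes one inflection point when $\theta_0\theta_1<0$ and none when $\theta_0\theta_1>0$, which is the opposite of the proposition being proved. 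As your computation shows, the correct relation is $\sgn\bigl(\kappa_\varepsilon(0)\kappa_\varepsilon(1)\bigr)=-\sgn(\theta_0\theta_1)$ for small $\varepsilon$: the curvature has sign $-\sgn(\theta_0)$ at the near endpoint, while at the far endpoint the backward reparameterization in the symmetry remark following Theorem \ref{thmmain1} negates the signed curvature, so $\kappa_\varepsilon$ there has sign $+\sgn(\theta_1)$. With this corrected relation, the parity argument yields precisely the stated dichotomy (consistent with Theorem \ref{thmqualitativeelastica} and the C-shape/S-shape picture), so your version is the proof as it should read. Two small remarks: the explicit formula $\partial_{\hat s}\vartheta_B^\theta(\hat s)=-\sgn(\theta)\sqrt2\sech\bigl((\hat s+s_\theta)/\sqrt2\bigr)$ that you attribute to Definition \ref{defborderline0} is not actually visible there (it sits in a commented-out passage and is only stated, for $\theta_0>0$, in the proof of Proposition \ref{propinfleciton0}), but all you need is the sign and nonvanishing of $\partial_{\hat s}\vartheta_B^\theta$, which follow directly from the defining ODE and the monotone decay of $\vartheta_B^\theta$ to $0$; and your appeal to Proposition \ref{propelasticasolution} to ensure that every interior zero of $\kappa_\varepsilon$ is a simple sign change is a useful detail that the paper leaves implicit.
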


\begin{proof}
	By symmetry, we may assume that $\theta_0\in(0,\pi)$ without loss of generality.
	Let $\kappa_{\varepsilon}$ denote the curvature of a minimizer $\gamma_\varepsilon$.
	In the case that $\theta_0\theta_1<0$, we easily find that $\kappa_\varepsilon(0)\kappa_\varepsilon(1)<0$ for any small $\varepsilon$ by (1) in Theorem \ref{thmmain1}.
	Hence, $\kappa_{\varepsilon}$ has at least one sign change for any small $\varepsilon$.
	By Proposition \ref{propinfleciton0}, $\kappa_{\varepsilon}$ has exactly one sign change.
	In the case that $\theta_0\theta_1>0$, we similarly find that $\kappa_\varepsilon(0)\kappa_\varepsilon(1)>0$ for any small $\varepsilon$.
	Hence, $\kappa_{\varepsilon}$ has either no sign change or at least two sign changes.
	By Proposition \ref{propinfleciton0}, $\kappa_{\varepsilon}$ has no sign change.
	The proof is now complete.
\end{proof}

\begin{remark}\label{reminflectioncritical2}
	In the above proof, the case that $\theta_1=0$ is not treated due to the complexity.
	As mentioned in Remark \ref{reminflectioncritical}, even if $\theta_1=0$, we can also determine the number of the inflection points providing additional conditions, for example, $\theta_0>0$ and $\theta^\varepsilon_1\geq0$ for any small $\varepsilon$.
	In this case the curvature has exactly one sign change for any small $\varepsilon$.

	We shall confirm the above fact.
	We notice that, by (1) in Theorem \ref{thmmain1} and symmetry, the straightness (2) in Theorem \ref{thmmain1} extends to the endpoint $(l_\varepsilon,0)$, i.e., for any $c>0$,
	$$\limsup_{\varepsilon\to0}\max_{s\in [c\varepsilon,L_\varepsilon]}|\partial_s\tilde{\gamma}_\varepsilon(s)-(1,0)|\leq 4 e^{-c/\sqrt{2}}.$$
	Let $c>0$ be sufficiently large so that for any small $\varepsilon$ the $x$-component of $\partial_s\tilde{\gamma}_\varepsilon$ is positive in $[c\varepsilon,L_\varepsilon]$.
	By (1) in Theorem \ref{thmmain1}, the assumption that $\theta_0>0$ implies that the $y$-components of $\tilde{\gamma}_\varepsilon(c\varepsilon)$ and $\partial_s\tilde{\gamma}_\varepsilon(c\varepsilon)$ are positive for any small $\varepsilon$.
	Then the curve $\tilde{\gamma}_\varepsilon|_{[c\varepsilon,L_\varepsilon]}$ is represented as the graph of a function $u_\varepsilon$ defined on an interval $[a_\varepsilon,b_\varepsilon]$ such that
	$$u_\varepsilon(a_\varepsilon)>0,\quad u_\varepsilon'(a_\varepsilon)>0,\quad u_\varepsilon(b_\varepsilon)=0,\quad u'_\varepsilon(b_\varepsilon)\geq0.$$
	By this boundary condition, the second derivative $u_\varepsilon''$ must have a zero in $(a_\varepsilon,b_\varepsilon)$; in fact, if $u_\varepsilon''>0$ (resp.\ $u_\varepsilon''<0$), then the first two conditions contradict the third (resp.\ fourth) condition.
	Since a zero of $u_\varepsilon''$ corresponds to a sign change of $\kappa_{\varepsilon}$, we find that $\kappa_\varepsilon$ has a sign change for any small $\varepsilon$.
	By Proposition \ref{propinfleciton0}, $\kappa_{\varepsilon}$ has exact one sign change.
	The proof is complete.

	Note that in this proof the graph representation is essential.
	In particular, for any nonzero vectors $v_0,v_1\in\mathbb{R}^2$, there is a non-graph (looping) smooth regular curve $\gamma:\bar{I}\to\mathbb{R}^2$ without inflection point such that $\gamma(0)=(0,0)$, $\gamma(1)=(1,0)$, $\dot{\gamma}(0)=v_0$ and $\dot{\gamma}(1)=v_1$.
\end{remark}

\begin{remark}
	As mentioned in Remark \ref{reminflectioncritical} the critical case $|\theta_0|=\pi$ or $|\theta_1|=\pi$ is also excluded.
	However, in the special case that $|\theta_0|=\pi$ and $\theta^\varepsilon_1\equiv\theta_1=0$ (or left and right reversed), thanks to the symmetry of the $x$-axis reflection, the same argument as in Remark \ref{reminflectioncritical2} implies that any minimizer has exact one inflection point for small $\varepsilon$.
\end{remark}

Finally, we obtain the upper bound in a different way in small angle cases.
This case is relatively easy; it follows from Theorem \ref{thmmain1} and the fact that, for any inflectional elastica with tension $\lambda$ (where $\lambda$ is as in (\ref{eqnborderline})), the sign of tension $\lambda$ characterizes whether the {\em turning angle} is larger or smaller than $\pi$ as in Figure \ref{figturningangle}.
Here the turning angle means the oscillation of the tangential angle of a periodically extended inflectional elastica (or twice the angle between an inflectional elastica and the axis though inflection points as indicated in Figure \ref{figturningangle}).
The above fact is proved in, e.g., \cite[Corllary 5]{Br92}, \cite[Eq.\ (11) and Eq.\ (36)]{DjHaMlVa08}.
In particular, in our case $\lambda$ is nothing but $1/\varepsilon^2>0$, and hence any half-period connecting two adjacent inflection points cannot be represented as the graph of any function as in the right of Figure \ref{figturningangle}.

\begin{figure}[tb]
	\centering
	\includegraphics[width=100mm]{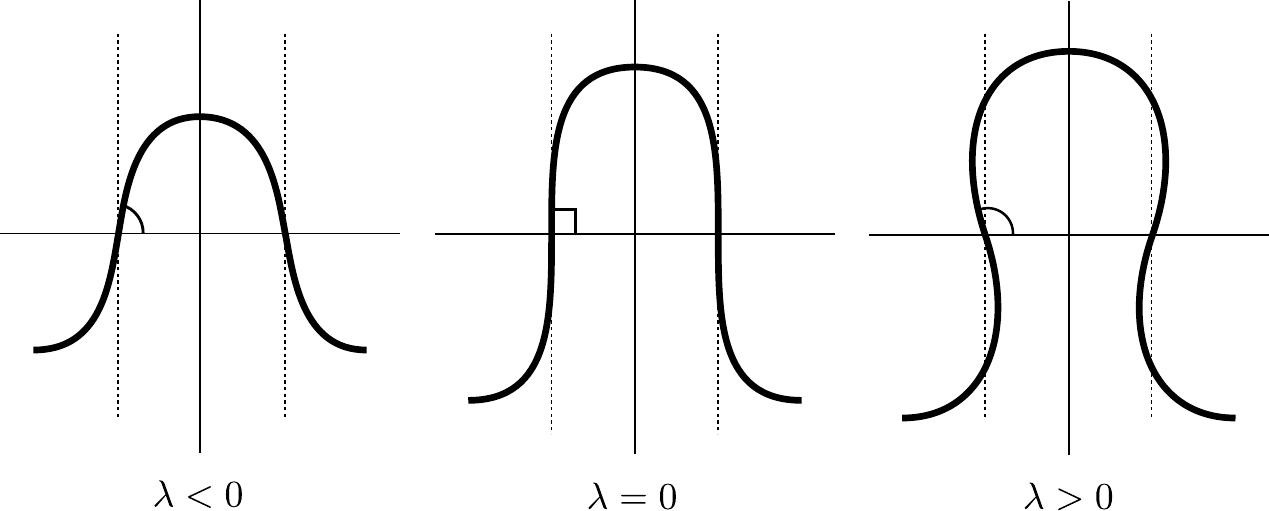}
	\caption{Relation between the turning angle of an inflectional elastica and the sign of tension $\lambda$.}
	\label{figturningangle}
\end{figure}

We shall complete the proof of the upper bound.
Recall that in our terminology the straight line has no inflection point.

\begin{proposition}\label{propinfleciton1}
	Let $\{\gamma_\varepsilon\}_\varepsilon$ be any sequence of minimizers as in Theorem \ref{thmmain1}.
	Suppose that $|\theta_0|,|\theta_1|<\pi/2$.
	Then there is $\bar{\varepsilon}>0$ such that for any $\varepsilon\in(0,\bar{\varepsilon})$ the curve $\gamma_\varepsilon$ has at most one inflection point.
\end{proposition}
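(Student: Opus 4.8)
The plan is to combine the confinement of the tangential angle provided by Theorem \ref{thmmain1} with the elliptic-function description of the curvature in Proposition \ref{propelasticasolution}. The governing idea is that under the hypothesis $|\theta_0|,|\theta_1|<\pi/2$ the minimizer is, in the straightening limit, a graph over the $x$-axis, so its tangential angle is trapped in $(-\pi/2,\pi/2)$; on the other hand, a $\cn$-type curvature possessing two interior sign changes would force the tangential angle to sweep through more than $\pi$ over a single monotone arc, which is incompatible with that confinement.

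First I would establish the confinement. Write $\vartheta_\varepsilon:=\vartheta_{\tilde{\gamma}_\varepsilon}$ with $\vartheta_\varepsilon(0)=\theta_0^\varepsilon$. Near the origin, part (1) of Theorem \ref{thmmain1} gives $C^\infty([0,c])$-convergence of the rescaled angle to the borderline angle function $\vartheta_B^{\theta_0}$, whose range lies in the closed interval between $0$ and $\theta_0$ (it is monotone), hence in $(-\pi/2,\pi/2)$ because $|\theta_0|<\pi/2$; by the symmetry remark following Theorem \ref{thmmain1} the same holds at the right endpoint with $\theta_1$. In the middle region $K_{c\varepsilon}=[c\varepsilon,L_\varepsilon-c\varepsilon]$, Proposition \ref{propstraight} yields $\limsup_{\varepsilon\to0}\max_{K_{c\varepsilon}}|\llbracket\vartheta_\varepsilon\rrbracket|\leq 4e^{-c/\sqrt{2}}$, so after fixing $c$ so large that $4e^{-c/\sqrt{2}}<\pi/2$ one has $\cos\vartheta_\varepsilon>0$ there for small $\varepsilon$. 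Assembling the three regions, $\cos\vartheta_\varepsilon(s)>0$ on all of $[0,L_\varepsilon]$; since $\vartheta_\varepsilon$ is continuous with $\vartheta_\varepsilon(0)\in(-\pi/2,\pi/2)$ and can never reach a value where the cosine vanishes, it stays in $(-\pi/2,\pi/2)$ throughout. In particular $|\vartheta_\varepsilon(s)-\vartheta_\varepsilon(s')|<\pi$ for all $s,s'\in[0,L_\varepsilon]$.

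Next I would exploit the elliptic form. By Proposition \ref{propelasticasolution} the curvature $\kappa_\varepsilon$ is either of $\dn$-type, in which case it never vanishes and there is no inflection point, or of $\cn$-type, $\kappa_\varepsilon(s)=A\cn(\alpha s+\beta,k)$, with modulus $k>1/\sqrt{2}$ (the lower bound recorded after Proposition \ref{propelasticasolution}) and $A^2=4k^2\alpha^2$, so that $|A/\alpha|=2k$. From the derivative formulae (\ref{eqnellipticderivative}) one checks that $\int\cn(x,k)\,dx=k^{-1}\arcsin(k\,\sn(x,k))$, whence the total angle swept over one full lobe of $\cn$, namely over the arc between two consecutive zeros (of argument-length $2K(k)$), equals $|A/\alpha|\cdot k^{-1}\cdot 2\arcsin k=4\arcsin k$. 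Since $k>1/\sqrt{2}$ forces $\arcsin k>\pi/4$, this swept angle strictly exceeds $\pi$.

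Finally I would assemble the contradiction. If $\kappa_\varepsilon$ had two interior zeros, choose two consecutive ones $s_1<s_2$; then $\kappa_\varepsilon|_{[s_1,s_2]}$ is a single lobe, $\vartheta_\varepsilon$ is monotone on $[s_1,s_2]$, and $|\vartheta_\varepsilon(s_2)-\vartheta_\varepsilon(s_1)|=4\arcsin k>\pi$, contradicting the confinement bound $<\pi$ from the first step. Hence $\kappa_\varepsilon$ has at most one interior sign change, i.e.\ at most one inflection point. The step I expect to be the main obstacle is the confinement itself: one must select a single cutoff $c$ that is simultaneously large enough for the straightness estimate in the middle and then, with $c$ fixed, invoke the locally uniform convergence near \emph{both} endpoints, and argue carefully that positivity of $\cos\vartheta_\varepsilon$ together with continuity pins the \emph{continuous} representative of the angle to the base band $(-\pi/2,\pi/2)$ rather than a shifted copy $(-\pi/2+2\pi n,\pi/2+2\pi n)$.
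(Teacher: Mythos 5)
Your proof is correct, but its key mechanism is genuinely different from the paper's. The confinement step is essentially the paper's first step as well: the paper observes that for $|\theta_0|,|\theta_1|<\pi/2$ Theorem \ref{thmmain1} makes any minimizer a graph over the $x$-axis (i.e.\ $\cos\vartheta_\varepsilon>0$), via the same three-region decomposition used for Proposition \ref{propselfintersection}, and your pinning of the continuous angle representative to the base band $(-\pi/2,\pi/2)$ is the right way to make that precise. Where you diverge is in how two inflection points are excluded. The paper argues variationally: a graph elastica with two inflection points has $\cn$-type curvature, hence contains, up to a Euclidean motion, the symmetric S-shaped profile $U$ of Lemma \ref{lemlocalperturbation}, and the cut-and-paste perturbation built there (replacing the S-shaped pieces by straight segments and lowering the middle) strictly decreases $\mathcal{E}_\varepsilon$, contradicting global minimality. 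You instead argue from criticality alone: for a $\cn$-type solution, $A^2=4k^2\alpha^2$ together with $\int\cn(x,k)\,dx=k^{-1}\arcsin(k\sn(x,k))$ gives a swept angle of exactly $4\arcsin k$ between consecutive zeros of the curvature, and the positivity of the multiplier forces $k>1/\sqrt{2}$, so this sweep exceeds $\pi$; your computation checks out, since $\sqrt{1-k^2\sn^2(x,k)}=\dn(x,k)$ makes the antiderivative identity immediate from (\ref{eqnellipticderivative}). Each route buys something. Yours invokes minimality only through Theorem \ref{thmmain1} and in fact proves the stronger statement that no elastica with positive multiplier whose tangential angle stays in an interval of length $\pi$ can have two inflection points --- a statement about critical points, close in spirit to the paper's proof of Proposition \ref{propinfleciton0}, which likewise plays the periodicity of $\cn$ against almost-straightness. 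The paper's route needs no elliptic integration at all, and its Lemma \ref{lemlocalperturbation} is a soft, reusable instability mechanism in the style of \cite{AvKaSo13}.
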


\begin{proof}[Proof of Proposition \ref{propinfleciton1}]
	Since $|\theta_0|,|\theta_1|<\pi/2$, Theorem \ref{thmmain1} implies that there is $\bar{\varepsilon}>0$ such that for any $\varepsilon\in(0,\bar{\varepsilon})$ the minimizer $\gamma_\varepsilon$ is represented as the graph curve $\{y=u_\varepsilon(x)\}$ of some smooth function $u_\varepsilon$ up to isometry; this is easily proved by the same decomposition as in the proof of Proposition \ref{propselfintersection}.
	Then we find that such $\gamma_\varepsilon$ does not have two inflection points; in fact, if so, then the graph representation of $\gamma_\varepsilon$ contradicts the turning angle condition (as described above the statement of this proposition).
	The proof is complete.
\end{proof}

To complete the proof of Theorem \ref{thmqualitativeelastica} we shall summarize the results in this subsection.

\begin{proof}[Proof of Theorem \ref{thmqualitativeelastica}]
	Proposition \ref{propselfintersection} immediately implies the self-intersection part in Theorem \ref{thmqualitativeelastica} since if there would be a sequence $\varepsilon_j\to0$ and a sequence of minimizers $\{\gamma_{\varepsilon_j}\}_j$ having self-intersections, then it contradicts Proposition \ref{propselfintersection}.
	Similarly, Propositions \ref{propinfleciton0} and \ref{propinfleciton1} imply the upper bound part in Theorem \ref{thmqualitativeelastica}, and also, Proposition \ref{propinfleciton} implies the part to determine the number of the inflection points.
	Finally, combining Proposition \ref{propinfleciton} with Theorem \ref{thmmain1}, we immediately obtain the part on the total variation of the tangential angle in Theorem \ref{thmqualitativeelastica}.
	The proof is now complete.
\end{proof}

\subsection{Uniqueness}

We finally prove the uniqueness result as in Theorem \ref{thmuniqueness}.

For $l>0$ and $\theta_0,\theta_1\in\mathbb{R}$, we denote by $\tilde{\mathcal{A}}_{\theta_0,\theta_1,l}$ the set of all smooth constant speed curves joining $(0,0)$ to $(l,0)$ such that the tangential angles are strictly monotone functions from $\theta_0$ to $\theta_1$.
Notice that $\tilde{\mathcal{A}}_{\theta_0,\theta_1,l}\subset \mathcal{A}_{\theta_0,\theta_1,l}$ if $\theta_0,\theta_1\in[-\pi,\pi]$.
Notice that the constraint of $\tilde{\mathcal{A}}_{\theta_0,\theta_1,l}$ completely fixes the variation of the tangential angle of a curve unlike our original clamped boundary condition.
We also remark that $\tilde{\mathcal{A}}_{\theta_0,\theta_1,l}$ is possible to be empty for some angles $\theta_0,\theta_1$, but nonempty if e.g.\ $\theta_0\theta_1<0$.

The following statement is a key step for the proof.

\begin{proposition}\label{propuniqueness}
	Let $l>0$ and $\theta_0,\theta_1\in\mathbb{R}$.
	Then, for any $\varepsilon>0$ the energy $\mathcal{E}_\varepsilon:\tilde{\mathcal{A}}_{\theta_0,\theta_1,l}\to(0,\infty)$ admits at most one minimizer in $\tilde{\mathcal{A}}_{\theta_0,\theta_1,l}$.
\end{proposition}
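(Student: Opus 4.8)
The plan is to recast the minimization as a genuinely convex problem by using the tangential angle as the independent variable, i.e.\ the coordinate induced by the Gauss map. Since reflection across the $x$-axis preserves $\mathcal{E}_\varepsilon$ and maps $\tilde{\mathcal{A}}_{\theta_0,\theta_1,l}$ onto $\tilde{\mathcal{A}}_{-\theta_0,-\theta_1,l}$, I may assume $\theta_0<\theta_1$, so that every $\gamma\in\tilde{\mathcal{A}}_{\theta_0,\theta_1,l}$ has strictly increasing tangential angle and nonnegative curvature $\kappa=\partial_s\vartheta_{\tilde\gamma}\ge0$. Parametrizing by the angle $\theta\in[\theta_0,\theta_1]$ and writing $\rho(\theta)=ds/d\theta=1/\kappa$ for the radius of curvature, the standard relations $\mathcal{B}[\gamma]=\int_{\theta_0}^{\theta_1}\rho^{-1}\,d\theta$ and $\mathcal{L}[\gamma]=\int_{\theta_0}^{\theta_1}\rho\,d\theta$ give
\[
\mathcal{E}_\varepsilon[\gamma]=\int_{\theta_0}^{\theta_1}\Big(\frac{\varepsilon^2}{\rho}+\rho\Big)\,d\theta .
\]
Moreover the endpoint condition $\gamma(1)-\gamma(0)=(l,0)$ reads $\int_{\theta_0}^{\theta_1}\rho(\theta)\cos\theta\,d\theta=l$ and $\int_{\theta_0}^{\theta_1}\rho(\theta)\sin\theta\,d\theta=0$, which are two \emph{affine} constraints on $\rho$.

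First I would turn this into a precise correspondence. To each finite-energy $\gamma\in\tilde{\mathcal{A}}_{\theta_0,\theta_1,l}$ I associate $\rho_\gamma$ as above; conversely, to a positive function $\rho$ satisfying the two constraints I associate the curve $\theta\mapsto\int_{\theta_0}^{\theta}\rho(\phi)(\cos\phi,\sin\phi)\,d\phi$, whose tangential angle is exactly $\theta$ and whose endpoints are $(0,0)$ and $(l,0)$. Under this correspondence $\mathcal{E}_\varepsilon$ equals the functional $J[\rho]:=\int_{\theta_0}^{\theta_1}(\varepsilon^2\rho^{-1}+\rho)\,d\theta$ on the convex admissible set $\mathcal{P}$ cut out by the two affine constraints inside the positive cone. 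The crux of the whole argument is that the integrand $g(\rho)=\varepsilon^2\rho^{-1}+\rho$ is strictly convex on $(0,\infty)$, since $g''(\rho)=2\varepsilon^2\rho^{-3}>0$; hence $J$ is strictly convex on $\mathcal{P}$.

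With this in hand the conclusion is immediate. If $\gamma^{(1)}\ne\gamma^{(2)}$ were two minimizers, their radius functions $\rho^{(1)},\rho^{(2)}$ would differ on a set of positive measure while attaining the common minimal value $m$ of $J$. Then $\rho^*:=\tfrac12(\rho^{(1)}+\rho^{(2)})$ lies in $\mathcal{P}$ by affineness of the constraints, and strict convexity gives $J[\rho^*]<\tfrac12 J[\rho^{(1)}]+\tfrac12 J[\rho^{(2)}]=m$; building the curve associated with $\rho^*$ then produces a competitor in $\tilde{\mathcal{A}}_{\theta_0,\theta_1,l}$ of energy below $m$, a contradiction.

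The only genuinely technical point is the regularity of the correspondence, not the convexity. A curve in $\tilde{\mathcal{A}}_{\theta_0,\theta_1,l}$ is merely strictly monotone, so its curvature may vanish at isolated points, where $\rho=1/\kappa=\infty$; here I would note that, by the area formula, the image under $\vartheta$ of $\{\kappa=0\}$ is a null set, so $\rho$ is finite a.e.\ with $\rho,\ \rho^{-1}\in L^1$, which is all that $J$ requires. Passing back from $\rho^*$ to an admissible \emph{smooth, constant-speed} curve requires a mollification: one approximates $\rho^*$ by smooth positive functions, corrects them to satisfy the two constraints exactly (possible because the linear map $\rho\mapsto(\int\rho\cos\theta,\int\rho\sin\theta)$ is onto, leaving room for small adjustments), and invokes continuity of $J$ to keep the energy below $m$. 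I expect this approximation step---keeping the perturbed radius function positive, smooth, and constraint-compatible while still lowering the energy below $m$---to be the part demanding the most care, whereas the strict-convexity core of the argument is essentially a one-line computation.
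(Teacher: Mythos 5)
Your proposal is essentially the paper's own proof: the paper likewise convexifies the problem by parametrizing the radius of curvature $\rho=1/\kappa$ by the tangential angle, observes that $\mathcal{E}_\varepsilon[\gamma]=\int_{\theta_0}^{\theta_1}(\varepsilon^2/\rho+\rho)\,d\phi$ is strictly convex in $\rho$ while the endpoint conditions become affine constraints, and concludes uniqueness from strict convexity of a functional on a convex set. The only differences are presentational: the paper works throughout with smooth, everywhere-finite radius functions and states the correspondence as an energy-preserving bijection onto $\tilde{\mathcal{A}}_{\theta_0,\theta_1,l}$ (so it never performs your midpoint-plus-mollification contradiction, nor the a.e.-finiteness discussion), whereas you argue by averaging two putative minimizers and supply the measure-theoretic and approximation details yourself.
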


To prove Proposition \ref{propuniqueness}, we convexify our minimizing problem by using the radius of curvatures parameterized by the (monotone) tangential angles.
As mentioned in the introduction, the main idea is classical and has already appeared in Born's stability analysis \cite{Born1906}.

\begin{proof}[Proof of Proposition \ref{propuniqueness}]
	Without loss of generality we may assume that $\theta_0<\theta_1$ and that $\tilde{\mathcal{A}}_{\theta_0,\theta_1,l}$ is nonempty.
	For any $\gamma\in\tilde{\mathcal{A}}_{\theta_0,\theta_1,l}$, we can define the radius of curvature function $\rho:[\theta_0,\theta_1]\to(0,\infty)$ parameterized by the tangential angle as $\rho(\phi):=1/\kappa(\vartheta_{\tilde{\gamma}}^{-1}(\phi))$, where $\tilde{\gamma}$ is the arc length parameterization of $\gamma$ and $\kappa(s)=\partial_s\vartheta_{\tilde{\gamma}}(s)$.
	For any $\varepsilon>0$ and $\gamma\in\tilde{\mathcal{A}}_{\theta_0,\theta_1,l}$, the energy $\mathcal{E}_\varepsilon$ is represented as
	$$\mathcal{E}_\varepsilon[\gamma]=\int_{0}^{\mathcal{L}[\gamma]}\left(\varepsilon^2\kappa^2+1\right)ds= \int_{\theta_0}^{\theta_1}\left(\frac{\varepsilon^2}{\rho}+\rho\right)d\phi=:\tilde{\mathcal{E}}_\varepsilon[\rho].$$
	In particular, for any fixed $\varepsilon$, the energy $\tilde{\mathcal{E}}_\varepsilon$ is strictly convex with respect to $\rho$ since $\rho>0$ and the integrand $f(\rho)=\varepsilon^2/\rho+\rho$ is strictly convex in $(0,\infty)$.
	Moreover, the constraints on the positions of $\gamma$ at the endpoints
	$$\int_{0}^{\mathcal{L}[\gamma]}\cos\vartheta_{\tilde{\gamma}}ds=l, \quad \int_{0}^{\mathcal{L}[\gamma]}\sin\vartheta_{\tilde{\gamma}}ds=0,$$
	are also expressed in terms of $\rho$ as
	\begin{eqnarray}\label{eqnboundaryconditionradius}
		\int_{\theta_0}^{\theta_1}\rho\cos\phi d\phi=l, \quad \int_{\theta_0}^{\theta_1}\rho\sin\phi d\phi=0.
	\end{eqnarray}
	Conversely, if a smooth function $\rho:[\theta_0,\theta_1]\to(0,\infty)$ is given as satisfying (\ref{eqnboundaryconditionradius}), then we can restore a unique curve in $\tilde{\mathcal{A}}_{\theta_0,\theta_1,l}$ of which radius of curvature parameterized by the tangential angle is equal to $\rho$.

	We now denote by $\tilde{\mathcal{R}}_{\theta_0,\theta_1,l}$ the set of all functions $\rho\in C^\infty([\theta_0,\theta_1];(0,\infty))$ satisfying (\ref{eqnboundaryconditionradius}).
	Clearly, the set $\tilde{\mathcal{R}}_{\theta_0,\theta_1,l}$ is convex.
	Moreover, by the above arguments, we find that the minimizing problem of $\mathcal{E}_\varepsilon: \tilde{\mathcal{A}}_{\theta_0,\theta_1,l}\to(0,\infty)$ is equivalent to the minimizing problem of $\tilde{\mathcal{E}}_\varepsilon:\tilde{\mathcal{R}}_{\theta_0,\theta_1,l}\to(0,\infty)$.
	More explicitly, there is a bijection $\Phi$ from $\tilde{\mathcal{R}}_{\theta_0,\theta_1,l}$ to $\tilde{\mathcal{A}}_{\theta_0,\theta_1,l}$ such that for any $\varepsilon>0$ and $\rho\in \tilde{\mathcal{R}}_{\theta_0,\theta_1,l}$ the equality $\mathcal{E}_\varepsilon[\Phi(\rho)]=\tilde{\mathcal{E}}_\varepsilon[\rho]$ holds.
	In addition, we easily find that the energy $\tilde{\mathcal{E}}_\varepsilon:\tilde{\mathcal{R}}_{\theta_0,\theta_1,l}\to(0,\infty)$ admits at most one minimizer since $\tilde{\mathcal{E}}_\varepsilon$ is a strictly convex functional defined on a convex set.
	Therefore, we also find that the energy $\mathcal{E}_\varepsilon:\tilde{\mathcal{A}}_{\theta_0,\theta_1,l}\to(0,\infty)$ admits at most one minimizer.
	The proof is now complete.
\end{proof}

We shall complete the proof of Theorem \ref{thmuniqueness}.

\begin{proof}[Proof of Theorem \ref{thmuniqueness}]
	By Theorem \ref{thmqualitativeelastica}, there is $\bar{\varepsilon}>0$ such that, for any $\varepsilon\in(0,\bar{\varepsilon})$ and any minimizer of $\mathcal{E}_\varepsilon$ in $\mathcal{A}_{\theta_0^\varepsilon,\theta_1^\varepsilon,l_\varepsilon}$, the tangential angle is strictly monotone from $\theta^\varepsilon_0$ to $\theta^\varepsilon_1$, that is, the curve $\gamma_\varepsilon$ belongs to $\tilde{\mathcal{A}}_{\theta_0^\varepsilon,\theta_1^\varepsilon,l_\varepsilon}$.
	Since $\tilde{\mathcal{A}}_{\theta_0^\varepsilon,\theta_1^\varepsilon,l_\varepsilon}$ is included in $\mathcal{A}_{\theta_0^\varepsilon,\theta_1^\varepsilon,l_\varepsilon}$, if a minimizer of $\mathcal{E}_\varepsilon$ in $\mathcal{A}_{\theta_0^\varepsilon,\theta_1^\varepsilon,l_\varepsilon}$ belongs to $\tilde{\mathcal{A}}_{\theta_0^\varepsilon,\theta_1^\varepsilon,l_\varepsilon}$, then it also minimizes $\mathcal{E}_\varepsilon$ in $\tilde{\mathcal{A}}_{\theta_0^\varepsilon,\theta_1^\varepsilon,l_\varepsilon}$.
	Therefore, Proposition \ref{propuniqueness} implies the desired uniqueness.
	The proof is complete.
\end{proof}

\begin{remark}
	As explained precisely in Appendix \ref{appendixexistence}, for any fixed $l>0$ and $\theta_0,\theta_1\in[-\pi,\pi]$, the set of admissible curves $\mathcal{A}_{\theta_0,\theta_1,l}$ is decomposed into the sets $\mathcal{A}_{\theta_0,\theta_1,l,m}$ by winding number $m\in\mathbb{Z}$.
	For each $m$, the set $\mathcal{A}_{\theta_0,\theta_1,l,m}$ is defined to fix the variation of the tangential angle as
	$$\vartheta_{\gamma}(1)-\vartheta_\gamma(0)=\theta_1-\theta_0+2\pi m.$$
	It is known that, for any inflectional elastica (i.e., $\cn$-solution) of finite length, the range of its tangential angle is included in an interval of which width is less than $2\pi$ (see e.g.\ \cite{Br92}).
	Hence, if $|m|>1$, then $|\vartheta_{\gamma}(1)-\vartheta_\gamma(0)|\geq 2\pi$, and hence any critical point in $\mathcal{A}_{\theta_0,\theta_1,l,m}$ must be a non-inflectional elastica (i.e., $\dn$-solution).
	Therefore, for $|m|>1$, by the same convexification as above, we find that $\mathcal{E}_\varepsilon$ admits a unique minimizer in $\mathcal{A}_{\theta_0,\theta_1,l,m}$.
	For $|m|\leq1$, there may be multiple candidates of minimizers.
\end{remark}

\section{Connection of inextensible and extensible problems}\label{sectrelation}

In this section we prove Theorem \ref{thmmain2} and Theorem \ref{thmmain2'}.
The relation between the problems (\ref{minprob1}) and (\ref{minprob2}) is not so trivial at the level of global minimizers.
As already mentioned, the case that $\theta_0=\theta_1=0$ is omitted since it is not possible to express the inextensible problem (\ref{minprob2}) in terms of the extensible problem (\ref{minprob1}).

\subsection{Length of minimizers of the modified total squared curvature}

We shall confirm some properties of the minimum values of energy and the lengths of minimizers in the extensible problem.
Throughout this subsection, we fix $l>0$ and $\theta_0,\theta_1\in[-\pi,\pi]$ with $|\theta_0|+|\theta_1|>0$, and denote $\mathcal{A}_{\theta_0,\theta_1,l}$ by $\mathcal{A}$ simply.

We first confirm basic properties of the minimum function $$m(\varepsilon)=\min_{\gamma\in\mathcal{A}}\mathcal{E}_\varepsilon[\gamma].$$
We extend the function $m$ to the origin as $m(0)=l$.

\begin{proposition}\label{propminimum}
	The minimum function $m$ is strictly increasing and continuous in $[0,\infty)$.
	Moreover, $m$ is locally semi-convex in $(0,\infty)$.
\end{proposition}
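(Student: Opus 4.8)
The plan is to establish the three claims about $m(\varepsilon)=\min_{\gamma\in\mathcal{A}}\mathcal{E}_\varepsilon[\gamma]$ separately, exploiting the fact that for each fixed admissible curve $\gamma$ the map $\varepsilon\mapsto\mathcal{E}_\varepsilon[\gamma]=\varepsilon^2\mathcal{B}[\gamma]+\mathcal{L}[\gamma]$ is affine in the variable $\mu:=\varepsilon^2$, with slope $\mathcal{B}[\gamma]\geq0$ and intercept $\mathcal{L}[\gamma]\geq l>0$. Thus $m$ is, as a function of $\mu$, a pointwise infimum of affine functions, hence concave in $\mu$; this is the structural fact driving everything.

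First I would treat monotonicity. For $0\le\varepsilon_1<\varepsilon_2$ and any $\gamma$ with $\mathcal{B}[\gamma]>0$ we have $\mathcal{E}_{\varepsilon_1}[\gamma]<\mathcal{E}_{\varepsilon_2}[\gamma]$. Letting $\gamma_{\varepsilon_2}$ be a minimizer at $\varepsilon_2$, strict monotonicity $m(\varepsilon_1)<m(\varepsilon_2)$ follows once we know $\mathcal{B}[\gamma_{\varepsilon_2}]>0$, i.e. that no minimizer is a straight segment. Since $|\theta_0|+|\theta_1|>0$, the clamped condition forces any admissible curve to have nonconstant tangential angle, so $\mathcal{B}[\gamma]>0$ for every $\gamma\in\mathcal{A}$; hence the strict inequality holds. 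At the endpoint, $m(0)=l\le\mathcal{L}[\gamma]\le\mathcal{E}_{\varepsilon}[\gamma]$ gives $m(0)\le m(\varepsilon)$, and the test-curve construction used in Proposition~\ref{propsegmentconvergence} (circular arcs of radius $\varepsilon$ plus a segment) shows $m(\varepsilon)\to l$, so $m$ is continuous and increasing at $0$ as well.

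Next I would obtain continuity on $(0,\infty)$. As a pointwise infimum of the affine (hence continuous) functions $\mu\mapsto\mathcal{E}_\varepsilon[\gamma]$, $m$ is concave in $\mu$ on $(0,\infty)$, and a finite concave function on an open interval is automatically continuous (indeed locally Lipschitz); composing with the smooth change of variable $\varepsilon\mapsto\mu=\varepsilon^2$ preserves continuity. To control the slopes one also needs an a priori bound on $\mathcal{B}[\gamma_\varepsilon]$ for minimizers on compact $\mu$-intervals: from $\varepsilon^2\mathcal{B}[\gamma_\varepsilon]\le\mathcal{E}_\varepsilon[\gamma_\varepsilon]\le m(\varepsilon')$ for a fixed comparison value one gets $\mathcal{B}[\gamma_\varepsilon]\le C/\varepsilon^2$ locally uniformly, which suffices for the continuity argument.

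The main obstacle is local semi-convexity in $(0,\infty)$, because the infimum-of-affine structure yields concavity in $\mu$, the \emph{opposite} of what is wanted. The correct strategy is to show that $m$, as a function of $\varepsilon$ itself, admits a locally uniform lower second-order bound. I would fix $\varepsilon_0>0$, let $\gamma_{\varepsilon_0}$ be a minimizer there, and use it as a \emph{fixed} test curve near $\varepsilon_0$: the comparison function $g(\varepsilon):=\mathcal{E}_\varepsilon[\gamma_{\varepsilon_0}]=\varepsilon^2\mathcal{B}[\gamma_{\varepsilon_0}]+\mathcal{L}[\gamma_{\varepsilon_0}]$ satisfies $g(\varepsilon)\ge m(\varepsilon)$ for all $\varepsilon$ with equality at $\varepsilon_0$, so $m$ is touched \emph{from above} by the parabola $g$ at $\varepsilon_0$, whose second derivative is $2\mathcal{B}[\gamma_{\varepsilon_0}]$. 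This makes $m-g$ nonpositive with a maximum at $\varepsilon_0$, giving the semiconvexity estimate $m(\varepsilon)\ge m(\varepsilon_0)+m'(\varepsilon_0)(\varepsilon-\varepsilon_0)-\mathcal{B}[\gamma_{\varepsilon_0}](\varepsilon-\varepsilon_0)^2$ in the sense of touching paraboloids; equivalently, $m(\varepsilon)+\mathcal{B}[\gamma_{\varepsilon_0}]\varepsilon^2$ is convex near $\varepsilon_0$. The hard part is uniformizing the constant: I must show $\mathcal{B}[\gamma_\varepsilon]$ is bounded on a neighborhood of each $\varepsilon_0$ (so that a single opening $C$ works), which follows from the local bound $\mathcal{B}[\gamma_\varepsilon]\le C/\varepsilon^2\le C'$ established above. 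With a uniform opening $C'$ on a compact subinterval, $m(\varepsilon)+C'\varepsilon^2$ is a supremum-free pointwise infimum no longer, but is shown convex by the standard criterion that it lies above each of its touching parabolas with uniformly bounded opening, giving local semi-convexity of $m$ and completing the proof.
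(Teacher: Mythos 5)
Your treatment of monotonicity and continuity is correct. The monotonicity argument is essentially the paper's: compare with a minimizer taken at the \emph{larger} parameter, with strictness supplied by $\mathcal{B}[\gamma]>0$ for every admissible curve (which you rightly deduce from the standing assumption $|\theta_0|+|\theta_1|>0$), and continuity at $0$ via $m(\varepsilon)\to l$. Your continuity proof on $(0,\infty)$ is a genuinely different and arguably cleaner route: the paper does not use concavity in $\mu=\varepsilon^2$, but instead combines monotonicity with the touching-parabola inequality $m(\varepsilon+\delta)\leq m(\varepsilon)+2\varepsilon\mathcal{B}[\gamma_\varepsilon]\delta+\mathcal{B}[\gamma_\varepsilon]\delta^2$ obtained from a fixed minimizer used as a test curve; your observation that $m$ is a finite pointwise infimum of functions affine in $\mu$, hence concave and locally Lipschitz in $\mu$, gets continuity with no extra slope bounds at all (the bound $\mathcal{B}[\gamma_\varepsilon]\leq C/\varepsilon^2$ you invoke there is in fact unnecessary for this part).

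The semi-convexity step, however, contains a genuine sign-reversed non sequitur. From $g(\varepsilon):=\mathcal{E}_\varepsilon[\gamma_{\varepsilon_0}]\geq m(\varepsilon)$ with equality at $\varepsilon_0$ you correctly obtain that $m$ is touched \emph{from above} at each point by an upward parabola of opening $2\mathcal{B}[\gamma_{\varepsilon_0}]$, and your local uniform bound $\mathcal{B}[\gamma_\varepsilon]\leq m(\varepsilon_{\max})/\varepsilon_{\min}^2$ on compact subintervals uniformizes the opening; this is exactly the inequality recorded in the paper's proof. But $m-g\leq 0$ with a maximum at $\varepsilon_0$ yields only the \emph{upper} bound $m(\varepsilon)\leq m(\varepsilon_0)+2\varepsilon_0\mathcal{B}[\gamma_{\varepsilon_0}](\varepsilon-\varepsilon_0)+\mathcal{B}[\gamma_{\varepsilon_0}](\varepsilon-\varepsilon_0)^2$. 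Your displayed lower bound $m(\varepsilon)\geq m(\varepsilon_0)+m'(\varepsilon_0)(\varepsilon-\varepsilon_0)-\mathcal{B}[\gamma_{\varepsilon_0}](\varepsilon-\varepsilon_0)^2$, and the conclusion that $m(\varepsilon)+\mathcal{B}[\gamma_{\varepsilon_0}]\varepsilon^2$ is convex, do not follow: an upper touching parabola cannot produce a lower parabola bound, and your closing claim that $m$ ``lies above each of its touching parabolas'' directly contradicts $m\leq g$. In the standard convention, uniformly upper-touching parabolas give local semi-\emph{concavity}, i.e.\ $m(\varepsilon)-C\varepsilon^2$ locally concave, which is also what the infimum-of-parabolas structure (your own concavity-in-$\mu$ remark) predicts: at a parameter where two minimizers with different bending energies coexist, the one-sided derivatives of $m$ would jump downward, a concave corner that no function with $m+C\varepsilon^2$ convex can have, so this method cannot possibly establish convexity of $m+C\varepsilon^2$. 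The substance of your inequality matches the paper's proof, whose stated ``semi-convexity'' should be understood as exactly this one-sided second-order bound (and is explicitly never used later in the paper); your argument as written asserts the opposite-signed property and is therefore incorrect at its final step.
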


\begin{proof}
	First we note that $m(\varepsilon)>l$ for $\varepsilon>0$ and $m(\varepsilon)\to l$ as $\varepsilon\to0$ by Lemma \ref{lemasymptitoticmodified} and the assumption that $|\theta_0|+|\theta_1|>0$.
	Let $0<\varepsilon_0<\varepsilon_1$.
	By taking a minimizer $\gamma_1\in\mathcal{A}$ of $\mathcal{E}_{\varepsilon_1}$, we find the strict monotonicity $$m(\varepsilon_0)\leq\mathcal{E}_{\varepsilon_0}[\gamma_1]<\mathcal{E}_{\varepsilon_1}[\gamma_1]=m(\varepsilon_1).$$
	Moreover, for any $\varepsilon>0$ and $\delta\in\mathbb{R}$ with small $|\delta|$, taking any minimizer $\gamma_\varepsilon\in\mathcal{A}$ of $\mathcal{E}_\varepsilon$, we have
	$$m(\varepsilon+\delta) \leq \mathcal{E}_{\varepsilon+\delta}[\gamma_\varepsilon] = \mathcal{B}[\gamma_\varepsilon]\delta^2 + 2\varepsilon\mathcal{B}[\gamma_\varepsilon]\delta + m(\varepsilon).$$
	This relation and the monotonicity imply the remaining conclusions.
\end{proof}

We mention that the semi-convexity is not used at least in this paper.

Now we define a set-valued function $\tilde{L}$ as
\begin{eqnarray}\label{eqnminlength}
\tilde{L}(\varepsilon):=\{\mathcal{L}[\gamma] \mid \textrm{$\gamma\in \mathcal{A}$ is a minimizer of $\mathcal{E}_\varepsilon$} \}
\end{eqnarray}
for $\varepsilon\in(0,\infty)$, and extend $\tilde{L}$ to the origin by $\tilde{L}(0)=\{l\}$.
(Note that the definition depends on the constraints $l,\theta_0,\theta_1$.)
By the existence of minimizers (Appendix \ref{appendixexistence}), the set $\tilde{L}(\varepsilon)$ is nonempty for any $\varepsilon>0$.
Moreover, we notice that $\tilde{L}(\varepsilon)\subset(l,\infty)$ for $\varepsilon>0$.
In addition, we have the following

\begin{proposition}\label{proplength1}
	The set-valued function $\tilde{L}$ is nondecreasing in the sense that, for any $0\leq\varepsilon_0<\varepsilon_1$, any $L_0\in\tilde{L}(\varepsilon_0)$ and $L_1\in\tilde{L}(\varepsilon_1)$ satisfy $L_0\leq L_1$.
\end{proposition}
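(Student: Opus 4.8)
The plan is to run the classical comparison (``interchange'') argument between minimizers at two distinct parameter values, exploiting that $\mathcal{E}_\varepsilon[\gamma]=\varepsilon^2\mathcal{B}[\gamma]+\mathcal{L}[\gamma]$ is affine in $\varepsilon^2$ with slope $\mathcal{B}[\gamma]$. First I would dispose of the endpoint case $\varepsilon_0=0$ separately: since $\tilde{L}(0)=\{l\}$ and, as already noted, $\tilde{L}(\varepsilon_1)\subset(l,\infty)$ for $\varepsilon_1>0$, we immediately get $L_0=l<L_1$. Thus it remains to treat $0<\varepsilon_0<\varepsilon_1$.

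For $0<\varepsilon_0<\varepsilon_1$, I would fix minimizers $\gamma_0,\gamma_1\in\mathcal{A}$ of $\mathcal{E}_{\varepsilon_0}$ and $\mathcal{E}_{\varepsilon_1}$ respectively, realizing the prescribed lengths $\mathcal{L}[\gamma_i]=L_i$, and abbreviate $B_i=\mathcal{B}[\gamma_i]$. Testing the minimality of $\gamma_0$ against the competitor $\gamma_1$, and that of $\gamma_1$ against $\gamma_0$, yields the two inequalities
\begin{align}
\varepsilon_0^2 B_0 + L_0 &\le \varepsilon_0^2 B_1 + L_1, \label{eqncomp0}\\
\varepsilon_1^2 B_1 + L_1 &\le \varepsilon_1^2 B_0 + L_0. \label{eqncomp1}
\end{align}
Adding \eqref{eqncomp0} and \eqref{eqncomp1} and cancelling the lengths leaves $(\varepsilon_1^2-\varepsilon_0^2)(B_0-B_1)\ge0$, and since $\varepsilon_1>\varepsilon_0$ this forces $B_0\ge B_1$; that is, the bending energy of minimizers is nonincreasing in $\varepsilon$. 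Substituting $B_1\le B_0$ back into \eqref{eqncomp0} gives $L_0-L_1\le\varepsilon_0^2(B_1-B_0)\le0$, i.e.\ $L_0\le L_1$, which is the desired conclusion.

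I do not expect a serious obstacle here: once the affine-in-$\varepsilon^2$ structure is recognized, the monotonicity of $\tilde{L}$ follows from a purely algebraic cancellation. The only point requiring a little care is the endpoint $\varepsilon_0=0$, where no minimizer of $\mathcal{E}_0$ exists in $\mathcal{A}$ (the straight segment is inadmissible because $|\theta_0|+|\theta_1|>0$), so the comparison step cannot be carried out there and one must instead invoke the inclusion $\tilde{L}(\varepsilon_1)\subset(l,\infty)$ directly, as above.
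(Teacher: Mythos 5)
Your proof is correct and follows essentially the same route as the paper: both dispose of $\varepsilon_0=0$ trivially (via $\tilde{L}(\varepsilon_1)\subset(l,\infty)$, equivalently $m(\varepsilon_1)>l$) and then test the minimality of each minimizer against the other to obtain the two comparison inequalities. The only difference is cosmetic algebra — you first eliminate the lengths to get $\mathcal{B}[\gamma_0]\ge\mathcal{B}[\gamma_1]$ and substitute back, while the paper eliminates the bending energies to reach $(\varepsilon_1^2-\varepsilon_0^2)(L_1-L_0)\ge0$ directly.
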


\begin{proof}
	Fix such $\varepsilon_0$, $\varepsilon_1$, $L_0$ and $L_1$.
	The case $\varepsilon_0=0$ is obvious since $m(\varepsilon_1)>l$ so we assume that $\varepsilon_0>0$.
	By the definition of $\tilde{L}$, for $i=0,1$, there is a minimizer $\gamma_i\in\mathcal{A}$ of $\mathcal{E}_{\varepsilon_i}$ with length $L_i$.
	Then, noting the minimality of $\gamma_0$ and $\gamma_1$, we have
	$$\mathcal{E}_{\varepsilon_0}[\gamma_0]\leq\mathcal{E}_{\varepsilon_0}[\gamma_1], \quad \mathcal{E}_{\varepsilon_1}[\gamma_1]\leq\mathcal{E}_{\varepsilon_1}[\gamma_0],$$
	that is,
	$$\varepsilon_0^2\mathcal{B}[\gamma_0]+L_0\leq\varepsilon_0^2\mathcal{B}[\gamma_1]+L_1, \quad \varepsilon_1^2\mathcal{B}[\gamma_1]+L_1\leq\varepsilon_1^2\mathcal{B}[\gamma_0]+L_0.$$
	Combining these inequalities, we obtain $(\varepsilon_1^2-\varepsilon_0^2)(L_1-L_0)\geq0$, which implies $L_0\leq L_1$.
\end{proof}

Recall that $\tilde{L}(\varepsilon)$ is nonempty for any $\varepsilon$.
Moreover, as in \cite{NoOk15}, it is known that $\tilde{L}(\varepsilon)$ is a finite set.
Hence, the following upper and lower envelopes of $\tilde{L}$, which are single-valued functions, are well-defined:
$$L^*(\varepsilon):=\max\{L \mid L\in\tilde{L}(\varepsilon)\},\quad L_*(\varepsilon):=\min\{L \mid L\in\tilde{L}(\varepsilon)\}.$$

\begin{proposition}\label{proplength2}
	The function $L^*$ (resp. $L_*$) is nondecreasing and upper (resp. lower) semicontinuous.
\end{proposition}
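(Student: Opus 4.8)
The plan is to separate the two assertions. The monotonicity of both envelopes is immediate from Proposition \ref{proplength1}: if $0\le\varepsilon_0<\varepsilon_1$, then every element of $\tilde{L}(\varepsilon_0)$ is bounded above by every element of $\tilde{L}(\varepsilon_1)$, so in particular $L^*(\varepsilon_0)=\max\tilde{L}(\varepsilon_0)\le\min\tilde{L}(\varepsilon_1)=L_*(\varepsilon_1)\le L^*(\varepsilon_1)$ and likewise $L_*(\varepsilon_0)\le L_*(\varepsilon_1)$. Thus $L^*$ and $L_*$ are both nondecreasing, and it remains to treat the semicontinuity.

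Here I would first reduce to one-sided continuity. For a nondecreasing function the $\limsup$ (resp.\ $\liminf$) at a point coincides with its right (resp.\ left) limit, so such a function is upper (resp.\ lower) semicontinuous exactly when it is right-continuous (resp.\ left-continuous). Hence it suffices to show that $L^*$ is right-continuous and $L_*$ is left-continuous at every $\varepsilon_0\in(0,\infty)$, the endpoint $\varepsilon_0=0$ being handled separately below.

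The substantive ingredient is a closure property of minimizers: if $\varepsilon_n\to\varepsilon_0\in(0,\infty)$ and $\gamma_n$ is a minimizer of $\mathcal{E}_{\varepsilon_n}$ in $\mathcal{A}$, then, along a subsequence, $\mathcal{L}[\gamma_n]$ converges to a value belonging to $\tilde{L}(\varepsilon_0)$. To prove this I would run the direct method exactly as in Appendix \ref{appendixexistence}. Since $m$ is continuous (Proposition \ref{propminimum}), the energies $\mathcal{E}_{\varepsilon_n}[\gamma_n]=m(\varepsilon_n)$ stay bounded; as $\varepsilon_n\to\varepsilon_0>0$, both $\mathcal{B}[\gamma_n]$ and $\mathcal{L}[\gamma_n]$ are bounded, and the clamped boundary condition yields, after passing to a subsequence, a limit curve $\gamma_\infty$ to which $\gamma_n$ converges weakly in $H^2$ and strongly in $C^1$. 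Strong $C^1$-convergence gives $\mathcal{L}[\gamma_n]\to\mathcal{L}[\gamma_\infty]$, while the weak $H^2$-lower semicontinuity of $\mathcal{B}$ together with $\varepsilon_n^2\to\varepsilon_0^2$ gives $\mathcal{E}_{\varepsilon_0}[\gamma_\infty]\le\liminf_{n}\mathcal{E}_{\varepsilon_n}[\gamma_n]=m(\varepsilon_0)$; therefore $\gamma_\infty$ is a minimizer of $\mathcal{E}_{\varepsilon_0}$ and $\mathcal{L}[\gamma_\infty]\in\tilde{L}(\varepsilon_0)$.

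Finally I would combine this with monotonicity. For right-continuity of $L^*$, take $\varepsilon_n\downarrow\varepsilon_0$ and minimizers $\gamma_n$ with $\mathcal{L}[\gamma_n]=L^*(\varepsilon_n)$; the closure property gives a subsequence with $L^*(\varepsilon_n)\to L_\infty\in\tilde{L}(\varepsilon_0)$, whence $L_\infty\le L^*(\varepsilon_0)$, while monotonicity forces $L_\infty$ to equal the right limit, which is $\ge L^*(\varepsilon_0)$; equality follows. The case of $L_*$ is symmetric, taking $\varepsilon_n\uparrow\varepsilon_0$ and length-minimizing minimizers and using $\tilde{L}(\varepsilon_0)\ni L_\infty\ge L_*(\varepsilon_0)$ against the left limit. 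At $\varepsilon_0=0$ the closure argument degenerates, but there the one-sided statement follows directly from Proposition \ref{propsegmentconvergence}, which forces every minimizer length to tend to $l=L^*(0)=L_*(0)$. The main obstacle is the closure lemma itself—in particular the compactness and the lower-semicontinuity bookkeeping for the constant-speed reparametrizations—but this is precisely the machinery already assembled for the existence proof in Appendix \ref{appendixexistence}.
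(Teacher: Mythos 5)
Your proof is correct and follows essentially the same route as the paper: the core step is the identical compactness argument (minimizers for nearby parameters are $H^2$-bounded, a subsequence converges weakly in $H^2$ and strongly in $C^1$, and the limit is a minimizer of $\mathcal{E}_{\varepsilon_0}$ by weak lower semicontinuity together with the continuity of $m$ from Proposition \ref{propminimum}), combined with monotonicity from Proposition \ref{proplength1} and Proposition \ref{propsegmentconvergence} at the origin. The only cosmetic difference is that you first reduce semicontinuity to one-sided continuity of the monotone envelopes, whereas the paper deduces the two-sided $\limsup$ (resp.\ $\liminf$) inequality directly from the subsequential closure property.
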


\begin{proof}
	Notice that the monotonicity in $[0,\infty)$ follows by Proposition \ref{proplength1}.
	Moreover, the continuity at the origin follows by the length convergence in Proposition \ref{propsegmentconvergence}.
	Hence, it suffices to prove the semicontinuity at any fixed $\varepsilon>0$.
	We prove only the upper semicontinuity since the lower semicontinuity follows by a similar argument.

	For any $\delta\in\mathbb{R}$ with small $|\delta|$, we take a minimizer $\gamma_{\varepsilon+\delta}\in\mathcal{A}$ of $\mathcal{E}_{\varepsilon+\delta}$ so that $L^*(\varepsilon+\delta)=\mathcal{L}[\gamma_{\varepsilon+\delta}]$.
	Then, since the sequence $\{\gamma_{\varepsilon+\delta}\}_\delta$ is $H^2$-bounded by their minimality, for any subsequence there is a subsequence $\{\gamma_{\varepsilon+\delta'}\}_{\delta'}$ converging to a regular $H^2$-curve $\gamma'$ weakly in $H^2$ and strongly in $C^1$; in particular, $\mathcal{L}[\gamma_{\varepsilon+\delta'}]\to\mathcal{L}[\gamma']$.
	Noting the $H^2$-weak lower semicontinuity of $\mathcal{E}_\varepsilon$ and Proposition \ref{propminimum}, we have
	$$\mathcal{E}_\varepsilon[\gamma']\leq\liminf_{\delta'\to0}\mathcal{E}_{\varepsilon}[\gamma_{\varepsilon+\delta'}]=\liminf_{\delta'\to0}\mathcal{E}_{\varepsilon+\delta'}[\gamma_{\varepsilon+\delta'}]=\liminf_{\delta'\to0}m(\varepsilon+\delta')=m(\varepsilon),$$
	which implies that $\gamma'$ is a minimizer of $\mathcal{E}_\varepsilon$ (in the $H^2$-framework, and hence $\gamma'$ is smooth by Appendix \ref{appendixexistence}).
	Then we find that $$\lim_{\delta'\to0}L^*(\varepsilon+\delta')=\lim_{\delta'\to0}\mathcal{L}[\gamma_{\varepsilon+\delta'}]=\mathcal{L}[\gamma']\leq L^*(\varepsilon),$$
	and hence we obtain the upper semicontinuity
	$$\limsup_{\delta\to0}L^*(\varepsilon+\delta)\leq L^*(\varepsilon)$$
	in the full limit sense.
	The proof is now complete.
\end{proof}

Combining Proposition \ref{proplength1} and Proposition \ref{proplength2}, we see that the set of jump points
$$J:=\{\varepsilon\in[0,\infty) \mid L^*(\varepsilon)>L_*(\varepsilon)\}=\{\varepsilon\in[0,\infty) \mid \tilde{L}(\varepsilon)\ \textrm{is not a singleton}\}$$
consists of at most countably many elements, and moreover for any open set $U\subset[0,\infty)\setminus J$ the function $L_*$ ($=L_*$) is a strictly increasing continuous function on $U$.

We finally confirm the first order expansion of the lengths of minimizers with respect to $\varepsilon$.

\begin{proposition}\label{propminlength}
	Any sequence of $L_\varepsilon\in\tilde{L}(\varepsilon)$ satisfies, as $\varepsilon\to0$,
	$$L_\varepsilon=l+4\sqrt{2}\left(\sin^2\frac{\theta_0}{4}+\sin^2\frac{\theta_1}{4}\right)\varepsilon+o(\varepsilon).$$
\end{proposition}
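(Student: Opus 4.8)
The plan is to extract the length expansion from the energy expansion already in hand (Lemma~\ref{lemasymptitoticmodified}) by proving an \emph{equipartition of energy} between the bending and potential parts of $\mathcal{F}_\varepsilon$. Fix a minimizer $\gamma_\varepsilon$ of $\mathcal{E}_\varepsilon$ in $\mathcal{A}$ with $L_\varepsilon=\mathcal{L}[\gamma_\varepsilon]$, let $\vartheta$ be its tangential angle, and abbreviate $M:=8\sqrt{2}\left(\sin^2\frac{\theta_0}{4}+\sin^2\frac{\theta_1}{4}\right)$. Since the constraints are fixed throughout this subsection, Lemma~\ref{lemasymptitoticmodified} reads $\mathcal{E}_\varepsilon[\gamma_\varepsilon]=l+M\varepsilon+o(\varepsilon)$, equivalently $\mathcal{F}_\varepsilon[\gamma_\varepsilon]\to M$. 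Because $\mathcal{E}_\varepsilon[\gamma_\varepsilon]=\varepsilon^2\mathcal{B}[\gamma_\varepsilon]+L_\varepsilon$, I have $L_\varepsilon=\mathcal{E}_\varepsilon[\gamma_\varepsilon]-\varepsilon\cdot P_\varepsilon$ with $P_\varepsilon:=\varepsilon\mathcal{B}[\gamma_\varepsilon]=\int_0^{L_\varepsilon}\varepsilon|\partial_s\vartheta|^2\,ds$. Hence it suffices to prove $P_\varepsilon\to M/2$: then $\varepsilon^2\mathcal{B}[\gamma_\varepsilon]=\frac{M}{2}\varepsilon+o(\varepsilon)$, and the claimed expansion follows since $\frac{M}{2}=4\sqrt{2}\left(\sin^2\frac{\theta_0}{4}+\sin^2\frac{\theta_1}{4}\right)$.

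Writing $Q_\varepsilon:=\int_0^{L_\varepsilon}\frac{1}{\varepsilon}(1-\cos\vartheta)\,ds$ for the scaled potential part, one has $P_\varepsilon+Q_\varepsilon=\mathcal{F}_\varepsilon[\gamma_\varepsilon]\to M$, so the goal reduces to showing $P_\varepsilon-Q_\varepsilon\to0$. I would factor the integrand of $P_\varepsilon-Q_\varepsilon$ as a difference of squares and apply the Cauchy--Schwarz inequality to get $|P_\varepsilon-Q_\varepsilon|\le\sqrt{G_\varepsilon}\,\sqrt{S_\varepsilon}$, where
\begin{equation*}
G_\varepsilon:=\int_0^{L_\varepsilon}\left(\sqrt{\varepsilon}\,|\partial_s\vartheta|-\frac{1}{\sqrt{\varepsilon}}\sqrt{1-\cos\vartheta}\right)^2 ds,\qquad S_\varepsilon:=\int_0^{L_\varepsilon}\left(\sqrt{\varepsilon}\,|\partial_s\vartheta|+\frac{1}{\sqrt{\varepsilon}}\sqrt{1-\cos\vartheta}\right)^2 ds.
\end{equation*}
Using $V'(\theta)=2\sqrt{1-\cos\theta}$, the cross term in each integrand is $2|\partial_s\vartheta|\sqrt{1-\cos\vartheta}=|\partial_s(V\circ\vartheta)|$, so that $G_\varepsilon=\mathcal{F}_\varepsilon[\gamma_\varepsilon]-\int_0^{L_\varepsilon}|\partial_s(V\circ\vartheta)|\,ds$ and $S_\varepsilon=\mathcal{F}_\varepsilon[\gamma_\varepsilon]+\int_0^{L_\varepsilon}|\partial_s(V\circ\vartheta)|\,ds$.

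The heart of the matter is thus to show that $G_\varepsilon\to0$ while $S_\varepsilon$ stays bounded; this is exactly the statement that the elementary inequality $\varepsilon X^2+\varepsilon^{-1}Y^2\ge 2|X||Y|$ underlying Lemma~\ref{lemlowerbound} becomes asymptotically sharp along minimizers. For the upper estimate, Lemma~\ref{lemlowerbound} gives $\int_0^{L_\varepsilon}|\partial_s(V\circ\vartheta)|\,ds\le\mathcal{F}_\varepsilon[\gamma_\varepsilon]$. For the matching lower estimate I would repeat the argument of Proposition~\ref{propliminfinequality}: by Proposition~\ref{propsegmentconvergence} and Lemma~\ref{lemmeanvalue} there is an interior point where $\llbracket\vartheta\rrbracket\to0$, and cutting there and combining the triangle inequality with Lemma~\ref{lemvartheta} yields $\liminf_{\varepsilon\to0}\int_0^{L_\varepsilon}|\partial_s(V\circ\vartheta)|\,ds\ge M$. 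Consequently $\int_0^{L_\varepsilon}|\partial_s(V\circ\vartheta)|\,ds\to M$, so $G_\varepsilon\to M-M=0$ and $S_\varepsilon\to 2M$ is bounded. Cauchy--Schwarz then forces $P_\varepsilon-Q_\varepsilon\to0$, whence $P_\varepsilon\to M/2$ as required.

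I expect the main obstacle to be the equipartition step $G_\varepsilon\to0$; the rest is bookkeeping with the identity $\mathcal{E}_\varepsilon=\varepsilon^2\mathcal{B}+\mathcal{L}$ and with Lemma~\ref{lemasymptitoticmodified}. The subtlety is that $G_\varepsilon\to0$ is not a pointwise fact but follows only from the combination of the sharp expansion $\mathcal{F}_\varepsilon[\gamma_\varepsilon]\to M$ with the independently established lower bound $\int_0^{L_\varepsilon}|\partial_s(V\circ\vartheta)|\,ds\to M$; one must verify that the interior point with $\llbracket\vartheta\rrbracket\to0$ supplied by Lemma~\ref{lemmeanvalue} is used exactly as in Proposition~\ref{propliminfinequality}, so that the two limits genuinely coincide at the value $M$ and the gap $G_\varepsilon$ collapses.
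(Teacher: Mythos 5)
Your proof is correct and takes essentially the same route as the paper's: the paper also proves equipartition, setting $X_\varepsilon=\sqrt{\varepsilon\mathcal{B}[\gamma_\varepsilon]}$ and $Y_\varepsilon=\sqrt{(L_\varepsilon-l)/\varepsilon}$, deducing $X_\varepsilon^2+Y_\varepsilon^2\to M$ from Lemma~\ref{lemasymptitoticmodified} and $2X_\varepsilon Y_\varepsilon\geq\int_0^{L_\varepsilon}|\partial_s(V\circ\vartheta_{\tilde{\gamma}_\varepsilon})|\,ds\geq M-o(1)$ from Cauchy--Schwarz together with the same Lemma~\ref{lemmeanvalue}/Lemma~\ref{lemvartheta} cutting argument, so that $(X_\varepsilon-Y_\varepsilon)^2\to0$ and hence $Y_\varepsilon^2\to M/2$. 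Your difference-of-squares factorization with $G_\varepsilon$ and $S_\varepsilon$ is just a repackaging of this same sharpness-of-AM--GM mechanism at the level of the energies $P_\varepsilon,Q_\varepsilon$ rather than their square roots.
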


\begin{proof}
	Let $X_\varepsilon:=\sqrt{\varepsilon\mathcal{B}[\gamma_{\varepsilon}]}$ and $Y_\varepsilon:=\sqrt{(L_\varepsilon-l)/\varepsilon}$.
	By Lemma \ref{lemasymptitoticmodified},
	$$X_\varepsilon^2+Y_\varepsilon^2=\frac{\mathcal{E}_\varepsilon[\gamma_{\varepsilon}]-l}{\varepsilon}=8\sqrt{2}\left(\sin^2\frac{\theta_0}{4}+\sin^2\frac{\theta_1}{4}\right)+o(1)$$
	as $\varepsilon\to0$.
	Moreover, by the Cauchy-Schwarz inequality,
	\begin{eqnarray}
	2X_\varepsilon Y_\varepsilon &=& 2\left(\int_{0}^{L_\varepsilon}|\partial_s\vartheta_{\tilde{\gamma}_\varepsilon}|^2ds\right)^{1/2}\left(\int_{0}^{L_\varepsilon}(1-\cos\vartheta_{\tilde{\gamma}_\varepsilon})ds\right)^{1/2} \nonumber\\
	&\geq& \int_{0}^{L_\varepsilon}|\partial_s\vartheta_{\tilde{\gamma}_\varepsilon}|2\sqrt{1-\cos\vartheta_{\tilde{\gamma}_\varepsilon}}ds = \int_{0}^{L_\varepsilon}|\partial_s(V\circ\vartheta_{\tilde{\gamma}_\varepsilon})|ds. \nonumber
	\end{eqnarray}
	By Lemma \ref{lemmeanvalue}, there is a sequence of $s_\varepsilon\in[0,L_\varepsilon]$ such that $\llbracket\vartheta_{\tilde{\gamma}_\varepsilon}(s_\varepsilon)\rrbracket\to0$.
	Hence, by the triangle inequality and Lemma \ref{lemvartheta}, we find that
	\begin{eqnarray}
	2X_\varepsilon Y_\varepsilon &\geq& \int_{0}^{s_\varepsilon}|\partial_s(V\circ\vartheta_{\tilde{\gamma}_\varepsilon})|ds + \int_{s_\varepsilon}^{L_\varepsilon}|\partial_s(V\circ\vartheta_{\tilde{\gamma}_\varepsilon})|ds \nonumber\\
	&\geq& 8\sqrt{2}\left(\sin^2\frac{\theta_0}{4}+\sin^2\frac{\theta_1}{4}\right)-o(1) \nonumber
	\end{eqnarray}
	as $\varepsilon\to0$.
	Therefore, $0\leq(X_\varepsilon-Y_\varepsilon)^2\leq o(1)$ as $\varepsilon\to0$.
	Noting that $X_\varepsilon$ and $Y_\varepsilon$ are bounded as $\varepsilon\to0$, we find that $X_\varepsilon$ and $Y_\varepsilon$ converges to a same value up to a subsequence, and the fact that $X_\varepsilon^2+Y_\varepsilon^2$ converges implies the full convergence.
	Hence, we find that
	$$\frac{L_\varepsilon-l}{\varepsilon}=Y_\varepsilon^2\to4\sqrt{2}\left(\sin^2\frac{\theta_0}{4}+\sin^2\frac{\theta_1}{4}\right)$$
	as $\varepsilon\to0$.
	The proof is complete.
\end{proof}

\subsection{Connection of inextensible and extensible problems: fixed endpoints}

We prove a prototype of Theorem \ref{thmmain2}, which connects the inextensible problem to the extensible problem under a fixed clamped boundary condition.
This prototype deals with ``shortening'' ($L\downarrow l$) but not straightening ($l\uparrow L$); in the next subsection, we give a statement in terms of straightening.

\begin{proposition}\label{propeulermodified}
	Let $L>l$ and $\theta_0,\theta_1\in[-\pi,\pi]$ with $|\theta_0|+|\theta_1|>0$.
	Let $\tilde{L}$ be the length function (\ref{eqnminlength}) for $l,\theta_0,\theta_1$.
	Then, for any $\varepsilon>0$ such that $L\in\tilde{L}(\varepsilon)$, any minimizer of $\mathcal{B}$ in $\mathcal{A}_{\theta_0,\theta_1,l}^L$ is a minimizer of $\mathcal{E}_\varepsilon$ in $\mathcal{A}_{\theta_0,\theta_1,l}$.
\end{proposition}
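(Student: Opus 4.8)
The plan is to exploit the fact that $\mathcal{E}_\varepsilon=\varepsilon^2\mathcal{B}+\mathcal{L}$ differs from $\varepsilon^2\mathcal{B}$ only by the length term $\mathcal{L}$, which is \emph{constant} (equal to $L$) on the fixed-length class $\mathcal{A}^L_{\theta_0,\theta_1,l}$. Consequently, a direct comparison between an $\mathcal{E}_\varepsilon$-minimizer of length exactly $L$ (furnished by the hypothesis $L\in\tilde{L}(\varepsilon)$) and an arbitrary $\mathcal{B}$-minimizer in $\mathcal{A}^L_{\theta_0,\theta_1,l}$ should immediately identify the latter as an $\mathcal{E}_\varepsilon$-minimizer over the whole extensible class.

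First I would unwind the hypothesis: by the definition (\ref{eqnminlength}) of $\tilde{L}$, the assumption $L\in\tilde{L}(\varepsilon)$ provides a minimizer $\gamma_*\in\mathcal{A}_{\theta_0,\theta_1,l}$ of $\mathcal{E}_\varepsilon$ with $\mathcal{L}[\gamma_*]=L$; thus $\mathcal{E}_\varepsilon[\gamma_*]=m(\varepsilon)$ and, crucially, $\gamma_*\in\mathcal{A}^L_{\theta_0,\theta_1,l}$. Now let $\gamma_0$ be any minimizer of $\mathcal{B}$ in $\mathcal{A}^L_{\theta_0,\theta_1,l}$. Since $\gamma_*$ is an admissible competitor in this fixed-length problem, the minimality of $\gamma_0$ gives $\mathcal{B}[\gamma_0]\leq\mathcal{B}[\gamma_*]$. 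Because both curves have length exactly $L$, the length contributions cancel and
\[
\mathcal{E}_\varepsilon[\gamma_0]=\varepsilon^2\mathcal{B}[\gamma_0]+L\leq\varepsilon^2\mathcal{B}[\gamma_*]+L=\mathcal{E}_\varepsilon[\gamma_*]=m(\varepsilon).
\]

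On the other hand, $\gamma_0\in\mathcal{A}_{\theta_0,\theta_1,l}$ forces $\mathcal{E}_\varepsilon[\gamma_0]\geq m(\varepsilon)$, so equality holds throughout and $\gamma_0$ is a minimizer of $\mathcal{E}_\varepsilon$ in $\mathcal{A}_{\theta_0,\theta_1,l}$, as claimed. I do not expect any genuine obstacle in this direction: the entire content is the observation that $\mathcal{L}$ is frozen on $\mathcal{A}^L$, combined with the availability of a full-problem minimizer at this particular length. The real work lies elsewhere and will be carried out when deducing the actual straightening statement (Theorem \ref{thmmain2}): one must determine, via the monotonicity and semicontinuity of $\tilde{L}$ established in Propositions \ref{proplength1} and \ref{proplength2}, which values $\varepsilon$ satisfy $L\in\tilde{L}(\varepsilon)$, and then recast the shortening viewpoint of this proposition as the straightening limit $l\uparrow L$.
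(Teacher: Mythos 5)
Your proof is correct and is essentially identical to the paper's own argument: both use the hypothesis $L\in\tilde{L}(\varepsilon)$ to produce an $\mathcal{E}_\varepsilon$-minimizer of length exactly $L$, compare bending energies within the fixed-length class, and conclude via the cancellation of the frozen length term. No gaps; nothing further is needed.
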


\begin{proof}
	Let $\gamma$ be a minimizer of $\mathcal{B}$ in $\mathcal{A}_{\theta_0,\theta_1,l}^L$ and $\varepsilon>0$ satisfy $L\in\tilde{L}(\varepsilon)$.
	Then, by $L\in\tilde{L}(\varepsilon)$, there exists a minimizer $\gamma'$ of $\mathcal{E}_\varepsilon$ in $\mathcal{A}_{\theta_0,\theta_1,l}$ with $\mathcal{L}[\gamma']=L$ ($=\mathcal{L[\gamma]}$).
	Since $\gamma$ minimizes $\mathcal{B}$ in $\mathcal{A}_{\theta_0,\theta_1,l}^L$, we have $\mathcal{B}[\gamma]\leq\mathcal{B}[\gamma']$ and hence $\mathcal{E}_\varepsilon[\gamma]\leq \mathcal{E}_\varepsilon[\gamma']$.
	Since $\gamma'$ minimizes $\mathcal{E}_\varepsilon$ in $\mathcal{A}_{\theta_0,\theta_1,l}$, so does $\gamma$.
\end{proof}

We are now in a position to state the following Theorem \ref{thmmain3}, which ensures that the inextensible problem in the shortening limit is read as the extensible problem.

\begin{theorem}\label{thmmain3}
	Let $l>0$ and $\theta_0,\theta_1\in[-\pi,\pi]$ with $|\theta_0|+|\theta_1|>0$.
	Let $\tilde{L}$ be the length function (\ref{eqnminlength}) for $l,\theta_0,\theta_1$.
	Let $L_\varepsilon\downarrow l$ be a sequence such that there is $\bar{\varepsilon}>0$ such that $L_\varepsilon\in\tilde{L}(\varepsilon)$ for any $\varepsilon\in(0,\bar{\varepsilon})$.
	Then any minimizer $\gamma_\varepsilon$ of $\mathcal{B}$ in $\mathcal{A}_{\theta_0,\theta_1,l}^{L_\varepsilon}$ is a minimizer of $\mathcal{E}_{\varepsilon}$ in $\mathcal{A}_{\theta_0,\theta_1,l}$.
	Moreover, as $\varepsilon\to0$,
	$$\lim_{\varepsilon\to0}\frac{L_\varepsilon-l}{\varepsilon}=4\sqrt{2}\left(\sin^2\frac{\theta_0}{4}+\sin^2\frac{\theta_1}{4}\right).$$
\end{theorem}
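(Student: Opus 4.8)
The plan is to derive this theorem directly from the two preceding propositions; essentially no new analysis is required, since the substantive work has already been carried out in the reduction statement and in the first-order expansion of the length. I would simply assemble these two ingredients.

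First I would prove the reduction claim. By hypothesis there is $\bar{\varepsilon}>0$ with $L_\varepsilon\in\tilde{L}(\varepsilon)$ for every $\varepsilon\in(0,\bar{\varepsilon})$. For each such $\varepsilon$, Proposition \ref{propeulermodified}, applied with $L=L_\varepsilon$, states precisely that any minimizer of $\mathcal{B}$ in $\mathcal{A}_{\theta_0,\theta_1,l}^{L_\varepsilon}$ is also a minimizer of $\mathcal{E}_\varepsilon$ in $\mathcal{A}_{\theta_0,\theta_1,l}$. This yields the first assertion verbatim. The only points to verify are that the hypotheses of Proposition \ref{propeulermodified} are met, namely $|\theta_0|+|\theta_1|>0$ (assumed) and $L_\varepsilon>l$; the latter holds because $\tilde{L}(\varepsilon)\subset(l,\infty)$ for $\varepsilon>0$, as already observed in the definition of $\tilde{L}$.

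Next I would establish the asymptotic rate. Since $L_\varepsilon\in\tilde{L}(\varepsilon)$, the sequence $\{L_\varepsilon\}_\varepsilon$ is an admissible choice in Proposition \ref{propminlength}, which gives
$$L_\varepsilon=l+4\sqrt{2}\left(\sin^2\frac{\theta_0}{4}+\sin^2\frac{\theta_1}{4}\right)\varepsilon+o(\varepsilon)\quad\text{as }\varepsilon\to0.$$
Subtracting $l$, dividing by $\varepsilon$, and letting $\varepsilon\to0$ produces the claimed value of $\lim_{\varepsilon\to0}(L_\varepsilon-l)/\varepsilon$. This in particular reconfirms $L_\varepsilon\to l$, consistent with the stated hypothesis $L_\varepsilon\downarrow l$.

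At the level of this theorem there is no genuine obstacle: the entire difficulty has been pushed into Proposition \ref{propminlength}, and through it into the sharp first-order energy expansion of Lemma \ref{lemasymptitoticmodified}, whose proof couples the $\liminf$ lower bound (via the weighted variation $V$) with the explicit borderline-elastica test curves giving the matching $\limsup$. Once those are in hand, Theorem \ref{thmmain3} is a short synthesis. If anything merits care here, it is purely bookkeeping---checking that $\tilde{L}(\varepsilon)$ is nonempty and contained in $(l,\infty)$ so that the quoted propositions genuinely apply---rather than any analytic difficulty.
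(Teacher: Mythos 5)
Your proof is correct and matches the paper's own argument exactly: the paper proves Theorem \ref{thmmain3} as ``an immediate corollary of Proposition \ref{propminlength} and Proposition \ref{propeulermodified},'' which is precisely your synthesis. Your added checks that $L_\varepsilon>l$ (since $\tilde{L}(\varepsilon)\subset(l,\infty)$) and that the hypotheses of the two propositions are met are the right bookkeeping and introduce no gap.
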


\begin{proof}
	An immediate corollary of Proposition \ref{propminlength} and Proposition \ref{propeulermodified}.
\end{proof}

\subsection{Dilation}

We finally prove Theorem \ref{thmmain2} and Theorem \ref{thmmain2'} via Theorem \ref{thmmain3} and simple dilation arguments.
We use the following elementary facts, the proofs of which are omitted.

\begin{lemma}\label{lemdilation}
	Let $\theta_0,\theta_1\in[-\pi,\pi]$ and $0<\lambda<\Lambda$.
	Then a curve $\gamma$ is a minimizer of $\mathcal{B}$ in $\mathcal{A}_{\theta_0,\theta_1,\lambda}^{\Lambda}$ if and only if the curve $\frac{\Lambda}{\lambda}\gamma$ is a minimizer of $\mathcal{B}$ in $\mathcal{A}_{\theta_0,\theta_1,\Lambda}^{{\Lambda}^2/\lambda}$.
\end{lemma}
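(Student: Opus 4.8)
The plan is to exploit the homogeneity of the bending energy under Euclidean dilations, so that the whole statement reduces to a scaling computation together with a bookkeeping check on the three constraints defining the admissible classes. First I would record the scaling law: for any $c>0$ and any regular curve $\gamma$, the dilated curve $c\gamma$ has curvature $\kappa_{c\gamma}=c^{-1}\kappa_\gamma$ at corresponding points and arc-length element $ds_{c\gamma}=c\,ds_\gamma$, whence $\mathcal{B}[c\gamma]=\int (c^{-1}\kappa_\gamma)^2(c\,ds_\gamma)=c^{-1}\mathcal{B}[\gamma]$. This is the only genuine computation in the argument, and it is immediate.

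Next I would set $c:=\Lambda/\lambda>1$ and verify that the map $\gamma\mapsto c\gamma$ carries $\mathcal{A}_{\theta_0,\theta_1,\lambda}^{\Lambda}$ bijectively onto $\mathcal{A}_{\theta_0,\theta_1,\Lambda}^{\Lambda^2/\lambda}$. A dilation preserves tangential directions, so the boundary angles $\theta_0,\theta_1$ are unchanged; it sends the endpoints $(0,0)$ and $(\lambda,0)$ to $(0,0)$ and $(c\lambda,0)=(\Lambda,0)$; and it scales the length according to $\mathcal{L}[c\gamma]=c\,\mathcal{L}[\gamma]=c\Lambda=\Lambda^2/\lambda$. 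Hence $\gamma\in\mathcal{A}_{\theta_0,\theta_1,\lambda}^{\Lambda}$ implies $c\gamma\in\mathcal{A}_{\theta_0,\theta_1,\Lambda}^{\Lambda^2/\lambda}$, and the inverse dilation by $1/c=\lambda/\Lambda$ supplies a two-sided inverse, giving the claimed bijection.

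Finally I would conclude by combining the two steps. Since the scaling law gives $\mathcal{B}[c\gamma]=c^{-1}\mathcal{B}[\gamma]$ with $c^{-1}=\lambda/\Lambda$ a fixed positive constant independent of $\gamma$, the bijection multiplies the energy, uniformly over the admissible set, by the same positive factor. A positive rescaling of a functional does not alter its set of minimizers, so $\gamma$ minimizes $\mathcal{B}$ over $\mathcal{A}_{\theta_0,\theta_1,\lambda}^{\Lambda}$ exactly when $c\gamma$ minimizes $\mathcal{B}$ over $\mathcal{A}_{\theta_0,\theta_1,\Lambda}^{\Lambda^2/\lambda}$, which is the assertion. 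There is no real obstacle here; the only point demanding care is the bookkeeping in the middle step, namely confirming that all three constraints — the endpoint positions, the boundary angles, and the total length — transform into precisely the parameters $(\theta_0,\theta_1,\Lambda)$ and $\Lambda^2/\lambda$ appearing in the target admissible class.
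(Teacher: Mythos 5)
Your proof is correct: the paper states this lemma without proof ("the proofs of which are omitted"), and your argument --- the scaling law $\mathcal{B}[c\gamma]=c^{-1}\mathcal{B}[\gamma]$ combined with the check that dilation by $c=\Lambda/\lambda$ is a bijection from $\mathcal{A}_{\theta_0,\theta_1,\lambda}^{\Lambda}$ onto $\mathcal{A}_{\theta_0,\theta_1,\Lambda}^{\Lambda^2/\lambda}$ preserving angles, endpoints, and constant-speed admissibility --- is exactly the elementary argument the author intended to be supplied. No gaps; the bookkeeping on the three constraints is the only content, and you have done it.
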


\begin{lemma}\label{lemdilation2}
	Let $\epsilon>0$, $\theta_0,\theta_1\in[-\pi,\pi]$ and $0<\lambda<\Lambda$.
	Then a curve $\gamma$ is a minimizer of $\mathcal{E}_\epsilon$ in $\mathcal{A}_{\theta_0,\theta_1,\Lambda}$ if and only if the curve $\frac{\lambda}{\Lambda}\gamma$ is a minimizer of $\mathcal{E}_{\lambda\epsilon/\Lambda}$ in $\mathcal{A}_{\theta_0,\theta_1,\lambda}$.
\end{lemma}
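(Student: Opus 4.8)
The plan is to treat this as a pure homothety (dilation) argument, exactly parallel to Lemma \ref{lemdilation} for the inextensible problem. Write $\mu := \lambda/\Lambda \in (0,1)$ and consider the dilation map $D_\mu : \gamma \mapsto \mu\gamma$ centered at the origin. First I would check that $D_\mu$ restricts to a bijection from $\mathcal{A}_{\theta_0,\theta_1,\Lambda}$ onto $\mathcal{A}_{\theta_0,\theta_1,\lambda}$: a homothety is smooth with smooth inverse, preserves the constant-speed condition, fixes the endpoint $(0,0)$, sends $(\Lambda,0)$ to $(\mu\Lambda,0)=(\lambda,0)$, and, being orientation- and angle-preserving, leaves the tangential directions $\theta_0,\theta_1$ unchanged. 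Hence $D_\mu$ carries the clamped boundary condition (\ref{eqnboundarycondition}) for $l=\Lambda$ exactly onto the one for $l=\lambda$, with inverse $D_{1/\mu}$ reversing it.

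The second step is the scaling of the energy under $D_\mu$. Since arc length rescales as $ds \mapsto \mu\,ds$ while the signed curvature rescales as $\kappa \mapsto \mu^{-1}\kappa$, I would record $\mathcal{L}[\mu\gamma]=\mu\mathcal{L}[\gamma]$ and $\mathcal{B}[\mu\gamma]=\mu^{-1}\mathcal{B}[\gamma]$. The key observation is that the choice $\epsilon' := \mu\epsilon = \lambda\epsilon/\Lambda$ is precisely the one that makes the two terms of $\mathcal{E}$ rescale by the same factor:
$$\mathcal{E}_{\epsilon'}[\mu\gamma] = (\mu\epsilon)^2\mathcal{B}[\mu\gamma] + \mathcal{L}[\mu\gamma] = \mu\epsilon^2\mathcal{B}[\gamma] + \mu\mathcal{L}[\gamma] = \mu\,\mathcal{E}_\epsilon[\gamma].$$
Thus $\mathcal{E}_{\epsilon'}\circ D_\mu = \mu\,\mathcal{E}_\epsilon$ on $\mathcal{A}_{\theta_0,\theta_1,\Lambda}$, with $\mu$ a fixed positive constant independent of $\gamma$.

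The conclusion then follows immediately: a bijection that multiplies the functional by a fixed positive constant carries minimizers to minimizers and non-minimizers to non-minimizers. Concretely, $\gamma$ minimizes $\mathcal{E}_\epsilon$ over $\mathcal{A}_{\theta_0,\theta_1,\Lambda}$ if and only if $\mu\,\mathcal{E}_\epsilon[\gamma]=\mathcal{E}_{\epsilon'}[\mu\gamma]$ is minimal over $D_\mu(\mathcal{A}_{\theta_0,\theta_1,\Lambda})=\mathcal{A}_{\theta_0,\theta_1,\lambda}$, i.e.\ if and only if $\frac{\lambda}{\Lambda}\gamma$ minimizes $\mathcal{E}_{\lambda\epsilon/\Lambda}$ over $\mathcal{A}_{\theta_0,\theta_1,\lambda}$, which is exactly the claim. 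There is no serious obstacle here; the only points requiring (routine) care are verifying that $D_\mu$ genuinely respects the constant-speed and boundary constraints and that the curvature scaling carries the correct power of $\mu$, both of which are standard for homotheties of plane curves.
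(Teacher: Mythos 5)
Your proof is correct, and it is exactly the elementary dilation/scaling argument the paper has in mind: the paper states Lemma \ref{lemdilation2} with the remark that the proof is omitted as an elementary fact, so your write-up supplies precisely the intended computation. The two key points — that $D_{\lambda/\Lambda}$ is a bijection between the admissible classes and that $\mathcal{E}_{\lambda\epsilon/\Lambda}[\tfrac{\lambda}{\Lambda}\gamma]=\tfrac{\lambda}{\Lambda}\,\mathcal{E}_\epsilon[\gamma]$ — are stated and verified correctly.
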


\begin{proof}[Proof of Theorem \ref{thmmain2}]
	Recall that the constants $L,\theta_0,\theta_1$ are given in the assumption.
	Let $\tilde{L}$ be the length function defined as (\ref{eqnminlength}) for $L,\theta_0,\theta_1$.
	Notice that $L'_\varepsilon\to L$ holds as $\varepsilon\downarrow 0$ for any sequence of $L'_\varepsilon\in\tilde{L}(\varepsilon)$ by Proposition \ref{propminlength}; in particular, there are sequences $L'_n\downarrow L$ and $\varepsilon_n\downarrow0$ as $n\to\infty$ such that $L'_n\in\tilde{L}(\varepsilon_n)$ for any $n$.
	Then, by Theorem \ref{thmmain3} with $l=L$, any minimizer of $\mathcal{B}$ in $\mathcal{A}_{\theta_0,\theta_1,L}^{L'_n}$ is a minimizer of $\mathcal{E}_{\varepsilon_n}$ in $\mathcal{A}_{\theta_0,\theta_1,L}$, and moreover
	$$\lim_{n\to\infty}\frac{L'_n-L}{\varepsilon_n}=4\sqrt{2}\left(\sin^2\frac{\theta_0}{4}+\sin^2\frac{\theta_1}{4}\right).$$

	We now define $l_n$ as $l_n:=L^2/L'_n$.
	We confirm that the sequences $l_n\uparrow L$ and $\varepsilon_n\downarrow0$ satisfy the desired properties.
	Let $\gamma_n$ be any minimizer of $\mathcal{B}$ in $\mathcal{A}_{\theta_0,\theta_1,l_n}^{L}$.
	By Lemma \ref{lemdilation} with $\lambda=l_n$ and $\Lambda=L$, the curve $\frac{L}{l_n}\gamma_n$ is a minimizer of $\mathcal{B}$ in $\mathcal{A}_{\theta_0,\theta_1,L}^{L'_n}$.
	Hence, by Theorem \ref{thmmain3}, the curve $\frac{L}{l_n}\gamma_n$ is a minimizer of $\mathcal{E}_{\varepsilon_n}$ in $\mathcal{A}_{\theta_0,\theta_1,L}$.
	Thus the first assertion is confirmed.
	Moreover, since $L'_n=L^2/l_n$ and $l_n/L\to1$, we have
	$$\lim_{n\to\infty}\frac{L-l_n}{\varepsilon_n}=\lim_{n\to\infty}\frac{L_n'-L}{\varepsilon_n}\cdot\frac{l_n}{L}=\lim_{n\to\infty}\frac{L_n'-L}{\varepsilon_n}=4\sqrt{2}\left(\sin^2\frac{\theta_0}{4}+\sin^2\frac{\theta_1}{4}\right),$$
	which is nothing but the last assertion.
	The proof is now complete.
\end{proof}

In the above proof we need to take a subsequence since the ``continuity'' of $\tilde{L}$ is not guaranteed in general even in a neighborhood of the origin.
Once the continuity is ensured, then there is no need to take a subsequence as shown in the following proof.

\begin{proof}[Proof of Theorem \ref{thmmain2'}]
	Recall that the constants $L,\theta_0,\theta_1$ with (\ref{eqngenericanglecondition}) and $\theta_0\theta_1<0$ are given in the assumption.
	By Theorem \ref{thmuniqueness}, there is $\bar{\varepsilon}>0$ such that for any $\varepsilon\in(0,\bar{\varepsilon})$ the energy $\mathcal{E}_\varepsilon$ admits a unique minimizer in $\mathcal{A}_{\theta_0,\theta_1,L}$.

	Let $\tilde{L}$ be the length function defined as (\ref{eqnminlength}) for fixed $L,\theta_0,\theta_1$.
	Then, by the above uniqueness, $\tilde{L}$ is a single-valued function in $[0,\bar{\varepsilon})$, and hence the lower semicontinuous envelope $L_*$ is a continuous nondecreasing function in $[0,\bar{\varepsilon}]$.
	Then, in particular, the function $L_*:[0,\bar{\varepsilon}]\to[L,L_*[\bar{\varepsilon}]]$ is surjective, and hence we can define a function $\tilde{\varepsilon}':[L,L_*(\bar{\varepsilon})]\to[0,\bar{\varepsilon}]$ so that $L_*\circ\tilde{\varepsilon}'$ is the identity map on $[L,L_*(\bar{\varepsilon})]$.
	Note that $\tilde{\varepsilon}'$ is a strictly increasing function since $L_*$ is nondecreasing.
	In addition, by Theorem \ref{thmmain3} with $l=L$, for any $L'\in(L,L_*(\bar{\varepsilon}))$, any minimizer of $\mathcal{B}$ in $\mathcal{A}_{\theta_0,\theta_1,L}^{L'}$ is a minimizer of $\mathcal{E}_{\tilde{\varepsilon}'(L')}$ in $\mathcal{A}_{\theta_0,\theta_1,L}$, and moreover
	$$\lim_{L'\downarrow L}\frac{L'-L}{\tilde{\varepsilon}'(L')}=4\sqrt{2}\left(\sin^2\frac{\theta_0}{4}+\sin^2\frac{\theta_1}{4}\right).$$
	In particular, for any $L'\in(L,L_*(\bar{\varepsilon}))$ the energy $\mathcal{B}$ admits a unique minimizer in $\mathcal{A}_{\theta_0,\theta_1,L}^{L'}$ (since $\mathcal{E}_{\tilde{\varepsilon}'(L')}$ admits a unique minimizer in $\mathcal{A}_{\theta_0,\theta_1,L}$).

	Now we set $\bar{l}:=L^2/L_*(\bar{\varepsilon})$.
	Define a function $\tilde{\varepsilon}:[\bar{l},L]\to[0,\bar{\varepsilon}]$ by $\tilde{\varepsilon}(l):=\tilde{\varepsilon}'(L^2/l)$.
	Notice that $\tilde{\varepsilon}$ is strictly decreasing.
	Then, by Lemma \ref{lemdilation}, for any $l\in(\bar{l},L)$ and any minimizer $\gamma_l$ of $\mathcal{B}$ in $\mathcal{A}_{\theta_0,\theta_1,l}^{L}$, the dilated curve $\frac{L}{l}\gamma_l$ minimizes $\mathcal{B}$ in $\mathcal{A}_{\theta_0,\theta_1,L}^{L^2/l}$.
	Since $L<L^2/l<L_*(\bar{\varepsilon})$, the desired uniqueness holds by the above arguments.
	In addition, we find that the curve $\frac{L}{l}\gamma_l$ also minimizes $\mathcal{E}_{\tilde{\varepsilon}(l)}$ in $\mathcal{A}_{\theta_0,\theta_1,L}$.
	Moreover, we also find that
	$$\lim_{l\uparrow L}\frac{L-l}{\tilde{\varepsilon}(l)}=\lim_{l\uparrow L}\frac{L^2/l-L}{\tilde{\varepsilon}'(L^2/l)}\cdot\frac{l}{L}=\lim_{L'\downarrow L}\frac{L'-L}{\tilde{\varepsilon}'(L')}=4\sqrt{2}\left(\sin^2\frac{\theta_0}{4}+\sin^2\frac{\theta_1}{4}\right).$$
	The proof is now complete.
\end{proof}

\begin{remark}
	It is not claimed that the above function $\tilde{\varepsilon}$ (or $\tilde{\varepsilon}'$) is continuous.
	The continuity is ensured if it is proved that the length function $\tilde{L}$ (or equivalently $L_*$) is strictly increasing.
\end{remark}

\if0

\section{Remarks on higher-(co)dimensional cases}\label{secthighdim}

In this paper we have addressed only planar elastic curves.
However, our new viewpoint would be more wide-spreading: we would be also able to understand higher-(co)dimensional elastic energy functionals as phase transitions (in some limits).
In this last section we give some formal discussions on this subject, although the details are left for future work.

First, as a higher-codimensional case, we observe the case of open curves $\gamma$ in $\mathbb{R}^n$ of clamped endpoints such that the endpoints are the origin and $l{\bf e}$, where $l>0$ and ${\bf e}\in S^{n-1}\subset\mathbb{R}^n$ be a unit vector.
Let ${\bf t}$ denote the unit tangent vector of $\gamma$.
Recall that $|\kappa|=|\partial_s{\bf t}|$.
In addition, the distance of the endpoints is nothing but integral of the tangent vector ${\bf t}$ projected to the axis though ${\bf e}$:
$$\int_{\gamma} {\bf t}\cdot{\bf e} ds = l.$$
Hence, the modified total squared curvature energy is read as
\begin{eqnarray}
\mathcal{E}_\varepsilon[\gamma] &=& \varepsilon^2\int_{\gamma}\kappa^2ds + \int_{\gamma}ds \nonumber\\
&=& \varepsilon^2\int_\gamma |\partial_s{\bf t}|^2 ds + \int_{\gamma} (1-{\bf t}\cdot{\bf e}) ds + l. \nonumber
\end{eqnarray}
The expression is also regarded as a phase transition energy defined for $S^{n-1}$-valued functions, where the potential $W_{\bf e}:S^{n-1}\to[0,2]$ is now given as $W_{\bf e}({\bf v})=1-{\bf v}\cdot{\bf e}$, the well of which is ${\bf v}={\bf e}$.
In this case the corresponding transition layer solutions are also nothing but the borderline elasticae in suitable directions, so one many naturally expect that a similar straightening result holds.

Unlike the above observed higher-codimensional case, higher-dimensional problems (e.g.\ elastic membranes) would be more involved.
We shall observe one of the simplest cases, a graph setting: we consider a hypersurface $\Gamma\subset\mathbb{R}^n$ such that $\Gamma=\{(x',u(x')) \mid x'\in U \}$, where $U\subset\mathbb{R}^{n-1}$ is an open set of smooth boundary and $u:U\to\mathbb{R}$ is a smooth function satisfying $u=0$ on $\partial U$ and also a first order condition.
One natural higher-dimensional generalization of the modified total squared curvature would be a modified Willmore energy:
$$\mathcal{E}_\varepsilon[\Gamma]:=\varepsilon^2\int_{\Gamma}H^2dS + \int_\Gamma dS,$$
where $H$ denotes the mean curvature and $dS$ denotes the surface element.
Let ${\bf e}_n\in S^{n-1}$ be the $n$-th unit vector and ${\bf n}\in S^{n-1}$ denote the upward unit vector of $\Gamma$.
Recall that $H=\nabla_\Gamma\cdot{\bf n}$.
We also recall the relation between the area of $U$ and the integration of ${\bf n}\cdot{\bf e}_n$ on the surface $\Gamma$ as
$$\int_{\Gamma} {\bf n}\cdot{\bf e}_n dS = \int_U \frac{1}{\sqrt{1+|\nabla' u|^2}}\sqrt{1+|\nabla' u|^2}dx' = \int_U dx' = |U|,$$
which is based on the notion of projected area.
The above relations allow us to express $\mathcal{E}_\varepsilon$ by a potential energy perturbed by a first order term:
$$\mathcal{E}_\varepsilon[\Gamma]= \varepsilon^2 \int_{\Gamma}|\nabla_\Gamma\cdot{\bf n}|^2dS + \int_{\Gamma} W_{{\bf e}_n}({\bf n}) dS + |U|,$$
where $W_{{\bf e}_n}({\bf v})=1-{\bf v}\cdot{\bf e}_n$.
Thus, there is an energy structural analogy at a glance.
However, there would still remain considerable difficulties.
For example, the higher-dimensional domain of integral would not be directly ``flattened'' in general, although in the one-dimensional case there always exists the arc length reparameterization so that the domain of integration can be flattened out, and hence the domain can be taken as just an interval without any changes in the form of energy: to flatten the higher-dimensional domain, we would need to modify the surface divergence operator $\nabla_\Gamma\cdot$ but then not be able to use the above form directly.
Another difficulty is that, unlike one-dimension, it would be significantly difficult to control the precise shapes of surfaces just by using an expansion of energy.
It is not so clear at this time if one can also treat the higher-dimensional elastic energy as a phase transition energy in a rigorous manner.

\fi

\appendix

\section{Existence of minimizers}
\label{appendixexistence}

Fix $l>0$ and $\theta_0,\theta_1\in[-\pi,\pi]$.
We say that $\gamma\in H^2(I;\mathbb{R}^2)\subset C^1(\bar{I};\mathbb{R}^2)$ is {\it $H^2$-admissible} if $\gamma$ is of constant speed and satisfying the boundary condition (\ref{eqnboundarycondition}).
We denote the set of $H^2$-admissible curves by $\mathcal{X}$.
Note that the $H^2$-weak topology is stronger than $C^1$-topology; hence, in particular, the set $\mathcal{X}$ is $H^2$-weakly closed in $H^2(I;\mathbb{R}^2)$.

In this $H^2$-framework we have an existence theorem of standard type:
Let $\mathcal{X}'\subset \mathcal{X}$ be an $H^2$-weakly closed subset.
Then the functional $\mathcal{E}_\varepsilon=\varepsilon^2\mathcal{B}+\mathcal{L}$ defined on $\mathcal{X}'$ attains its minimum in $\mathcal{X}'$.

The proof is straightforward.
Since any $\gamma\in \mathcal{X}'$ is of constant speed, we have the following representations:
$$\mathcal{L}[\gamma]\equiv|\dot{\gamma}|\geq l, \quad \mathcal{B}[\gamma]=\frac{1}{\mathcal{L}[\gamma]^3}\int_I|\ddot{\gamma}(t)|^2dt.$$
By the above relations and the boundary condition, we find that a minimizing sequence is $H^2$-bounded.
Since $\mathcal{E}_\varepsilon$ is lower semicontinuous with respect to the $H^2$-weak topology, a standard direct method implies the existence of a minimizer, completing the proof.

Moreover, if $\mathcal{X}'$ admits any local perturbation, then we find that any minimizer $\gamma\in\mathcal{X}'$ is of class $C^\infty$ by a bootstrap argument.
In particular, the problem (\ref{minprob1}) admits a smooth minimizer.
Using the Lagrange multiplier method to modify the length constraint, we find that the problem (\ref{minprob2}) also admits a smooth minimizer.
One may also refer to \cite[Theorem 2.2]{Li98} for a different argument.

In addition, it is also proved that there are infinitely many local minimizers with different winding numbers in a sense.
Here $\gamma\in\mathcal{X}$ is a local minimizer of the energy $\mathcal{E}_\varepsilon$ if there is $\delta>0$ such that $\mathcal{E}_\varepsilon[\gamma]\leq\mathcal{E}_\varepsilon[\gamma']$ for any $\gamma'\in\mathcal{X}$ with $\|\gamma-\gamma'\|_{H^2}\leq\delta$.
To state the above fact, we use a kind of winding number; for $\gamma\in\mathcal{X}$ we define $\mathcal{N}[\gamma]\in\mathbb{Z}$ as
$$\mathcal{N}[\gamma]=\frac{1}{2\pi}\left(\int_{\gamma}\kappa ds + \theta_0-\theta_1\right),$$
where $\kappa$ is the counterclockwise signed curvature ($\kappa=\partial_s\vartheta_{\tilde{\gamma}}$).
We notice that the functional $\mathcal{N}$ is $\mathbb{Z}$-valued and continuous with respect to the $H^2$-weak and -strong topologies.
Thus for any $m\in\mathbb{Z}$ the set $\mathcal{X}_m=\{\gamma\in \mathcal{X} \mid \mathcal{N}[\gamma]=m \}$ is open and closed in $\mathcal{X}$ both weakly and strongly.
Since $\mathcal{X}_m$ is weakly closed, the energies $\mathcal{E}_\varepsilon$ defined on $\mathcal{X}_m$ and $\mathcal{B}$ defined on $\mathcal{X}_m\cap \mathcal{X}^L$ attain their minimizers, where $L>l$ and $\mathcal{X}^L=\{\gamma\in\mathcal{X}\mid\mathcal{L}[\gamma]=L \}$.
Moreover, the set $\mathcal{X}_m$ is strongly open, and hence such minimizers are local minimizers on $\mathcal{X}$ or $\mathcal{X}^L$, respectively.

\section{Uniqueness of minimizers for well-prepared boundary conditions}
\label{appendixuniqueness}

In this section we briefly show that the uniqueness of global minimizers is easily proved or disproved for some special parameters of the boundary condition, which possess well-prepared symmetry.

We observe that under the generic boundary angle condition (\ref{eqngenericanglecondition}), if circular arcs are admissible, then global minimizers are unique.
For the extensible problem with any fixed $\varepsilon>0$, the inequality $X^2+Y^2\geq2XY$ leads to
$$\varepsilon^2\int_\gamma\kappa^2 ds + \int_\gamma ds \geq 2\varepsilon\int_\gamma|\kappa| ds,$$
where the equality is attained if and only if $|\kappa|=1/\varepsilon$.
In addition we easily observe that the right-hand side attains its minimum among admissible curves $\gamma\in\mathcal{A}_{\theta_0,\theta_1,l}$ if and only if a convex curve of rotation angle $|\theta_0|+|\theta_1|$ is admissible.
Therefore, if a circular arc of radius $1/\varepsilon$ is admissible, i.e., $\theta_0=-\theta_1$ and $l=2\cos\theta_0/\varepsilon$, then the circular arc of radius $1/\varepsilon$ is a unique global minimizer.
For the inextensible problem with fixed $L>0$, the Cauchy-Schwarz inequality leads to
$$\int_\gamma\kappa^2 ds \geq \frac{1}{L}\left(\int_\gamma|\kappa| ds\right)^2,$$
where the equality is attained if and only if $|\kappa|$ is constant.
Then a similar consideration implies that if a circular arc is admissible, i.e., $\theta_0=-\theta_1$ and $l=L\sin\theta_0/\theta_0$, then the circular arc of radius $L/2\theta_0$ is a unique global minimizer.

We shall finally think of some particular critical cases.
Both for the extensible problem with fixed $\varepsilon>0$ and the inextensible problem with fixed $L>0$, the most trivial case is that $(l,\theta_0,\theta_1)=(l,0,0)$ with $l>0$; in this case the segment is a unique minimizer.
If $(l,\theta_0,\theta_1)=(0,0,0)$ or $(l,|\theta_0|,|\theta_1|)=(0,\pi,\pi)$, a consideration as in the above paragraph implies there are only two minimizers of suitable circle, thus being not unique in the strict sense but unique up to symmetry.
In other critical cases, we are often able to ensure some nonuniqueness by a simple argument on symmetry, but up-to-symmetry uniqueness is a delicate issue (cf.\ \cite{SaSa14}).

\if0

Finally we present a more advanced uniqueness result, which is valid for both the extensible problem with any fixed $\varepsilon$ and the inextensible problem with fixed $L>0$.
As is seen above, the uniqueness is easily shown for some special parameters $(l,\theta_0,\theta_1)$ that constitute a two-dimensional set in $\mathcal{R}^3$.
However, employing a similar idea to our argument for Theorem \ref{thmuniqueness}, we are able to reach another type of uniqueness result that is valid for an open set of the parameter space.
To the author's knowledge this result would be also new, or at least not explicitly proved in the literature.

\begin{theorem}\label{thmuniquenessappendix}
	Suppose that a parameter $(l^*,\theta_0^*,\theta_1^*)\in\mathbb{R}^3$ admits a unique global minimizer, and moreover its curvature has no zero (i.e., the curve is convex).
	Then there is a neighborhood $U$ of $(l^*,\theta_0^*,\theta_1^*)\in\mathbb{R}^3$ such that any parameter $(l,\theta_0,\theta_1)$ in $U$ also admits a unique global minimizer, and moreover its curvature has no zero.
\end{theorem}

As a corollary, we find that the uniqueness is valid even for an open neighborhood of the above two-dimensional set of the parameter space that admits unique minimizers of circular arc.

For the proof of Theorem \ref{thmuniquenessappendix} we prepare some general properties about how our problem depends on the parameters.

\begin{lemma}
	With respect to the parameter $(l,\theta_0,\theta_1)\in\mathbb{R}^3$, the following continuities hold.
	\begin{enumerate}
		\item The minimum of energy is continuous.
		\item Let $F$ be the map that sends $(l,\theta_0,\theta_1)$ to a subset of $C^\infty(\bar{I};\mathbb{R}^3)$ that consists of all minimizers corresponding to the parameter $(l,\theta_0,\theta_1)$.
		Then $F$ is set-valued upper semicontinuous.
	\end{enumerate}
\end{lemma}

\begin{proof}[Proof of Theorem \ref{thmuniquenessappendix}]
	We only prove for the extensible problem since the proofs are parallel.

	We first confirm the general fact that the minimum of $\mathcal{E}_\varepsilon$ is continuous with respect to $(l,\theta_0,\theta_1)$.
	Take a convergent sequence of parameters $(l^j,\theta_0^j,\theta_1^j)\to(l,\theta_0,\theta_1)$.
	Denote corresponding minimizers $\gamma_j$ and $\gamma$ (the choice may not be unique).
	Then the sequence of minimizers is bounded in $H^2$ and hence, up to a subsequence, converges to some curve $\bar{\gamma}$ weakly in $H^2$ and also in $C^1$.
	The $C^1$-convergence implies that $\bar{\gamma}$ also belongs to the same space $\mathcal{A}_{l,\theta_0,\theta_1}$ as $\gamma$, and thus $\mathcal{E}_\varepsilon[\bar{\gamma}]\geq\mathcal{E}_\varepsilon[\gamma]$ by the minimality of $\gamma$.
	Combining with the $H^2$-lower semicontinuity of $\mathcal{E}_\varepsilon$, we find that
	$$\liminf_{j\to\infty}\mathcal{E}_\varepsilon[\gamma_j]\geq \mathcal{E}_\varepsilon[\bar{\gamma}]\geq \mathcal{E}_\varepsilon[\gamma].$$
	Since the lower semicontinuity is now ensured, the remaining task is to confirm the upper semicontinuity.

\end{proof}

\fi

\subsection*{Acknowledgments}

The author would like to thank Professor Yoshikazu~Giga, Professor Yasuhito~Miyamoto, Professor Michiaki~Onodera, and Dr.\ Olivier Pierre-Louis for their helpful comments and discussion.
In particular, Professor Miyamoto indicated to the author that our study is related to the works by Ni and Takagi.
The author learned of Audoly and Pomeau's book from Dr.\ Pierre-Louis, and found a description related to our study.
This work is mainly carried out at the University of Tokyo and partly at the Max Planck Institute for Mathematics in the Sciences.
This work was partially supported by JSPS KAKENHI Grant Numbers JP15J05166, JP18J30004, and also the Program for Leading Graduate Schools, MEXT, Japan.

\end{document}